\newlength{\defbaselineskip}
\newcommand{\setlinespacing}[1]%
           {\setlength{\baselineskip}{#1 \defbaselineskip}}
\newcounter{marnote}
\newcommand\marginnote[1]{\stepcounter{marnote}$^{\bullet\,\themarnote}$\marginpar{\tiny$\bullet\,\themarnote$:\,#1}}
\theoremstyle{plain}
\newtheorem{theorem}{Theorem}[section]
\newtheorem{corollary}[theorem]{Corollary}
\newtheorem{lemma}[theorem]{Lemma}
\newtheorem{prop}[theorem]{Proposition}
\theoremstyle{definition}
\newtheorem{defn}{Definition}[section]
\theoremstyle{remark}
\newtheorem{remark}{Remark}[section]
\numberwithin{equation}{section}
\begin{document}


\title{Gradient Estimates for Parabolic Systems from Composite Material}

\author{Haigang Li\footnote{School of Mathematical Sciences, Beijing Normal University, Laboratory of Mathematics and Complex Systems, Ministry of
   Education, Beijing 100875,
China. Email: hgli@bnu.edu.cn}\quad and\quad Yanyan
Li\footnote{Department of Mathematics, Rutgers University, 110
Frelinghuysen Rd, Piscataway, NJ 08854, USA. Email:
yyli@math.rutgers.edu}}

\date{}

\maketitle

\begin{abstract}

In this paper we derive $W^{1,\infty}$ and piecewise $C^{1,\alpha}$
estimates for solutions, and their $t-$derivatives, of divergence
form parabolic systems with coefficients piecewise H\"older
continuous in space variables $x$ and smooth in $t$.  This is an
extension to parabolic systems of results of Li and Nirenberg on
elliptic systems. These estimates depend on the shape and the size
of the surfaces of discontinuity of the coefficients, but are
independent of the distance between these surfaces.

\end{abstract}

\section{Introduction and Main Results}\label{sec1}

The purpose of this paper is to establish gradient estimates for
some parabolic systems of divergence form, which arise from the
study of composite material. Babu\u{s}ka et al. \cite{basl} were
interested in elliptic systems arising in elasticity. They observed
numerically that, for certain homogeneous isotropic linear systems
of elasticity, $|\nabla{u}|$ stay bounded independently of the
distance between the regions. Bonnetier and Vogelius \cite{bv}
proved the boundedness of $|\nabla{u}|$ for a scalar elliptic
equation in bounded domains with two unit balls touching at a point.
This result was extended by Li and Vogelius in \cite{lv} to general
second order elliptic equations with piecewise H\"{o}lder
coefficients, where stronger $C^{1,\alpha}$ estimates were
established. Later, $C^{1,\alpha}$ estimates were obtained by Li and
Nirenberg in \cite{ln} for general second order elliptic systems
including systems of elasticity, with an improved H\"{o}lder
exponent $\alpha$. A remaining open problem is to determine the
optimal regularity; see the open problem on page 894 of \cite{ln}.
In this paper we extend the interior $ C^{1,\alpha}$ estimates in
\cite{ln} to parabolic systems. For parabolic problems, related
results were given by Almgren and Wang \cite{aw} and Dong \cite{d}.

Let ${D}\subset\mathbb{R}^{n}$ $(n\geq1)$ be a bounded domain that contains $L$
disjoint subdomains ${D}_{1},\cdots,{D}_{L}$, with
${D}=(\cup\overline{{D}}_{m})\setminus\partial{D}$. Suppose that
their boundaries $\partial{D}$, $\partial{D}_{m}$ are $C^{1,\alpha}$
for some $0<\alpha<1$. Denote $Q_{T}={D}\times(0,T)$, for some
$T>0$.

We study interior gradient estimates of solutions of the linear
parabolic systems
\begin{equation}\label{IBVP}
(u^{i})_{t}-D_{\alpha}\bigg(A_{ij}^{\alpha\beta}(x,t)D_{\beta}u^{j}\bigg)=-D_{\alpha}g^{\alpha\,i}+f^{i}\quad\quad\hbox{in}\quad
Q_{T}.
\end{equation}
Here $u(x,t)=(u^{1}(x,t),\cdots,u^{N}(x,t))$ is a vector-valued
function, and we use $D_{i}u$ for $\partial{u}/\partial{x}_{i}$
while we use $u_{t}$ (or sometimes $D_{t}u$) for
$\partial{u}/\partial{t}$. We also write $Du$ (or sometimes
$\nabla{u}$) for $(D_{1}u,\cdots,D_{2}u)$. Throughout this paper we
use the usual summation convention over repeated indices: $\alpha$
and $\beta$ are summed from $1$ to $n$, while $i$ and $j$ are summed
from $1$ to $N$.

The coefficients $A_{ij}^{\alpha\beta}(x,t)$, often also denoted by
$A$,
\begin{itemize}
\item [(a)]are measurable and bounded,
\begin{equation}\label{coeff1}
|A_{ij}^{\alpha\beta}|\leq\Lambda_{0};
\end{equation}
\item [(b)] and $A_{ij}^{\alpha\beta}(x,t)$
satisfy the following (weak) parabolic condition: for some constant
$\lambda>0$,
\begin{equation}\label{coeff2}
\int_{{D}}A_{ij}^{\alpha\beta}(x,t)\partial_{\alpha}\xi^{i}\partial_{\beta}\xi^{j}dx\geq\lambda\int_{{D}}|\nabla\xi|^{2}dx,
\ \ \forall
\xi\in\overset{\circ}{W}\,^{1,0}_{2}(Q_{T};\mathbb{R}^{N}),\ \forall
t\in(0,T);
\end{equation}

\item [(c)] Furthermore, $A_{ij}^{\alpha\beta}(x,t)$ is of class
$C^{\mu,\infty}(\overline{{D}}_{m}\times(0,T))$, that is, for some
constants $0<\mu<1$, and $C$ such that
$$
\big|A(x,t)-A(y,t)\big|\leq C\big|x-y\big|^{\mu},\quad\forall
(x,t),(y,t)\in{D}_{m}\times(0,T), m=1,2,\cdots,L;
$$
and, for every integer $k\geq 1$, there exists $\Lambda_{2k}$,
depending on $k$, such that
\begin{equation}\label{coeff31}
\begin{array}{cl}
\sum\limits_{s=0}^{k}\big|D_{t}^{s}A(x,t)\big|\leq\Lambda_{2k},\\
\sum\limits_{s=0}^{k}\big|D_{t}^{s}A(x,t)-D_{t}^{s}A(y,t)\big|\leq
\Lambda_{2k}\big|x-y\big|^{\mu},
\end{array}~\mbox{in}~D_{m}\times(0,T),m=1,\cdots,L.
\end{equation}
\end{itemize}

Finally, we assume that $f\in{L}^{\infty}(Q_{T})$, and
$g\in{C}^{\mu,0}(\overline{{D}}_{m}\times[0,T])$. Here
$C^{\mu,k}(\overline{{D}}_{m}\times[0,T])$ denotes the Banach space
of functions $g(x,t)$ that are $C^{k}$ continuous in $t$ and
H\"{o}lder continuous in $x$ with exponent $\mu\in[0,1]$, and having
finite norms
$$\|g\|_{C^{\mu,k}}=\sum_{s=0}^{k}\sup_{\overline{{D}}_{m}\times[0,T]}|D_{t}^{s}g|
+\sup_{\scriptstyle x,y\in{D}_{m} \atop \scriptstyle
t\in[0,T]}\frac{|g(x,t)-g(y,t)|}{|x-y|^{\mu}}.$$ Throughout the
paper, unless otherwise stated, we will always assume that these
hypotheses hold.

We define weak solutions of \eqref{IBVP} as in \cite{ch}. Denote by
$V(Q_{T};\mathbb{R}^{N})$ the set of all
$u\in{L}^{2}(Q_{T};\mathbb{R}^{N})$ such that
$Du\in{L}^{2}(Q_{T};\mathbb{R}^{N})$,
$u(\cdot,t)\in{L}^{2}(D;\mathbb{R}^{N})$ for a.e. $t\in[0,T]$, and
having finite norms
$$\|u\|_{V(Q_{T})}:=\left(\int_{0}^{T}\int_{{D}}|Du|^{2}dxdt+\textrm{ess}\sup\limits_{0\leq\tau\leq\,T}\int_{{D}}u^{2}(x,\tau)dx\right)^{\frac{1}{2}}.$$
$W^{1,1}_{2}(Q_{T};\mathbb{R}^{N})$ denotes the Hilbert space with
the inner product
$$\langle u,v\rangle_{W^{1,1}_{2}}=\int_{0}^{T}\int_{{D}}(uv+D_{x}uD_{x}v+D_{t}uD_{t}v)dxdt.$$
By $\overset{\circ}{V}(Q_{T};\mathbb{R}^{N})$ and
$\overset{\circ}{W}\,^{1,1}_{2}(Q_{T};\mathbb{R}^{N})$ we denote the
subset of $V(Q_{T};\mathbb{R}^{N})$ and
$W^{1,1}_{2}(Q_{T};\mathbb{R}^{N})$, respectively, with the elements
satisfying $u(\cdot,t)|_{\partial{D}}=0$ a.e. $t\in(0,T)$.

\begin{defn}
For $f\in{L}^{\infty}(Q_{T})$,
$g\in{C}^{\mu,0}(\overline{{D}}_{m}\times[0,T])$, we say that $u$ is
a weak solution of \eqref{IBVP}, if
$u\in\overset{\circ}{V}(Q_{T};\mathbb{R}^{N})$ satisfies, for a.e
$\tau\in(0,T)$ and for all
$\zeta\in\overset{\circ}{W}\,^{1,1}_{2}(Q_{T};\mathbb{R}^{N})$ that
vanish at $t=0$,  the identity
\begin{align}\label{weaksoln1}
\int_{{D}}(u\zeta)(\cdot,\tau)\,dx-\int_{0}^{\tau}\int_{{D}}u\zeta_{t}dxdt&+\int_{0}^{\tau}\int_{{D}}A^{\alpha\beta}_{ij}D_{\beta}u^{j}D_{\alpha}\zeta^{i}\,dxdt
\nonumber\\
&=\int_{0}^{\tau}\int_{{D}}(f\zeta+g^{\alpha}D_{\alpha}\zeta)dxdt.
\end{align}
\end{defn}
For $\epsilon>0$ small, set
$${D}_{\epsilon}=\bigg\{x\in{D}~\big|~\mathrm{dist}(x,\partial{D})>\epsilon\bigg\}.$$
The first of our main results concerns the H\"{o}lder interior
estimates for the gradient.
\begin{theorem}\label{thm1}
Under the assumptions on ${D},A,f,g,\varphi$ mentioned above, let
$u\in{V}(Q_{T};\mathbb{R}^{N})$ be a weak solution of \eqref{IBVP}.
Then for any $0<\epsilon<\frac{1}{2}$ and
$\alpha'<\min\{\mu,\frac{\alpha}{2(1+\alpha)}\}$,
\begin{align}\label{thm1.1}
\|u&\|_{L^{\infty}({D}_{\epsilon}\times(\epsilon{T},T))}+
\|D_{x}u\|_{C^{\alpha',0}(({D}_{\epsilon}\cap\overline{{D}}_{m})\times(\epsilon{T},T))}\nonumber\\
&\leq\,C\left(\|u\|_{L^{2}(Q_{T})}
+\|f\|_{L^{\infty}(Q_{T})}+\max_{1\leq\,m\leq\,L}\|g\|_{C^{\alpha',0}(\overline{{D}}_{m}\times[0,T])}\right),
\end{align}
where $C$ depends only on
$n,N,L,\alpha,\epsilon,\lambda,\Lambda_{0},\mu,T$,$\|A\|_{C^{\alpha',1}(\overline{{D}}_{m}\times[0,T])}$
and the $C^{1,\alpha}$ norm of ${D}_{m}$. In particular,
\begin{equation}\label{thm1.2}
\|D_{x}u\|_{L^{\infty}({D}_{\epsilon}\times(\epsilon{T},T))}\leq\,C\left(\|u\|_{L^{2}(Q_{T})}
+\|f\|_{L^{\infty}(Q_{T})}+\max_{1\leq\,m\leq\,L}\|g\|_{C^{\alpha',0}(\overline{{D}}_{m}\times[0,T])}\right).
\end{equation}
\end{theorem}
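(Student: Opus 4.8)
The plan is to obtain the estimate from three ingredients: (i) a local energy (Caccioppoli) inequality for \eqref{IBVP}; (ii) a regularity statement for a ``frozen'' reference problem, whose coefficients are piecewise constant in $x$ across a hyperplane and constant in $t$; and (iii) a Campanato-type iteration on parabolic cylinders that upgrades $L^{2}$-control of $D_{x}u$ to a pointwise H\"older bound in the spatial variable. Since the interfaces $\partial D_{m}$ do not move in time, $t$ enters (iii) only as a passive parameter, while the $t$-regularity of $A$ in \eqref{coeff31} is used precisely in (ii). For (i) I would test the weak formulation \eqref{weaksoln1} against $\zeta=\eta^{2}(u-c)$, with $\eta$ a space--time cutoff and $c$ a constant, and use \eqref{coeff2} together with the usual absorption, to get, on nested interior parabolic cylinders $Q_{r}\subset Q_{\rho}$,
$$\|D_{x}u\|_{L^{2}(Q_{r})}^{2}+\sup_{t}\|u(\cdot,t)\|_{L^{2}(B_{r})}^{2}\ \le\ \frac{C}{(\rho-r)^{2}}\|u\|_{L^{2}(Q_{\rho})}^{2}+C\big(\|f\|_{L^{\infty}(Q_{T})}^{2}+\|g\|_{C^{\mu,0}}^{2}\big),$$
which already controls the left side by the right side of \eqref{thm1.1}.

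The heart of the matter is the reference estimate: let $A_{0}$ be constant in $t$ and equal to two constant tensors on the two sides of $\{x_{n}=0\}$, each still satisfying \eqref{coeff2}, and let $v$ solve $v^{i}_{t}-D_{\alpha}\big(A_{0,ij}^{\alpha\beta}D_{\beta}v^{j}\big)=0$ in a parabolic cylinder; then on a smaller cylinder $D_{x}v$, together with all of its $t$-derivatives, is bounded in $C^{1,\gamma}$ for every $\gamma<1$ on each side of $\{x_{n}=0\}$, with constants controlled by $\|v\|_{L^{2}}$. I would prove this by differencing in $t$: since $A_{0}$ is $t$-independent, the quotients $\big(v(\cdot,t+h)-v(\cdot,t)\big)/h$ solve the same system, so Caccioppoli iterated in $t$ gives interior $L^{2}$-bounds on every $D_{t}^{k}v$; Sobolev embedding in $t$, and then interior Sobolev in $x$, yields $\sup_{t}\|D_{t}^{k}v(\cdot,t)\|_{L^{q}}$ with $q>n$ on a slightly smaller region. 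For a.e.\ fixed $t$, $v(\cdot,t)$ then solves the \emph{elliptic} transmission system $-D_{\alpha}\big(A_{0}^{\alpha\beta}D_{\beta}v\big)=-v_{t}(\cdot,t)$ with right-hand side in $L^{q}$, $q>n$; the flat-interface, piecewise-constant special case of the elliptic estimates of \cite{ln} (in which case one may in fact bootstrap to piecewise $C^{\infty}$) bounds $D_{x}v(\cdot,t)$ on each side uniformly in $t$, and the same argument applied to each $D_{t}^{k}v(\cdot,t)$ gives $D_{t}D_{x}v\in L^{\infty}_{\mathrm{loc}}$, hence the $C^{0}$-in-$t$ regularity of $D_{x}v$.

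With these in hand I would run the perturbation/iteration. Fix an interior point $z_{0}=(x_{0},t_{0})$ and a small parabolic cylinder $Q_{r}(z_{0})$. If $z_{0}$ is far from every $\partial D_{m}$, simply freeze $A$ at $z_{0}$; if $z_{0}$ lies on or near some $\partial D_{m}$, first straighten that interface by a $C^{1,\alpha}$ change of the spatial variables, turning \eqref{IBVP} into a system of the same type with coefficients piecewise H\"older across $\{x_{n}=0\}$, and then freeze. Letting $v$ solve the homogeneous frozen system in $Q_{r}(z_{0})$ with $v=u$ on the parabolic boundary, $w=u-v$ satisfies the frozen system with right-hand side $D_{\alpha}\big((A-A_{0})D_{\beta}u\big)+D_{\alpha}\big(g-g(x_{0},\cdot)\big)+f$ (note that $g(x_{0},\cdot)$ is $x$-independent, so freezing $g$ only in $x$ keeps the $v$-equation homogeneous and is all that the mere $C^{0}$-in-$t$ regularity of $g$ permits); estimating $w$ by the energy inequality and $v$ by the reference estimate produces, on the relevant side(s) of the straightened interface, an excess-decay inequality of the form
$$\int_{Q_{\theta r}}\big|D_{x}u-(D_{x}u)_{Q_{\theta r}}\big|^{2}\ \le\ C\theta^{\,n+2+2\gamma}\int_{Q_{r}}\big|D_{x}u-(D_{x}u)_{Q_{r}}\big|^{2}+Cr^{\,n+2+2\alpha'}\big(\|D_{x}u\|_{L^{\infty}}^{2}+\|g\|_{C^{\alpha',0}}^{2}+\|f\|_{L^{\infty}(Q_{T})}^{2}\big).$$
Iterating this and invoking Campanato's characterization for parabolic cylinders, on each piece, yields the asserted piecewise spatial $C^{\alpha'}$-bound for $D_{x}u$, uniform in $t$, the admissible exponent being dictated by three things: $\mu$, the spatial H\"older regularity of $A$; the exponent $\frac{\alpha}{1+\alpha}$ at which the elliptic argument of \cite{ln} propagates the $C^{1,\alpha}$ regularity of the interfaces under the weak condition \eqref{coeff2}; and the extra factor $\frac12$ coming from the parabolic scaling $(x,t)\mapsto(rx,r^{2}t)$ --- whence $\alpha'<\min\{\mu,\tfrac{\alpha}{2(1+\alpha)}\}$. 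Then the $L^{\infty}$-bound \eqref{thm1.2} on $u$ (and on $D_{x}u$) follows from the energy bound $u\in L^{\infty}_{t}L^{2}_{x}$ together with this spatial Lipschitz bound, and the $C^{0}$-in-$t$ regularity of $D_{x}u$ follows from $u\in C([0,T];L^{2})$ and the uniform piecewise $C^{1,\alpha'}_{x}$-bound by interpolation.

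The main obstacle is the same one that makes the elliptic result of \cite{ln} delicate: the ellipticity \eqref{coeff2} is an integrated condition, not a pointwise Legendre--Hadamard condition, so there is no classical Schauder theory available for the frozen constant-coefficient system; both the reference estimate of step (ii) and the closing of the iteration in step (iii) must therefore be effected by hand, and they are what forces the loss of exponent from $\alpha$ (and $\mu$) down to $\alpha'$. A closely related difficulty is that near a point where two or more of the interfaces $\partial D_{m}$ lie close together one cannot straighten them all at once, so the reference problem must retain several nearby flat interfaces; controlling this configuration uniformly in their mutual distances --- which is exactly the crux of the composite-material problem and the source of the distance-independence claimed in the theorem --- is handled as in \cite{ln}. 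Finally, some care is needed to make both the reference estimate and the iteration uniform in $t$, so as to recover H\"older regularity of $D_{x}u$ in $x$ together with only continuity in $t$, matching the regularity assumed of $f$ and $g$.
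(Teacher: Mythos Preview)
Your outline captures the broad strategy (freeze to a laminar reference problem, compare, iterate), but there is a genuine gap at the comparison step, and it is precisely the place where the composite--material difficulty bites.

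In your step (iii) the error term in the excess--decay inequality carries $\|D_{x}u\|_{L^{\infty}}^{2}$, which is the quantity you are trying to bound; as written the iteration is circular. The reason this factor appears is clear: when several interfaces $\partial D_{m}$ cluster near $x_{0}$ you cannot flatten them simultaneously, so the reference coefficients $\overline{A}$ are the laminar (flat--strip) approximation, and $A-\overline{A}$ is \emph{not} pointwise small --- it is $O(1)$ on the thin wedges between the curved surfaces $\{x_{n}=f_{m}(x')\}$ and the flat ones $\{x_{n}=f_{m}(0')\}$. One therefore has only an integrated bound
\[
\Big(\fint_{Q(0,r)}|A-\overline{A}|^{2}\Big)^{1/2}\ \le\ C\,r^{\alpha'},\qquad \alpha'<\min\{\mu,\tfrac{\alpha}{2(1+\alpha)}\},
\]
(this is Lemma~\ref{lem61}), and \emph{no} $L^{\infty}$ smallness. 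To turn this into decay of $\|(A-\overline{A})\nabla u\|_{L^{2}(Q_{r})}$ via H\"older you need $\nabla u\in L^{p}_{\mathrm{loc}}$ for some $p>2$; with only $\nabla u\in L^{2}$ you are forced to pull out $\|\nabla u\|_{L^{\infty}}$, which is exactly your circularity. The paper supplies the missing $p>2$ by a reverse--H\"older (Gehring/Meyers--type) inequality for parabolic systems under the \emph{integrated} condition \eqref{coeff2} --- Theorem~\ref{reverseholder} in the Appendix --- which is a nontrivial extension of Campanato's result for strongly parabolic systems and does not follow from anything you invoke. This higher integrability is then packaged into the Perturbation Lemma~\ref{pertubationlemma}: one produces $v$ solving the $\overline{A}$--system, uses the reverse H\"older to get $\nabla v\in L^{p}$, and bounds $\|(A-\overline{A})\nabla v\|_{L^{2}}\le\|A-\overline{A}\|_{L^{2p/(p-2)}}\|\nabla v\|_{L^{p}}\lesssim \epsilon^{\gamma}$. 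The iteration (Propositions~\ref{prop61} and \ref{prop71}) is then of Caffarelli type: one builds $\sum_{j}w_{j}$ with each $w_{j}$ a laminar solution, so that the error at step $k+1$ involves $(A-\overline{A})\sum_{j\le k}\nabla w_{j}$ --- gradients of \emph{reference} solutions, already controlled by Corollary~\ref{cor1} --- rather than $\nabla u$ itself.

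Two smaller points. First, your reference problem has a single interface and $t$--frozen coefficients; in the clustered--interface regime the correct reference is a laminar system with \emph{many} flat strips, and the paper establishes the needed piecewise--smooth estimates for it directly in the parabolic setting (Section~\ref{sec3}, Proposition~\ref{prop41} and Corollary~\ref{cor1}) rather than by slicing and quoting \cite{ln}. Second, the perturbation lemma also consumes an interior $L^{2}$ bound on $u_{t}$ (Lemma~\ref{lemin2}/\ref{lemin3}), which is where the $C^{1}$--in--$t$ regularity of $A$ in \eqref{coeff31} is actually used; your Caccioppoli step (i) alone does not provide this.
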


\begin{remark}
Theorem \ref{thm1} in the case $A(x,t)\equiv{A}(x)$, independent of
$t$, was included as a part of the thesis of the first author, see
\cite{l}.
\end{remark}

Further, if we suppose that $A_{ij}^{\alpha\beta}$ is of class
$C^{\mu,k+1}(\overline{{D}}_{m}\times[0,T])$, and
$f\in{C}^{0,k}(\overline{{D}}_{m}\times[0,T])$,
$g\in{C}^{\mu,k}(\overline{{D}}_{m}\times[0,T])$, then we have the
following estimates on $u'$s higher order derivatives.

\begin{theorem}\label{thm2}
Under the assumptions on ${D},A,f,g,\varphi$ mentioned above, let
$u\in{V}(Q_{T};\mathbb{R}^{N})$ be a weak solution of \eqref{IBVP}.
Then for any $l\leq{k}$, for any $0<\epsilon<\frac{1}{2}$ and
$\alpha'<\min\{\mu,\frac{\alpha}{2(1+\alpha)}\}$,
\begin{align}\label{thm2.1}
\|D_{t}^{l}u&\|_{L^{\infty}({D}_{\epsilon}\times(\epsilon{T},T))}+
\|D_{t}^{l}D_{x}u\|_{C^{\alpha',0}(({D}_{\epsilon}\cap\overline{{D}}_{m})\times(\epsilon{T},T))}\nonumber\\
&\leq\,C\left(\|u\|_{L^{2}(Q_{T})}
+\sum_{s=1}^{l}\|D_{t}^{s}f\|_{L^{\infty}(Q_{T})}+\max_{1\leq\,m\leq\,L}\|g\|_{C^{\alpha',l}(\overline{{D}}_{m}\times[0,T])}\right),
\end{align}
where $C$ depends only on
$n,N,L,l,\alpha,\epsilon,\lambda,\Lambda_{2l},\mu,T$, diam$({D})$,
the $C^{1,\alpha}$ norm of ${D}_{m}$, and
$\|A\|_{C^{\alpha',k+1}(\overline{{D}}_{m}\times[0,T])}$.
\end{theorem}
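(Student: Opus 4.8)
The plan is to prove Theorem~\ref{thm2} by induction on $l$, using Theorem~\ref{thm1} as the base case and differentiating the system \eqref{IBVP} in $t$ to produce, at each stage, a new parabolic system of the same structural type whose inhomogeneous data involve lower-order $t$-derivatives of $u$. Concretely, suppose the estimate \eqref{thm2.1} holds for all orders $\le l-1$ on a slightly larger cylinder, and set $v=D_t^{l}u$. Formally differentiating \eqref{IBVP} $l$ times in $t$ and using the Leibniz rule gives
\begin{equation}\label{diffsystem}
(v^{i})_{t}-D_{\alpha}\bigl(A_{ij}^{\alpha\beta}D_{\beta}v^{j}\bigr)
=-D_{\alpha}\Bigl(g^{\alpha i}_{(l)}+\sum_{s=1}^{l}\binom{l}{s}(D_t^{s}A_{ij}^{\alpha\beta})D_{\beta}D_t^{l-s}u^{j}\Bigr)+D_t^{l}f^{i},
\end{equation}
where $g_{(l)}=D_t^{l}g$. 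The coefficient matrix is unchanged, so it still satisfies (a), (b), (c) with constants controlled by $\Lambda_{2l}$ and $\|A\|_{C^{\alpha',k+1}}$; the new first-order data $\tilde g^{\alpha i}:=g^{\alpha i}_{(l)}+\sum_{s=1}^{l}\binom{l}{s}(D_t^{s}A)D_{\beta}D_t^{l-s}u$ lies in $C^{\alpha',0}(\overline{D}_m\times[0,T'])$ on the slightly shrunk cylinder, because $D_t^{s}A\in C^{\mu,k+1-s}$ and, by the inductive hypothesis, $D_{\beta}D_t^{l-s}u\in C^{\alpha',0}((D_{\epsilon'}\cap\overline{D}_m)\times(\epsilon' T,T))$ for $1\le s\le l$; the new zeroth-order datum $D_t^{l}f$ lies in $L^{\infty}$ by hypothesis. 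Applying Theorem~\ref{thm1} to \eqref{diffsystem} on the larger cylinder, with $\|\tilde g\|_{C^{\alpha',0}}$ bounded via the inductive estimate and the norm of $A$, and with $\|v\|_{L^{2}}$ bounded by the inductive $L^\infty$ bound on $D_t^{l}u$ (or, for the base of this sub-argument, absorbed into $\|u\|_{L^2}$ through the chain of estimates), yields \eqref{thm2.1} for order $l$.

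A preliminary step, which I would carry out before the induction, is to justify that $u$ actually possesses the $t$-derivatives appearing above and that the formal differentiation leading to \eqref{diffsystem} is legitimate in the weak sense of \eqref{weaksoln1}. The standard device is the difference quotient in $t$: set $u_h(x,t)=\bigl(u(x,t+h)-u(x,t)\bigr)/h$, observe that $u_h$ is a weak solution of a system of the form \eqref{IBVP} with coefficients $A(x,t+h)$, right-hand side built from difference quotients of $g$ and $f$ and from $\bigl(A(\cdot,t+h)-A(\cdot,t)\bigr)/h\cdot D_\beta u(\cdot,t)$, apply Theorem~\ref{thm1} uniformly in $h$ to get interior bounds on $u_h$ independent of $h$, and pass to the limit $h\to0$ to obtain $u_t$ together with the bound and the equation it satisfies. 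Iterating this $l$ times, using at stage $j$ the already-established regularity of $D_t^{j-1}u$, produces $D_t^{l}u$ and legitimizes \eqref{diffsystem}. The smoothness hypotheses in (c) — that $A$ is $C^{k+1}$ in $t$ with each $t$-derivative H\"older in $x$ — are exactly what make the difference-quotient right-hand sides converge in the required norms.

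The main obstacle is bookkeeping the interplay between the shrinking of the space-time cylinder and the loss of one $t$-derivative of $A$ at each inductive step, while keeping every constant in \eqref{thm2.1} depending only on the quantities listed. I would fix, once and for all, a nested family of cylinders $Q^{(0)}\supset Q^{(1)}\supset\cdots\supset Q^{(l)}$ interpolating between $Q_T$ and $D_\epsilon\times(\epsilon T,T)$, say with $D_{\epsilon_j}\times(\epsilon_j T,T)$ and $\epsilon_j=\epsilon(1-j/(2l))$ (depending only on $\epsilon$ and $l$), and at step $j$ apply Theorem~\ref{thm1} with ambient domain $Q^{(j-1)}$ and target $Q^{(j)}$; since $l\le k$ is fixed, only finitely many reductions occur and the constants compound in a controlled way, picking up factors depending on $l$, on $\Lambda_{2l}$, and on $\|A\|_{C^{\alpha',k+1}}$, as claimed. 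A secondary point requiring care is that the term $\sum_{s=1}^{l}\binom{l}{s}(D_t^{s}A)D_{\beta}D_t^{l-s}u$ needs its $C^{\alpha',0}$-norm in $x$ estimated: the factor $D_t^{s}A$ contributes its $C^{\mu,\cdot}$ semi-norm (hence a $C^{\alpha',0}$ bound since $\alpha'\le\mu$) and $D_{\beta}D_t^{l-s}u$ contributes the $C^{\alpha',0}$ bound from the inductive hypothesis, and a product of two $C^{\alpha'}$ functions is $C^{\alpha'}$ with the expected norm bound; this is the one place where one genuinely uses the \emph{H\"older-in-$x$} part of hypothesis (c) rather than mere boundedness.
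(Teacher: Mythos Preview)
Your approach is essentially the same as the paper's: induct on $l$, differentiate the system in $t$, note that the new right-hand side $\tilde g$ built from $(D_t^{s}A)\,D_\beta D_t^{l-s}u$ lies in $C^{\alpha',0}(\overline D_m)$ by the inductive hypothesis and the $C^{\mu}$-in-$x$ regularity of $D_t^{s}A$, and then apply Theorem~\ref{thm1} on a slightly larger cylinder. The paper carries this out one $t$-derivative at a time (set $v=u_t$, then $w=v_t$, etc.) rather than via the full Leibniz expansion, but this is only a cosmetic difference.

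There is one point where your write-up is circular and the paper supplies the missing ingredient. To apply Theorem~\ref{thm1} to the $v$-equation you need $\|v\|_{L^{2}}=\|D_t^{l}u\|_{L^{2}}$ on the right-hand side, and you cannot get this from ``the inductive $L^{\infty}$ bound on $D_t^{l}u$'', since that is exactly what you are proving at step $l$. Likewise, in your difference-quotient justification, applying Theorem~\ref{thm1} to $u_h$ uniformly in $h$ already requires a uniform bound on $\|u_h\|_{L^{2}}$. The paper closes this loop with Lemma~\ref{lemin2} (and its iterate Lemma~\ref{lemin3}): a standard interior energy estimate showing
\[
\int_{\delta}^{T}\!\!\int_{D_{\epsilon}}|u_t|^{2}\,dx\,dt
\;\le\; C\Bigl(\|u\|_{L^{2}(Q_T)}^{2}+\|f\|_{L^{2}}^{2}+\sum_{s\le 1}\|D_t^{s}g\|_{L^{2}}^{2}\Bigr),
\]
obtained by testing against (Steklov averages of) $u_t$, and then bootstrapped to $\|D_t^{l}u\|_{L^{2}}$ by applying the same lemma to the successively differentiated equations. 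Once you insert this $L^{2}$ estimate in place of your parenthetical ``absorbed into $\|u\|_{L^{2}}$ through the chain of estimates'', your argument is complete and coincides with the paper's.
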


We draw attention to some closely related results in \cite{fknn} by
J. Fan, K. Kim, S. Nagayasu and G. Nakamura.

This paper is organized as follows. In Section \ref{sec2}, we
introduce some notations and present some standard $L^{2}$ estimates
for readers' convenience. A result of Chipot, Kinderlehrer, and
Vergara-Caffarelli \cite{ck} for laminar elliptic systems is
extended to laminar parabolic systems in Section \ref{sec3}. We
present a general perturbation result, an extension of Li-Vogelius
and Li-Nirenberg for elliptic equations and systems, in Section
\ref{sec5}, and apply it to establish uniform gradient estimates in
Section \ref{gradient}. Finally, the proofs of main theorems are
given in Section \ref{sec7}.

\bigskip
\section{Preliminary Results}\label{sec2}
In this section we mainly follow the notations and definitions  of
\cite{ch} and \cite{lsu} and list some standard $L^{2}$ estimates
for readers' convenience.

\subsection{Function Spaces}
Let ${D}$ be a bounded open set of $\mathbb{R}^{n}$.
$H^{1,p}({D};\mathbb{R}^{N})$ and $H_{0}^{1,p}({D};\mathbb{R}^{N})$
are the usual Sobolev spaces of the vector-valued function
$u:D\rightarrow\mathbb{R}^{N}$; if $p=2$ we shall write more briefly
$H^{1}$ and $H^{1}_{0}$.

We shall also use $V(Q_{T})$ and $\overset{\circ}{V}(Q_{T})$ to
denote $V(Q_{T};\mathbb{R}^{N})$ and
$\overset{\circ}{V}(Q_{T};\mathbb{R}^{N})$, respectively, when there
is no ambiguity. Let $W^{1,0}_{2}(Q_{T})$, $W^{1,1}_{2}(Q_{T})$, and
$W_{2}^{0,k}(Q_{T})$ denote the Hilbert spaces with the inner
product
$$\langle u,v\rangle_{W^{1,0}_{2}}=\int_{0}^{T}\int_{{D}}\bigg(uv+D_{x}uD_{x}v\bigg)dxdt,$$
$$\langle u,v\rangle_{W^{1,1}_{2}}=\int_{0}^{T}\int_{{D}}\bigg(uv+D_{x}uD_{x}v+D_{t}uD_{t}v\bigg)dxdt,$$
and
$$\langle u,v\rangle_{W_{2}^{0,k}}=\int_{0}^{T}\int_{{D}}\left(\sum_{s\leq{k}}D^{s}_{t}uD^{s}_{t}v\right)dxdt,$$
respectively. Denote by $V^{1,0}(Q_{T})$ the Banach space consisting
of all elements of $V(Q_{T})$ satisfying
\begin{equation}\label{v10}
\lim_{h\rightarrow 0}\|u(\cdot,t+h)-u(\cdot,t)\|_{L^{2}(D)}=0,
\quad\mbox{uniformly for}\ t,t+h\in [0,T].
\end{equation}
In fact
$$W_{2}^{1,1}(Q_{T})\subset\,V^{1,0}(Q_{T})\subset\,V(Q_{T}),$$
and $V^{1,0}(Q_{T})$ is the completion of $W_{2}^{1,1}(Q_{T})$ with
respect to the norm $\|\cdot\|_{V(Q_{T})}$. Similarly, a zero over
$W^{1,0}_{2}(Q_{T})$, $W^{1,1}_{2}(Q_{T})$, $W_{2}^{0,k}(Q_{T})$,
$V^{1,0}(Q_{T})$ and $V(Q_{T})$ means that only those elements of
the spaces are taken which satisfy $u(\cdot,t)|_{\partial{D}}=0$
a.e. $t\in(0,T)$.

For $u(x,t)\in{V}^{1,0}(Q_{T})$, we will consider the Steklov
average, i.e. $u'$s average in $t$,
$$u_{h}(x,t):=\frac{1}{h}\int_{t}^{t+h}u(x,\tau)d\tau,$$
for $0<t<t+h<T$. It is clear that if
$u\in\overset{\circ}{V}\,^{1,0}(Q_{T})$, then for any $0<h<\delta$
$(0<\delta<T)$, $u_{h}\in
\overset{\circ}{W}\,^{1,1}_{2}(Q_{T-\delta})$, and
\begin{equation}\label{uh}
\|u_{h}-u\|_{V(Q_{T-\delta})}\rightarrow 0,\quad\mbox{as}\quad
h\rightarrow 0,
\end{equation}
as shown below. First,
$$\|u_{h}(\cdot,t)-u(\cdot,t)\|^{2}_{L^{2}({D})}\leq\frac{1}{h}\int_{t}^{t+h}\|u(x,\tau)-u(\cdot,t)\|^{2}_{L^{2}({D})}d\tau
\leq\sup_{t\leq\tau\leq\,t+h}\|u(x,\tau)-u(\cdot,t)\|^{2}_{L^{2}({D})}.$$
Since $u\in{V}^{1,0}(Q_{T})$, it follows that the right hand side
converges to zero as $h\rightarrow 0$. On the other hand,
\begin{align*}
\|Du_{h}(\cdot,t)-Du(\cdot,t)\|^{2}_{L^{2}(Q_{T-\delta})}&\leq\frac{1}{h}\int_{0}^{h}\|Du(x,t+s)-Du(x,t)\|^{2}_{L^{2}(Q_{T-\delta})}ds\\
&\leq\sup_{0\leq\,s\leq\,h}\|Du(x,t+s)-Du(x,t)\|^{2}_{L^{2}(Q_{T-\delta})}.
\end{align*}
Since $L^{2}(Q_{T})$ functions are continuous with respect to
translations, this supremum tends to zero as $h\rightarrow 0$.

\subsection{$L^{2}$ Estimates for the Initial Boundary Value
Problem}

Let $D$ and $f,g$ be defined in Section \ref{sec1}. We assume that
$A^{\alpha\beta}_{ij}(x,t)$ satisfy \eqref{coeff1} \eqref{coeff2},
and \eqref{coeff31} and $f,g$ are smooth in $t$. We consider the
following initial boundary value problem of the parabolic systems
\begin{equation}\label{VPIND}
\left\{
  \begin{array}{ll}
    (u^{i})_{t}-D_{\alpha}\left(A_{ij}^{\alpha\beta}(x,t)D_{\beta}u^{j}\right)=-D_{\alpha}g^{\alpha\,i}+f^{i} & \hbox{in}\ {D}\times(0,T),\\
    u=0 & \hbox{on}\ \partial{D}\times(0,T),\\
    u=\varphi(x) & \hbox{on}\ {D}\times\{0\},
  \end{array}
\right.
\end{equation}
where $\varphi\in{L}^{2}(D)$. We now define a weak solution of
problem \eqref{VPIND}.

\begin{defn}\label{definition1}
For $f\in{L}^{\infty}(Q_{T})$,
$g\in{C}^{\mu,0}(\overline{{D}}_{m}\times[0,T])$, and $\varphi\in
L^{2}({D})$, we say that $u$ is a weak solution of \eqref{VPIND}, if
$u\in\overset{\circ}{V}(Q_{T})$ satisfies, for a.e $\tau\in(0,T)$
and for all $\zeta\in\overset{\circ}{W}\,^{1,1}_{2}(Q_{T})$, the
identity
\begin{align}\label{weaksoln}
\int_{{D}}(u\zeta)(\cdot,\tau)\,dx-\int_{{D}}(\varphi\zeta)(\cdot,0)\,dx&-\int_{0}^{\tau}\int_{{D}}u\zeta_{t}dxdt
+\int_{0}^{\tau}\int_{{D}}A^{\alpha\beta}_{ij}D_{\beta}u^{j}D_{\alpha}\zeta^{i}\,dxdt\nonumber\\
&=\int_{0}^{\tau}\int_{{D}}(f\zeta+g^{\alpha}D_{\alpha}\zeta)dxdt.
\end{align}
\end{defn}

The following lemmas and their proofs follow book \cite{ch} and
\cite{lsu}. We present them here for readers' convenience.

\begin{lemma}\label{lem31}
Assume the above. Suppose $u\in\overset{\circ}{V}(Q_{T})$ is a weak
solution of problem \eqref{VPIND}, then $u$ belongs to
$\overset{\circ}{V}\,^{1,0}(Q_{T})$.
\end{lemma}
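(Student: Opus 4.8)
The plan is to show that a weak solution $u\in\overset{\circ}{V}(Q_T)$ of \eqref{VPIND} automatically satisfies the extra continuity-in-$t$ requirement \eqref{v10}, i.e. that $t\mapsto u(\cdot,t)\in L^2(D)$ is strongly continuous. The natural tool is the Steklov averaging already introduced in the excerpt, which converts the integral identity \eqref{weaksoln} into a genuine equation satisfied by $u_h$ for fixed small $h$. First I would rewrite \eqref{weaksoln}, tested against $\zeta$ supported in a time subinterval $(t_0,t_1)\subset(0,T)$, to obtain the Steklov-averaged identity
\begin{equation*}
\int_D (u_h)_t\,\zeta\,dx + \int_D \big(A^{\alpha\beta}_{ij}D_\beta u^j\big)_h D_\alpha\zeta^i\,dx
= \int_D\big(f_h\zeta + g^\alpha_h D_\alpha\zeta\big)\,dx
\end{equation*}
for a.e.\ $t$ and all $\zeta\in\overset{\circ}{W}{}^{1}_2(D)$; here I use that for $u\in\overset{\circ}V(Q_T)$ the weak $t$-derivative of the Steklov average is the difference quotient $(u_h)_t(x,t)=\frac1h\big(u(x,t+h)-u(x,t)\big)$, which lies in $L^2$.

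Next I would test this averaged equation with $\zeta=u_h(\cdot,t)$ itself (legitimate since $u_h(\cdot,t)\in\overset{\circ}{W}{}^1_2(D)$ for a.e.\ $t$) and integrate in $t$ over $(\delta,\tau)$. The first term yields $\tfrac12\|u_h(\cdot,\tau)\|_{L^2(D)}^2-\tfrac12\|u_h(\cdot,\delta)\|_{L^2(D)}^2$; the parabolicity \eqref{coeff2} controls the second term from below by $\lambda\int|Du_h|^2$; and the right-hand side is estimated by Cauchy--Schwarz and Young's inequality in terms of $\|f_h\|_{L^2}$, $\|g_h\|_{L^2}$ and $\|Du_h\|_{L^2}$, absorbing the gradient term into the left side. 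This gives a uniform (in $h$) energy bound
\begin{equation*}
\sup_{\delta\le\tau\le T-\delta}\|u_h(\cdot,\tau)\|_{L^2(D)}^2 + \int_\delta^{T-\delta}\!\!\int_D |Du_h|^2\,dx\,dt \le C\Big(\|u\|_{L^2(Q_T)}^2 + \|f\|_{L^2(Q_T)}^2 + \|g\|_{L^2(Q_T)}^2\Big),
\end{equation*}
and, more importantly, an absolute-continuity-type estimate for $\tau\mapsto\|u_h(\cdot,\tau)\|_{L^2}^2$ with modulus controlled independently of $h$. Letting $h\to0$ and invoking \eqref{uh}, the functions $u_h$ converge to $u$ in $V(Q_{T-\delta})$, and one passes the equicontinuity to the limit: $\tau\mapsto\|u(\cdot,\tau)\|_{L^2(D)}^2$ is continuous, and combined with weak $L^2$-continuity of $\tau\mapsto u(\cdot,\tau)$ (which follows from the averaged identity) this upgrades to strong $L^2$-continuity, i.e.\ \eqref{v10}. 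Since $\delta>0$ is arbitrary and the endpoints are handled using the initial condition $u(\cdot,0)=\varphi$, we conclude $u\in\overset{\circ}V{}^{1,0}(Q_T)$.

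The main obstacle I anticipate is purely technical rather than conceptual: justifying that $u_h(\cdot,t)$ is an admissible test function in the averaged identity for a.e.\ $t$ (it is not in $W^{1,1}_2$ in the $t$-variable but only in $W^1_2$ in $x$ for fixed $t$), and carefully tracking the behavior near $t=0$ so that the $\varphi$-term in \eqref{weaksoln} enters correctly and the continuity extends up to the initial time. This requires either a density argument approximating $u_h(\cdot,t)$ by genuinely admissible test functions, or — as in \cite{ch,lsu} — reformulating the whole argument at the level of the Steklov-averaged equation from the outset, where all terms are already in the right spaces; I would follow the latter route to keep the bookkeeping clean.
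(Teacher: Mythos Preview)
Your strategy via the Steklov-averaged equation and an energy identity for $u_h$ is a sound alternative route, but as written it contains a circularity: you invoke \eqref{uh} to pass to the limit, yet \eqref{uh} is stated in the excerpt precisely under the hypothesis $u\in V^{1,0}(Q_T)$, which is what you are trying to prove. The sup-in-$t$ part of the $V$-norm convergence in \eqref{uh} genuinely requires \eqref{v10}. The fix is not hard: you only need $u_h\to u$ in $L^2(Q_{T-\delta})$ (true for any $u\in L^2$ by continuity of translations) together with the uniform-in-$h$ equicontinuity of $\tau\mapsto\|u_h(\cdot,\tau)\|_{L^2}^2$ you already have; Arzel\`a--Ascoli then gives continuity of $\tau\mapsto\|u(\cdot,\tau)\|_{L^2}^2$ after redefinition on a null set, and combined with the weak $L^2(D)$-continuity (which, as you note, follows by testing the averaged identity with time-independent $\zeta$) you obtain strong continuity. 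A second minor point: the elliptic term in your identity is $\int(A\,Du)_h\cdot Du_h$, not $\int A\,Du_h\cdot Du_h$, so coercivity \eqref{coeff2} does not directly bound it below by $\lambda\int|Du_h|^2$; fortunately, for the equicontinuity argument you do not need the lower bound, only that this term is small on short time intervals uniformly in $h$.

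The paper's own argument avoids the circularity by a different organization. It first tests \eqref{weaksoln} on $(\tau,\tau+\Delta t)$ against a time-independent $\zeta\in H^1_0(D)$ to obtain weak $L^2(D)$-continuity of $t\mapsto u(\cdot,t)$ directly; then it tests the same difference identity against a symmetric mollification $\widetilde u_h$ of an extension of $u$ and, using the weak continuity just established, computes the limit of the time-derivative term explicitly, arriving at an energy identity for $u$ itself (not for $u_h$) on $(\tau,\tau+\Delta t)$, from which norm continuity $\|u(\cdot,\tau+\Delta t)\|_{L^2}\to\|u(\cdot,\tau)\|_{L^2}$ is immediate. Your route is closer to the textbook treatment in \cite{lsu}; the paper's route trades the equicontinuity-and-limit step for a single explicit computation that uses weak continuity as the key intermediate ingredient.
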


\begin{remark}\label{rem1}
Suppose $u\in{V}(Q_{T})$ is a weak solution of \eqref{IBVP}. Then
$u\in{V}^{1,0}(Q^{'}_{T})$, for any $Q'_{T}=D'\times(0,T)$,
${D}'\subset\subset{D}$. The proof of this assertion is similar to
the proof of Lemma \ref{lem31}.
\end{remark}

\begin{proof}[Proof of Lemma \ref{lem31}.]
By Definition \ref{definition1}, we have, for a.e. $\tau\in(0,T)$
and for any $\zeta\in\overset{\circ}{W}\,^{1,1}_{2}(Q_{T})$, $u$
satisfies the identity \eqref{weaksoln}. Since $u\in{V}(Q_{T})$, it
is obvious that
\begin{equation}\label{lem31.2}
\|u(\cdot,\tau)\|_{L^{2}({D})}\leq\|u\|_{V(Q_{T})},\quad\quad\mbox{a.e.}\
\tau\in(0,T).
\end{equation}
Denote $I=\big\{\tau\in(0,T)\ |\ \eqref{weaksoln}\ \mbox{and}\
\eqref{lem31.2}\ \mbox{hold simultaneously}\big\}$, then the measure
of $([0,T]\setminus{I})$ is zero. For $\tau\in[0,T]\setminus{I}$, we
can redefine $u(x,\tau)$ such that \eqref{weaksoln} and
\eqref{lem31.2} still hold. Hence, without loss of generality, we
assume that $u(x,\tau)$ satisfies \eqref{weaksoln} and
\eqref{lem31.2} for every $\tau\in[0,T]$.

For $\tau\in(0,T)$, and $\Delta{t}>0$, denote
$Q_{\tau,\tau+\Delta{t}}={D}\times(\tau,\tau+\Delta{t})$. By
\eqref{weaksoln}, we have
\begin{align}\label{lem31.3}
\int_{{D}}(u\zeta)(\cdot,\tau+\Delta\,t)\,dx-\int_{{D}}(u\zeta)(\cdot,\tau)\,dx&-\int_{\tau}^{\tau+\Delta\,t}\int_{{D}}u\zeta_{t}dxdt
\nonumber\\+\int_{\tau}^{\tau+\Delta\,t}\int_{{D}}A^{\alpha\beta}_{ij}D_{\beta}u^{j}D_{\alpha}\zeta^{i}\,dxdt
&=\int_{\tau}^{\tau+\Delta\,t}\int_{{D}}(f\zeta+g^{\alpha}D_{\alpha}\zeta)dxdt.
\end{align}
Picking the test function $\zeta(x,t)=\zeta(x)\in{H}_{0}^{1}(D)$,
the usual Sobolev space, then we have
\begin{align*}
&\int_{{D}}(u(x,\tau+\Delta\,t)-u(x,\tau))\zeta\,dx\nonumber\\
&=\int_{\tau}^{\tau+\Delta\,t}\int_{{D}}f\zeta\,dxdt+
\int_{\tau}^{\tau+\Delta\,t}\int_{{D}}(g^{\alpha}-A^{\alpha\beta}_{ij}D_{\beta}u)D_{\alpha}\zeta\,dxdt.
\end{align*}
It is clear that
\begin{equation}\label{lem31.4}
\lim_{\Delta\,t\rightarrow{0}}\int_{{D}}(u(x,\tau+\Delta\,t)-u(x,\tau))\zeta(x)\,dx=0,\quad\mbox{for}~\zeta\in{H}_{0}^{1}({D}).
\end{equation}

For $0\leq\tau<\tau+\Delta{t}<T$, consider $\widetilde{u}$
(depending on $\tau$),
$$\widetilde{u}(x,t)=\begin{cases}
u(x,\tau+\Delta{t}),&t\geq\tau+\Delta{t},\\
u(x,t),&0\leq t\leq\tau+\Delta{t},\\
u(x,\tau),&t\leq\tau.
\end{cases}
$$
Similarly as the Steklov average, for $0<h<\Delta{t}$, we define
$$\widetilde{u}_{h}(x,t):=\frac{1}{2h}\int_{t-h}^{t+h}\widetilde{u}(x,s)ds.$$
Then
$\widetilde{u}_{h}\in\overset{\circ}{W}\,^{1,1}_{2}(Q_{\tau,\tau+\Delta{t}})$.
Take $\zeta=\widetilde{u}_{h}$ in \eqref{lem31.3}, and compute
\begin{align*}
&\int_{\tau}^{\tau+\Delta\,t}\int_{{D}}u(\widetilde{u}_{h})_{t}dxdt\\
&=\frac{1}{2h}\int_{{D}}dx\bigg\{\int_{\tau}^{\tau+\Delta\,t}u(x,t)\widetilde{u}(x,t+h)dt-\int_{\tau}^{\tau+\Delta\,t}u(x,t)\widetilde{u}(x,t-h)dt\bigg\}\\
&=\frac{1}{2h}\int_{{D}}dx\bigg\{\int_{\tau+\Delta\,t}^{\tau+\Delta\,t+h}u(x,t-h)\widetilde{u}(x,t)dt-\int_{\tau}^{\tau+h}u(x,t)\widetilde{u}(x,t-h)dt\bigg\}\\
&=\frac{1}{2h}\int_{{D}}dx\bigg\{\int_{\tau+\Delta\,t}^{\tau+\Delta\,t+h}u(x,t-h)u(x,\tau+\Delta\,t)dt-\int_{\tau}^{\tau+h}u(x,t)u(x,\tau)dt\bigg\}\\
&=\frac{1}{2h}\int_{{D}}dx\bigg\{\int_{\tau+\Delta\,t}^{\tau+\Delta\,t+h}\bigg(u(x,t-h)-u(x,\tau+\Delta\,t)\bigg)u(x,\tau+\Delta\,t)dt\\
&\quad\quad-\int_{\tau}^{\tau+h}\bigg(u(x,t)-u(x,\tau)\bigg)u(x,\tau)dt\bigg\}
+\frac{1}{2}\int_{{D}}u^{2}(x,\tau+\Delta\,t)dx-\frac{1}{2}\int_{{D}}u^{2}(x,\tau)dx.
\end{align*}
By \eqref{lem31.4}, it follows that
\begin{equation}\label{lem31.5}
\lim_{h\rightarrow
0}\int_{\tau}^{\tau+\Delta\,t}\int_{{D}}u(\widetilde{u}_{h})_{t}dxdt=
\frac{1}{2}\int_{{D}}u^{2}(x,\tau+\Delta\,t)dx-\frac{1}{2}\int_{{D}}u^{2}(x,\tau)dx.
\end{equation}
Furthermore, we have
$$\lim_{h\rightarrow0}\int_{{D}}u\widetilde{u}_{h}(x,\tau)dx=\int_{{D}}u^{2}(x,\tau)dx,
~\mbox{and}~\lim_{h\rightarrow0}\int_{{D}}u\widetilde{u}_{h}(x,\tau+\Delta{t})dx=\int_{{D}}u^{2}(x,\tau+\Delta{t})dx.$$
On the other hand,
$$\lim_{h\rightarrow 0}\int_{\tau}^{\tau+\Delta\,t}\int_{{D}}A^{\alpha\beta}_{ij}D_{\beta}uD_{\alpha}\widetilde{u}_{h}dxdt
=\int_{\tau}^{\tau+\Delta\,t}\int_{{D}}A^{\alpha\beta}_{ij}D_{\beta}uD_{\alpha}udxdt.$$
Indeed, it reduces to show
\begin{align*}
|I|&=\left|\int_{\tau}^{\tau+\Delta\,t}\int_{{D}}A^{\alpha\beta}_{ij}D_{\beta}u(D_{\alpha}\widetilde{u}_{h}-D_{\alpha}u)dxdt\right|\\
&=\left|\int_{\tau}^{\tau+\Delta\,t}\int_{{D}}A^{\alpha\beta}_{ij}D_{\beta}u
\left(\frac{1}{2h}\int_{t-h}^{t+h}(D_{\alpha}u(x,s)-D_{\alpha}u(x,t))ds\right)dxdt\right|\\
&=\left|\int_{\tau}^{\tau+\Delta\,t}\int_{{D}}A^{\alpha\beta}_{ij}(x,t)D_{\beta}u(x,t)
\left(\frac{1}{2}\int_{-1}^{1}\left(D_{\alpha}u(x,t+hs')-D_{\alpha}u(x,t)\right)ds'\right)dxdt\right|\\
&\leq\,C(\Lambda)\|Du\|_{L^{2}((\tau,\tau+\Delta\,t)\times{D})}\sup_{-h<s<h}\|Du(x,t+s)-Du(x,t)\|_{L^{2}((\tau,\tau+\Delta\,t)\times{D})}\\
&\rightarrow0\quad\mbox{as}\quad h\rightarrow0,\quad(\mbox{since}\ \
Du\in\,L^{2}(Q_{T})).
\end{align*}
Similarly, we have
$$\lim_{h\rightarrow 0}\iint g^{\alpha}D_{\alpha}\widetilde{u}_{h}=\iint g^{\alpha}D_{\alpha}u.$$
Then taking $h\rightarrow 0$ in identity \eqref{lem31.3} with
$\zeta=\widetilde{u}_{h}$, we have
\begin{equation}\label{lem31.6}
\frac{1}{2}\int_{{D}}u^{2}(x,\tau+\Delta\,t)dx-\frac{1}{2}\int_{{D}}u^{2}(x,\tau)dx
=\int_{\tau}^{\tau+\Delta\,t}\int_{{D}}\left(fu+(g^{\alpha}-A^{\alpha\beta}_{ij}D_{\beta}u)D_{\alpha}u\right)dxdt.
\end{equation}
This implies that
\begin{equation}\label{lem31.7}
\lim_{\Delta\,t\rightarrow0}\|u(\cdot,\tau+\Delta\,t)\|^{2}_{L^{2}({D})}=\|u(\cdot,\tau)\|^{2}_{L^{2}({D})}.
\end{equation}

Now we can prove the continuity of $u(\cdot,t)$ in $t$ in the norm
of $L^{2}({D})$. Indeed,
$$\|u(\cdot,\tau+\Delta\,t)-u(\cdot,t)\|^{2}_{L^{2}({D})}=\|u(\cdot,\tau+\Delta\,t)\|^{2}_{L^{2}({D})}
+\|u(x,\tau)\|^{2}_{L^{2}({D})}-2\int_{{D}}u(x,\tau+\Delta\,t)u(x,\tau)dx.$$
Then using \eqref{lem31.4} and \eqref{lem31.7}, it follows that
$$\lim_{\Delta\,t\rightarrow0}\|u(\cdot,\tau+\Delta\,t)-u(\cdot,\tau)\|^{2}_{L^{2}({D})}=0.$$
Hence $u\in\overset{\circ}{V}\,^{1,0}(Q_{T})$.
\end{proof}

\begin{lemma}\label{lem32}
Assume the above. Suppose $u\in\overset{\circ}{V}(Q_{T})$ is a weak
solution of problem \eqref{VPIND}, then we have
\begin{align}\label{lem32.1}
\|u\|^{2}_{V(Q_{T})}\leq\,C\bigg(\|\varphi\|^{2}_{L^{2}({D})}+\|f\|^{2}_{L^{2}(Q_{T})}+\|g\|^{2}_{L^{2}(Q_{T})}\bigg),
\end{align}
where $C$ depends only on $\lambda$ and $T$.
\end{lemma}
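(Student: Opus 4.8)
The plan is to test the weak formulation \eqref{weaksoln} against the solution $u$ itself, using the energy identity \eqref{lem31.6} established in the proof of Lemma \ref{lem31} (applied with $\tau = 0$, $\varphi = u(\cdot,0)$, and variable upper limit $\tau+\Delta t$ replaced by a running time $t\in(0,T)$). This gives, for every $t\in(0,T)$,
\begin{align}\label{lem32.proof1}
\frac12\int_{{D}}u^{2}(x,t)\,dx - \frac12\int_{{D}}\varphi^{2}(x)\,dx
= \int_{0}^{t}\int_{{D}}\Big(f u + (g^{\alpha} - A^{\alpha\beta}_{ij}D_{\beta}u)D_{\alpha}u\Big)\,dx\,ds.
\end{align}
Using the parabolicity \eqref{coeff2} to absorb the term $\int A^{\alpha\beta}_{ij}D_{\beta}u\,D_{\alpha}u \geq \lambda\int|\nabla u|^{2}$ onto the left, and bounding the remaining right-hand terms by Cauchy--Schwarz and Young's inequality (splitting $|\int f u|\le \frac12\|f\|_{L^2}^2 + \frac12\|u\|_{L^2}^2$ and $|\int g^\alpha D_\alpha u| \le \frac{1}{2\lambda}\|g\|_{L^2}^2 + \frac{\lambda}{2}\|\nabla u\|_{L^2}^2$, the latter half absorbed), one obtains
\begin{align}\label{lem32.proof2}
\frac12\int_{{D}}u^{2}(x,t)\,dx + \frac{\lambda}{2}\int_{0}^{t}\int_{{D}}|\nabla u|^{2}\,dx\,ds
\le \frac12\|\varphi\|_{L^{2}({D})}^{2} + C\big(\|f\|_{L^{2}(Q_T)}^{2} + \|g\|_{L^{2}(Q_T)}^{2}\big) + \frac12\int_{0}^{t}\int_{D}u^{2}\,dx\,ds,
\end{align}
with $C$ depending only on $\lambda$.

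From \eqref{lem32.proof2}, dropping the gradient term, the function $y(t) := \int_{D}u^{2}(x,t)\,dx$ satisfies $y(t) \le \|\varphi\|_{L^2(D)}^2 + 2C(\|f\|_{L^2(Q_T)}^2 + \|g\|_{L^2(Q_T)}^2) + \int_0^t y(s)\,ds$, so Gronwall's inequality yields $\sup_{0\le t\le T} y(t) \le e^{T}\big(\|\varphi\|_{L^2(D)}^2 + 2C(\|f\|_{L^2(Q_T)}^2 + \|g\|_{L^2(Q_T)}^2)\big)$. Feeding this bound on $\int_0^t y(s)\,ds \le T\sup_t y(t)$ back into \eqref{lem32.proof2} and keeping the gradient term this time controls $\int_0^T\int_D|\nabla u|^2$ by the same right-hand side. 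Adding the two estimates and recalling the definition of $\|\cdot\|_{V(Q_T)}$ gives \eqref{lem32.1} with a constant depending only on $\lambda$ and $T$.

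The main technical point — and the only place care is genuinely needed — is justifying the energy identity \eqref{lem32.proof1} itself: the test function $u$ is not a priori admissible in \eqref{weaksoln} since it need not lie in $\overset{\circ}{W}\,^{1,1}_{2}(Q_{T})$. This is precisely the Steklov-averaging argument already carried out in the proof of Lemma \ref{lem31}, where $\zeta = \widetilde{u}_h$ was used and the limit $h\to 0$ was taken; so I would simply invoke \eqref{lem31.6} (with $\tau=0$, so the $u^2(x,0)$ term becomes $\|\varphi\|_{L^2}^2$ by the initial condition), or equivalently repeat that approximation verbatim. Everything after that — parabolicity, Young's inequality, Gronwall — is routine.
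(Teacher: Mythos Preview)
Your proposal is correct and follows essentially the same approach as the paper: derive the energy identity by Steklov averaging (the paper redoes this with its own averaged function $\overline{u}_h$ plugged directly into \eqref{weaksoln}, rather than citing \eqref{lem31.6}, but the mechanism is identical), then apply the parabolicity condition \eqref{coeff2}, Cauchy's inequality, and Gronwall. The only cosmetic difference is that the paper runs Gronwall on $E(\tau)=\int_0^\tau\|u(\cdot,s)\|_{L^2}^2\,ds$ via $E'\le C(E+F)$, whereas you apply it to $y(t)=\|u(\cdot,t)\|_{L^2}^2$ directly; both yield the same bound.
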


\begin{proof}
By Lemma \ref{lem31}, $u(x,t)\in\overset{\circ}{V}\,^{1,0}(Q_{T})$.
For $0<\tau<T$, let
$$\overline{u}(x,t)=\begin{cases}
u(x,\tau),&t\geq\tau,\\
u(x,t),&0\leq t\leq\tau,\\
u(x,0),&t\leq0,
\end{cases}
\quad\mbox{and}\quad\overline{u}_{h}(x,t):=\frac{1}{2h}\int_{t-h}^{t+h}\overline{u}(x,s)ds.$$
Replacing $\zeta$ by $\overline{u}_{h}$ in \eqref{weaksoln} gives
\begin{align}\label{lem32.2}
\int_{{D}}(u\overline{u}_{h})(\cdot,\tau)\,dx-\int_{{D}}\varphi\overline{u}_{h}(\cdot,0)\,dx
&-\int_{0}^{\tau}\int_{{D}}u(\overline{u}_{h})_{t}dxdt\nonumber\\
+\int_{0}^{\tau}\int_{{D}}A^{\alpha\beta}_{ij}D_{\beta}uD_{\alpha}(\overline{u}_{h})\,dxdt
&=\int_{0}^{\tau}\int_{{D}}\big(f\overline{u}_{h}+g^{\alpha}D_{\alpha}(\overline{u}_{h})\big)dxdt.
\end{align}
Similar to the derivation of \eqref{lem31.5}, we obtain
\begin{equation}\label{lem32.3}
\lim_{h\rightarrow0}\int_{0}^{\tau}\int_{{D}}u(\overline{u}_{h})_{t}dxdt=
\frac{1}{2}\int_{{D}}u^{2}(x,\tau)dx-\frac{1}{2}\int_{{D}}u^{2}(x,0)dx.
\end{equation}
In fact, the derivation is much easier since we now have
$u\in\overset{\circ}{V}\,^{1,0}(Q_{T})$. Since
$\|\overline{u}_{h}-u\|_{V(Q_{T})}\rightarrow 0$ as $h\rightarrow0$,
and combining with \eqref{lem32.3}, then sending $h\rightarrow 0$ in
\eqref{lem32.2}, it follows that
$$\frac{1}{2}\int_{{D}}u^{2}(x,\tau)dx+\int_{0}^{\tau}\int_{{D}}A^{\alpha\beta}_{ij}D_{\beta}uD_{\alpha}udxdt
=\int_{0}^{\tau}\int_{{D}}\bigg(fu+g^{\alpha}D_{\alpha}u\bigg)dxdt+\frac{1}{2}\int_{{D}}\varphi^{2}(x)dx.$$
Using the parabolic condition and Cauchy inequality, we have
\begin{align*}
&\frac{1}{2}\int_{{D}}u^{2}(x,\tau)dx+\lambda\int_{0}^{\tau}\int_{{D}}|Du|^{2}dxdt\\
&\leq\frac{\lambda}{2}\int_{0}^{\tau}\int_{{D}}|Du|^{2}dxdt+C\int_{0}^{\tau}\int_{{D}}|g|^{2}dxdt
\\
&+\frac{1}{2}\int_{0}^{\tau}\int_{{D}}|u|^{2}dxdt+\frac{1}{2}\int_{0}^{\tau}\int_{{D}}|f|^{2}dxdt
+\frac{1}{2}\int_{{D}}\varphi^{2}(x)dx,
\end{align*}
where $C$ depends only on $\lambda$. That is,
\begin{align}\label{lem32.4}
&\int_{{D}}u^{2}(x,\tau)dx+\int_{0}^{\tau}\int_{{D}}|Du|^{2}dxdt\nonumber\\
&\leq\,C\left(\int_{0}^{\tau}\int_{{D}}|u|^{2}dxdt+\int_{{D}}\varphi^{2}(x)dx
+\int_{0}^{\tau}\int_{{D}}|g|^{2}dxdt+\int_{0}^{\tau}\int_{{D}}|f|^{2}dxdt\right),
\end{align}
where $C$ depends only on $\lambda$. Denote
$E(\tau)=\int_{0}^{\tau}\int_{{D}}u^{2}dxdt$ and
$F(\tau)=\|\varphi\|^{2}_{L^{2}({D})}+\|g\|_{L^{2}(Q_{\tau})}+\|f\|_{L^{2}(Q_{\tau})}$,
where $Q_{\tau}=D\times(0,\tau)$. By \eqref{lem32.4},
$$\frac{dE(\tau)}{d\tau}\leq\,C\bigg(E(\tau)+F(\tau)\bigg),\quad0<\tau<T,$$
where $C$ depends only on $\lambda$. This implies that
$$E(\tau)\leq\,e^{C\tau}E(0)+e^{C\tau}F(\tau),\quad0<\tau<T.$$
Lemma \ref{lem32} follows from the above and \eqref{lem32.4}.
\end{proof}

Now we consider the $L^{2}$ estimate for $u_{t}$.

\begin{lemma}\label{lem33}
Under \eqref{coeff1} \eqref{coeff2} and \eqref{coeff31}, suppose
$u(x,t)\in\overset{\circ}{V}(Q_{T})$ is a weak solution of Problem
\eqref{VPIND}. Then for any small $\delta>0$, we have
\begin{align}\label{lem33.0}
\sup_{\delta\leq\tau\leq\,T}\int_{{D}}|&Du(x,\tau)|^{2}dx+\int_{\delta}^{T}\int_{{D}}|u_{t}|^{2}dxdt\nonumber\\
&\leq{C}\bigg(\|\varphi\|_{L^{2}({D})}^{2}
+\|f\|^{2}_{L^{2}(Q_{T})}+\sum_{s\leq1}\|D_{t}^{s}g\|^{2}_{L^{2}(Q_{T})}\bigg),
\end{align}
where $C$ depends only on $n,N,\lambda,\Lambda_{2},T$ and $\delta$.
\end{lemma}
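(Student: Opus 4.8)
The plan is to obtain the $L^2$ bound for $u_t$ by a standard energy argument applied to the time-difference quotients of $u$, exploiting the smoothness of $A$, $f$, $g$ in $t$. Since we already know from Lemma~\ref{lem31} that $u\in\overset{\circ}{V}\,^{1,0}(Q_T)$, I would first differentiate the equation formally in $t$: writing $v=u_t$, one expects $v$ to satisfy
\begin{equation*}
v^i_t-D_\alpha\bigl(A^{\alpha\beta}_{ij}D_\beta v^j\bigr)=-D_\alpha\bigl((D_tA^{\alpha\beta}_{ij})D_\beta u^j\bigr)-D_\alpha(D_tg^{\alpha i})+D_tf^i,
\end{equation*}
with the same boundary condition. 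Of course $v$ is only defined in a limiting sense, so the rigorous version replaces $v$ by the finite difference $\delta_h u(x,t)=\frac1h\bigl(u(x,t+h)-u(x,t)\bigr)$ (equivalently Steklov averages of differences), which lies in the right function space on an interior time slab, and one derives the difference-quotient analogue of the weak formulation \eqref{weaksoln} by subtracting \eqref{weaksoln} at times $\tau$ and $\tau+h$ and dividing by $h$; the extra terms involve $\delta_h A$ acting on $D_\beta u$, $\delta_h g$, and $\delta_h f$, all controlled by $\Lambda_2$ and the $t$-regularity of $f,g$ via the bounds in \eqref{coeff31} and the hypotheses on $f,g$.

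Next I would run the energy estimate of Lemma~\ref{lem32} on this difference-quotient equation: test with $(\delta_h u)$ itself (through the appropriate Steklov-regularized test function, exactly as in the proof of Lemma~\ref{lem31}), use the parabolic condition \eqref{coeff2} to absorb $\lambda\int|D\delta_h u|^2$, and use Cauchy's inequality on the right-hand side terms. This yields, on a slab $(\delta',T)$ with $0<\delta'<\delta$,
\begin{equation*}
\sup_{\delta'\le\tau\le T}\int_D|\delta_h u(x,\tau)|^2\,dx+\int_{\delta'}^T\int_D|D\delta_h u|^2\,dx\,dt\le C\Bigl(\|\varphi\|_{L^2}^2+\|f\|_{L^2(Q_T)}^2+\sum_{s\le1}\|D^s_tg\|_{L^2(Q_T)}^2\Bigr),
\end{equation*}
uniformly in $h$, where I also need to control the initial slice $\int_D|\delta_h u(x,\delta')|^2\,dx$; this is where the small-time cutoff enters — one cannot bound $u_t$ up to $t=0$ since $\varphi$ is only $L^2$, so I would insert a smooth time cutoff $\eta(t)$ vanishing near $t=0$ and equal to $1$ on $(\delta,T)$, test with $\eta^2\delta_h u$, and pick up an extra harmless term $\int\eta\eta'|\delta_h u|^2$ bounded by $\|Du\|_{L^2(Q_T)}^2$, which in turn is bounded by the right-hand side through Lemma~\ref{lem32}. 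Letting $h\to0$ and using weak lower semicontinuity of the norms gives $u_t\in L^2(D\times(\delta,T))$ together with $Du_t\in L^2$, and identifies the weak limit of $\delta_hu$ as $u_t$.

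Finally, to upgrade the $L^2$-in-space bound on $Du$ to the pointwise-in-$\tau$ supremum $\sup_{\delta\le\tau\le T}\int_D|Du(x,\tau)|^2\,dx$ asserted in \eqref{lem33.0}, I would test the original weak formulation \eqref{weaksoln} (in Steklov-averaged form) with $\eta^2 u_t$ — now a legitimate test function on the interior slab since we have just shown $u_t\in\overset{\circ}{V}\,^{1,0}$ there — and integrate by parts in $x$ to convert $\int A^{\alpha\beta}_{ij}D_\beta u\,D_\alpha(\eta^2 u_t)$ into $\frac12\frac{d}{dt}\int A^{\alpha\beta}_{ij}D_\beta u\,D_\alpha u$ up to a term involving $D_tA$ (bounded by $\Lambda_2\|Du\|_{L^2}^2$), balancing against $\int\eta^2|u_t|^2$ and the data terms; a Gronwall argument in $\tau$ then gives the stated sup bound with constant depending on $n,N,\lambda,\Lambda_2,T,\delta$. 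The main obstacle is the justification at the level of difference quotients — producing admissible test functions, handling the noncommutation of the Steklov average with the $x$-dependent coefficients, and in particular controlling the term $\int (D_tA)\,D_\beta u\,D_\alpha(\text{test})$, which is exactly why the hypothesis \eqref{coeff31} for $k=1$ (hence the appearance of $\Lambda_2$) is needed; everything else is a routine, if careful, adaptation of the energy estimates already carried out for Lemmas~\ref{lem31} and~\ref{lem32}.
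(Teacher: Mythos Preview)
Your two-step plan has a genuine circularity in Step~1 that the paper avoids. When you test the difference-quotient equation with $\eta^{2}\delta_{h}u$, the time-derivative term produces the cutoff contribution $\int\eta\eta'|\delta_{h}u|^{2}$, which you claim is ``bounded by $\|Du\|_{L^{2}(Q_{T})}^{2}$''. This is false: $\delta_{h}u$ is a \emph{time} difference quotient, unrelated to the spatial gradient $Du$, and a uniform-in-$h$ bound on $\int_{\operatorname{supp}\eta'}\int_{D}|\delta_{h}u|^{2}$ is exactly the $L^{2}$ bound on $u_{t}$ you are trying to establish. Gronwall does not rescue this, since $\eta\eta'$ is not dominated by $C\eta^{2}$ on the region where $\eta$ is small. (A second, lesser issue: the $\delta_{h}f$ term forces $\|D_{t}f\|_{L^{2}}$ onto the right-hand side, not just $\|f\|_{L^{2}}$ as you wrote.)

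The paper's proof sidesteps both problems by going directly to your Step~2, but with the Steklov time-derivative $(u_{-h})_{t}$ in place of $u_{t}$: since $u_{-h}\in \overset{\circ}{W}^{1,1}_{2}$ automatically, $\overline{\eta}^{2}(u_{-h})_{t}$ is an admissible test function in the Steklov-averaged identity \eqref{lem33.2} without any preliminary knowledge that $u_{t}\in L^{2}$. The $f$ term becomes $\int f_{-h}\overline{\eta}^{2}(u_{-h})_{t}$, controlled by $\|f\|_{L^{2}}^{2}$ after Cauchy and absorption. Crucially, the time cutoff now hits the \emph{coefficient} term: integrating $\int(AD_{\beta}u)_{-h}\overline{\eta}^{2}D_{\alpha}(u_{-h})_{t}$ by parts in $t$ yields the boundary term $\frac{1}{2}\int_{D}A D_{\beta}u\,D_{\alpha}u\,\overline{\eta}^{2}\big|^{\tau}$ plus a cutoff term $\int(AD_{\beta}u)\overline{\eta}_{t}\overline{\eta}D_{\alpha}u$, which \emph{is} bounded by $\|Du\|_{L^{2}(Q_{T})}^{2}$ and hence by the right-hand side via Lemma~\ref{lem32}. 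Your Step~1 is therefore unnecessary; the fix is simply to run your Step~2 at the level of Steklov averages from the start.
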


\begin{proof}
{\bf{STEP 1.}} By Lemma \ref{lem31},
$u(x,t)\in\overset{\circ}{V}\,^{1,0}(Q_{T})$. Let $\tau_{0}\in(0,T)$
and $h\in(0,T-\tau_{0})$. Take $\zeta_{h}$ as the test function in
\eqref{weaksoln}, where $\zeta$ is an arbitrary element of
$\overset{\circ}{W}\,^{1,1}_{2}({D}\times(0,T+h))$ with
$\zeta(x,t)=0$ if $t\leq{h}$ and $t\geq{T}$. A simple calculation
shows that $(\zeta_{h})_{t}=(\zeta_{t})_{h}$, and hence
\begin{align*}
\int_{0}^{T}\int_{{D}}u(\zeta_{h})_{t}dxdt&=\int_{0}^{T}\int_{{D}}u(\zeta_{t})_{h}dxdt
=\int_{0}^{T}\int_{{D}}u_{-h}\zeta_{t}dxdt=-\int_{0}^{T}\int_{{D}}(u_{-h})_{t}\zeta\,dxdt,
\end{align*}
where the notation
$$\zeta_{-h}(x,t)=\frac{1}{h}\int_{t-h}^{t}\zeta(x,\tau)d\tau.$$
For all the other terms in \eqref{weaksoln} with $\zeta=\zeta_{h}$,
we also transfer the average $(\cdot)_{h}$ from $\zeta$ to the
coefficients, taking into account the permutability of this
averaging with differentiation with respect to $x$. This gives the
identity
\begin{align}\label{lem33.1}
\int_{h}^{T}\int_{{D}}(u_{-h})_{t}\zeta\,dxdt
+\int_{h}^{T}\int_{{D}}\big(A^{\alpha\beta}_{ij}D_{\beta}u^{j}\big)_{-h}D_{\alpha}\zeta\,dxdt
=\int_{h}^{T}\int_{{D}}(f_{-h}\zeta+g^{\alpha}_{-h}D_{\alpha}\zeta)dxdt,
\end{align}
for any $\zeta\in\overset{\circ}{W}\,^{1,1}_{2}(Q_{T+h})$ which
vanishes for $t\leq{h}$ and $t\geq{T}$. This identity is actually
valid for any $\zeta$ that is equal to zero for $t>\tau$
$(\tau\leq{T})$ and is equal to some function
$\hat{\zeta}\in\overset{\circ}{V}\,^{1,0}(Q_{\tau})$ for
$t\in[h,\tau]$. Indeed, the set
$\overset{\circ}{W}\,^{1,1}_{2}(Q_{T})$ is dense in the space
$\overset{\circ}{V}\,^{1,0}(Q_{T})$. Thus for any
$\hat{\zeta}(x,t)\in\overset{\circ}{V}\,^{1,0}(Q_{T})$ there is a
sequence of functions
$\zeta_{m}\in\overset{\circ}{W}\,^{1,1}_{2}(Q_{T})$, that is
strongly convergent to $\hat{\zeta}$ as $m\rightarrow\infty$ in the
norm of $\overset{\circ}{V}\,^{1,0}(Q_{T})$. We denote $\chi_{k}(t)$
the continuous piecewise-linear functions
$$\chi_{k}(t)=
\begin{cases}
kt,&\mbox{in}\ (0,\frac{1}{k});\\
1,&\mbox{in}\ [\frac{1}{k},\tau-\frac{1}{k}];\\
k(\tau-t),&\mbox{in}\ (\tau-\frac{1}{k},\tau);\\
0,&\mbox{in}\ (-\infty,0]\cup[\tau,+\infty).
\end{cases}
$$
Then identity \eqref{lem33.1} is established for
$\zeta_{m,k}=\zeta_{m}\chi_{k}$ for $\tau<T$. One can pass to the
limit as $m\rightarrow\infty$ and $k\rightarrow\infty$, thereby
\begin{equation}\label{lem33.2}
\int_{0}^{\tau}\int_{{D}}(u_{-h})_{t}\hat{\zeta}\,dxdt+\int_{0}^{\tau}\int_{{D}}(A^{\alpha\beta}_{ij}D_{\beta}u^{j})_{-h}D_{\alpha}\hat{\zeta}\,dxdt
=\int_{0}^{\tau}\int_{{D}}f_{-h}\zeta+g^{\alpha}_{-h}D_{\alpha}\hat{\zeta}\,dxdt,
\end{equation}
for any $\hat{\zeta}\in\overset{\circ}{V}\,^{1,0}(Q_{\tau})$,
$\tau\leq{T}$.

{\bf{STEP 2.}} For $h<\frac{\delta}{2}$, replacing $\hat{\zeta}$ by
$\overline{\eta}^{2}(u_{-h})_{t}$ in \eqref{lem33.2}, where
$\overline{\eta}(x,t)$ is a smooth cutoff function,
 satisfying $0\leq\overline{\eta}\leq1$, and for any $0<\delta<T$,
\begin{equation}\label{etabar}
\overline{\eta}(t)=
\begin{cases}1,&\mbox{in}~(\delta,\tau),\\
0,&\mbox{in}~(0,T)\setminus(\frac{\delta}{2},\tau),
\end{cases}\quad|\overline{\eta}_{t}|\leq\frac{C}{\delta},
\end{equation}
it follows that
\begin{align*}
&\int_{0}^{\tau}\int_{{D}}(u_{-h})_{t}\overline{\eta}^{2}(u_{-h})_{t}dxdt
+\int_{0}^{\tau}\int_{{D}}(A^{\alpha\beta}_{ij}D_{\beta}u^{j})_{-h}\overline{\eta}^{2}D_{\alpha}(u_{-h})_{t}dxdt\\
=&\int_{0}^{\tau}\int_{{D}}f_{-h}\overline{\eta}^{2}(u_{-h})_{t}+g^{\alpha}_{-h}\overline{\eta}^{2}D_{\alpha}(u_{-h})_{t}dxdt.
\end{align*}
Then we have
\begin{align*}
&LHS\\
=&\int_{0}^{\tau}\int_{{D}}|(u_{-h})_{t}|^{2}\overline{\eta}^{2}dxdt
-\frac{1}{2}\int_{0}^{\tau}\int_{{D}}\left((A^{\alpha\beta}_{ij})_{-h}\right)_{t}D_{\beta}u^{j}(x,t-h)\overline{\eta}^{2}D_{\alpha}u^{i}_{-h}dxdt\\
&+\frac{1}{2}\int_{{D}}(A^{\alpha\beta}_{ij}D_{\beta}u^{j})_{-h}\overline{\eta}^{2}D_{\alpha}u^{i}_{-h}dx\bigg|_{t=0}^{t=\tau}
-\int_{0}^{\tau}\int_{{D}}\left(A^{\alpha\beta}_{ij}D_{\beta}u^{j}\right)_{-h}\overline{\eta}_{t}\overline{\eta}D_{\alpha}u^{i}_{-h}\\
=&\int_{0}^{\tau}\int_{{D}}|u_{ht}|^{2}\overline{\eta}^{2}dxdt
-\frac{1}{2}\int_{0}^{\tau}\int_{{D}}\left((A^{\alpha\beta}_{ij})_{-h}\right)_{t}D_{\beta}u^{j}(x,t-h)\overline{\eta}^{2}D_{\alpha}u^{i}_{-h}dxdt\\
&+\frac{1}{2}\int_{{D}}\left((A^{\alpha\beta}_{ij}D_{\beta}u^{j})_{h}\overline{\eta}^{2}D_{\alpha}u^{i}_{h}\right)(x,\tau)dxdt
-\int_{0}^{\tau}\int_{{D}}\left(A^{\alpha\beta}_{ij}D_{\beta}u^{j}\right)_{-h}\overline{\eta}_{t}\overline{\eta}D_{\alpha}u^{i}_{-h},
\end{align*}
here using $\overline{\eta}(0)=0$, and similarly,
\begin{align*}
RHS&=\int_{0}^{\tau}\int_{{D}}f_{-h}\overline{\eta}^{2}(u_{-h})_{t}+\int_{{D}}\left(\overline{\eta}^{2}g^{\alpha}_{-h}D_{\alpha}u_{-h}\right)(x,\tau)dx\\
&-\int_{0}^{\tau}\int_{{D}}(g^{\alpha}_{-h})_{t}\overline{\eta}^{2}D_{\alpha}u_{-h}dxdt
-2\int_{0}^{\tau}\int_{{D}}g^{\alpha}_{-h}\overline{\eta}_{t}\overline{\eta}D_{\alpha}u_{-h}dxdt.
\end{align*}
Combining them, by Cauchy inequality, and sending $h\rightarrow0$,
we have
\begin{align*}
&\frac{1}{2}\int_{0}^{\tau}\int_{{D}}\overline{\eta}^{2}|u_{t}|^{2}dxdt
+\frac{1}{2}\int_{{D}}\left(A^{\alpha\beta}_{ij}\overline{\eta}D_{\beta}u^{j}\overline{\eta}D_{\alpha}u^{i}\right)(x,\tau)dx\\
&\leq\int_{0}^{\tau}\int_{{D}}(D_{t}A^{\alpha\beta}_{ij})(\overline{\eta}D_{\beta}u^{j})(\overline{\eta}D_{\alpha}u^{i})dxdt
+\frac{1}{2}\int_{0}^{\tau}\int_{{D}}|\overline{\eta}f|^{2}dxdt\\
&+\int_{{D}}\left(\overline{\eta}^{2}g^{\alpha}D_{\alpha}u\right)(x,\tau)dx
-\int_{0}^{\tau}\int_{{D}}\overline{\eta}^{2}(g^{\alpha})_{t}D_{\alpha}udxdt
-2\int_{0}^{\tau}\int_{{D}}\overline{\eta}_{t}\overline{\eta}g^{\alpha}D_{\alpha}udxdt.
\end{align*}
Then using the parabolic condition and Cauchy inequality, we obtain
\begin{align}\label{lem4-1}
&\int_{\delta}^{\tau}\int_{{D}}|u_{t}|^{2}dxdt
+\lambda\int_{{D}}|Du(x,\tau)|^{2}dx\nonumber\\
&\leq\,C(n,N,\Lambda_{2})\int_{0}^{\tau}\int_{{D}}|Du|^{2}dxdt
+\int_{0}^{\tau}\int_{{D}}|f|^{2}dxdt\nonumber\\
&+C(\lambda)\int_{{D}}|g(x,\tau)|^{2}dx+C\int_{0}^{\tau}\int_{{D}}|g|^{2}dxdt
+C\int_{0}^{\tau}\int_{{D}}|g_{t}|^{2}dxdt.
\end{align}
Since
\begin{align*}
&\|g(x,\tau)\|^{2}_{L^{2}({D})}-\frac{1}{\tau}\int_{0}^{\tau}\|g(x,t)\|^{2}_{L^{2}({D})}dt\\
=&\frac{1}{\tau}\int_{0}^{\tau}\int_{{D}}|g(x,\tau)|^{2}-|g(x,t)|^{2}dxdt\\
=&\frac{1}{\tau}\int_{0}^{\tau}\int_{{D}}\int_{t}^{\tau}D_{s}\left(g(x,s)\right)^{2}dsdxdt\\
\leq&2\int_{0}^{\tau}\int_{{D}}|g(x,t)g_{t}(x,t)|dxdt\\
\leq&\int_{0}^{\tau}\int_{{D}}|g(x,t)|^{2}dxdt+\int_{0}^{\tau}\int_{{D}}|g_{t}(x,t)|^{2}dxdt,
\end{align*}
it follows that
$$\|g(x,\tau)\|^{2}_{L^{2}({D})}\leq\,C\bigg(\int_{0}^{\tau}\int_{{D}}|g(x,t)|^{2}dxdt+\int_{0}^{\tau}\int_{{D}}|g_{t}(x,t)|^{2}dxdt\bigg),$$
where $C$ depends only on $\delta$. Substituting it into
\eqref{lem4-1} gives
\begin{align}\label{lem4-2}
&\sup_{\delta\leq\tau\leq{T}}\int_{{D}}|Du(x,\tau)|^{2}dx+\int_{\delta}^{\tau}\int_{{D}}|u_{t}|^{2}dxdt\nonumber\\
&\leq\,C\bigg(\int_{0}^{\tau}\int_{{D}}|Du|^{2}dxdt+\int_{0}^{\tau}\int_{{D}}|f|^{2}dxdt+\int_{0}^{\tau}\int_{{D}}\left(|g|^{2}+|g_{t}|^{2}\right)dxdt\bigg),
\end{align}
where $C$ depends $n,N,\lambda,\Lambda_{2},\delta$ and $\tau$. In
combination with Lemma \ref{lem32}, \eqref{lem33.0} is established.
\end{proof}

Since $A,f,g$ are smooth in $t$, it is standard to use difference
quotients in $t$ to estimate higher derivatives.

\begin{lemma}\label{lem34}
Under \eqref{coeff1} \eqref{coeff2} and \eqref{coeff31}, if
$u\in{V}(Q_{T})$ is a weak solution of problem \eqref{VPIND}, then
for any $0<\delta<T$ and $0<\epsilon<1$, we have
\begin{align}\label{lem34-2}
&\sup_{\delta\leq\tau\leq\,T}\sum_{|\gamma'|\leq\,k}\int_{{D}_{\epsilon}}|u_{t}(x,\tau)|^{2}dx
+\sum_{|\gamma'|\leq\,k}\int_{\delta}^{T}\int_{{D}_{\epsilon}}|Du_{t}|^{2}dxdt\nonumber\\
&\leq\,C\bigg(\|\varphi\|_{L^{2}({D})}^{2}
+\sum_{s\leq1}\|D_{t}^{s}f\|_{L^{2}(Q_{T})}^{2}
+\sum_{s\leq1}\|D_{t}^{s}g\|^{2}_{L^{2}({D}_{m}\times(0,T))}\bigg),
\end{align}\marginnote{Delete the two $\sum$.}
where $C$ depends only on $n,N,\lambda,\Lambda_{2},T,\epsilon$, and
$\delta$.
\end{lemma}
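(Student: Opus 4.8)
The plan is to differentiate the system in $t$ by means of difference quotients, localize in space with a cutoff supported in ${D}$ and in time with a cutoff vanishing near $t=0$, run the \emph{basic} energy identity (testing with the difference quotient itself, as in the proof of Lemma \ref{lem32}), and pass to the limit, the right-hand side being supplied by the bounds already established in Lemmas \ref{lem32} and \ref{lem33}. It is essential to use the basic identity here rather than the one-derivative-higher identity from the proof of Lemma \ref{lem33}: testing with $(u^{h})_{t}$ would force a dependence on $\Lambda_{4}$ and on $\|g_{tt}\|$, whereas the basic identity sees only $\Lambda_{2},\|f_{t}\|,\|g_{t}\|$, exactly as required by the statement.

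Fix $0<\delta<T$, $0<\epsilon<1$, and for small $h>0$ put $u^{h}(x,t):=\tfrac1h\big(u(x,t+h)-u(x,t)\big)$ on $Q_{T-h}$. Arguing as in Step 1 of the proof of Lemma \ref{lem33} (transferring the difference quotient from $u$ onto the test field, using its permutability with $D_{x}$ and the Steklov device for the $t$-derivative pairing), one checks that $u^{h}$ is a weak solution on $Q_{T-h}$ of
\[
(u^{h})^{i}_{t}-D_{\alpha}\!\big(A^{\alpha\beta}_{ij}(x,t+h)\,D_{\beta}(u^{h})^{j}\big)=D_{\alpha}\!\big((A^{h})^{\alpha\beta}_{ij}\,D_{\beta}u^{j}\big)-D_{\alpha}(g^{h})^{\alpha i}+(f^{h})^{i},
\]
where $(A^{h})^{\alpha\beta}_{ij}(x,t):=\tfrac1h\big(A^{\alpha\beta}_{ij}(x,t+h)-A^{\alpha\beta}_{ij}(x,t)\big)$, and $(g^{h})^{\alpha},f^{h}$ are the analogous difference quotients. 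By \eqref{coeff31} with $k=1$ one has $|A^{h}|\le\Lambda_{2}$ uniformly in $h$; the shifted coefficients $A^{\alpha\beta}_{ij}(\cdot,\cdot+h)$ still satisfy \eqref{coeff1}--\eqref{coeff2} with the same $\lambda,\Lambda_{0}$; and since $f,g$ are $C^{1}$ in $t$, $f^{h}\to f_{t}$, $g^{h}\to g_{t}$ in $L^{2}(Q_{T-h})$. Moreover Lemma \ref{lem33} (applied with $\delta/4$ in place of $\delta$) gives $u_{t}\in L^{2}({D}\times(\delta/4,T))$, so writing $u^{h}(x,t)=\tfrac1h\int_{t}^{t+h}u_{t}(x,s)\,ds$ on $(\delta/4,T-h)$ and using Fubini, $\int_{{D}\times(\delta/4,T-h)}|u^{h}|^{2}\le\int_{{D}\times(\delta/4,T)}|u_{t}|^{2}$ uniformly in $h$, and $u^{h}\to u_{t}$ in $L^{2}({D}\times(\delta/4,T-h'))$ for each $h'>0$.

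Now take $\eta=\eta(x)\in C_{c}^{\infty}({D})$ with $0\le\eta\le1$, $\eta\equiv1$ on ${D}_{\epsilon}$, $\mathrm{supp}\,\eta\subset {D}_{\epsilon/2}$, $|D\eta|\le C/\epsilon$, and $\overline{\eta}=\overline{\eta}(t)$ a smooth time cutoff (as in \eqref{etabar}) with $\overline{\eta}\equiv1$ on $[\delta/4,T]$, $\overline{\eta}\equiv0$ on $[0,\delta/8]$, $|\overline{\eta}_{t}|\le C/\delta$. Test the equation for $u^{h}$ with $\zeta=\overline{\eta}^{2}\eta^{2}u^{h}$, admissible since it vanishes near $\partial{D}$ and near $t=0$ (the $t$-derivative term being treated by Steklov averaging as in the proof of Lemma \ref{lem33}). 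The time term yields $\tfrac12\overline{\eta}^{2}(\tau)\int_{D}\eta^{2}|u^{h}(\cdot,\tau)|^{2}-\int_{0}^{\tau}\!\int_{D}\overline{\eta}\,\overline{\eta}_{t}\eta^{2}|u^{h}|^{2}$; for the principal term, write $\eta D_{\alpha}u^{h}=D_{\alpha}(\eta u^{h})-u^{h}D_{\alpha}\eta$ and apply the parabolic condition \eqref{coeff2} (at time $t+h$) to the field $\eta u^{h}(\cdot,t)\in H^{1}_{0}({D})$, getting $\int_{D}A^{\alpha\beta}_{ij}(x,t+h)D_{\beta}(\eta(u^{h})^{j})D_{\alpha}(\eta(u^{h})^{i})\ge\lambda\int_{D}|D(\eta u^{h})|^{2}\ge\tfrac\lambda2\int_{D}\eta^{2}|Du^{h}|^{2}-\lambda\int_{D}|D\eta|^{2}|u^{h}|^{2}$. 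Every other term — the $D\eta$-commutators, the $(A^{h}Du)$-term, the $g^{h}$- and $f^{h}$-terms, the $\overline{\eta}_{t}$-term — is handled by Young's inequality so that the part pairing with $\eta Du^{h}$ (or $D(\eta u^{h})$) is a small multiple of $\int_{0}^{\tau}\!\int_{D}\overline{\eta}^{2}\eta^{2}|Du^{h}|^{2}$, absorbed on the left, leaving a remainder bounded by $C\!\int_{0}^{\tau}\!\int_{D}\overline{\eta}^{2}\eta^{2}(|g^{h}|^{2}+|f^{h}|^{2})+C(\Lambda_{2})\!\int_{Q_{T}}|Du|^{2}+C(\epsilon,\delta)\!\int_{{D}\times(\delta/4,T-h)}|u^{h}|^{2}$. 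The first tends to $C\int_{Q_{T}}(|g_{t}|^{2}+|f_{t}|^{2})$, the second is bounded by Lemma \ref{lem32}, and the third by the uniform bound above together with Lemma \ref{lem33}. Letting $h\to0$ (using $u^{h}\to u_{t}$ and $Du^{h}\rightharpoonup Du_{t}$ in $L^{2}_{\mathrm{loc}}$, lower semicontinuity for the $\sup_{\tau}$ term and weak-$\ast$ compactness in $L^{\infty}((\delta,T);L^{2}({D}_{\epsilon}))$), and invoking $\overline{\eta}\equiv1$ on $(\delta,T)$ and $\eta\equiv1$ on ${D}_{\epsilon}$, produces \eqref{lem34-2} (with the two redundant sums $\sum_{|\gamma'|\le k}$ deleted), with $C$ depending only on $n,N,\lambda,\Lambda_{2},T,\epsilon,\delta$.

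I expect the main obstacle to be technical bookkeeping rather than a conceptual difficulty: making the difference-quotient identity for $u^{h}$ and the admissibility of the test field $\overline{\eta}^{2}\eta^{2}u^{h}$ rigorous — in particular the handling of the $t$-derivative pairing — through the Steklov-average device used repeatedly in the proofs of Lemmas \ref{lem31}--\ref{lem33}, and calibrating the Young-inequality constants so that every occurrence of $\int\overline{\eta}^{2}\eta^{2}|Du^{h}|^{2}$ on the right is strictly absorbed into the coercive term coming from \eqref{coeff2}.
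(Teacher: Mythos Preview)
Your proposal is correct and takes essentially the same approach as the paper --- apply a $t$-difference quotient to the system, test with a time-cutoff multiple of the difference quotient itself, and close via Lemmas \ref{lem32}--\ref{lem33}. The only difference is that the paper dispenses with the spatial cutoff $\eta(x)$: since $u\in\overset{\circ}{V}(Q_{T})$ already vanishes on $\partial D$, so does $\Delta_{t}^{h}u$, hence $\overline{\eta}^{2}\Delta_{t}^{h}u$ is admissible without spatial localization, yielding the estimate on all of $D$ and eliminating the $D\eta$-commutator terms.
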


\begin{proof}
For $h<\delta$, we apply $\triangle^{h}_{t}$ to \eqref{VPIND}, thus
\begin{align}\label{lem34-1}
\Delta_{t}^{h}u_{t}&-D_{\alpha}(A^{\alpha\beta}_{ij}D_{\beta}\Delta_{t}^{h}u^{j})
=D_{\alpha}\bigg(\left(\Delta_{t}^{h}A^{\alpha\beta}_{ij}\right)(D_{\beta}u^{j})-\Delta_{t}^{h}g^{\alpha\,i}\bigg)+\Delta_{t}^{h}f^{i},
\end{align}
where
$$\Delta_{t}^{h}u^{i}=\frac{u^{i}(x,t)-u^{i}(x,t-h)}{h}.$$
Similarly as in Lemma \ref{lem32}, multiplying
$\overline{\eta}^{2}\Delta_{t}^{h}u$ on both sides of
\eqref{lem34-1}, where $\overline{\eta}$ is defined by
\eqref{etabar}, integrating by parts, and using the property of
difference quotients, we have
\begin{align*}
\sup_{\delta\leq\tau\leq\,T}\int_{{D}}&|u_{t}(x,\tau)|^{2}dx
+\int_{\delta}^{T}\int_{{D}}|Du_{t}|^{2}dxdt\nonumber\\
&\leq\,C\bigg(\|Du\|_{L^{2}(Q_{T})}^{2}+\|u_{t}\|_{L^{2}(Q_{T})}^{2}
+\|D_{t}f\|_{L^{2}(Q_{T})}^{2} +\|D_{t}g\|^{2}_{L^{2}(Q_{T})}\bigg),
\end{align*}
where $C$ depends only on $n,N,\lambda,\Lambda_{2}$ and $\delta$. In
combination with \eqref{lem32.1} and \eqref{lem33.0}, we established
\eqref{lem34-2}.
\end{proof}

\bigskip
\section{Estimates for Laminar Systems}\label{sec3}
\bigskip

In this section, we extend results for laminar elliptic systems due
to Chipot, Kinderlehrer, and Vergara-Caffarelli \cite{ck} to laminar
parabolic systems. In subsection \ref{subibvp}, we first consider
the initial boundary value problem of laminar parabolic systems.
Then in subsection \ref{subsD}, we give the interior estimates for
equations \eqref{IBVP} in general domain $D\times(0,T)$, similarly
as in section \ref{sec2}. In subsection \ref{subslaminar}, we
consider the laminar parabolic systems in $\omega\times(0,T)$.

\subsection{Estimates for the Initial Boundary Value
Problem}\label{subibvp}

Let $D$ be the unit cube $\omega$,
$$\omega=\bigg\{x\in\mathbb{R}^{n}:|x_{i}|<\frac{1}{2}, 1\leq i\leq n\bigg\},$$
divided into $\omega_{m}$. However, the $\omega_{m}$ are different;
they are strips:
 $$\omega_{m}=\bigg\{x\in\omega:c_{m-1}<x_{n}<c_{m}\bigg\},$$
where the $c_{m}$ are increasing constants lying between
$-\frac{1}{2}$ and $\frac{1}{2}$. There may be infinitely many
strips; if so, we set $c_{-\infty}=-\frac{1}{2}$ and
$c_{\infty}=\frac{1}{2}$. We consider the following initial boundary
value problem of the laminar parabolic systems
\begin{equation}\label{IBVP0}
\left\{
  \begin{array}{ll}
    (u^{i})_{t}-D_{\alpha}\left(A_{ij}^{\alpha\beta}(x,t)D_{\beta}u^{j}\right)=-D_{\alpha}g^{\alpha\,i}+f^{i} & \hbox{in}\ \omega\times(0,T),\\
    u=0 & \hbox{on}\ \partial\omega\times(0,T),\\
    u=\varphi(x) & \hbox{on}\ \omega\times\{0\}.
  \end{array}
\right.
\end{equation}
Here $g,f$ are smooth in each $\overline{\omega}_{m}\times[0,T]$,
and $\varphi\in C^{\infty}(\overline{\omega}_{m})$. We assume that
$A^{\alpha\beta}_{ij}(x,t)$ satisfy \eqref{coeff1} \eqref{coeff2},
and further, for any nonnegative integer $r,s$,
\begin{equation}\label{coeff3}
\sum_{r+2s\leq{l}}|D_{x}^{r}D_{t}^{s}A_{ij}^{\alpha\beta}(x,t)|\leq\Lambda_{l},\quad\forall~
(x,t)\in\omega_{m}\times(0,T),
\end{equation}
where
$\Lambda=\Lambda_{0}\leq\Lambda_{1}\leq\cdots\leq\Lambda_{l}\leq\cdots$.

\begin{prop}\label{prop1}
Assume the above. Let $u\in\overset{\circ}{V}(\Omega_{T})$ be a weak
solution of \eqref{IBVP0}. Then for $0<\delta<T$ and for all
$\gamma'$, $D_{x'}^{\gamma'}u\in\,C^{0}(\omega\times(\delta,T))$,
and for each $m$, and any $0<\epsilon<1$,
$u\in\,C^{\infty}\left(\left(\omega_{\epsilon}\cap\overline{\omega}_{m}\right)\times(\delta,T)\right)$.
Moreover for any nonnegative integer $k$, and any $m$,
\begin{align}\label{prop1-1}
&\sum_{r+2s\leq{k}}\|D_{x}^{r}D_{t}^{s}u\|_{L^{\infty}\left(\left(\overline{\omega}_{m}\cap\omega_{\epsilon}\right)\times(\delta,T)\right)}\nonumber\\
&\leq\,C\left(\|\varphi\|_{L^{2}(\omega)}
+\sum_{|\gamma'|\leq\overline{k}+k+1,s\leq{\frac{k}{2}+1}}\left(\|D_{x'}^{\gamma'}D_{t}^{s}g\|_{L^{2}(\Omega_{T})}
+\|D_{x'}^{\gamma'}D_{t}^{s}f\|_{L^{2}(\Omega_{T})}\right)\right),
\end{align}
where $\overline{k}=[\frac{n-1}{2}]+1$ and $C$ depends only on
$\epsilon,\delta,k,n,N,\lambda,T$ and $\Lambda_{\overline{k}+k+2}$.
\end{prop}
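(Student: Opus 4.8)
The plan is to establish the estimate \eqref{prop1-1} by a bootstrap argument that exploits the laminar (strip) structure of the domain, combined with the $L^2$ estimates already proved in Section \ref{sec2}. The crucial observation is that the coefficients $A_{ij}^{\alpha\beta}(x,t)$ depend on the spatial variable only through $x_n$ being in a fixed strip $\omega_m$, so that \emph{tangential} difference quotients $\triangle_{x'}^{h}$ in the directions $x' = (x_1,\dots,x_{n-1})$ commute (up to controlled error terms) with the operator. First I would show that for each multi-index $\gamma'$ in the tangential variables, $D_{x'}^{\gamma'}u$ is again a weak solution of a problem of the same type \eqref{IBVP0}, with right-hand side involving $D_{x'}^{\gamma'}f$, $D_{x'}^{\gamma'}g$ and lower-order tangential derivatives of $A$ times derivatives of $u$. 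Applying Lemma \ref{lem32} and Lemma \ref{lem33} (and the $t$-derivative version, Lemma \ref{lem34}) to $D_{x'}^{\gamma'}u$ inductively on $|\gamma'|$, and absorbing the lower-order terms using the inductive hypothesis, yields
\begin{align*}
\sum_{|\gamma'|\le k}\Big(\sup_{\delta\le\tau\le T}\int_{\omega}|D_{x'}^{\gamma'}u(\cdot,\tau)|^2\,dx + \int_{\delta}^{T}\!\!\int_{\omega}|D_{x'}^{\gamma'}Du|^2 + |D_{x'}^{\gamma'}u_t|^2\Big)
\le C\Big(\|\varphi\|_{L^2(\omega)}^2 + \sum_{|\gamma'|\le k,\, s\le \frac{k}{2}+1}\big(\|D_{x'}^{\gamma'}D_t^s f\|_{L^2}^2 + \|D_{x'}^{\gamma'}D_t^s g\|_{L^2}^2\big)\Big),
\end{align*}
where one must take $\delta$ slightly smaller at each stage and shrink to $\omega_\epsilon$ only at the end (or rather, handle the interior cutoff exactly as in the cited lemmas so that $\epsilon$ enters only through the constant).

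Next I would recover the missing \emph{normal} derivatives $D_n^r u$ from the equation itself. Since $A^{\alpha\beta}_{ij}$ is smooth in each strip and, in particular, the leading coefficient $A^{nn}_{ij}$ is invertible by the parabolicity \eqref{coeff2}, the system \eqref{IBVP0} can be solved for $D_n(A^{nn}_{ij}D_n u^j)$, hence — differentiating in $x_n$ within a single strip — for $D_n^2 u$ in terms of $u_t$, tangential derivatives of $Du$, lower-order normal derivatives, and $f, g$. Iterating this elliptic-regularity-in-the-strip step, every mixed derivative $D_x^r D_t^s u$ with $r+2s\le k$ is expressed through quantities already controlled in $L^2$ by the tangential estimate above (with $k$ replaced by $k + $ something, which is why the index $\overline{k}+k+1$ appears). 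Finally, to pass from the $L^2$-type bounds to the $L^\infty$ bound claimed in \eqref{prop1-1}, I would apply the Sobolev embedding $W^{1,2}\hookrightarrow L^\infty$ in the variable $x_n$ on each strip together with sufficiently many tangential derivatives: $\overline{k} = [\tfrac{n-1}{2}]+1$ extra tangential derivatives in $x'$ suffice to embed into $C^0$ in the $(n-1)$-dimensional tangential slice, and one more to also cover continuity across $t$; this accounts for the loss "$\overline{k}+k+1$" in the summation range on the right-hand side and for the appearance of $\Lambda_{\overline{k}+k+2}$ in the constant.

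The main obstacle will be the careful management of the normal direction. Tangential difference quotients are clean, but to get the normal derivatives one cannot simply differentiate the weak formulation in $x_n$ — across the interface $x_n = c_m$ the coefficients jump, so $D_n^2 u$ is not globally $L^2$. The correct procedure is to work one strip at a time, using the conormal quantity $A^{n\beta}_{ij}D_\beta u^j$ (which \emph{is} continuous across interfaces) as the fundamental object, and to bootstrap its regularity; this is precisely the scheme of Chipot--Kinderlehrer--Vergara-Caffarelli \cite{ck} for the elliptic case, and the parabolic extension requires only that one additionally track the $u_t$ term, which contributes via the already-established Lemmas \ref{lem33}--\ref{lem34}. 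A secondary technical point is the bookkeeping of the successive shrinkages $\delta \to \delta/2 \to \delta/4 \to \cdots$ of the time interval and the interior domains at each step of the induction; since only finitely many steps (depending on $k$) are needed, the final constant remains of the asserted form.
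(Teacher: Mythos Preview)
Your proposal is correct and follows essentially the same route as the paper: tangential $L^2$ estimates via difference quotients in $x'$ (the paper's Lemmas \ref{lem42}--\ref{lem44}), control of the conormal quantity $w=A^{n\beta}_{ij}D_\beta u^j-g^{ni}$ (Lemma \ref{lem45}), the anisotropic Sobolev embedding requiring $\overline{k}=[\tfrac{n-1}{2}]+1$ extra tangential derivatives (Lemma \ref{lem9}), and then recursive recovery of $D_n^r u$ from the equation rewritten as \eqref{energy7} using the invertibility of $(A^{nn}_{ij})$. The only cosmetic discrepancy is that your displayed intermediate estimate is written over $\omega$ rather than $\omega_\epsilon$; as you yourself note, the tangentially differentiated function no longer vanishes on $\partial\omega$, so every step after the first already requires a spatial cutoff and the integrals should be over $\omega_\epsilon$ from the outset.
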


Our proof of Proposition \ref{prop1} adapts the alternative proof of
Li-Nirenberg in the elliptic case to the parabolic systems. For
laminar systems, we could establish the estimates in
$\omega\times(\delta,T)$, as well as in Lemmma \ref{lem32}-Lemma
\ref{lem34}, just replacing the domain $D$ by $\omega$. Next, we
will establish the interior estimates for higher derivatives of weak
solutions.

Denote
$$\Omega_{T}=\omega\times(0,T),\quad\mbox{and}\quad\omega_{\epsilon}=\left\{x\in\omega~\big|~\emph{dist}(x,\partial\omega)>\epsilon\right\}, \quad\mbox{for}\quad 0<\epsilon<1.$$
Then

\begin{lemma}\label{lem42}
Under \eqref{coeff1} \eqref{coeff2} and \eqref{coeff3}, if
$u\in{V}(\Omega_{T})$ is a weak solution of problem \eqref{IBVP0},
then for any $0<\epsilon<1$ and integer $k\geq0$, for
$0\leq|\gamma'|\leq{k}$,
$D_{x'}^{\gamma'}u\in{W}^{1,0}_{2}(\Omega_{T})$, and we have
\begin{align}\label{lem42-1}
&\sup_{0\leq\tau\leq{T}}\int_{\omega_{\epsilon}}|D^{\gamma'}_{x'}u(x,\tau)|^{2}dx
+\sum_{|\gamma'|\leq\,k}\int_{0}^{T}\int_{\omega_{\epsilon}}|DD_{x'}^{\gamma'}u(x,t)|^{2}dxdt\nonumber\\
&\hspace{1cm}\leq\,C\bigg\{\sum_{|\gamma'|\leq\,k}\|D_{x'}^{\gamma'}\varphi\|_{L^{2}(\omega)}^{2}
+\sum_{|\gamma'|\leq\,k}\|D_{x'}^{\gamma'}f\|^{2}_{L^{2}(\Omega_{T})}+\sum_{|\gamma'|\leq\,k}\|D_{x'}^{\gamma'}g\|^{2}_{L^{2}(\Omega_{T})}\bigg\},
\end{align}
where $C$ depends only on $n,N,\lambda,\Lambda_{k},T,k$, and
$\epsilon$.
\end{lemma}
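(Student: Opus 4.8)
\textbf{Proof proposal for Lemma \ref{lem42}.}

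The plan is to prove the estimate by induction on $k$, using tangential difference quotients in the $x'=(x_1,\dots,x_{n-1})$ directions, following the strategy already used in Lemmas \ref{lem32} and \ref{lem33} but now differentiating in the directions parallel to the strips $\omega_m$. The key structural observation is that, since the coefficients $A^{\alpha\beta}_{ij}(x,t)$ depend on $x$ only through $x_n$ (up to the regularity encoded in \eqref{coeff3}), a difference quotient $\triangle^h_{x'}$ applied to \eqref{IBVP0} produces a system of exactly the same form for $\triangle^h_{x'}u$, with new right-hand sides of the form $-D_\alpha\bigl(g^{\alpha}_{\mathrm{new}}\bigr)+f_{\mathrm{new}}$ where $g^\alpha_{\mathrm{new}}=\triangle^h_{x'}g^\alpha+(\triangle^h_{x'}A^{\alpha\beta}_{ij})D_\beta u^j$ and $f_{\mathrm{new}}=\triangle^h_{x'}f$. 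Because $\triangle^h_{x'}A$ is bounded by $\Lambda_1$ (hence by $\Lambda_k$) and is again smooth in the relevant sense, these new data are controlled by $\|D_{x'}g\|_{L^2}$, $\|D_{x'}f\|_{L^2}$, and crucially by $\|Du\|_{L^2}$, which is already controlled by the base case $k=0$ (Lemma \ref{lem32}, applied on $\omega$).

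The base case $k=0$ is just Lemma \ref{lem32} with $D=\omega$; note that here $\sup_{0\le\tau\le T}$ replaces $\sup_{\delta\le\tau\le T}$ because, unlike the $u_t$ estimate in Lemma \ref{lem33}, tangential derivatives of $\varphi$ are assumed in $L^2(\omega)$, so no time cutoff near $t=0$ is needed. For the inductive step, I would fix a smooth spatial cutoff $\eta$ equal to $1$ on $\omega_{\epsilon}$ and supported in $\omega_{\epsilon/2}$ (with $\eta$ depending only on $x'$, so that differentiating $\eta$ does not interfere with the lamination structure), apply $\triangle^h_{x'}$ to the equation, test the resulting weak formulation with $\eta^2\,\triangle^h_{x'}u$ (using the Steklov-average machinery from Lemma \ref{lem31} to justify the integration by parts in $t$, exactly as in Lemma \ref{lem32}), invoke the parabolic condition \eqref{coeff2} to absorb the leading gradient term, and use Cauchy's inequality together with the commutator bounds on $\triangle^h_{x'}A$ to control the cross terms. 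Letting $h\to 0$ converts $\triangle^h_{x'}u$ into $D_{x'}u$ and yields the estimate for $|\gamma'|=1$; iterating (equivalently, running the induction) gives all $|\gamma'|\le k$, with the constant picking up $\Lambda_k$ from the accumulated coefficient derivatives.

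The main obstacle is bookkeeping the cross terms generated when $\triangle^h_{x'}$ hits the product $A^{\alpha\beta}_{ij}D_\beta u^j$ at each stage of the induction: by the discrete Leibniz rule one gets a sum of terms $(\triangle^h_{x'}A)(D_\beta u)$, $(\text{shifted }A)(\triangle^h_{x'}D_\beta u)$, and higher iterated differences, and one must check that every factor other than the top-order $\triangle^h_{x'}DD_{x'}^{\gamma'-e}u$ has already been estimated at a previous induction level or is bounded by the data. A secondary technical point is the passage $h\to 0$: one needs the uniform-in-$h$ bounds first (to extract weak limits of the difference quotients and identify them with genuine tangential derivatives), and one must be careful that the cutoff $\eta$ localizes strictly inside $\omega$ so that the difference quotients $\triangle^h_{x'}u$ are well-defined on $\mathrm{supp}\,\eta$ for all small $h$. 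Once these are handled, the membership $D_{x'}^{\gamma'}u\in W^{1,0}_2(\Omega_T)$ (with the $\omega_\epsilon$ localization) and the quantitative bound \eqref{lem42-1} follow simultaneously.
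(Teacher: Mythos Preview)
Your proposal is correct and follows essentially the same approach as the paper: induction on $k$ with the base case given by Lemma \ref{lem32} for $D=\omega$, tangential difference quotients $\Delta^s_\iota$ applied to \eqref{IBVP0}, testing against $(\overline{\tilde\eta^2\Delta^s_\iota u})_h$ with a spatial cutoff and Steklov averages as in Lemma \ref{lem31}, and then passing $h\to0$. One minor point: the cutoff $\tilde\eta$ should depend on all of $x$ (not only on $x'$) in order to localize to $\omega_\epsilon$, but this does not affect your argument since the gradient terms $|D\tilde\eta|\le C/\epsilon$ are harmless lower-order contributions regardless of direction.
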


In order to estimate higher derivative, it is customary to
differentiate the equation, to multiply by a suitable derivative of
$u$ and by a cutoff function, and to integrate by parts. Clearly, we
are not allowed to apply $D_{n}$ across $\{x_{n}=c_{m}\}$ since the
coefficients are smooth only in $x'=(x_{1},\cdots,x_{n-1})$.
Furthermore, we do not know yet that $u$ has additional derivatives
in the $x'$ directions. So in place of taking derivatives, it is
standard to use difference quotients in these directions.

\begin{proof}[Proof of Lemma \ref{lem42}.]
For $k=0$, the estimate \eqref{lem42-1} is done by Lemma \ref{lem32}
for $D=\omega$. For $|\gamma'|=1$, denote the difference quotient in
$x_{s}$-direction $(s=1,\cdots,n-1)$ as
$$\Delta_{\iota}^{s}u(x,t)=\frac{u(x+\iota{e}_{s},t)-u(x,t)}{\iota}.$$
Taking the difference quotient $\Delta_{\iota}^{s}$ to the equation
in \eqref{IBVP0}, we obtain
\begin{align}\label{lem42.1}
\Delta_{\iota}^{s}u_{t}-D_{\alpha}\left(A_{ij}^{\alpha\beta}(x,t)D_{\beta}\Delta_{\iota}^{s}u^{j}\right)
=-D_{\alpha}\bigg(\Delta_{\iota}^{s}g^{\alpha\,i}-(\Delta_{\iota}^{s}A_{ij}^{\alpha\beta})(x+\iota{e}_{s},t)D_{\beta}u^{j}\bigg)+\Delta_{\iota}^{s}f^{i}.
\end{align}
Multiply by
$\zeta=(\overline{\tilde{\eta}^{2}\Delta_{\iota}^{s}u})_{h}$, where
$\tilde{\eta}(x)\in C_{0}^{\infty}(\omega)$ is a cutoff function
satisfying $0\leq\tilde{\eta}(x)\leq 1$, and for any $\epsilon>0$,
\begin{equation}\label{etatilde}
\tilde{\eta}(x)=
\begin{cases}
1&\mbox{in}\ \omega_{\epsilon},\\
0&\mbox{outside}\ \omega_{\frac{\epsilon}{2}},
\end{cases}
\quad\quad|D\tilde{\eta}|\leq\frac{C(n)}{\epsilon}.
\end{equation}
Then, for $0\leq|\iota|\leq\frac{\epsilon}{4}$, integrating by
parts, we obtain
\begin{align*}
&\int_{\omega}\Delta_{\iota}^{s}u(\overline{\tilde{\eta}^{2}\Delta_{\iota}^{s}u})_{h}(\cdot,\tau)\,dx
-\int_{\omega}\Delta_{\iota}^{s}u(\overline{\tilde{\eta}^{2}\Delta_{\iota}^{s}u})_{h}(\cdot,0)\,dx\\
&-\int_{0}^{\tau}\int_{\omega}(\Delta_{\iota}^{s}u)\bigg((\overline{\tilde{\eta}^{2}\Delta_{\iota}^{s}u})_{h}\bigg)_{t}dxdt
+\int_{0}^{\tau}\int_{\omega}A_{ij}^{\alpha\beta}(x,t)D_{\beta}\Delta_{\iota}^{s}u^{j}
D_{\alpha}(\overline{\tilde{\eta}^{2}\Delta_{\iota}^{s}u^{i}})_{h}dxdt\\
&=\int_{0}^{\tau}\int_{\omega}\Delta_{\iota}^{s}f^{i}(\overline{\tilde{\eta}^{2}\Delta_{\iota}^{s}u^{i}})_{h}dxdt\\
&+\int_{0}^{\tau}\int_{\omega}\bigg(\Delta_{\iota}^{s}g^{\alpha\,i}
-(\Delta_{\iota}^{s}A_{ij}^{\alpha\beta})(x+\iota\,e_{s},t)D_{\beta}u^{j}\bigg)D_{\alpha}(\overline{\tilde{\eta}^{2}\Delta_{\iota}^{s}u^{i}})_{h}dxdt.
\end{align*}
Similarly as in the proof of Lemma \ref{lem31}, sending
$h\rightarrow 0$, and making use of \eqref{coeff1} \eqref{coeff2}
and \eqref{coeff3}, we have
\begin{align*}
&\sup_{0\leq\tau\leq{T}}\int_{\omega}\tilde{\eta}^{2}|\Delta^{s}_{\iota}u(x,\tau)|^{2}dx
+\int_{0}^{T}\int_{\omega}\tilde{\eta}^{2}|D\Delta^{s}_{\iota}u(x,t)|^{2}dxdt\\
&\hspace{2cm}\leq\,C\bigg\{\|Du\|^{2}_{L^{2}(\Omega_{T})}+\|\Delta^{s}_{\iota}\varphi\|^{2}_{L^{2}(\omega)}
+\|\Delta^{s}_{\iota}f\|^{2}_{L^{2}(\Omega_{T})}+\|\Delta^{s}_{\iota}g\|^{2}_{L^{2}(\Omega_{T})}\bigg\},
\end{align*}
where $C$ depends only on $n,N,\lambda,\Lambda_{1},\epsilon$. So
that
\begin{align}\label{lem42.3}
&\sup_{0\leq\tau\leq{T}}\int_{\omega}\tilde{\eta}^{2}|D_{x_{s}}u(x,\tau)|^{2}dx+\int_{0}^{T}\int_{\omega}\tilde{\eta}^{2}|DD_{x_{s}}u(x,t)|^{2}dxdt\nonumber\\
&\hspace{2cm}\leq\,C\bigg\{\|Du\|^{2}_{L^{2}(\Omega_{T})}+\|D_{x_{s}}\varphi\|^{2}_{L^{2}(\omega)}
+\|D_{x_{s}}f\|^{2}_{L^{2}(\Omega_{T})}+\|D_{x_{s}}g\|^{2}_{L^{2}(\Omega_{T})}\bigg\}.
\end{align}
Hence, for any $\gamma'$, $|\gamma'|=1$, combining with Lemma
\ref{lem32}, we obtain \eqref{lem42-1} for $k=1$. For the general
$k$, we can make use of further differentiation in the $x'$
direction, and obtain the estimates \eqref{lem42-1} by induction.
\end{proof}

\begin{lemma}\label{lem43}
Under \eqref{coeff1} \eqref{coeff2} and \eqref{coeff3}, if
$u\in{V}(\Omega_{T})$ is a weak solution of problem \eqref{IBVP0},
then for any $0<\epsilon<1$, $0<\delta<T$, and integer $k\geq0$, for
$0\leq|\gamma'|\leq{k}$,
$D_{x'}^{\gamma'}u\in{W}_{2}^{1,0}(\omega_{\epsilon}\times(\delta,T))$,
and we have
\begin{align}\label{lem43-0}
&\sum_{|\gamma'|\leq{k}}\sup_{\delta\leq\tau\leq{T}}\int_{\omega_{\epsilon}}|D^{\gamma'}_{x'}u(x,\tau)|^{2}dx
+\sum_{|\gamma'|\leq{k}}\int_{\delta}^{T}\int_{\omega_{\epsilon}}|DD_{x'}^{\gamma'}u|^{2}dxdt\nonumber\\
&\hspace{1cm}\leq\,C\bigg(\|\varphi\|_{L^{2}(\omega)}^{2}
+\sum_{|\gamma'|\leq{k}}\|D^{\gamma'}_{x'}f\|_{L^{2}(\Omega_{T})}^{2}+\sum_{|\gamma'|\leq{k}}\|D^{\gamma'}_{x'}g\|_{L^{2}(\Omega_{T})}^{2}\bigg),
\end{align}
where $C$ depends only on $n,N,\lambda,\Lambda_{k},T,k$, $\epsilon$
and $\delta$.
\end{lemma}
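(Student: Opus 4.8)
The plan is to combine the interior tangential-derivative estimate of Lemma \ref{lem42} (which controls $x'$-derivatives up to order $k$ but only in combination with $\|Du\|_{L^2(\Omega_T)}$ on the right, and over the whole time interval $(0,T)$) with a Steklov-averaged energy argument localized in $t$ away from $0$, exactly in the spirit of the passage from Lemma \ref{lem32} to Lemma \ref{lem33}. Concretely, fix a multi-index $\gamma'$ with $|\gamma'|\le k$ and apply iterated difference quotients $\Delta^{s}_{\iota}$ in the $x'$-directions to \eqref{IBVP0}, obtaining an equation of the form \eqref{lem42.1} for $v=\Delta^{\gamma'}_{\iota}u$ with forcing terms built from $\Delta^{\gamma'}_{\iota}f$, $\Delta^{\gamma'}_{\iota}g$, and commutator terms $(\Delta^{\gamma'}_{\iota}A)D_\beta(\cdots)$ involving lower-order $x'$-difference quotients of $u$, all of which are already controlled in $L^2(\Omega_T)$ by Lemma \ref{lem42}. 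Then I would test this equation against $\overline{\tilde\eta^{2}\overline{\eta}^{2}v}_{h}$, where $\tilde\eta(x)$ is the spatial cutoff of \eqref{etatilde} supported in $\omega_{\epsilon/2}$ and $\overline{\eta}(t)$ is the temporal cutoff of \eqref{etabar} vanishing for $t\le\delta/2$, send $h\to0$ as in the proof of Lemma \ref{lem31}, and use the parabolic condition \eqref{coeff2} together with the Cauchy inequality.

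The key steps, in order: (i) write down the difference-quotiented system and identify every term on the right-hand side, absorbing the commutators using \eqref{coeff3} and the already-established Lemma \ref{lem42} bound on $\|DD^{\gamma''}_{x'}u\|_{L^2(\Omega_T)}$ for $|\gamma''|<|\gamma'|$; (ii) carry out the Steklov-average computation as in Lemma \ref{lem31}, so that the term $\int v (\overline{\tilde\eta^{2}\overline{\eta}^{2}v}_h)_t$ produces, after $h\to0$, a boundary term $\tfrac12\int_\omega \tilde\eta^{2}\overline{\eta}^{2}|v|^{2}(x,\tau)\,dx$ plus an error $\int \overline{\eta}\,\overline{\eta}_t\,\tilde\eta^{2}|v|^{2}$ that is bounded, via $|\overline{\eta}_t|\le C/\delta$, by $\|D^{\gamma'}_{x'}u\|^2_{L^2(\Omega_T)}$, again controlled by Lemma \ref{lem42}; (iii) handle the principal term $\int A D_\beta v\, D_\alpha(\tilde\eta^{2}\overline{\eta}^{2}v)$ by expanding the derivative, keeping the good term $\lambda\int\tilde\eta^{2}\overline{\eta}^{2}|Dv|^{2}$ and estimating the cross terms with $D\tilde\eta$ by Cauchy with a small parameter, absorbing $\tfrac{\lambda}{2}\int\tilde\eta^{2}\overline{\eta}^{2}|Dv|^{2}$; (iv) let $\iota\to0$ to replace $\Delta^{\gamma'}_{\iota}u$ by $D^{\gamma'}_{x'}u$ (this also yields $D^{\gamma'}_{x'}u\in W^{1,0}_2(\omega_\epsilon\times(\delta,T))$ by the usual weak-compactness characterization of difference quotients), take the supremum over $\tau\in[\delta,T]$, and sum over $|\gamma'|\le k$; (v) finally feed in Lemma \ref{lem42} to eliminate $\|Du\|_{L^2(\Omega_T)}$ and the lower-order terms in favor of $\|\varphi\|_{L^2(\omega)}$, $\|D^{\gamma'}_{x'}f\|_{L^2(\Omega_T)}$, $\|D^{\gamma'}_{x'}g\|_{L^2(\Omega_T)}$. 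One may also run this as an induction on $k$: the case $k=0$ is Lemma \ref{lem33} (with $Du_t$ there replaced by nothing — in fact $k=0$ is just Lemma \ref{lem32} restricted to $(\delta,T)$, or more precisely the gradient sup-bound part is covered once the argument is specialized), and the inductive step is exactly the computation above applied to one more $x'$-difference quotient.

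The main obstacle I expect is bookkeeping rather than anything conceptual: one must verify that \emph{every} commutator term generated by $\Delta^{\gamma'}_{\iota}$ hitting the product $A^{\alpha\beta}_{ij}D_\beta u^{j}$ — these are sums over Leibniz-type splittings of lower-order $x'$-difference quotients of $A$ (bounded by $\Lambda_k$ via \eqref{coeff3}) times lower-order $x'$-difference quotients of $Du$ — is indeed estimated by the right-hand side of \eqref{lem42-1} at the previous induction level, so that no circularity arises and the constant depends only on $n,N,\lambda,\Lambda_k,T,k,\epsilon,\delta$. A secondary technical point is that the Steklov-average limit $h\to0$ in the time-derivative term requires $v=D^{\gamma'}_{x'}u\in V^{1,0}$ on the relevant subdomain, which is not immediate; as in Remark \ref{rem1} and Lemma \ref{lem31} one should first justify this interior regularity in $t$ for the difference-quotiented solution (it holds because the right-hand side of \eqref{lem42.1}, after localization, lies in $L^2$ and the coefficients are admissible), or alternatively work throughout with the difference quotients $\Delta^{\gamma'}_{\iota}u$, for which the $V^{1,0}$ property is inherited from $u$ by Lemma \ref{lem31}, and only pass to $\iota\to0$ at the very end.
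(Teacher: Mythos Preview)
Your temporal-cutoff strategy works, but be careful in steps (ii) and (v): you cannot close by invoking Lemma~\ref{lem42}, because its right-hand side carries $\sum_{|\gamma'|\le k}\|D_{x'}^{\gamma'}\varphi\|_{L^2}^2$, not just $\|\varphi\|_{L^2}^2$. The error $\int\bar\eta\,\bar\eta_t\,\tilde\eta^2|v|^2$ with $v=\Delta_\iota^{\gamma'}u$, $|\gamma'|=k$, must instead be bounded via the inductive hypothesis (Lemma~\ref{lem43} at level $k-1$, applied on $\omega_{\epsilon/2}\times(\delta/2,T)$), using that $D_{x'}^{\gamma'}u$ is a component of $DD_{x'}^{\gamma''}u$ with $|\gamma''|=k-1$; the commutator terms in step (i) are handled the same way. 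So the induction you sketch at the end is not an alternative but essential to obtain $\|\varphi\|_{L^2}$ alone on the right.

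The paper proceeds differently: no temporal cutoff. It uses the full-interval estimate (Lemma~\ref{lem32}) to bound $\int_{\delta/10}^{\delta/9}\int_\omega|D_{x'}u|^2$ by $C(\|\varphi\|_{L^2}^2+\cdots)$, then applies Fubini to select a time $\bar t_1\in(\delta/10,\delta/9)$ with $\int_\omega|D_{x'}u(\cdot,\bar t_1)|^2$ so bounded, and restarts the problem on $\omega\times(\bar t_1,T)$ with this controlled slice as new initial data. The Lemma~\ref{lem42}-type computation then applies verbatim with $\|D_{x'}u(\cdot,\bar t_1)\|_{L^2}$ in place of $\|D_{x'}\varphi\|_{L^2}$, and the induction iterates this Fubini-select-and-restart step. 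Your cutoff method trades the slice selection for an $\bar\eta_t$-error term absorbed by induction; the paper's method reuses the whole-interval estimate cleanly at the cost of a nonexplicit initial time. Both are standard and comparable in effort.
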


\begin{proof}
For $k=0$, \eqref{lem43-0} has been established in Lemma \ref{lem32}
with $D=\omega$. For any $0<\delta<T$, by Lemma \ref{lem32}, we
have, for $|\gamma'|=1$,
$$\int_{\frac{\delta}{10}}^{\frac{\delta}{9}}\int_{\omega}|D_{x'}u(x,t)|^{2}dxdt
\leq\,C\bigg(\|\varphi\|^{2}_{L^{2}(\omega)}+\|f\|^{2}_{L^{2}(\Omega_{T})}+\|g\|^{2}_{L^{2}(\Omega_{T})}\bigg).$$
Then by Fubini theorem, there exist
$\overline{t}_{1}\in(\frac{\delta}{10},\frac{\delta}{9})$ such that
\begin{align}\label{lem43.1}
\int_{\omega}|D_{x'}u(x,\overline{t}_{1})|^{2}dx
\leq\,C\bigg(\|\varphi\|^{2}_{L^{2}(\omega)}+\|f\|^{2}_{L^{2}(\Omega_{T})}+\|g\|^{2}_{L^{2}(\Omega_{T})}\bigg).
\end{align}
Consider \eqref{lem42.1} in $\omega\times(\overline{t}_{1},T)$, by
the same process, we obtain, similar as \eqref{lem42.3},
\begin{align*}
&\sup_{\overline{t}_{1}\leq\tau\leq{T}}\int_{\omega}\tilde{\eta}^{2}|D_{x_{s}}u(x,\tau)|^{2}dx
+\int_{\overline{t}_{1}}^{T}\int_{\omega}\tilde{\eta}^{2}|DD_{x_{s}}u(x,t)|^{2}dxdt\nonumber\\
&\leq\,C\bigg\{\|Du\|^{2}_{L^{2}(\Omega_{T})}+\|D_{x_{s}}u(x,\overline{t}_{1})\|^{2}_{L^{2}(\omega)}
+\|D_{x_{s}}f\|^{2}_{L^{2}(\Omega_{T})}+\|D_{x_{s}}g\|^{2}_{L^{2}(\Omega_{T})}\bigg\}.
\end{align*}
where $C$ depends only on $n,N,\lambda,\Lambda_{1}$, $\epsilon$ and
$\delta$. Then combining with \eqref{lem43.1} and \eqref{lem32.1},
we proved Lemma \ref{lem43} for $k=1$, and
\begin{align}\label{lem43.3}
\sum_{|\gamma'|\leq{1}}\sup_{\overline{t}_{1}\leq\tau\leq{T}}\int_{\omega_{\epsilon}}&|D^{\gamma'}_{x'}u(x,\tau)|^{2}dx
+\sum_{|\gamma'|\leq{1}}\int_{\overline{t}_{1}}^{T}\int_{\omega_{\epsilon}}|DD_{x'}^{\gamma'}u|^{2}dxdt\nonumber\\
&\leq\,C\bigg(\|\varphi\|_{L^{2}(\omega)}^{2}
+\sum_{|\gamma'|\leq{1}}\|D^{\gamma'}_{x'}f\|_{L^{2}(\Omega_{T})}^{2}+\sum_{|\gamma'|\leq{1}}\|D^{\gamma'}_{x'}g\|_{L^{2}(\Omega_{T})}^{2}\bigg),
\end{align}
where $C$ depends only on $n,N,\lambda,\Lambda_{1},T$, $\epsilon$
and $\delta$. Then by \eqref{lem43.3}, we have for $|\gamma'|=2$,
$$\int_{\frac{\delta}{9}}^{\frac{\delta}{8}}\int_{\omega}|D^{\gamma'}_{x'}u(x,t)|^{2}dxdt
\leq\,C\bigg(\|\varphi\|^{2}_{L^{2}(\omega)}+\sum_{|\gamma'|\leq{1}}\|D^{\gamma'}_{x'}f\|_{L^{2}(\Omega_{T})}^{2}
+\sum_{|\gamma'|\leq{1}}\|D^{\gamma'}_{x'}g\|_{L^{2}(\Omega_{T})}^{2}\bigg).$$
By Fubini theorem, there exists
$\overline{t}_{2}\in(\frac{\delta}{9},\frac{\delta}{8})$. Repeating
the above process in $\omega\times(\overline{t}_{2},T)$, we obtain
the estimate \eqref{lem43-0} for $k=2$. For further $k>2$, we can
establish \eqref{lem43-0} by induction.
\end{proof}

To save space and the reader's patience, in the following we shall
simply differentiate the equation in place of taking difference
quotients in $x'$ to obtain higher regularity.

\begin{lemma}\label{lem44}
Under \eqref{coeff1} \eqref{coeff2} and \eqref{coeff3}, if
$u\in{V}(\Omega_{T})$ is a weak solution of problem \eqref{IBVP0},
then for any $0<\delta<T$ and $0<\epsilon<1$, we have, for
$|\gamma'|\leq{k}$, and $l\geq0$,
\begin{align}\label{lem44-1}
&\sum_{|\gamma'|\leq\,k,s\leq{l}-1}\sup_{\delta\leq\tau\leq{T}}\int_{\omega_{\epsilon}}|DD_{x'}^{\gamma'}D_{t}^{s}u(x,\tau)|^{2}dx+
\sum_{|\gamma'|\leq\,k,s\leq{l}}\int_{\delta}^{T}\int_{\omega_{\epsilon}}|D_{x'}^{\gamma'}D_{t}^{s}u|^{2}dxdt\nonumber\\
&\leq\,C\bigg(\|\varphi\|_{L^{2}(\omega)}^{2}
+\sum_{|\gamma'|\leq\,k,s\leq{l-1}}\|D_{x'}^{\gamma'}f\|_{L^{2}(\Omega_{T})}^{2}
+\sum_{|\gamma'|\leq\,k,s\leq{l}}\|D_{x'}^{\gamma'}D_{t}^{s}g\|^{2}_{L^{2}(\Omega_{T})}\bigg),
\end{align}
where $C$ depends only on
$n,N,\lambda,\Lambda_{k+2l+2},T,k,\epsilon$ and $\delta$.
\begin{align}\label{lem44-2}
&\sup_{\delta\leq\tau\leq\,T}\sum_{|\gamma'|\leq\,k,s\leq{l}}\int_{\omega_{\epsilon}}|D_{x'}^{\gamma'}D_{t}^{s}u(x,\tau)|^{2}dx
+\sum_{|\gamma'|\leq\,k,s\leq{l}}\int_{\delta}^{T}\int_{\omega_{\epsilon}}|DD_{x'}^{\gamma'}D_{t}^{s}u|^{2}dxdt\nonumber\\
&\leq\,C\bigg(\|\varphi\|_{L^{2}(\omega)}^{2}
+\sum_{|\gamma'|\leq\,k,s\leq{l}}\|D_{x'}^{\gamma'}D_{t}^{s}f\|_{L^{2}(\Omega_{T})}^{2}
+\sum_{|\gamma'|\leq\,k,s\leq{l}}\|D_{x'}^{\gamma'}D_{t}^{s}g\|^{2}_{L^{2}(\omega_{m}\times(0,T))}\bigg),
\end{align}
where $C$ depends only on
$n,N,\lambda,\Lambda_{k+2l+2},T,k,\epsilon$ and $\delta$.
\end{lemma}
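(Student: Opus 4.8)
The plan is to prove Lemma \ref{lem44} by induction on $l$, the order of the $t$-derivative, using at each stage the spatial regularity already available from Lemma \ref{lem43} together with the equation itself. The base case $l=0$ is precisely Lemma \ref{lem43}, which gives the estimate \eqref{lem44-2} (with $s=0$) on $\omega_\epsilon\times(\delta,T)$; note that for $l=0$ estimate \eqref{lem44-1} is vacuous. For the inductive step, I would apply the difference quotient $\Delta_t^h$ in the $t$-direction to the equation in \eqref{IBVP0}, exactly as in \eqref{lem34-1}, obtaining an equation for $\Delta_t^h u$ of the same structure but with the new right-hand-side terms $D_\alpha\big((\Delta_t^h A^{\alpha\beta}_{ij})D_\beta u^j\big)$ and $\Delta_t^h f^i$. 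Because $A$ is smooth in $t$ with bounds \eqref{coeff3}, the commutator term $(\Delta_t^h A)D_\beta u$ converges, as $h\to 0$, to $(D_t A)D_\beta u$, which is controlled in $L^2$ by $\|Du\|_{L^2}$ with constant depending on $\Lambda_{k+2l+2}$; the other terms are handled by the corresponding $D_t$-derivatives of $f$ and $g$.

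The mechanics of each inductive step mirror the proof of Lemma \ref{lem43}: first use the already-established estimate at the previous level on a slightly larger domain $\omega_{\epsilon/2}\times(\delta/2,T)$ to extract, via Fubini, a good time-slice $\overline{t}\in(\delta/2,\delta)$ on which the relevant norms of $D_t^{l-1}u$ (and its $x'$-derivatives) are finite. Then one multiplies the differentiated equation by $\tilde\eta^2\,\Delta_t^h(D_{x'}^{\gamma'}D_t^{l-1}u)$ with the cutoff $\tilde\eta$ from \eqref{etatilde} supported in $\omega_{\epsilon/2}$, integrates over $\omega\times(\overline t,\tau)$, integrates by parts in $x$, and uses the Steklov-average manipulations from Lemma \ref{lem31} to produce, after sending $h\to 0$, the energy identity. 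Applying the parabolic condition \eqref{coeff2} and Cauchy's inequality as in \eqref{lem4-1}–\eqref{lem4-2} yields the bound with the $\int |Du|^2$-type terms on the right replaced by lower-order quantities already controlled by induction. This gives \eqref{lem44-1}; then bootstrapping once more---i.e. taking one additional $t$-derivative inside the energy estimate or equivalently using \eqref{lem44-1} for level $l$ together with the previous argument---promotes $\int_\delta^T\!\int |D_{x'}^{\gamma'}D_t^l u|^2$ to the stronger $\sup_\tau\int |D_{x'}^{\gamma'}D_t^l u(\cdot,\tau)|^2 + \int_\delta^T\!\int |DD_{x'}^{\gamma'}D_t^l u|^2$, which is \eqref{lem44-2}.

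Throughout, the mixed $x'$-derivatives $D_{x'}^{\gamma'}$ cause no difficulty: since the coefficients are smooth in $x'$ and one may legitimately differentiate the equation in tangential directions (or take tangential difference quotients, as remarked before the statement), commuting $D_{x'}^{\gamma'}$ past the operator produces only lower-order terms already estimated at the previous induction level, and all constants pick up at most $\Lambda_{k+2l+2}$ from the $r+2s$ bookkeeping in \eqref{coeff3}. The one point requiring care---and the main technical obstacle---is the justification of the limit $h\to 0$ in the Steklov-averaged energy identity: one must know a priori that $D_t^{l-1}u$ and $DD_{x'}^{\gamma'}D_t^{l-1}u$ lie in the right $L^2$ spaces so that the difference-quotient terms converge and the boundary term at $t=\overline t$ is finite. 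This is exactly what the preceding induction level (and the choice of the good slice $\overline t$ via Fubini) supplies, so the scheme closes. The remaining steps---absorbing the $\frac{\lambda}{2}\int|Du_t|^2$ term, estimating $\|g(\cdot,\tau)\|_{L^2}$ by $\|g\|_{L^2}+\|g_t\|_{L^2}$ as in Lemma \ref{lem33}, and combining with Lemmas \ref{lem32}, \ref{lem43}---are routine and identical in spirit to the computations already carried out above.
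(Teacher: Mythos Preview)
Your proposal is correct and follows essentially the same strategy as the paper: induction on the order of $t$-derivatives, tangential $x'$-differentiation (via difference quotients), spatial cutoffs $\tilde\eta$, and the energy/Steklov manipulations already developed in Lemmas~\ref{lem32}--\ref{lem34} and~\ref{lem43}. The only minor clarification is that the paper separates the two estimates by using two different test functions: for \eqref{lem44-1} it tests the $x'$-differentiated equation with $\bar\eta^{2}(D_{x'}^{\gamma'}u_{-h})_{t}$ exactly as in Lemma~\ref{lem33} (yielding $\sup_\tau\int|DD_{x'}^{\gamma'}u|^2+\int\!\!\int|D_{x'}^{\gamma'}u_t|^2$), while for \eqref{lem44-2} it tests the $\Delta_t^h$-differentiated equation \eqref{lem34-1} (after applying $D_{x'}^{\gamma'}$) with $\tilde\eta^{2}D_{x'}^{\gamma'}\Delta_t^h u$ as in Lemma~\ref{lem34}; your description blends these two steps, but the ingredients and the induction scheme are the same.
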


\begin{proof}
For any $\gamma'$, $|\gamma'|=1$, applying $D_{x'}^{\gamma'}$ on
both sides of the equation in \eqref{IBVP0}, we have
\begin{equation*}
D_{x'}^{\gamma}u_{t}-D_{\alpha}(A^{\alpha\beta}_{ij}D_{\beta}D_{x'}^{\gamma'}u^{j})
=D_{\alpha}(D_{x'}^{\gamma'}A^{\alpha\beta}_{ij}D_{\beta}u^{j}-g^{\alpha\,i})+D_{x'}^{\gamma'}f.
\end{equation*}
Similarly as the process to prove Lemma \ref{lem33}, in virtue of a
cutoff function, defined by \eqref{etatilde}, instead of
\eqref{lem4-2}, we obtain
\begin{align*}
&\sum_{|\gamma'|\leq\,k}\sup_{\delta\leq\tau\leq{T}}\int_{\omega_{\epsilon}}|DD_{x'}^{\gamma'}u(x,\tau)|^{2}dx+
\sum_{|\gamma'|\leq\,k}\int_{\delta}^{T}\int_{\omega_{\epsilon}}|D_{x'}^{\gamma'}u_{t}|^{2}dxdt\nonumber\\
&\leq\,C\bigg(\sum_{|\gamma'|\leq\,k}\int_{\frac{\delta}{2}}^{T}\int_{\omega_{\frac{\epsilon}{2}}}|DD_{x'}^{\gamma'}u|dxdt
+\sum_{|\gamma'|\leq\,k}\|D_{x'}^{\gamma'}f\|_{L^{2}(\Omega_{T})}^{2}
+\sum_{|\gamma'|\leq\,k,s\leq1}\|D_{x'}^{\gamma'}D_{t}^{s}g\|^{2}_{L^{2}(\Omega_{T})}\bigg),
\end{align*}
where $C$ depends only on $n,N,\lambda,\Lambda_{k+2},k,T$,
$\epsilon$ and $\delta$. Combining with Lemma \ref{lem43}, we know
$D_{x'}^{\gamma'}u\in\,W^{1,1}_{2}(\omega_{m}\cap\omega_{\epsilon}\times(0,T))$,
and we have \eqref{lem44-1}.

Further applying $D_{x'}^{\gamma'}$ to \eqref{lem34-1}, we have
\begin{align}\label{lem44-6}
D_{x'}^{\gamma'}\Delta_{t}^{h}u_{t}&-D_{\alpha}(A^{\alpha\beta}_{ij})D_{\beta}D_{x'}^{\gamma'}\Delta_{t}^{h}u^{j})
=D_{\alpha}\left(\left(\Delta_{t}^{h}A^{\alpha\beta}_{ij}\right)D_{\beta}D_{x'}^{\gamma'}u^{j}\right)\nonumber\\
&+D_{\alpha}\bigg(\left(\Delta_{t}^{h}D_{x'}^{\gamma'}A^{\alpha\beta}_{ij}\right)D_{\beta}u^{j}
+\left(D_{x'}^{\gamma'}A^{\alpha\beta}_{ij}\right)\left(\Delta_{t}^{h}D_{\beta}u^{j}\right)-\Delta_{t}^{h}g^{\alpha\,i}\bigg)+D_{x'}^{\gamma'}\Delta_{t}^{h}f^{i},
\end{align}
Multiplying $\widetilde{\eta}^{2}D_{x'}^{\gamma'}\Delta_{t}^{h}u$ on
both sides of \eqref{lem44-6}, integrating by parts, we have
\eqref{lem44-2}. For general $k$ and $l$, we can obtain
\eqref{lem44-1} and \eqref{lem44-2} by induction.
\end{proof}

Now denoting
\begin{equation}\label{w31}
w=(w^{i})=\left(A^{n\beta}_{ij}D_{\beta}u^{j}-g^{ni}\right),
\end{equation}
then we have
\begin{lemma}\label{lem45}
For $0<\delta<T$, $0<\epsilon<1$, and for $|\gamma'|\leq\,k$,
$s\leq{l}$,
$D_{x'}^{\gamma'}D_{t}^{s}w,D_{x'}^{\gamma'}D_{t}^{s}\partial_{n}w\in\,L^{2}_{loc}((\omega_{\epsilon}\cap\omega_{m})\times(\delta,T))$,
\begin{align}\label{lem45-1}
&\sum_{|\gamma'|\leq\,k,s\leq{l}}\int_{\delta}^{T}\int_{\omega_{\epsilon}}|D_{x'}^{\gamma'}D_{t}^{s}w|^{2}dxdt
+\sum_{|\gamma'|\leq\,k-1,s\leq{l}}\int_{\delta}^{T}\int_{\omega_{\epsilon}\cap\omega_{m}}|D_{x'}^{\gamma'}D_{t}^{s}\partial_{n}w|^{2}dxdt\nonumber\\
&\leq\,C\bigg(\|\varphi\|_{L^{2}(\omega)}^{2}
+\sum_{|\gamma'|\leq\,k,s\leq{l}}\|D_{x'}^{\gamma'}f\|_{L^{2}(\Omega_{T})}^{2}
+\sum_{|\gamma'|\leq\,k,s\leq\,l+1}\|D_{x'}^{\gamma'}D_{t}^{s}g\|^{2}_{L^{2}(\Omega_{T})}\bigg),
\end{align}
where $C$ depends only on $n,N,\lambda,\Lambda_{k+2l+1},T,k$,
$\epsilon$ and $\delta$.
\end{lemma}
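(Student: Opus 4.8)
The plan is to use the equation \eqref{IBVP0} itself to express $\partial_n w$ through $u_t$, $f$, and \emph{purely tangential} derivatives of the flux, all of which have already been estimated in Lemma \ref{lem44}. Splitting the spatial divergence as $D_\alpha = \sum_{\alpha'=1}^{n-1}D_{\alpha'} + D_n$ in \eqref{IBVP0}, and recalling $w^i = A^{n\beta}_{ij}D_\beta u^j - g^{ni}$, the $i$-th equation reads, in the distributional sense on $\Omega_T$,
$$
(u^i)_t - \partial_n w^i - \sum_{\alpha'=1}^{n-1} D_{\alpha'}\!\left(A^{\alpha'\beta}_{ij}D_\beta u^j - g^{\alpha' i}\right) = f^i ,
$$
hence
$$
\partial_n w^i = (u^i)_t - f^i - \sum_{\alpha'=1}^{n-1} D_{\alpha'}\!\left(A^{\alpha'\beta}_{ij}D_\beta u^j - g^{\alpha' i}\right).
$$
Every term on the right lies in $L^2_{loc}\big((\omega_\epsilon\cap\omega_m)\times(\delta,T)\big)$: $u_t$ and the mixed second derivatives $D_{\alpha'}D_\beta u^j = D_\beta(D_{\alpha'}u^j)$, which are components of $DD_{x'}u$, are controlled by Lemma \ref{lem44}; the coefficient factors $D_{\alpha'}A^{\alpha'\beta}_{ij}$ are bounded by \eqref{coeff3}; $f\in L^\infty$; and $D_{\alpha'}g$ is finite since $g$ is smooth in each $\overline\omega_m\times[0,T]$. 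Together with $|w|\le C(|Du|+|g|)$ and Lemma \ref{lem42}, this proves \eqref{lem45-1} when $k=l=0$.

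For the general case, apply $D_{x'}^{\gamma'}D_t^s$ and expand by the Leibniz rule. To bound $w$, take $|\gamma'|\le k$, $s\le l$: every resulting term is either $D_{x'}^{\gamma'}D_t^s g^{ni}$ or a product $(D_{x'}^{\gamma_1}D_t^{s_1}A^{n\beta}_{ij})\,(D_\beta D_{x'}^{\gamma_2}D_t^{s_2}u^j)$ with $\gamma_1+\gamma_2=\gamma'$, $s_1+s_2=s$, whose first factor is bounded by \eqref{coeff3} and whose second factor is a component of $DD_{x'}^{\gamma_2}D_t^{s_2}u$, controlled in $L^2_{loc}$ by \eqref{lem44-2}; summing gives the first term of \eqref{lem45-1}. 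To bound $\partial_n w$, apply $D_{x'}^{\gamma'}D_t^s$ with $|\gamma'|\le k-1$, $s\le l$ to the identity above: the term $D_{x'}^{\gamma'}D_t^{s+1}u$ is controlled by Lemma \ref{lem44}, the term $D_{x'}^{\gamma'}D_t^s f$ is a hypothesis, and the one further tangential derivative falling on $A^{\alpha'\beta}_{ij}D_\beta u^j - g^{\alpha' i}$ produces only bounded derivatives of $A$ times factors $DD_{x'}^{\gamma''}D_t^{s''}u$ with $|\gamma''|\le k$, again within the range of Lemma \ref{lem44}. The crucial bookkeeping point is that the shift from $|\gamma'|\le k$ (for $w$) to $|\gamma'|\le k-1$ (for $\partial_n w$) is exactly what keeps every $u$-input inside the already-estimated tangential-derivative range. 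Induction on $k$ and $l$ then yields \eqref{lem45-1}.

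The main obstacle is the first step: although $D_n u$ carries no a priori regularity across the interfaces $\{x_n=c_m\}$, the combination $\partial_n w$ does, because the equation rewrites it entirely through $u_t$, $f$, and tangential derivatives of the flux. One should derive that identity first in the distributional sense, and only then upgrade it using the $L^2_{loc}$ membership of the right-hand side; the remaining work — Leibniz-rule accounting and keeping track of how many $\Lambda_k$'s are consumed, consistently with the dependence recorded in Lemma \ref{lem44} — is routine.
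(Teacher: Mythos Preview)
Your proposal is correct and follows essentially the same approach as the paper: rewrite the equation as $\partial_n w^i = u_t^i - f^i + \sum_{\alpha'<n}\partial_{\alpha'}(g^{\alpha' i}-A^{\alpha'\beta}_{ij}D_\beta u^j)$, observe that every term on the right is controlled by the tangential and time-derivative estimates already obtained in Lemmas~\ref{lem43} and~\ref{lem44}, and proceed by induction after applying $D_{x'}^{\gamma'}D_t^s$ via the Leibniz rule. The paper's proof is more terse but uses the same identity and the same bookkeeping.
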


\begin{proof}
Rewrite the equation in \eqref{IBVP0} as
\begin{equation}\label{w2}
\partial_{n}w=u_{t}^{i}+\sum_{\alpha\leq\,n-1}\partial_{\alpha}\left(g^{\alpha\,i}-A^{\alpha\beta}_{ij}D_{\beta}u^{j}\right)-f^{i}.
\end{equation}
For any $\gamma'$, $|\gamma'|=1$, applying $D_{x'}^{\gamma'}$ to
\eqref{w31} and \eqref{w2}, by virtue of \eqref{lem43-0} and
\eqref{lem44-1} with $l=1$, we obtain,
$D_{x'}^{\gamma'}\partial_{n}w\in\,L^{2}_{loc}(\Omega_{T})$ for
$k=2$. Further applying $\partial_{t}$, similarly as the above, by
virtue of \eqref{lem44-1} and \eqref{lem44-2}, we obtain
\eqref{lem45-1} for $k=2$ and $l=1$. For general $k$ and $l$, we
will obtain \eqref{lem45-1} by induction.
\end{proof}

Here we need the following embedding inequality, which is a
variation of well-known Sobolev's inequality. The proof also could
be found in \cite{ln}.

\begin{lemma}\label{lem9}
Let $f$ be a real function in a bounded domain
$\omega\subset\mathbb{R}^{n}$ with
$D_{x'}^{\gamma'}f\in{L}^{2}(\omega)$ and
$D_{x'}^{\gamma'}\partial_{n}f\in{L}^{2}(\omega)$ for all
$0\leq|\gamma'|\leq[\frac{n-1}{2}]+1=:\overline{k}$. Then
$f\in{C}^{0}(\overline{\omega})$, and
$$\|f\|_{L^{\infty}(\omega)}\leq\,C(n)\sum_{|\gamma'|\leq\,\overline{k}}\left(\|D_{x'}^{\gamma'}f\|_{L^{2}(\omega)}
+\|D_{x'}^{\gamma'}\partial_{n}f\|_{L^{2}(\omega)}\right).$$
Further, if $f(x,t)\in{L}^{2}(0,T;C^{0}(\omega))$ and
$f_{t}(x,t)\in{L}^{2}(0,T;C^{0}(\omega))$, then
$f\in{C}^{0}(\overline{\Omega_{T}})$, and
\begin{align*}
&\|f\|_{L^{\infty}(\Omega_{T})}\\
&\leq\,C(n)\sum_{|\gamma'|\leq\,\overline{k}}\left(\|D_{x'}^{\gamma'}f\|_{L^{2}(\Omega_{T})}
+\|D_{x'}^{\gamma'}\partial_{n}f\|_{L^{2}(\Omega_{T})}+\|D_{x'}^{\gamma'}f_{t}\|_{L^{2}(\Omega_{T})}
+\|D_{x'}^{\gamma'}\partial_{n}f_{t}\|_{L^{2}(\Omega_{T})}\right).
\end{align*}
\end{lemma}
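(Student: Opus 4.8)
The plan is to obtain this anisotropic bound from two ordinary Sobolev embeddings applied in succession in the two groups of variables $x'=(x_1,\dots,x_{n-1})$ and $x_n$, the crucial point being that the high-order embedding must be used \emph{first}. First I would reduce to the case in which $\omega$ is the unit cube $(-\tfrac12,\tfrac12)^n$ — which is the situation in which the lemma is actually invoked — so that for every $x_n$ the slice $\{x':(x',x_n)\in\omega\}$ is the fixed cube $\omega'=(-\tfrac12,\tfrac12)^{n-1}$; for a general bounded $\omega$ one precedes this by a standard reflection (Hestenes-type) extension across $\partial\omega$, chosen to respect the anisotropy (full regularity is needed only in the $x'$ variables) and therefore preserving $\|D_{x'}^{\gamma'}f\|_{L^2}$ and $\|D_{x'}^{\gamma'}\partial_n f\|_{L^2}$ up to a constant depending only on $n$.

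For the spatial estimate, note that $\overline{k}=[\tfrac{n-1}{2}]+1>\tfrac{n-1}{2}$, so Morrey's inequality in $\mathbb{R}^{n-1}$ gives, for a.e.\ $x_n$,
\[
\|f(\cdot,x_n)\|_{L^\infty(\omega')}^2\le C(n)\,G(x_n),\qquad G(x_n):=\sum_{|\gamma'|\le\overline{k}}\|D_{x'}^{\gamma'}f(\cdot,x_n)\|_{L^2(\omega')}^2 .
\]
Then $G\in W^{1,1}(-\tfrac12,\tfrac12)$ with $G'(x_n)=\sum_{|\gamma'|\le\overline{k}}2\int_{\omega'}D_{x'}^{\gamma'}f\,D_{x'}^{\gamma'}\partial_n f\,dx'$, so by Cauchy--Schwarz $|G'(x_n)|\le\sum_{|\gamma'|\le\overline{k}}\big(\|D_{x'}^{\gamma'}f(\cdot,x_n)\|_{L^2(\omega')}^2+\|D_{x'}^{\gamma'}\partial_n f(\cdot,x_n)\|_{L^2(\omega')}^2\big)$. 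Since $(-\tfrac12,\tfrac12)$ has length $1$, the one-dimensional bound $\sup|G|\le\int_{-1/2}^{1/2}|G|\,dx_n+\int_{-1/2}^{1/2}|G'|\,dx_n$ then gives $\|f\|_{L^\infty(\omega)}^2\le C(n)\sum_{|\gamma'|\le\overline{k}}(\|D_{x'}^{\gamma'}f\|_{L^2(\omega)}^2+\|D_{x'}^{\gamma'}\partial_n f\|_{L^2(\omega)}^2)$, and taking square roots yields the stated inequality. Continuity $f\in C^0(\overline\omega)$ follows by applying this same a priori bound to $f-f_\varepsilon$ with $f_\varepsilon$ smooth and converging to $f$ in the $L^2$ norms appearing on the right, so that the $f_\varepsilon$ converge to $f$ uniformly.

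For the parabolic statement I would run the identical argument on $\Omega_T=\omega\times(0,T)$, now treating $t$ as a second one-derivative direction alongside $x_n$. Morrey in $x'$ produces $G(x_n,t)=\sum_{|\gamma'|\le\overline{k}}\|D_{x'}^{\gamma'}f(\cdot,x_n,t)\|_{L^2(\omega')}^2$ with $\|f(\cdot,x_n,t)\|_{L^\infty(\omega')}^2\le C(n)G(x_n,t)$, and iterating the fundamental theorem of calculus in $x_n$ and in $t$ gives $\sup|G|\le C\big(\|G\|_{L^1}+\|\partial_n G\|_{L^1}+\|\partial_t G\|_{L^1}+\|\partial_n\partial_t G\|_{L^1}\big)$ over $(-\tfrac12,\tfrac12)\times(0,T)$. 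Expanding these derivatives of $G$ as before — in particular $\partial_n\partial_t G=\sum_{|\gamma'|\le\overline{k}}2\int_{\omega'}\big(D_{x'}^{\gamma'}\partial_n\partial_t f\,D_{x'}^{\gamma'}f+D_{x'}^{\gamma'}\partial_n f\,D_{x'}^{\gamma'}\partial_t f\big)dx'$ — and applying Cauchy--Schwarz on $\Omega_T$ produces precisely the four families $\|D_{x'}^{\gamma'}f\|_{L^2(\Omega_T)}$, $\|D_{x'}^{\gamma'}\partial_n f\|_{L^2(\Omega_T)}$, $\|D_{x'}^{\gamma'}f_t\|_{L^2(\Omega_T)}$, $\|D_{x'}^{\gamma'}\partial_n f_t\|_{L^2(\Omega_T)}$ on the right; continuity of $f$ on $\overline{\Omega_T}$ again follows by smooth approximation.

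The one genuine subtlety I anticipate is the \emph{order} of the two embeddings in the spatial step: one must apply the $\overline{k}$-th order (Morrey) embedding in the $x'$ variables first, while everything is still $L^2$-based, and only afterwards the single-derivative estimate in $x_n$. Slicing in $x_n$ first and then attempting an $L^1$-based Sobolev embedding in $x'$ loses too much, since $W^{\overline{k},1}(\mathbb{R}^{n-1})$ fails to embed into $L^\infty$ as soon as $n-1\ge 3$. The reflection extension across $\partial\omega$ for a non-cube domain is routine but, as noted, must be arranged so that only $x'$-regularity is required of it.
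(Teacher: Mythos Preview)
Your argument is correct. The paper does not actually prove this lemma: it merely states it as ``a variation of well-known Sobolev's inequality'' and refers to \cite{ln} for the proof, so there is no in-paper argument to compare against. Your approach---Morrey embedding in the $(n-1)$-dimensional $x'$ variables first (using $\overline{k}>\tfrac{n-1}{2}$), then the one-dimensional $W^{1,1}\hookrightarrow L^\infty$ bound on the resulting function $G(x_n)$, iterated once more in $t$ for the parabolic version---is exactly the standard way to obtain such anisotropic embeddings, and your identification of the order-of-embedding subtlety is precisely the point that makes the argument work.

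One small simplification for the parabolic step: rather than tracking $G(x_n,t)$ and its mixed derivative $\partial_n\partial_t G$, you may instead apply the full spatial inequality at each fixed $t$ to obtain
\[
\|f(\cdot,t)\|_{L^\infty(\omega)}^2\le C\,\widetilde G(t),\qquad \widetilde G(t):=\sum_{|\gamma'|\le\overline{k}}\Big(\|D_{x'}^{\gamma'}f(\cdot,t)\|_{L^2(\omega)}^2+\|D_{x'}^{\gamma'}\partial_n f(\cdot,t)\|_{L^2(\omega)}^2\Big),
\]
and then use a single one-dimensional $W^{1,1}(0,T)\hookrightarrow L^\infty$ bound on $\widetilde G$; the derivative $\widetilde G'(t)$ involves only the pairings $D_{x'}^{\gamma'}f\cdot D_{x'}^{\gamma'}f_t$ and $D_{x'}^{\gamma'}\partial_n f\cdot D_{x'}^{\gamma'}\partial_n f_t$, which Cauchy--Schwarz over $\Omega_T$ controls by exactly the four families in the statement. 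This avoids having to justify the mixed second derivative $\partial_n\partial_t G$, though your computation of it is also correct.
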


\begin{proof}[Proof of Proposition \ref{prop1}]
It is well known that for each $m$,
$u\in\,C^{\infty}((\omega_{m}\cap\omega_{\epsilon})\times(\delta,T))$.
For $k\geq\overline{k}=[\frac{n-1}{2}]+1$ and
$|\gamma'|\leq\,k-\overline{k}$, by Lemma \ref{lem43} and Lemma
\ref{lem44} and application of Lemma \ref{lem9} with
$f=D_{x'}^{\gamma'}u$, we have
$D_{x'}^{\gamma'}u\in\,C^{0}(\omega_{\epsilon}\times(\delta,T))$,
and
\begin{align}\label{prop1-2}
&\sum_{|\gamma'|\leq\,k-\overline{k}}\|D_{x'}^{\gamma'}u\|_{L^{\infty}(\omega_{\epsilon}\times(\delta,T))}\nonumber\\
&\leq\,C\bigg(\|\varphi\|_{L^{2}(\omega)}
+\sum_{|\gamma'|\leq\,k,s\leq1}\|D_{x'}^{\gamma'}D_{t}^{s}g\|_{L^{2}(\Omega_{T})}
+\sum_{|\gamma'|\leq\,k,s\leq1}\|D_{x'}^{\gamma'}D_{t}^{s}f\|_{L^{2}(\Omega_{T})}\bigg),
\end{align}
where $C$ depends only on $\epsilon,\delta,k,n,N,\lambda,T$ and
$\Lambda_{k+2}$. Similarly, for $k\geq\overline{k}+1$ and
$|\gamma'|\leq\,k-\overline{k}-1$, by Lemma \ref{lem45} and Lemma
\ref{lem9} with $f=D_{x'}^{\gamma'}w$, we have
$D_{x'}^{\gamma'}w\in\,C^{0}(\omega_{\epsilon}\times(\delta,T))$,
and
\begin{align}\label{prop1-3}
&\sum_{|\gamma'|\leq\,k-\overline{k}-1}\|D_{x'}^{\gamma'}w\|_{L^{\infty}(\omega_{\epsilon}\times(\delta,T))}\nonumber\\
&\leq\,C\bigg(\|\varphi\|_{L^{2}(\omega)}
+\sum_{|\gamma'|\leq\,k,s\leq2}\|D_{x'}^{\gamma'}D_{t}^{s}g\|_{L^{2}(\Omega_{T})}
+\sum_{|\gamma'|\leq\,k,s\leq1}\|D_{x'}^{\gamma'}D_{t}^{s}f\|_{L^{2}(\Omega_{T})}\bigg),
\end{align}
where $C$ depends only on $\epsilon,\delta,k,n,N,\lambda,T$ and
$\Lambda_{k+3}$. Consequently,
$DD_{x'}^{\gamma'}u\in{L}^{\infty}_{loc}(\omega\times(\delta,T))$,
and
\begin{align}\label{prop1-4}
&\sum_{|\gamma'|\leq\,k-\overline{k}-1}\|DD_{x'}^{\gamma'}u\|_{L^{\infty}((\omega_{m}\cap\omega_{\epsilon})\times(\delta,T))}\nonumber\\
&\leq\,C\bigg(\|\varphi\|_{L^{2}(\omega)}
+\sum_{|\gamma'|\leq\,k,s\leq2}\|D_{x'}^{\gamma'}D_{t}^{s}g\|_{L^{2}(\Omega_{T})}
+\sum_{|\gamma'|\leq\,k,s\leq1}\|D_{x'}^{\gamma'}D_{t}^{s}f\|_{L^{2}(\Omega_{T})}\bigg),
\end{align}
where $C$ has the same dependence as in \eqref{prop1-3}. Indeed, by
\eqref{prop1-2}, we only need to show that
$\partial_{n}D_{x'}^{\gamma'}u\in\,L^{\infty}_{loc}((\omega_{m}\cap\omega_{\epsilon})\times(\delta,T))$,
for $|\gamma'|\leq\,k-\bar{k}-1$. Since
$$A^{nn}_{ij}\partial_{n}D_{x'}^{\gamma'}u^{j}=D_{x'}^{\gamma'}w-\left(\left(D_{x'}^{\gamma'}A_{ij}^{n\beta}\right)\partial_{\beta}u^{j}
+\sum_{\beta\leq\,n-1}A_{ij}^{n\beta}\left(\partial_{\beta}D_{x'}^{\gamma'}u^{j}\right)
-D_{x'}^{\gamma'}g^{ni}\right),$$ it follows, by \eqref{prop1-2} and
\eqref{prop1-3}, that
$A_{ij}^{nn}\partial_{n}D_{x'}^{\gamma'}u^{j}\in\,L^{\infty}_{loc}(\omega\times(\delta,T))$
and
\begin{align*}
&\sum_{|\gamma'|\leq\,k-\overline{k}-1}\|A_{ij}^{nn}\partial_{n}D_{x'}^{\gamma'}u\|_{L^{\infty}((\omega_{m}\cap\omega_{\epsilon})\times(\delta,T))}\nonumber\\
& \leq\,C\bigg(\|\varphi\|_{L^{2}(\omega)}
+\sum_{|\gamma'|\leq\,k,s\leq2}\|D_{x'}^{\gamma'}D_{t}^{s}g\|_{L^{2}(\Omega_{T})}
+\sum_{|\gamma'|\leq\,k,s\leq1}\|D_{x'}^{\gamma'}D_{t}^{s}f\|_{L^{2}(\Omega_{T})}\bigg),
\end{align*}
where $C$ depends only on $\epsilon,\delta,k,n,N,\lambda,T$ and
$\Lambda_{k+3}$. Because of \eqref{coeff1} and \eqref{coeff2},
$(A^{nn}_{ij})$ is a positive definite $N\times\,N$ matrix with
eigenvalues in $[\lambda,\Lambda_{0}]$. Consequently,
$\partial_{n}D_{x'}^{\gamma'}u\in{L}^{\infty}_{loc}(\omega\times(\delta,T))$
and
\begin{align*}
&\sum_{|\gamma'|\leq\,k-\overline{k}-1}\|\partial_{n}D_{x'}^{\gamma'}u\|_{L^{\infty}((\omega_{m}\cap\omega_{\epsilon})\times(\delta,T))}\nonumber\\
& \leq\,C\bigg(\|\varphi\|_{L^{2}(\omega)}
+\sum_{|\gamma'|\leq\,k,s\leq2}\|D_{x'}^{\gamma'}D_{t}^{s}g\|_{L^{2}(\Omega_{T})}
+\sum_{|\gamma'|\leq\,k,s\leq1}\|D_{x'}^{\gamma'}D_{t}^{s}f\|_{L^{2}(\Omega_{T})}\bigg),
\end{align*}
where $C$ depends only on $\epsilon,\delta,k,n,N,\lambda,T$ and
$\Lambda_{k+3}$.

On the other hand, by Lemma \ref{lem44} and Lemma \ref{lem9}, we
have
\begin{align*}
&\sum_{|\gamma'|\leq\,k-\overline{k}-1}\|D_{x'}^{\gamma'}u_{t}\|_{L^{\infty}((\omega_{m}\cap\omega_{\epsilon})\times(\delta,T))}\nonumber\\
& \leq\,C\bigg(\|\varphi\|_{L^{2}(\omega)}
+\sum_{|\gamma'|\leq\,k-1,s\leq2}\|D_{x'}^{\gamma'}D_{t}^{s}g\|_{L^{2}(\Omega_{T})}
+\sum_{|\gamma'|\leq\,k-1,s\leq2}\|D_{x'}^{\gamma'}D_{t}^{s}f\|_{L^{2}(\Omega_{T})}\bigg),
\end{align*}
where $C$ depends only on $\epsilon,\delta,k,n,N,\lambda,T$ and
$\Lambda_{k+3}$.

Inequality \eqref{prop1-4} gives us the desired bound for tangential
derivatives in spatial space of $u$ and $\partial_{n}u$. To estimate
derivatives involving $\partial_{n}^{k}u$ for $k>1$, we simply
observe that these can be derived recursively from those already
established. Indeed, according to the equation in \eqref{IBVP0}, and
by the definition of weak solution,
$$u^{i}_{t}=D_{\alpha}(A^{\alpha\beta}_{ij}D_{\beta}u^{j}-g^{\alpha\,i})+f^{i}\quad\mbox{in}\ \omega\times(0,T),\quad\mbox{in the sense of distribution}.$$
So in every $\omega_{m}\times(0,T)$, the equation could be rewritten
piecewise as
\begin{equation}\label{energy7}
A^{nn}_{ij}D_{nn}u^{j}=u^{i}_{t}+D_{\alpha}g^{\alpha\,i}-\sum_{\alpha+\beta\leq\,2n-1}D_{\alpha}(A^{\alpha\beta}_{ij}D_{\beta}u^{j})
-(D_{n}A_{ij}^{nn})D_{n}u^{j}-f^{i}.
\end{equation}

Since the matrix $A^{nn}_{ij}$ has a bounded inverse, we can
estimate $D_{x'}^{\gamma'}\partial_{n}^{2}u$ pointwise for each open
strip. Applying $\partial_{n}$ and $D_{t}$ to \eqref{energy7}, we
can then estimate higher derivatives. We thus obtain
\begin{align*}
&\sum_{r+2s\leq\,k-\overline{k}+1}\|D^{r}_{x}D_{t}^{s}u\|_{L^{\infty}((\omega_{m}\cap\omega_{\epsilon})\times(\delta,T))}\nonumber\\
&\leq\,C\bigg(\|\varphi\|_{L^{2}(\omega)}
+\sum_{|\gamma'|\leq\,k,s\leq\frac{k-\overline{k}-1}{2}+1}\|D_{x'}^{\gamma'}D_{t}^{s}g\|_{L^{2}(\Omega_{T})}
+\sum_{|\gamma'|\leq{k},s\leq\frac{k-\overline{k}-1}{2}+1}\|D_{x'}^{\gamma'}D_{t}^{s}f\|_{L^{2}(\Omega_{T})}\bigg),
\end{align*}
where $C$ depends only on $\epsilon,\delta,k,n,N,\lambda,T$ and
$\Lambda_{k+3}$. Hence,
$u\in\,C^{\infty}((\overline{\omega}_{m}\cap\omega_{\epsilon})\times[\delta,T])$.
Proposition \ref{prop1} is established.
\end{proof}

\subsection{Interior Estimates for the equations in Domain
$D\times(0,T)$}\label{subsD}
\bigskip

In this subsection we consider the equation \eqref{IBVP} in
${D}\times(0,T)$, defined as in section \ref{sec2}. Here we still
assume that $f,g\in C^{\infty}(\overline{{D}}_{m}\times[0,T])$, and
$A_{ij}^{\alpha\beta}$ satisfies \eqref{coeff1} \eqref{coeff2} and
\eqref{coeff31}. we will establish the interior estimates for a weak
solution of the equation \eqref{IBVP}.

\begin{lemma}\label{lemin1}
Under \eqref{coeff1} \eqref{coeff2} and \eqref{coeff31}, suppose
$u(x,t)\in{V}(Q_{T})$ is a weak solution of \eqref{IBVP}, then, for
$0<\epsilon<1$, $0<\delta<T$, we have
\begin{align}\label{lemin1.0}
&\|u\|^{2}_{V({D}_{\epsilon}\times(\delta,T))}\leq\,C\bigg(\|u\|^{2}_{L^{2}(Q_{T})}+\|f\|^{2}_{L^{2}(Q_{T})}+\|g\|^{2}_{L^{2}(Q_{T})}\bigg),
\end{align}
where $C$ depends only on $\lambda$, $\epsilon$ and $\delta$.
\end{lemma}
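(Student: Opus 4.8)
The plan is to run the standard interior energy (Caccioppoli) estimate for the parabolic system \eqref{IBVP}, localizing in both the spatial and time variables. First I would fix a smooth cutoff $\eta(x,t)=\eta_1(x)\eta_2(t)$, where $\eta_1\in C_0^\infty(D)$ satisfies $0\le\eta_1\le1$, $\eta_1\equiv1$ on $D_\epsilon$, $\eta_1\equiv0$ outside $D_{\epsilon/2}$, with $|D\eta_1|\le C/\epsilon$, and $\eta_2\in C^\infty(\mathbb{R})$ satisfies $0\le\eta_2\le1$, $\eta_2\equiv1$ on $(\delta,T)$, $\eta_2\equiv0$ on $(0,\delta/2)$, with $|\eta_2'|\le C/\delta$ (as in \eqref{etabar}). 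By Remark \ref{rem1}, $u\in V^{1,0}(Q'_T)$ for $Q'_T=D_{\epsilon/4}\times(0,T)\subset\subset D\times(0,T)$, so the Steklov-average machinery used in Lemmas \ref{lem31} and \ref{lem32} is available on the support of $\eta$.

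The key step is to test \eqref{weaksoln1} (equivalently \eqref{lem31.3}) with $\zeta=(\overline{\eta^2 u})_h$, a Steklov average of $\eta^2 u$ extended constantly in $t$ past $\tau$ exactly as $\overline u$ was built in the proof of Lemma \ref{lem32}; then send $h\to0$. The parabolic term produces $\tfrac12\int_D \eta^2 u^2(x,\tau)\,dx$ plus a term $-\int_0^\tau\int_D \eta\,\eta_t\, u^2\,dx\,dt$ coming from $\partial_t(\eta^2)$, which is controlled by $\tfrac{C}{\delta}\|u\|_{L^2(Q_T)}^2$. The principal part gives $\int_0^\tau\int_D A^{\alpha\beta}_{ij}D_\beta u^j\,D_\alpha(\eta^2 u^i)\,dx\,dt$; expanding the derivative and using the parabolic condition \eqref{coeff2} on $\eta_1 u(\cdot,t)\in\overset{\circ}{W}{}^{1,0}_2$ yields a lower bound $\lambda\int_0^\tau\int_D \eta^2|Du|^2$, while the cross term $\int A^{\alpha\beta}_{ij}D_\beta u^j\,u^i\,\eta\,D_\alpha\eta$ is absorbed by Cauchy's inequality at the cost of $\tfrac{C}{\epsilon^2}\|u\|_{L^2(Q_T)}^2$. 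The right-hand side $\int_0^\tau\int_D\big(f\,\eta^2 u + g^\alpha D_\alpha(\eta^2 u)\big)$ is handled by Cauchy's inequality, again absorbing the $\eta^2|Du|^2$ contribution from the $g$ term into the left side and leaving $C(\|f\|_{L^2(Q_T)}^2+\|g\|_{L^2(Q_T)}^2+\|u\|_{L^2(Q_T)}^2)$, with constants depending on $\epsilon,\delta,\lambda,\Lambda_0$. Taking the supremum over $\tau\in(\delta,T)$ and using $\eta\equiv1$ on $D_\epsilon\times(\delta,T)$ gives \eqref{lemin1.0}.

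The main obstacle is purely technical rather than conceptual: justifying the test function $(\overline{\eta^2 u})_h\in\overset{\circ}{W}{}^{1,1}_2(Q_T)$ and carrying out the $h\to0$ limit in the time-derivative term to recover $\tfrac12\int_D\eta^2u^2(x,\tau)$, since $u$ is only known a priori to lie in $V(Q_T)$, not in $W^{1,1}_2$. This is resolved exactly as in Lemmas \ref{lem31}--\ref{lem32}: the spatial cutoff $\eta_1$ keeps everything supported in $D_{\epsilon/4}\times(0,T)$ where $u\in V^{1,0}$ by Remark \ref{rem1}, the constant-in-$t$ extension makes $\overline{\eta^2 u}$ admissible, and the convergence $\|(\overline{\eta^2u})_h-\eta^2u\|_{V(Q_T)}\to0$ together with the identity \eqref{lem31.5}-type computation passes all terms to the limit. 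One should note that no initial condition on $u$ is used: the cutoff $\eta_2$ vanishes near $t=0$, which is why the bound involves $\|u\|_{L^2(Q_T)}$ in place of an $L^2(D)$ norm of initial data. The remaining estimates are the same Cauchy-inequality absorptions already performed verbatim in the proof of Lemma \ref{lem32}, so no new difficulty arises.
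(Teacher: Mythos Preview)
Your proposal is correct and follows essentially the same route as the paper's proof: both localize with a space--time cutoff $\eta$ supported in $D_{\epsilon/2}\times(\delta/2,T)$, invoke Remark \ref{rem1} to get $u\in V^{1,0}$ on the support of $\eta$, test the weak formulation with a Steklov-averaged version of $\eta^2 u$, pass to the limit $h\to0$ to recover the energy identity, and then absorb cross terms via Cauchy's inequality and \eqref{coeff2}. The only cosmetic difference is that the paper writes the test function as $\overline{u}_h\,\eta^2$ rather than $(\overline{\eta^2 u})_h$, which is immaterial in the limit.
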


\begin{proof}
By Remark \ref{rem1}, for $0<\epsilon<1$, $0<\delta<T$, we have
$u\in{V}^{1,0}({D}_{\frac{\epsilon}{2}}\times(\frac{\delta}{2},T))$.
Take $\zeta=\overline{u}_{h}\eta^{2}(x,t)$ in \eqref{weaksoln1},
where $\eta(x,t)$ is a suitable cutoff function,
 satisfying $0\leq\eta\leq1$, and
\begin{equation}\label{eta}
\eta(x,t)=
\begin{cases}1,&\mbox{in}~{D}_{\epsilon}\times(\delta,T),\\
0,&\mbox{in}~Q_{T}\setminus{D}_{\frac{\epsilon}{2}}\times(\frac{\delta}{2},T),
\end{cases}\quad|\eta_{t}|\leq\frac{C}{\delta},\quad|D\eta|\leq\frac{C}{\epsilon}.
\end{equation}
Then, instead of \eqref{lem32.3}, we have
\begin{align*}
&\lim_{h\rightarrow0}\int_{0}^{\tau}\int_{{D}}u(\overline{u}_{h}\eta^{2})_{t}dxdt\nonumber\\
&=\frac{1}{2}\int_{{D}}(\eta^{2}u^{2})(x,\tau)dx-\frac{1}{2}\int_{{D}}(\eta^{2}u^{2})(x,0)dx
+2\int_{0}^{\tau}\int_{{D}}\eta\eta_{t}u^{2}dxdt\\
&=\frac{1}{2}\int_{{D}}(\eta^{2}u^{2})(x,\tau)dx+2\int_{0}^{\tau}\int_{{D}}\eta\eta_{t}u^{2}dxdt.
\end{align*}
And passing to the limit as $h\rightarrow 0$ in other terms, it
leads to
\begin{align*}
&\frac{1}{2}\int_{{D}}(\eta^{2}u^{2})(x,\tau)dx+\int_{0}^{\tau}\int_{{D}}A^{\alpha\beta}_{ij}\eta\,D_{\beta}u\eta\,D_{\alpha}udxdt
=-2\int_{0}^{\tau}\int_{{D}}\eta{u}A^{\alpha\beta}_{ij}D_{\beta}uD_{\alpha}\eta\,dxdt\\
&\hspace{3cm}+\int_{0}^{\tau}\int_{{D}}\bigg(fu\eta^{2}+\eta^{2}g^{\alpha}D_{\alpha}u+2\eta\eta_{t}u^{2}+2u\eta{}g^{\alpha}D_{\alpha}\eta\bigg)dxdt.
\end{align*}
From it follows the estimate \eqref{lemin1.0}.
\end{proof}

For the derivatives in $t$, we have

\begin{lemma}\label{lemin2}
Under \eqref{coeff1} \eqref{coeff2} and \eqref{coeff3}, if
$u\in{V}(Q_{T})$ is a weak solution of \eqref{IBVP}, then for any
$0<\epsilon<1$, $0<\delta<T$, we have
\begin{align}
&\sup_{\delta\leq\tau\leq\,T}\int_{{D}_{\epsilon}}|Du(x,\tau)|^{2}dx+\int_{\delta}^{T}\int_{{D}_{\epsilon}}|u_{t}|^{2}dxdt\nonumber\\
&\hspace{3cm}\leq\,C\bigg(\|u\|_{L^{2}(Q_{T})}^{2}
+\|f\|^{2}_{L^{2}(Q_{T})}+\sum_{s\leq1}\|D_{t}^{s}g\|^{2}_{L^{2}(Q_{T})}\bigg),
\end{align}
where $C$ depends only on $n,N,\lambda,\Lambda_{2},T,\epsilon$ and
$\delta$.
\end{lemma}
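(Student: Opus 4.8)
The plan is to localize the argument of Lemma~\ref{lem33} to the interior, inserting a cutoff $\eta=\eta(x,t)$ as in \eqref{eta} in exactly the way Lemma~\ref{lemin1} localizes Lemma~\ref{lem32}. By Remark~\ref{rem1}, $u\in V^{1,0}(D_{\epsilon/2}\times(0,T))$, so all of the Steklov-average manipulations used in the proof of Lemma~\ref{lem33} are available on this intermediate cylinder. First I would repeat Step~1 of the proof of Lemma~\ref{lem33}, now starting from the weak formulation \eqref{weaksoln1} (which has no initial term) — forming the Steklov averages $u_{-h}$, transferring the averaging onto the coefficients and the data, and passing to the dense class — to obtain, for every $\tau\le T$ and every $\hat\zeta\in\overset{\circ}{V}\,^{1,0}(D_{\epsilon/2}\times(0,\tau))$,
\[
\int_0^\tau\!\!\int_D (u_{-h})_t\,\hat\zeta\,dxdt+\int_0^\tau\!\!\int_D \big(A^{\alpha\beta}_{ij}D_\beta u^j\big)_{-h}D_\alpha\hat\zeta\,dxdt=\int_0^\tau\!\!\int_D\big(f_{-h}\hat\zeta+g^\alpha_{-h}D_\alpha\hat\zeta\big)\,dxdt,
\]
i.e.\ the interior analogue of \eqref{lem33.2}; the absence of the $\varphi$-term is harmless because $\eta$ vanishes near $t=0$.

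Next I would take $\hat\zeta=\eta^2(u_{-h})_t$ and let $h\to 0$. As in Lemma~\ref{lem33}, the parabolic term produces the good quantity $\int_0^\tau\!\!\int_D\eta^2|u_t|^2$; the elliptic term, after an integration by parts in $t$, produces the term $\tfrac12\int_D\big(A^{\alpha\beta}_{ij}\,\eta D_\beta u^j\,\eta D_\alpha u^i\big)(x,\tau)\,dx$ at the final time, together with remainder terms each carrying one factor $D_tA$, $\eta\eta_t$, or $D_\alpha\eta$ (this last one coming from $D_\alpha(\eta^2(u_{-h})_t)=\eta^2 D_\alpha(u_{-h})_t+2\eta(D_\alpha\eta)(u_{-h})_t$); the right-hand side is treated the same way, integrating by parts in $t$ in the $g_{-h}$ term. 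Collecting everything, using the Cauchy inequality to absorb $\tfrac12\int\eta^2|u_t|^2$, and using the elementary bound $\|g(\cdot,\tau)\|_{L^2(D)}^2\le C(\delta)\big(\|g\|_{L^2}^2+\|g_t\|_{L^2}^2\big)$ from the proof of Lemma~\ref{lem33}, one reaches the localized analogue of \eqref{lem4-1}, whose right-hand side involves only $\|u\|_{L^2}$, $\|Du\|_{L^2}$, $\|f\|_{L^2}$, $\|g\|_{L^2}$ and $\|g_t\|_{L^2}$ over the intermediate cylinder $D_{\epsilon/2}\times(\delta/2,T)$.

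Finally, to turn the final-time term into $\lambda\int_{D_\epsilon}|Du(x,\tau)|^2\,dx$, I would apply the parabolic condition \eqref{coeff2} with $\xi=\eta(\cdot,\tau)u(\cdot,\tau)$, which lies in $H^1_0(D;\mathbb R^N)$ for a.e.\ $\tau$ since $\eta$ is compactly supported in $D$; writing $\partial_\alpha(\eta u^i)=\eta\partial_\alpha u^i+u^i\partial_\alpha\eta$, the difference between $\int_D A\,\eta D_\beta u\,\eta D_\alpha u$ and $\int_D A\,\partial_\beta(\eta u^j)\partial_\alpha(\eta u^i)$ consists of terms bounded by $\int_{D_{\epsilon/2}}\big(|D\eta|^2|u|^2+|D\eta||u||Du|\big)(x,\tau)\,dx$, which after integrating in $\tau$ are $\le C\|u\|^2_{V(D_{\epsilon/2}\times(\delta/2,T))}$ — and here the $\sup$-part of the $V$-norm is exactly what is needed. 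Since $\eta\equiv 1$ on $D_\epsilon\times(\delta,T)$, this bounds $\sup_{\delta\le\tau\le T}\int_{D_\epsilon}|Du(x,\tau)|^2+\int_\delta^T\!\!\int_{D_\epsilon}|u_t|^2$ by the asserted right-hand side plus $\|u\|^2_{L^2}+\|Du\|^2_{L^2}$ over the intermediate cylinder; applying Lemma~\ref{lemin1} to replace the latter by $\|u\|^2_{L^2(Q_T)}+\|f\|^2_{L^2(Q_T)}+\|g\|^2_{L^2(Q_T)}$ finishes the proof. The one genuinely delicate point is the bookkeeping of the cutoff remainder terms, so that the $D_tA$ contribution gets absorbed into $\lambda\int|Du(x,\tau)|^2$ rather than into $\int\eta^2|u_t|^2$, exactly as in the passage to \eqref{lem4-1}; everything else is routine.
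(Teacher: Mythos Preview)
Your proposal is correct and takes precisely the approach the paper intends: the paper states Lemma~\ref{lemin2} without proof, leaving it implicit that one localizes the argument of Lemma~\ref{lem33} with the space--time cutoff $\eta$ of \eqref{eta}, exactly as Lemma~\ref{lemin1} localizes Lemma~\ref{lem32}. Your handling of the extra terms produced by $D_\alpha\eta$ and of the weak parabolic condition \eqref{coeff2} via $\xi=\eta u\in H^1_0(D)$ is the right way to close the argument, and the final appeal to Lemma~\ref{lemin1} to control $\|u\|_{V(D_{\epsilon/2}\times(\delta/2,T))}$ is exactly what replaces the use of Lemma~\ref{lem32} in the original proof.
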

\begin{lemma}\label{lemin3}
Under \eqref{coeff1} \eqref{coeff2} and \eqref{coeff3}, if
$u\in{V}(Q_{T})$ is a weak solution of \eqref{IBVP}, then for any
$0<\epsilon<1$ and $0<\delta<T$, we have
\begin{align}\label{lemin4.2}
&\sup_{\delta\leq\tau\leq\,T}\sum_{|\gamma'|\leq\,k}\int_{{D}_{\epsilon}}|u_{t}(x,\tau)|^{2}dx
+\sum_{|\gamma'|\leq\,k}\int_{\delta}^{T}\int_{{D}_{\epsilon}}|Du_{t}|^{2}dxdt\nonumber\\
&\leq\,C\bigg(\|u\|_{L^{2}(Q_{T})}^{2}
+\sum_{s\leq1}\|D_{t}^{s}f\|_{L^{2}(Q_{T})}^{2}
+\sum_{s\leq1}\|D_{t}^{s}g\|^{2}_{L^{2}({D}_{m}\times(0,T))}\bigg),
\end{align}
where $C$ depends only on $n,N,\lambda,\Lambda_{2},T$, $\epsilon$
and $\delta$.
\end{lemma}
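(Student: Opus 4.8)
The plan is to follow the scheme of Lemma~\ref{lem34}, replacing the purely temporal cutoff \eqref{etabar} by the space--time interior cutoff \eqref{eta}, and to close the estimate with the interior $L^2$ bounds already established in Lemma~\ref{lemin1} and Lemma~\ref{lemin2}. By Remark~\ref{rem1} we have $u\in V^{1,0}(D_{\epsilon/2}\times(\delta/4,T))$, so for $0<h<\delta/8$ the backward difference quotient $\Delta_t^h u=(u(\cdot,t)-u(\cdot,t-h))/h$ is well defined there; subtracting the weak formulation \eqref{weaksoln1} at times $t$ and $t-h$ and dividing by $h$ yields the weak form of
\[
\Delta_t^h u_t-D_\alpha\big(A_{ij}^{\alpha\beta}D_\beta\Delta_t^h u^j\big)
=D_\alpha\big((\Delta_t^h A_{ij}^{\alpha\beta})D_\beta u^j-\Delta_t^h g^{\alpha i}\big)+\Delta_t^h f^i,
\]
which is of the same type as \eqref{IBVP}. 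The point relevant for the constant is that $|\Delta_t^hA|\le\sup|D_tA|\le\Lambda_2$ and $\|\Delta_t^hf\|_{L^2}\le\|D_tf\|_{L^2}$, $\|\Delta_t^hg\|_{L^2}\le\|D_tg\|_{L^2}$ on the relevant subcylinder, so no $t$-derivatives of $A$ beyond the first enter.

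Next I would test this equation with $\eta^2\Delta_t^h u$, where $\eta$ is the cutoff of \eqref{eta} (depending on $\epsilon$ and $\delta$). Since $\Delta_t^h u$ has no $a~priori$ $t$-derivative, this is legitimized exactly as in the proofs of Lemma~\ref{lem42} and Lemma~\ref{lemin1}: one first uses the Steklov-averaged test function $\big(\overline{\eta^2\Delta_t^h u}\big)_{h'}$, then lets $h'\to0$, the time-derivative term producing $\tfrac12\int_D(\eta^2|\Delta_t^h u|^2)(\cdot,\tau)\,dx-\int_0^\tau\!\!\int_D\eta\eta_t|\Delta_t^h u|^2$ (the computation leading to \eqref{lem31.6} and \eqref{lemin1.0}). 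Applying the weak parabolicity \eqref{coeff2} to $\eta\Delta_t^h u\in\overset{\circ}{W}\,^{1,0}_{2}$, integrating by parts in $x$, and using the Cauchy inequality to absorb the cross terms containing $2\eta D\eta\,\Delta_t^h u$ and the term $(\Delta_t^hA)Du$ into $\lambda\int_0^\tau\!\!\int_D\eta^2|D\Delta_t^hu|^2$, one obtains, for all $\delta\le\tau\le T$,
\[
\int_{D_\epsilon}|\Delta_t^hu(x,\tau)|^2dx+\int_\delta^\tau\!\!\int_{D_\epsilon}|D\Delta_t^hu|^2dxdt
\le C\Big(\|Du\|^2_{L^2(D_{\epsilon/2}\times(\delta/4,T))}+\|u_t\|^2_{L^2(D_{\epsilon/2}\times(\delta/4,T))}+\|D_tf\|^2_{L^2(Q_T)}+\textstyle\sum_{s\le1}\|D_t^sg\|^2_{L^2(Q_T)}\Big),
\]
with $C=C(n,N,\lambda,\Lambda_2,T,\epsilon,\delta)$; the $\|u_t\|^2$ term arises from $\int\eta\eta_t|\Delta_t^hu|^2$ and from the $D\eta$ cross terms, estimated by $\|\Delta_t^hu\|_{L^2(K)}\le\|u_t\|_{L^2(K')}$ for $K\subset\subset K'\subset\subset Q_T$ and $h$ small. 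Letting $h\to0$, Lemma~\ref{lemin2} guarantees $u_t\in L^2(D_{\epsilon/2}\times(\delta/4,T))$, hence $\Delta_t^hu\to u_t$ strongly in $L^2_{\mathrm{loc}}$; by weak lower semicontinuity of the $L^2$-norm the left-hand side passes to the limit with $u_t$ in place of $\Delta_t^hu$, and the continuity of $\tau\mapsto u_t(\cdot,\tau)$ in $L^2(D_\epsilon)$ needed to make $\sup_\tau$ meaningful follows from the same Steklov argument used in Lemma~\ref{lem31}.

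It remains to insert the interior estimates already proved. Lemma~\ref{lemin1}, applied with parameters $\epsilon/2,\delta/4$, bounds $\|Du\|^2_{L^2(D_{\epsilon/2}\times(\delta/4,T))}$ by $C(\|u\|^2_{L^2(Q_T)}+\|f\|^2_{L^2(Q_T)}+\|g\|^2_{L^2(Q_T)})$, and Lemma~\ref{lemin2}, applied with the same parameters, bounds $\|u_t\|^2_{L^2(D_{\epsilon/2}\times(\delta/4,T))}$ by $C(\|u\|^2_{L^2(Q_T)}+\|f\|^2_{L^2(Q_T)}+\sum_{s\le1}\|D_t^sg\|^2_{L^2(Q_T)})$; substituting into the displayed inequality and taking the supremum over $\tau\in[\delta,T]$ yields \eqref{lemin4.2}, with $C$ depending only on $n,N,\lambda,\Lambda_2,T,\epsilon,\delta$. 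I expect the only genuine (though routine) obstacles to be the justification of the test function $\eta^2\Delta_t^h u$ via the double limit $h'\to0$ then $h\to0$, and the bookkeeping of the extra error terms produced because $\eta$ now also localizes in $x$; both are handled exactly as in the proofs of Lemma~\ref{lemin1} and Lemma~\ref{lem34}, and the resulting error terms are precisely the quantities controlled by Lemmas~\ref{lemin1}--\ref{lemin2}, so the argument closes.
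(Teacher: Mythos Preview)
Your proposal is correct and follows essentially the approach the paper intends: the paper states Lemma~\ref{lemin3} without proof precisely because it is the interior analogue of Lemma~\ref{lem34}, obtained by replacing the purely temporal cutoff $\overline{\eta}$ of \eqref{etabar} by the space--time cutoff $\eta$ of \eqref{eta} in the difference-quotient argument and then closing with Lemmas~\ref{lemin1} and~\ref{lemin2}. Your identification of the extra error terms from $D\eta$ and their control via those two lemmas is exactly the content the paper is leaving to the reader (note also that the two sums $\sum_{|\gamma'|\le k}$ in the statement are a typo, as the paper's own margin note indicates).
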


\subsection{Interior Estimates for Laminar Systems in Domain
$\omega\times(0,T)$}\label{subslaminar}

In this subsection we consider the laminar  systems \eqref{IBVP} in
$\omega\times(0,T)$, defined as in subsection \ref{subibvp}. Here we
still assume that $f,g\in
C^{\infty}(\overline{\omega}_{m}\times[0,T])$, and
$A_{ij}^{\alpha\beta}$ satisfies \eqref{coeff1} \eqref{coeff2} and
\eqref{coeff3}. we have

\begin{prop}\label{prop41}
Under \eqref{coeff1} \eqref{coeff2} and \eqref{coeff3}, let
$u\in{V}(\Omega_{T})$ be a weak solution of \eqref{IBVP}. Then, for
$0<\delta<T$ and for all $\gamma'$,
$D_{x'}^{\gamma'}u\in\,C^{0}(\omega\times(\delta,T))$, and for each
$m$,
$u\in\,C^{\infty}((\omega\cap\overline{\omega}_{m})\times(\delta,T))$.
Moreover for any small $\epsilon>0$, $k\geq0$, and any $m$
\begin{align}\label{prop41.1}
&\sum_{r+2s\leq{k}}\|D_{x}^{r}D_{t}^{s}u\|_{L^{\infty}(\overline{\omega}_{m}\cap\omega_{\epsilon}\times(\delta,T))}\nonumber\\
&\leq\,C\left(\|u\|_{L^{2}(\Omega_{T})}
+\sum_{|\gamma'|\leq\overline{k}+k+1,s\leq{\frac{k}{2}+1}}\left(\|D_{x'}^{\gamma'}D_{t}^{s}g\|_{L^{2}(\Omega_{T})}
+\|D_{x'}^{\gamma'}D_{t}^{s}f\|_{L^{2}(\Omega_{T})}\right)\right),
\end{align}
where $\overline{k}=[\frac{n-1}{2}]+1$ and $C$ depends only on
$\epsilon,\delta,k,n,N,\lambda,T$ and $\Lambda_{\overline{k}+k+2}$.
\end{prop}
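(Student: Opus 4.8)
The plan is to run the proof of Proposition \ref{prop1} essentially verbatim, replacing at each step the energy estimate for the initial--boundary value problem \eqref{IBVP0} by its \emph{interior} counterpart, in which the term $\|\varphi\|_{L^{2}(\omega)}$ is traded for $\|u\|_{L^{2}(\Omega_{T})}$ at the price of a cutoff in $t$ near $t=0$ and a cutoff in $x$ near $\partial\omega$. Since the cube $\omega$ (divided into the strips $\omega_{m}$) is an admissible domain for the interior lemmas of Section \ref{sec2}, Lemmas \ref{lemin1}, \ref{lemin2} and \ref{lemin3} already furnish the base cases: the $L^{2}$ bound for $u$ and $Du$, and the first $t$--derivative bounds, on $\omega_{\epsilon}\times(\delta,T)$ in terms of $\|u\|_{L^{2}(\Omega_{T})}$ and the norms of $f$ and $g$.

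The first substantive step is to establish the interior analogues of Lemmas \ref{lem42}--\ref{lem45}, i.e.\ versions of \eqref{lem44-1}, \eqref{lem44-2} and \eqref{lem45-1} with $\|\varphi\|_{L^{2}(\omega)}^{2}$ replaced by $\|u\|_{L^{2}(\Omega_{T})}^{2}$, the integrals taken over $\omega_{\epsilon}\times(\delta,T)$, and the constants depending in addition on $\epsilon$ and $\delta$. The proofs are unchanged in structure: one applies the tangential difference quotient $\Delta_{\iota}^{s}$ (or, once enough regularity is available, the tangential derivative $D_{x'}^{\gamma'}$) together with the time difference quotient $\Delta_{t}^{h}$ to the equation in \eqref{IBVP}, tests against $\eta^{2}$ times the appropriate difference quotient of $u$, where $\eta$ is the space--time cutoff of \eqref{eta}, integrates by parts, and lets $h\to 0$. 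The only new terms relative to Lemmas \ref{lem42}--\ref{lem45} are the commutator contributions $\eta\eta_{t}u$ and $u\,\eta\,A^{\alpha\beta}_{ij}D_{\beta}u\,D_{\alpha}\eta$, handled by Cauchy's inequality exactly as in the derivation of Lemma \ref{lemin1} from Lemma \ref{lem32}. As in Lemma \ref{lem43}, each time one raises the differentiation order one first uses Fubini's theorem to pick a good time slice $\overline{t}_{j}$ on which the lower--order norm is controlled, and then restarts the energy argument on $\omega\times(\overline{t}_{j},T)$; the resulting shrinking of the time interval is absorbed into the constant. Applying the same scheme to $w=(w^{i})=(A^{n\beta}_{ij}D_{\beta}u^{j}-g^{ni})$ from \eqref{w31} and, via the rewritten equation \eqref{w2}, to $\partial_{n}w$, yields the interior version of \eqref{lem45-1}.

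From here the passage to $L^{\infty}$ is identical to the proof of Proposition \ref{prop1}. With $\overline{k}=[\frac{n-1}{2}]+1$, Lemma \ref{lem9} applied with $f=D_{x'}^{\gamma'}u$ for $|\gamma'|\leq k-\overline{k}$, and with $f=D_{x'}^{\gamma'}w$ for $|\gamma'|\leq k-\overline{k}-1$, gives $D_{x'}^{\gamma'}u,\ D_{x'}^{\gamma'}w\in C^{0}(\omega_{\epsilon}\times(\delta,T))$ with the stated bounds; since $(A^{nn}_{ij})$ is positive definite with eigenvalues in $[\lambda,\Lambda_{0}]$ by \eqref{coeff1}--\eqref{coeff2}, solving the relation $A^{nn}_{ij}\partial_{n}D_{x'}^{\gamma'}u^{j}=D_{x'}^{\gamma'}w^{i}-\big((D_{x'}^{\gamma'}A^{n\beta}_{ij})\partial_{\beta}u^{j}+\sum_{\beta\leq n-1}A^{n\beta}_{ij}\partial_{\beta}D_{x'}^{\gamma'}u^{j}-D_{x'}^{\gamma'}g^{ni}\big)$ for $\partial_{n}D_{x'}^{\gamma'}u$ bounds the latter, hence $DD_{x'}^{\gamma'}u$, in $L^{\infty}_{loc}((\omega_{m}\cap\omega_{\epsilon})\times(\delta,T))$. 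Finally, to reach derivatives involving $\partial_{n}^{r}u$ with $r\geq 2$ one differentiates the piecewise identity \eqref{energy7} (the equation rewritten inside each strip $\omega_{m}$) in $\partial_{n}$ and $D_{t}$ and inverts $A^{nn}_{ij}$ recursively, exactly as at the end of the proof of Proposition \ref{prop1}. This produces \eqref{prop41.1}, the continuity $D_{x'}^{\gamma'}u\in C^{0}(\omega\times(\delta,T))$ for all $\gamma'$, and $C^{\infty}$ regularity in each $(\overline{\omega}_{m}\cap\omega_{\epsilon})\times[\delta,T]$.

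The main point requiring care --- rather than a genuine obstacle --- is coordinating the two families of cutoffs: because \eqref{IBVP} carries no initial data, every energy step sacrifices a slice of time near $t=0$, so the Fubini time-slice selection of Lemma \ref{lem43} must be interleaved with both the $x'$ and the $t$ difference-quotient inductions, and one must track that the constant at level $k$ depends only on $\epsilon,\delta,k,n,N,\lambda,T$ and $\Lambda_{\overline{k}+k+2}$, in particular not on the distance between the strips $\omega_{m}$. The conceptual heart --- that one may differentiate freely only in the tangential variables $x'$, normal derivatives being recovered algebraically through $w$ and the equation --- is already present in the laminar elliptic case and in the proof of Proposition \ref{prop1}; nothing beyond combining that mechanism with the interior estimates of Lemmas \ref{lemin1}--\ref{lemin3} is needed.
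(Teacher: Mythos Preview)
Your proposal is correct and follows essentially the same approach as the paper: the paper states the interior analogues of Lemmas \ref{lem43}--\ref{lem45} (namely Lemmas \ref{lemin4}--\ref{lemin6}, obtained by replacing \eqref{lem32.1} with \eqref{lemin1.0}), and then says that Proposition \ref{prop41} follows from Lemma \ref{lem9} together with these lemmas, exactly as in the proof of Proposition \ref{prop1}. Your write-up is simply a more explicit account of this scheme, including the cutoff bookkeeping and the Fubini time-slice selection that the paper leaves implicit.
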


\begin{corollary}\label{cor1}
Suppose $A$ is constant in $x$ and smooth in $t$ in
$\omega_{m}\times(0,T)$, satisfying \eqref{coeff1} \eqref{coeff2},
and $f,g$ are constant in $\omega_{m}\times(0,T)$. If
$u\in{V}(\Omega_{T})$ is a weak solution of \eqref{IBVP}, then, for
$0<\delta<T$ and for all $\gamma'$,
$D_{x'}^{\gamma'}u\in\,C^{0}(\omega\times(\delta,T))$, and for any
$0<\epsilon<1$, $k\geq 0$, we have
$$\|u\|_{C^{k,k/2}((\omega_{m}\cap\omega_{\epsilon})\times(\delta,T))}
\leq\,C\left(\|u\|_{L^{2}(\Omega_{T})}+\|f\|_{L^{\infty}(\Omega_{T})}+\|g\|_{L^{\infty}(\Omega_{T})}\right),$$
where $C=C(\epsilon,k,n,N,\lambda,\Lambda,T, A)$.
\end{corollary}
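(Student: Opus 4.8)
The strategy is to read the corollary off directly from Proposition \ref{prop41}: under the stated hypotheses almost every term on the right-hand side of \eqref{prop41.1} vanishes, because $f$ and $g$ are constant on each strip. First I would verify that Proposition \ref{prop41} is applicable. The bound \eqref{coeff1} and the parabolicity \eqref{coeff2} are assumed outright, and since $A$ does not depend on $x$ we have $D_x^r A\equiv 0$ for all $r\ge 1$, hence
$$\sum_{r+2s\le l}\bigl|D_x^r D_t^s A(x,t)\bigr|=\sum_{2s\le l}\bigl|D_t^s A(t)\bigr|\le \Lambda_l:=\sum_{2s\le l}\sup_{(0,T)}\bigl|D_t^s A\bigr|<\infty,$$
the finiteness coming from the smoothness of $A$ in $t$ on the compact interval $[0,T]$; thus \eqref{coeff3} holds with constants $\Lambda_l$ depending only on $l$ and $A$. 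Finally $f$ and $g$, being constant on each $\omega_m\times(0,T)$, trivially lie in $C^\infty(\overline{\omega}_m\times[0,T])$. Hence Proposition \ref{prop41} applies to $u$, already yielding $D_{x'}^{\gamma'}u\in C^0(\omega\times(\delta,T))$ for every $\gamma'$ and $u\in C^\infty$ inside each $\overline{\omega}_m\cap\omega_{\epsilon}$.

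Next, fix $k\ge 0$ and apply \eqref{prop41.1}. Because $f$ and $g$ are constant on each $\omega_m\times(0,T)$, $D_{x'}^{\gamma'}D_t^s f=D_{x'}^{\gamma'}D_t^s g=0$ whenever $|\gamma'|+s\ge 1$, so the double sum on the right of \eqref{prop41.1} collapses to $\|g\|_{L^2(\Omega_T)}+\|f\|_{L^2(\Omega_T)}\le |\Omega_T|^{1/2}\bigl(\|g\|_{L^\infty(\Omega_T)}+\|f\|_{L^\infty(\Omega_T)}\bigr)$. Since $\omega_m\cap\omega_{\epsilon}\subset\overline{\omega}_m\cap\omega_{\epsilon}$ and $u$ is smooth inside each strip, the $C^{k,k/2}$ norm of $u$ on $(\omega_m\cap\omega_{\epsilon})\times(\delta,T)$ is controlled by $\sum_{r+2s\le k}\|D_x^r D_t^s u\|_{L^\infty((\overline{\omega}_m\cap\omega_{\epsilon})\times(\delta,T))}$ — precisely the left side of \eqref{prop41.1} — so the asserted estimate follows (if one wants the parabolic Hölder seminorm of the top-order derivatives included in the $C^{k,k/2}$ norm, apply Proposition \ref{prop41} with $k$ replaced by $k+1$, which still annihilates all the data terms). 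The constant in \eqref{prop41.1} depends only on $\epsilon,\delta,k,n,N,\lambda,T$ and $\Lambda_{\overline{k}+k+2}$, and by the display above $\Lambda_{\overline{k}+k+2}$ is controlled in terms of $n,k$ and $A$; so $C=C(\epsilon,\delta,k,n,N,\lambda,T,A)$, which is the claimed dependence (with the harmless extra parameter $\delta$).

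I do not expect any genuine obstacle: the entire content is carried by Proposition \ref{prop41}, and the corollary is just the observation that constant data kill every derivative term in \eqref{prop41.1}. The only points needing a moment's care are (i) checking that ``smooth in $t$ on $[0,T]$'' does furnish finite constants $\Lambda_l$ for \eqref{coeff3}, the $x$-derivatives of $A$ being identically zero, and (ii) matching the parabolic bookkeeping $r+2s\le k$ against the index ranges $|\gamma'|\le\overline{k}+k+1$, $s\le k/2+1$ appearing in \eqref{prop41.1}, both of which are routine.
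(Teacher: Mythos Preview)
Your proposal is correct and is exactly the intended argument: the paper states Corollary~\ref{cor1} immediately after Proposition~\ref{prop41} without a separate proof, so it is meant to be read off from \eqref{prop41.1} precisely as you describe, with the constant data annihilating all higher-order terms on the right. Your verification of \eqref{coeff3} from the $x$-independence and $t$-smoothness of $A$, and your remark about passing to $k+1$ if the parabolic H\"older seminorm of the top derivatives is wanted, are both the right observations to make the deduction airtight.
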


The proof of Proposition \ref{prop41} will needs the following
Lemmas, analogically as in Subsection \ref{subibvp}. From arguments
similar to that in the derivation of Lemma \ref{lem43}-\ref{lem45},
just replacing \eqref{lem32.1} by \eqref{lemin1.0} with $D=\omega$,
we have the following higher interior estimates for the solution of
\eqref{IBVP}.

\begin{lemma}\label{lemin4}
Under \eqref{coeff1} \eqref{coeff2} and \eqref{coeff3}, if
$u\in{V}(\Omega_{T})$ is a weak solution of \eqref{IBVP}, then for
any $0<\epsilon<1$, $0<\delta<T$ and positive integer $k$, for
$1\leq|\gamma'|\leq{k}$, $D_{x'}^{\gamma'}u\in
W_{2}^{1,0}(\omega_{\epsilon}\times(\delta,T))$, and we have
\begin{align*}
&\sum_{|\gamma'|\leq{k}}\sup_{\delta\leq\tau\leq{T}}\int_{\omega_{\epsilon}(\tau)}|D^{\gamma'}_{x'}u(x,\tau)|^{2}dx
+\sum_{|\gamma'|\leq{k}}\int_{\delta}^{T}\int_{\omega_{\epsilon}}|DD_{x'}^{\gamma'}u|^{2}dxdt\nonumber\\
&\hspace{1cm}\leq\,C\bigg(\|u\|_{L^{2}(\Omega_{T})}^{2}
+\sum_{|\gamma'|\leq{k}}\|D^{\gamma'}_{x'}f\|_{L^{2}(\Omega_{T})}^{2}+\sum_{|\gamma'|\leq{k}}\|D^{\gamma'}_{x'}g\|_{L^{2}(\Omega_{T})}^{2}\bigg),
\end{align*}
where $C$ depends only on $n,N,\lambda,\Lambda_{k},k$, $\epsilon$
and $\delta$.
\end{lemma}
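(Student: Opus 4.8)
The plan is to argue by induction on $k$, differentiating the equation in the tangential directions $x'=(x_1,\dots,x_{n-1})$ via difference quotients and running the Caccioppoli-type argument already used in the proofs of Lemma~\ref{lem42} and Lemma~\ref{lemin1}. The only genuinely new feature, compared with the initial–value version, is that in the absence of data $\varphi$ one must localize \emph{simultaneously} in $x$ and in $t$, exactly as in the passage from Lemma~\ref{lem42} to Lemma~\ref{lem43}. For $k=0$ the asserted estimate is precisely Lemma~\ref{lemin1} applied with $D=\omega$ (note that $\omega_{\epsilon}$ and $(\delta,T)$ there play the roles of $D_{\epsilon}$ and $(\delta,T)$). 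Assume the estimate, together with $D_{x'}^{\gamma'}u\in W^{1,0}_{2}(\omega_{\epsilon}\times(\delta,T))$ for all $|\gamma'|\le k-1$ and all $0<\epsilon<1$, $0<\delta<T$.

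For the inductive step, fix $\gamma'$ with $|\gamma'|=k-1$ and, for $1\le s\le n-1$ and $|\iota|$ small, apply the difference quotient $\Delta^{s}_{\iota}$ to the equation in \eqref{IBVP} satisfied (in the distributional sense) by $D_{x'}^{\gamma'}u$, obtaining an identity of the shape \eqref{lem42.1}; the right-hand side now carries, besides $D_{x'}^{\gamma'}\Delta^s_\iota f$ and $D_{x'}^{\gamma'}\Delta^s_\iota g$, the commutator term $(\Delta^{s}_{\iota}A)\,\nabla D_{x'}^{\gamma'}u$, which is harmless because $A$ is smooth in $x'$ across the \emph{whole} cube (tangential differencing never crosses an interface $\{x_n=c_m\}$) and $|\Delta^{s}_{\iota}A|\le\Lambda_{k}$ by \eqref{coeff3}. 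Multiply this identity by the Steklov average $\zeta=(\overline{\eta^{2}\Delta^{s}_{\iota}D_{x'}^{\gamma'}u})_{h}$, where $\eta(x,t)$ is the cutoff of \eqref{eta} (built from $\omega_{\epsilon}\subset\omega_{\epsilon/2}$ and $(\delta,T)\subset(\delta/2,T)$), integrate by parts over $\omega\times(0,\tau)$, and let $h\to0$ as in the proof of Lemma~\ref{lem31}. Since $\eta(\cdot,0)\equiv0$, the parabolic term produces $\tfrac12\int_{\omega}(\eta^{2}|\Delta^{s}_{\iota}D_{x'}^{\gamma'}u|^{2})(x,\tau)\,dx+2\iint\eta\eta_{t}|\Delta^{s}_{\iota}D_{x'}^{\gamma'}u|^{2}$ and \emph{no} initial contribution, exactly as in Lemma~\ref{lemin1}. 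Bounding the principal term below by $\lambda\iint\eta^{2}|D\Delta^{s}_{\iota}D_{x'}^{\gamma'}u|^{2}$ via \eqref{coeff2}, and using \eqref{coeff1}, \eqref{coeff3} and Young's inequality to absorb the commutator and the $|D\eta|,|\eta_{t}|$ terms, one obtains
$$
\sup_{\delta\le\tau\le T}\int_{\omega_{\epsilon}}|\Delta^{s}_{\iota}D_{x'}^{\gamma'}u(x,\tau)|^{2}dx
+\int_{\delta}^{T}\!\!\int_{\omega_{\epsilon}}|D\Delta^{s}_{\iota}D_{x'}^{\gamma'}u|^{2}\,dxdt
\le C\Big(\|DD_{x'}^{\gamma'}u\|^{2}_{L^{2}(\omega_{\epsilon/2}\times(\delta/2,T))}+\|D_{x'}^{\gamma'}f\|^{2}_{L^{2}}+\sum_{s'\le1}\|D_{t}^{s'}D_{x'}^{\gamma'}g\|^{2}_{L^{2}}\Big),
$$
uniformly in $\iota$. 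Letting $\iota\to0$ gives $D_{x_{s}}D_{x'}^{\gamma'}u\in W^{1,0}_{2}(\omega_{\epsilon}\times(\delta,T))$ with the same bound; summing over $s\le n-1$ and $|\gamma'|=k-1$ and inserting the inductive estimate for the right-hand side (on the larger cylinder) yields the claimed estimate of Lemma~\ref{lemin4} at order $k$. As $\epsilon,\delta$ are arbitrary, finitely many such steps—each replacing $\epsilon\mapsto\epsilon/2$, $\delta\mapsto\delta/2$—complete the induction. For $k\ge2$ one may, as remarked in the text, differentiate the equation directly in $x'$ instead of using difference quotients once $D_{x'}^{\gamma'}u$ is known to have the requisite weak derivatives.

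The integration-by-parts bookkeeping and the Steklov-average limits are the routine parts, being word-for-word those in Lemmas~\ref{lem31}, \ref{lem42} and \ref{lemin1}. The point that requires care—and the place where the statement truly differs from the initial-value Lemma~\ref{lem43}—is the joint space–time localization: the cutoff $\eta$ must depend on both variables so that the missing term at $t=0$ is annihilated by $\eta(\cdot,0)=0$, while the resulting $\eta_{t}$-terms are absorbable \emph{only} because they multiply $|\Delta^{s}_{\iota}D_{x'}^{\gamma'}u|^{2}$, a quantity already under control at the previous induction level on the slightly larger cylinder $\omega_{\epsilon/2}\times(\delta/2,T)$. No ingredient beyond those of Subsection~\ref{subibvp} is needed, and the resulting constant depends only on $n,N,\lambda,\Lambda_{k},T,k,\epsilon,\delta$ as stated.
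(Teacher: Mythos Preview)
Your argument is correct. It differs from the paper's route only in the time-localization mechanism: the paper instructs one to mimic Lemma~\ref{lem43}, which uses Fubini to select a time slice $\bar t_{1}\in(\delta/10,\delta/9)$ at which $\int_{\omega_{\epsilon}}|D_{x'}u(\cdot,\bar t_{1})|^{2}$ is controlled by the previous step, and then reruns the purely spatially cut-off argument of Lemma~\ref{lem42} on $\omega\times(\bar t_{1},T)$ with this slice as initial data. You instead keep the joint space--time cutoff $\eta(x,t)$ of \eqref{eta} throughout, killing the $t=0$ contribution by $\eta(\cdot,0)\equiv0$ and absorbing the resulting $\eta\eta_{t}|\Delta^{s}_{\iota}D_{x'}^{\gamma'}u|^{2}$ term via the induction hypothesis on the slightly larger cylinder. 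Both devices are standard and equivalent in strength; yours is arguably more direct here since Lemma~\ref{lemin1} already uses the space--time cutoff, so you never switch mechanisms.

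One small slip: in your displayed intermediate inequality the term $\sum_{s'\le1}\|D_{t}^{s'}D_{x'}^{\gamma'}g\|_{L^{2}}^{2}$ is too much. Testing the differenced equation against $\eta^{2}\Delta^{s}_{\iota}D_{x'}^{\gamma'}u$ (not against a time derivative, as in Lemma~\ref{lem33}) produces on the $g$-side only $\int g'^{\alpha}D_{\alpha}(\eta^{2}\Delta^{s}_{\iota}D_{x'}^{\gamma'}u)$ with $g'=\Delta^{s}_{\iota}D_{x'}^{\gamma'}g$ plus commutator pieces; after Cauchy--Schwarz this yields only $\|D_{x'}^{\gamma''}g\|_{L^{2}}^{2}$ with $|\gamma''|\le k$, exactly matching the statement of the lemma. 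No $D_{t}g$ is needed, and dropping it makes your conclusion agree with the claimed right-hand side.
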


\begin{lemma}\label{lemin5}

Under \eqref{coeff1} \eqref{coeff2} and \eqref{coeff3}, if
$u\in{V}(\Omega_{T})$ is a weak solution of problem \eqref{IBVP0},
then for any $0<\delta<T$ and $0<\epsilon<1$, we have, for
$|\gamma'|\leq{k}$, and $l\geq0$,
\begin{align*}
&\sum_{|\gamma'|\leq\,k,s\leq{l}}\sup_{\delta\leq\tau\leq{T}}\int_{\omega_{\epsilon}}|DD_{x'}^{\gamma'}D_{t}^{s}u(x,\tau)|^{2}dx+
\sum_{|\gamma'|\leq\,k,s\leq{l}+1}\int_{\delta}^{T}\int_{\omega_{\epsilon}}|D_{x'}^{\gamma'}D_{t}^{s}u|^{2}dxdt\nonumber\\
&\leq\,C\bigg(\|u\|_{L^{2}(\Omega_{T})}^{2}
+\sum_{|\gamma'|\leq\,k,s\leq{l}}\|D_{x'}^{\gamma'}f\|_{L^{2}(\Omega_{T})}^{2}
+\sum_{|\gamma'|\leq\,k,s\leq{l}+1}\|D_{x'}^{\gamma'}D_{t}^{s}g\|^{2}_{L^{2}(\Omega_{T})}\bigg),
\end{align*}
where $C$ depends only on
$n,N,\lambda,\Lambda_{k+2l+2},T,k,\epsilon$ and $\delta$.
\begin{align*}
&\sup_{\delta\leq\tau\leq\,T}\sum_{|\gamma'|\leq\,k,s\leq{l}}\int_{\omega_{\epsilon}}|D_{x'}^{\gamma'}D_{t}^{s}u(x,\tau)|^{2}dx
+\sum_{|\gamma'|\leq\,k,s\leq{l}}\int_{\delta}^{T}\int_{\omega_{\epsilon}}|DD_{x'}^{\gamma'}D_{t}^{s}u|^{2}dxdt\nonumber\\
&\leq\,C\bigg(\|u\|_{L^{2}(\Omega_{T})}^{2}
+\sum_{|\gamma'|\leq\,k,s\leq{l}}\|D_{x'}^{\gamma'}D_{t}^{s}f\|_{L^{2}(\Omega_{T})}^{2}
+\sum_{|\gamma'|\leq\,k,s\leq{l}}\|D_{x'}^{\gamma'}D_{t}^{s}g\|^{2}_{L^{2}(\omega_{m}\times(0,T))}\bigg),
\end{align*}
where $C$ depends only on
$n,N,\lambda,\Lambda_{k+2l+2},T,k,\epsilon$ and $\delta$.
\end{lemma}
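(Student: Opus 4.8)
The plan is to follow the pattern already set for the initial-boundary-value problem (Lemmas \ref{lem43}--\ref{lem45} and Lemma \ref{lemin4}) but to replace the $L^2$ bound on $\varphi$ by the interior-$L^2$ bound \eqref{lemin1.0} on $u$ itself. Since the statement is purely interior in $x$ and bounded away from $t=0$, the method of choice is differencing/differentiating the equation \eqref{IBVP} in the tangential directions $x'=(x_1,\dots,x_{n-1})$ and in $t$, multiplying by a suitable cutoff times the corresponding derivative (difference quotient) of $u$, and integrating by parts — exactly as in the proof of Lemma \ref{lem33}, Lemma \ref{lem44}, and Lemma \ref{lemin3}. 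The point is that the laminar structure ($A$ smooth in $x'$, only $C^\mu$ across $\{x_n=c_m\}$) permits differentiation in $x'$ and in $t$ freely, so no new ideas beyond those lemmas are needed.

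First I would establish the base case. For $k=l=0$ the first displayed inequality is exactly Lemma \ref{lemin2} (it bounds $\sup_\tau\int_{\omega_\epsilon}|Du|^2$ and $\int|u_t|^2$), and the second displayed inequality with $k=l=0$ is contained in Lemma \ref{lemin1} together with Lemma \ref{lemin2}. So the content is the inductive step. For the inductive step on the number $|\gamma'|$ of tangential $x'$-derivatives, I would proceed as in Lemma \ref{lem43}: apply the difference quotient $\Delta_\iota^s$ (or, once the regularity is known, $D_{x'}^{\gamma'}$) to \eqref{IBVP}, obtaining an equation of the same type for $D_{x'}^{\gamma'}u$ with modified right-hand side $D_\alpha\big((D_{x'}^{\gamma'}A)D_\beta u - D_{x'}^{\gamma'}g^\alpha\big)+D_{x'}^{\gamma'}f$; the extra commutator terms $(D_{x'}^{\gamma'}A)D_\beta u$ are controlled by the lower-order interior estimates already obtained by induction. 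To gain the time-derivative regularity (the $s\le l$ and $s\le l+1$ sums), I would apply $\Delta_t^h$ (difference quotient in $t$) to the differentiated equation, multiply by $\bar\eta^2 D_{x'}^{\gamma'}\Delta_t^h u$ with the time-cutoff $\bar\eta$ of \eqref{etabar} adapted also to exclude a neighbourhood of $\partial\omega$ in $x$ (combining $\bar\eta$ with $\tilde\eta$ of \eqref{etatilde}), and integrate by parts; this is precisely the mechanism of Lemmas \ref{lem33}--\ref{lem34} and of Lemma \ref{lemin3}, and it yields the first displayed inequality. The second displayed inequality then follows by one more such energy step (multiplying the $D_{x'}^{\gamma'}D_t^s$-differentiated equation by $\bar\eta^2 \tilde\eta^2$ times $D_{x'}^{\gamma'}D_t^s u$ rather than its $t$-derivative), exactly as \eqref{lem44-2} follows from \eqref{lem44-1}. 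In each step one uses a finite chain of shrinking cutoffs and intermediate time slices $\bar t_j$ chosen by Fubini, as in the proof of Lemma \ref{lem43}, so that the constants depend only on $\epsilon,\delta$ and the listed data.

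The main obstacle — really the only place where care is needed — is bookkeeping the right-hand sides so that the constants depend only on $\Lambda_{k+2l+2}$ and on $\epsilon,\delta$, not on any lower $\delta'$: each differentiation in $t$ costs two orders in the $\Lambda$-hierarchy (because $D_t$ of the principal part brings in $D_tA$, and the energy estimate pairs two such factors), and each tangential differentiation costs one, which is why the index $k+2l+2$ appears; one must also verify that differentiating \eqref{IBVP} (not \eqref{VPIND}) means the boundary term at $t=\delta/2$ is killed by $\bar\eta$ and the boundary term on $\partial\omega$ is killed by $\tilde\eta$, so that only the interior $L^2$ norm of $u$ survives on the right — this is why \eqref{lemin1.0} is the correct substitute for the $\|\varphi\|_{L^2}$ term. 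Once these two accountings are in place, the double induction on $(k,l)$ closes and both inequalities of Lemma \ref{lemin5} follow; I would simply remark that the argument is "the same as Lemma \ref{lem43}--Lemma \ref{lem45}, with \eqref{lem32.1} replaced by \eqref{lemin1.0} for $D=\omega$," as the surrounding text already signals.
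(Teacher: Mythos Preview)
Your proposal is correct and follows essentially the same approach as the paper: the paper states only that Lemmas \ref{lemin4}--\ref{lemin6} are obtained ``from arguments similar to that in the derivation of Lemma \ref{lem43}--\ref{lem45}, just replacing \eqref{lem32.1} by \eqref{lemin1.0} with $D=\omega$,'' and your sketch faithfully fleshes out exactly this replacement (spatial cutoff $\tilde\eta$ combined with the time cutoff $\bar\eta$, tangential and time difference quotients, induction on $(k,l)$). Your bookkeeping of the $\Lambda_{k+2l+2}$ dependence and the reason the initial data term disappears are both in line with the paper's intent.
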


\begin{lemma}\label{lemin6}
For $0<\delta<T$, $0<\epsilon<1$, and for $|\gamma'|\leq\,k$,
$s\leq{l}$,
$D_{x'}^{\gamma'}D_{t}^{s}w,D_{x'}^{\gamma'}D_{t}^{s}\partial_{n}w\in\,L^{2}_{loc}((\omega_{\epsilon}\cap\omega_{m})\times(\delta,T))$,
\begin{align*}
&\sum_{|\gamma'|\leq\,k,s\leq{l}}\int_{\delta}^{T}\int_{\omega_{\epsilon}}|D_{x'}^{\gamma'}D_{t}^{s}w|^{2}dxdt
+\sum_{|\gamma'|\leq\,k-1,s\leq{l}}\int_{\delta}^{T}\int_{\omega_{\epsilon}\cap\omega_{m}}|D_{x'}^{\gamma'}D_{t}^{s}\partial_{n}w|^{2}dxdt\nonumber\\
&\leq\,C\bigg(\|u\|_{L^{2}(\Omega_{T})}^{2}
+\sum_{|\gamma'|\leq\,k,s\leq{l}}\|D_{x'}^{\gamma'}f\|_{L^{2}(\Omega_{T})}^{2}
+\sum_{|\gamma'|\leq\,k,s\leq\,l+1}\|D_{x'}^{\gamma'}D_{t}^{s}g\|^{2}_{L^{2}(\Omega_{T})}\bigg),
\end{align*}
where $C$ depends only on $n,N,\lambda,\Lambda_{k+2l+1},T,k$,
$\epsilon$ and $\delta$.
\end{lemma}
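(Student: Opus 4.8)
The plan is to follow verbatim the argument used for Lemma \ref{lem45}, with the a priori estimates of Lemma \ref{lem43}--Lemma \ref{lem44} replaced by their interior counterparts Lemma \ref{lemin4}--Lemma \ref{lemin5}, and with $\|\varphi\|_{L^{2}(\omega)}$ replaced throughout by $\|u\|_{L^{2}(\Omega_{T})}$. Recall $w=(w^{i})=(A^{n\beta}_{ij}D_{\beta}u^{j}-g^{ni})$. As in the remark preceding Lemma \ref{lemin4}, since Lemma \ref{lemin4} already gives tangential spatial regularity of $u$ in the interior, I shall simply differentiate the equation rather than taking difference quotients in $x'$.

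First I would bound the tangential derivatives of $w$. Applying $D_{x'}^{\gamma'}D_{t}^{s}$ to the definition \eqref{w31} and expanding by the Leibniz rule produces a finite sum of terms of the form $(D_{x'}^{\gamma''}D_{t}^{r}A^{n\beta}_{ij})(D_{\beta}D_{x'}^{\gamma'-\gamma''}D_{t}^{s-r}u^{j})$ with $|\gamma''|\le|\gamma'|$, $r\le s$, together with $D_{x'}^{\gamma'}D_{t}^{s}g^{ni}$. By the coefficient bound \eqref{coeff3} and the energy estimates of Lemma \ref{lemin4} and the second inequality of Lemma \ref{lemin5}, each such term lies in $L^{2}_{loc}((\omega_{\epsilon}\cap\omega_{m})\times(\delta,T))$, and summing over $|\gamma'|\le k$, $s\le l$ yields the asserted bound for $\sum\int\!\!\int|D_{x'}^{\gamma'}D_{t}^{s}w|^{2}$.

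Next I would treat $\partial_{n}w$. Rewrite the equation in \eqref{IBVP}, valid piecewise in each $\omega_{m}\times(0,T)$ in the sense of distributions, as in \eqref{w2}, i.e.
\begin{equation*}
\partial_{n}w^{i}=u^{i}_{t}+\sum_{\alpha\le n-1}\partial_{\alpha}\bigl(g^{\alpha i}-A^{\alpha\beta}_{ij}D_{\beta}u^{j}\bigr)-f^{i},
\end{equation*}
and apply $D_{x'}^{\gamma'}D_{t}^{s}$ with $|\gamma'|\le k-1$, $s\le l$. On the right-hand side: the term $D_{x'}^{\gamma'}D_{t}^{s}u^{i}_{t}=D_{x'}^{\gamma'}D_{t}^{s+1}u^{i}$ is controlled by Lemma \ref{lemin5}, which is where $s\le l+1$ enters the $g$-sum, since estimating $D_{t}^{s+1}u$ costs one extra time derivative of $g$; the tangential term $\partial_{\alpha}D_{x'}^{\gamma'}D_{t}^{s}(g^{\alpha i}-A^{\alpha\beta}_{ij}D_{\beta}u^{j})$ involves at most $|\gamma'|+1\le k$ tangential spatial derivatives of $\nabla u$ and of $A,g$, hence is again bounded via Lemma \ref{lemin4}--\ref{lemin5} and \eqref{coeff3}; and $D_{x'}^{\gamma'}D_{t}^{s}f^{i}$ is already among the terms on the right. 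Collecting these bounds gives $D_{x'}^{\gamma'}D_{t}^{s}\partial_{n}w\in L^{2}_{loc}$ for $|\gamma'|\le k-1$, $s\le l$, with the claimed estimate.

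Finally one proceeds by induction on $k$, and on $l$, the base case $k=2$, $l=1$ being handled as in the proof of Lemma \ref{lem45} using the $C^{\infty}$ regularity of $u$ in the interior of each strip together with the case $k=1$ of the estimates above; at each step the previously established interior estimates of Lemma \ref{lemin4}--\ref{lemin5} furnish exactly the $L^{2}$ bounds needed for the next application. The only point requiring care is the bookkeeping of how many tangential and time derivatives of $A$, $f$, $g$ appear after the Leibniz expansions; condition \eqref{coeff3} with index $\Lambda_{k+2l+1}$ is precisely what absorbs all the coefficient contributions, and no genuinely new difficulty arises beyond that already met for Lemma \ref{lem45}.
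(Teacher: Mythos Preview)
Your proposal is correct and matches the paper's approach exactly: the paper states Lemma~\ref{lemin6} without proof, noting only that it follows ``from arguments similar to that in the derivation of Lemma~\ref{lem43}--\ref{lem45}, just replacing \eqref{lem32.1} by \eqref{lemin1.0},'' which is precisely what you carry out. Your explicit Leibniz-rule bookkeeping and the identification of where the extra $D_t^{s+1}g$ term enters are in fact more detailed than the paper's own sketch of Lemma~\ref{lem45}.
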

The proof  Proposition \ref{prop41} can be established by Lemma
\ref{lem9} and Lemma \ref{lemin4}-\ref{lemin6}.

\bigskip
\section{A Perturbation Lemma}\label{sec5}
\bigskip

In this section, we will give a perturbation lemma for parabolic
systems. For simplicity, here we still suppose that $\omega$ is the
unit cube. For $0<\lambda<\Lambda_{0}<\infty$,
$\mathscr{A}(\lambda,\Lambda_{0})$ denotes the class of measurable
vector-valued functions $(A_{ij}^{\alpha\beta}(x,t))$ satisfying
\eqref{coeff1} and \eqref{coeff2}. Denote
$$\Omega_{T}^{(\sigma)}=(1-\sigma)\omega\times(\sigma{T},T).$$
In this section we use, unless otherwise stated, $C$ to denote
various positive constants whose values may change from line to line
and which depend only on $n,N,\lambda,\Lambda_{2},T$.

\begin{lemma}\label{pertubationlemma}
For $0<\epsilon<1$, suppose $A,B\in\mathscr{A}(\lambda,\Lambda_{0})$
satisfies
\begin{equation}\label{AB}
\int_{0}^{T}\int_{\omega}|A-B|<\epsilon.
\end{equation}
If $A$ is $C^{1}$ in $t$, then for any
$f\in{W}_{2}^{0,1}(\omega\times(0,T))$,
$g\in{W}_{2}^{0,1}(\omega\times(0,T))$ and the solution
$u\in{V}(\omega\times(0,T))$ of
$$(u^{i})_{t}-D_{\alpha}(A^{\alpha\beta}_{ij}(x,t)D_{\beta}u^{j})=-D_{\alpha}g^{\alpha i}+f^{i},\quad\quad\mbox{in}\ \Omega_{T}=\omega\times(0,T),$$
there exists some solution $v\in\,V(\Omega_{T}^{(\frac{1}{4})})$ of
$$(v^{i})_{t}-D_{\alpha}(B^{\alpha\beta}_{ij}(x,t)D_{\beta}v^{j})=0,\quad\quad\mbox{in}\ \Omega_{T}^{(\frac{1}{4})},$$
and we have
\begin{align*}
\|u-v\|_{V(\Omega_{T}^{(\frac{1}{2})})}
\leq\,C\left(\|g\|_{L^{2}(\Omega_{T})}+\|f\|_{L^{2}(\Omega_{T})}+\epsilon^{\gamma}\left(\|u\|_{L^{2}(\Omega_{T})}
+\|g_{t}\|_{L^{2}(\Omega_{T})}+\|f_{t}\|^{2}_{L^{2}(\Omega_{T})}\right)\right),
\end{align*}
where $C$ and $\gamma$ depend on $n,N,\lambda,\Lambda_{2},T$.
\end{lemma}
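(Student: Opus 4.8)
The plan is to construct $v$ by solving, on the smaller cylinder $\Omega_T^{(1/4)}$, the homogeneous $B$-system with the "same" parabolic initial-boundary data as $u$, and then to estimate $w := u - v$ by testing the difference equation against $w$ itself (with a cutoff). More precisely, I would first invoke the interior $L^2$ and $t$-derivative estimates of Section~\ref{sec2} and Section~\ref{sec3} (Lemma~\ref{lemin1}, Lemma~\ref{lemin2}, Lemma~\ref{lemin3}) applied to $u$ on $\Omega_T^{(1/8)}$: these give control of $\|u\|_{V(\Omega_T^{(1/8)})}$, $\sup_\tau \int |Du(\cdot,\tau)|^2$, and $\|u_t\|_{L^2}$ in terms of $\|u\|_{L^2(\Omega_T)}$, $\|f\|_{L^2}$, $\|f_t\|_{L^2}$, $\|g\|_{L^2}$, $\|g_t\|_{L^2}$. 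In particular $u(\cdot,\sigma T)\in L^2((1-\sigma)\omega)$ has a bound for a suitable fixed $\sigma\in(1/8,1/4)$, and $Du(\cdot,\sigma T)\in L^2$ as well; I would pick such a "good slice" $t_0 \approx T/4$ via Fubini so that $u(\cdot,t_0)$ is a bona fide $H^1$ function with the right bound.

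Next I would \emph{define} $v$ on $\Omega_T^{(1/4)}$ to be the weak solution of the homogeneous $B$-system with lateral boundary data $u$ on $\partial((1-\tfrac14)\omega)\times(t_0,T)$ and initial data $u(\cdot,t_0)$; equivalently, set $v = u - z$ where $z$ solves the $B$-system with zero boundary/initial data and right-hand side equal to the "commutator" forcing $-D_\alpha((B^{\alpha\beta}_{ij}-A^{\alpha\beta}_{ij})D_\beta u^j) - D_\alpha g^{\alpha i} + f^i$. Then $w = u-v$ solves
\begin{equation*}
(w^i)_t - D_\alpha(B^{\alpha\beta}_{ij}D_\beta w^j) = -D_\alpha\big((B^{\alpha\beta}_{ij}-A^{\alpha\beta}_{ij})D_\beta u^j\big) - D_\alpha g^{\alpha i} + f^i
\end{equation*}
in $\Omega_T^{(1/4)}$, with $w = 0$ on the parabolic boundary of $\Omega_T^{(1/4)}$. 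By the basic energy estimate (Lemma~\ref{lem32} with $D = (1-\tfrac14)\omega$ and initial time $t_0$),
\begin{equation*}
\|w\|_{V(\Omega_T^{(1/4)})}^2 \le C\Big(\|g\|_{L^2}^2 + \|f\|_{L^2}^2 + \big\|(B-A)Du\big\|_{L^2(\Omega_T^{(1/4)})}^2\Big).
\end{equation*}
It remains to absorb the commutator term $\|(B-A)Du\|_{L^2}$ into $\epsilon^\gamma(\cdots)$, and this is the main obstacle: we only know $\int_0^T\!\!\int_\omega |A-B| < \epsilon$ in $L^1$, not in $L^\infty$, whereas $Du$ is merely in $L^2$.

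To overcome this I would use an interpolation between the $L^1$ smallness of $A-B$ and higher integrability / higher regularity of $Du$ on the slightly larger cylinder. Concretely, $Du \in L^2$ and — crucially — $u_t \in L^2_{loc}$ and $DD_{x'}u\in L^2_{loc}$ by Lemma~\ref{lemin2}–\ref{lemin3}, so by parabolic Sobolev embedding $Du$ lies in $L^{2+\delta}_{loc}$ for some $\delta = \delta(n,N)>0$ with a quantitative bound by $\|u\|_{L^2(\Omega_T)} + \|f\|_{L^2} + \|f_t\|_{L^2} + \|g\|_{L^2} + \|g_t\|_{L^2}$. Then H\"older's inequality gives, with $\tfrac{1}{p} + \tfrac{2}{2+\delta} = 1$,
\begin{equation*}
\int_{\Omega_T^{(1/4)}} |A-B|^2 |Du|^2 \le (2\Lambda_0)^{2-1/p}\Big(\int |A-B|\Big)^{1/p}\|Du\|_{L^{2+\delta}}^2 \le C\,\epsilon^{1/p}\,\|Du\|_{L^{2+\delta}}^2,
\end{equation*}
using $|A-B| \le 2\Lambda_0$ to trade the $L^1$ bound for an $L^q$ bound for any $q<\infty$. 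Setting $\gamma = \tfrac{1}{2p} = \tfrac{\delta}{2(2+\delta)}$ yields the factor $\epsilon^\gamma$ multiplying exactly the norms $\|u\|_{L^2(\Omega_T)} + \|g_t\|_{L^2} + \|f_t\|_{L^2}$ claimed in the statement (the lower-order norms $\|g\|_{L^2}, \|f\|_{L^2}$ already appear without the $\epsilon$ factor from the energy estimate). Finally I would note that $v = u - w$ is indeed defined on $\Omega_T^{(1/4)} \supset \Omega_T^{(1/2)}$, solves the homogeneous $B$-system there, and restricting the estimate on $w$ to $\Omega_T^{(1/2)}$ gives the conclusion; the constants depend only on $n,N,\lambda,\Lambda_2,T$ because all the invoked lemmas have that dependence with $\epsilon=1/2,\ \delta = T/8$ fixed.
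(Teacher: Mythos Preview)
Your overall architecture---solve the $B$-system with $u$'s parabolic boundary data, write the energy identity for $w=u-v$, and extract an $\epsilon^\gamma$ factor from the commutator via H\"older---matches the paper. The gap is in the higher-integrability step. You claim $DD_{x'}u\in L^2_{\mathrm{loc}}$ from Lemmas~\ref{lemin2}--\ref{lemin3}, but those lemmas (and Lemma~\ref{lemin4}) assume the spatial smoothness condition~\eqref{coeff3} on the coefficients, whereas Lemma~\ref{pertubationlemma} only assumes $A\in\mathscr{A}(\lambda,\Lambda_0)$ is measurable in $x$ and $C^1$ in $t$. What \emph{is} available under these hypotheses is $u_t\in L^2_{\mathrm{loc}}$, $Du\in L^\infty_tL^2_x$, and $Du_t\in L^2_{\mathrm{loc}}$ (these only require $D_tA$ bounded); but none of these give spatial higher integrability of $Du$, and no parabolic Sobolev embedding produces $Du\in L^{2+\delta}$ from them without a bound on second spatial derivatives. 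Likewise, applying the interior reverse H\"older inequality (Proposition~\ref{prop73} or Theorem~\ref{reverseholder}) directly to $u$ would require $g\in L^p$ for some $p>2$, which is not assumed.

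The paper's remedy is to flip which gradient appears in the commutator: write $w=u-v$ as a solution of the $A$-system with right-hand side $-\partial g + f + \partial\big((B-A)\nabla v\big)$, so that one needs $\nabla v\in L^p$ rather than $\nabla u\in L^p$. This is obtained in two moves. First, from $u\in W^{1,1}_2(\Omega_T^{(1/5)})$ one uses Fubini to find a good $\sigma\in(\tfrac18,\tfrac14)$ and builds an extension $U\in H^{3/2-\delta}(\Omega_T^{(\sigma)})$ of $u\big|_{\partial\Omega_T^{(\sigma)}}$; by the $(n{+}1)$-dimensional Sobolev embedding, $\nabla U,\,U_t\in L^{\bar p}$ for some $\bar p>2$, with norm controlled by the right-hand side of the lemma. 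Second, $v-U$ solves the $B$-system on $\Omega_T^{(\sigma)}$ with zero parabolic boundary data and forcing $U_t-\partial(B\nabla U)\in L^{\bar p}$, so Theorem~\ref{reverseholder} (the global $L^p$ estimate) yields $\nabla(v-U)\in L^p$ and hence $\nabla v\in L^p$. The H\"older step you wrote then goes through with $\nabla v$ in place of $\nabla u$, and $\gamma=\tfrac{p-2}{2p}$.
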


In our proof of Lemma \ref{pertubationlemma}, we make use of Theorem
\ref{reverseholder}, an extension of a classical result of Campanato
in \cite{camp} for strongly parabolic systems to parabolic systems
with coefficients satisfying only \eqref{coeff1} and \eqref{coeff2}.
See Appendix.

\begin{proof}[Proof of Lemma \ref{pertubationlemma}.]

{\bf STEP 1.} By Lemma \ref{lemin1} and Lemma \ref{lemin3} with
$D=\omega$, we have
$$\|u\|^{2}_{W^{1,1}_{2}\big(\Omega_{T}^{(\frac{1}{5})}\big)}\leq\,C\left(\|u\|^{2}_{L^{2}(\Omega_{T})}+\sum_{s\leq1}\|D_{t}^{s}g\|^{2}_{L^{2}(\omega\times(0,T))}
+\|f\|^{2}_{L^{2}(\Omega_{T})}\right).$$ By Fubini theorem, there
exists $\frac{1}{8}<\sigma<\frac{1}{4}$ such that
\begin{equation}\label{pert5-2}
\int_{\partial((1-\sigma)\omega)}\bigg(\int_{\frac{1}{5}T}^{T}(|u|^{2}+|u_{t}|^{2}+|\nabla{u}|^{2})dt\bigg)dS
\leq\,C\|u\|^{2}_{W^{1,1}_{2}\big(\Omega_{T}^{(\frac{1}{5})}\big)}.
\end{equation}
By Lemma \ref{lemin1}, Lemma \ref{lemin3} and Lemma \ref{lemin4}
with $D=\omega$, for any $\frac{T}{10}\leq\tau\leq{T}$, we have
$$\int_{(1-\sigma)\omega}|u(x,\tau)|^{2}dx\leq\,C\left(\|u\|^{2}_{L^{2}(\Omega_{T})}+\|g\|^{2}_{L^{2}(\Omega_{T})}+\|f\|^{2}_{L^{2}(\Omega_{T})}\right),$$
$$\int_{(1-\sigma)\omega}|Du(x,\tau)|^{2}dx\leq\,C\left(\|u\|^{2}_{L^{2}(\Omega_{T})}+\sum_{s\leq1}\|D_{t}^{s}g\|^{2}_{L^{2}(\Omega_{T})}+\|f\|^{2}_{L^{2}(\Omega_{T})}\right),$$
$$\int_{(1-\sigma)\omega}|u_{t}(x,\tau)|^{2}dx\leq\,C\left(\|u\|^{2}_{L^{2}(\Omega_{T})}+\sum_{s\leq1}\|D_{t}^{s}g\|^{2}_{L^{2}(\Omega_{T})}
+\sum_{s\leq1}\|D_{t}^{s}f\|^{2}_{L^{2}(\Omega_{T})}\right).$$
Therefore,
\begin{align*}
&\int_{(1-\sigma)\omega}\bigg(|u|^{2}+|u_{t}|^{2}+|\nabla{u}|^{2}\bigg)\left(x,\sigma{T}\right)dx
+\int_{(1-\sigma)\omega}\bigg(|u|^{2}+|u_{t}|^{2}+|\nabla{u}|^{2}\bigg)(x,T)dx\\
&\hspace{3cm}\leq\,C\left(\|u\|^{2}_{L^{2}(\Omega_{T})}+\sum_{s\leq1}\|D_{t}^{s}g\|^{2}_{L^{2}(\Omega_{T})}
+\sum_{s\leq1}\|D_{t}^{s}f\|^{2}_{L^{2}(\Omega_{T})}\right).
\end{align*}
Then combining with \eqref{pert5-2}, we have
$$\int_{\partial\big(\Omega_{T}^{(\sigma)}\big)}\bigg(|u|^{2}+|u_{t}|^{2}+|\nabla\,u|^{2}\bigg)
\leq\,C\left(\|u\|^{2}_{L^{2}(\Omega_{T})}+\sum_{s\leq1}\|D_{t}^{s}g\|^{2}_{L^{2}(\Omega_{T})}
+\sum_{s\leq1}\|D_{t}^{s}f\|^{2}_{L^{2}(\Omega_{T})}\right).$$ Fix
some $0<\delta<1$, then take
$U\in{H}^{\frac{3}{2}-\delta}(\Omega_{T}^{(\sigma)})$ (the usual
fractional Sobolev space in $(n+1)$ dimensions) as an extension of
$u$ on $\partial\Omega_{T}^{(\sigma)}$ such that
$$\|U\|^{2}_{H^{\frac{3}{2}-\delta}\big(\Omega_{T}^{(\sigma)}\big)}
\leq\,C\int_{\partial\big(\Omega_{T}^{(\sigma)}\big)}
\bigg(|u|^{2}+|u_{t}|^{2}+|\nabla{u}|^{2}\bigg).$$ By the Sobolev
embedding theorem,
$$\|U_{t}\|_{L^{\overline{p}}(\Omega_{T}^{(\sigma)})}+\|\nabla{U}\|_{L^{\overline{p}}(\Omega_{T}^{(\sigma)})}\leq\,C\|U\|_{H^{3/2-\delta}(\Omega_{T}^{(\sigma)})},$$
where $\overline{p}=\frac{2n+2}{n+2\delta}\in(2,\frac{2n+2}{n})$. It
implies that
\begin{equation}\label{pert5-3}
\|U_{t}\|^{2}_{L^{\overline{p}}(\Omega_{T}^{(\sigma)})}+\|\nabla{U}\|^{2}_{L^{\overline{p}}(\Omega_{T}^{(\sigma)})}\leq\,C\left(\|u\|^{2}_{L^{2}(\Omega_{T})}+\sum_{s\leq1}\|D_{t}^{s}g\|^{2}_{L^{2}(\Omega_{T})}
+\sum_{s\leq1}\|D_{t}^{s}f\|^{2}_{L^{2}(\Omega_{T})}\right).
\end{equation}

{\bf STEP 2.} There exists $v\in\,V(\Omega_{T}^{(\sigma)})$
satisfying
$$
\begin{cases}
-(v^{i})_{t}+D_{\alpha}(B^{\alpha\beta}_{ij}(x,t)D_{\beta}v^{j})=0\
&\mbox{in}\
\Omega_{T}^{(\sigma)},\\
v(x,t)=u(x,t) &\mbox{on}\ \partial((1-\sigma)\omega)\times(\sigma{T},T),\\
v(x,t)=u(x,t)&\mbox{on}\ (1-\sigma)\omega\times\{\sigma{T}\}.
\end{cases}
$$
Since
$U\in{W}^{1,1}_{2}(\Omega_{T}^{(\sigma)})\subset{V}(\Omega_{T}^{(\sigma)})$,
then $v-U\in\overset{\circ}{V}(\Omega_{T}^{(\sigma)})$ and satisfies
$$-(v^{i}-U^{i})_{t}+D_{\alpha}(B^{\alpha\beta}_{ij}(x,t)D_{\beta}(v^{j}-U^{j}))
=U^{i}_{t}-D_{\alpha}(B^{\alpha\beta}_{ij}(x,t)D_{\beta}U^{j}),\quad\mbox{in}\
\Omega_{T}^{(\sigma)}.$$ Applying Theorem \ref{reverseholder}, we
have for all $2\leq{p}<p_{0}$, where $p_{0}$ is the one in Theorem
\ref{reverseholder}, (to determine the above constant $\delta$),
depending only on $n,\lambda$ and $\Lambda_{0}$, such that
\begin{align*}
\|\nabla(v-U)\|_{L^{p}(\Omega_{T}^{(\sigma)})}
&\leq\,C\left(\|U_{t}\|_{L^{p}(\Omega_{T}^{(\sigma)})}+
\|\nabla{U}\|_{L^{p}((1-\sigma)\omega\times(\sigma{T},T))}\right).
\end{align*}
Now we choose $\delta$ such that $2<\overline{p}<p_{0}$ with
$\overline{p}=p$. Then,
\begin{align*}
\|\nabla(v-U)\|^{2}_{L^{p}(\Omega_{T}^{(\sigma)})}
&\leq\,C\left(\|u\|^{2}_{L^{2}(\Omega_{T})}+\sum_{s\leq1}\|D_{t}^{s}g\|^{2}_{L^{2}(\Omega_{T})}
+\sum_{s\leq1}\|D_{t}^{s}f\|^{2}_{L^{2}(\Omega_{T})}\right).
\end{align*}
Recalling \eqref{pert5-3}, we have
$$\|\nabla{v}\|^{2}_{L^{p}(\Omega_{T}^{(\sigma)})}\leq\,C\left(\|u\|^{2}_{L^{2}(\Omega_{T})}+\sum_{s\leq1}\|D_{t}^{s}g\|^{2}_{L^{2}(\Omega_{T})}
+\sum_{s\leq1}\|D_{t}^{s}f\|^{2}_{L^{2}(\Omega_{T})}\right).$$

{\bf Step 3.} Combining the equations of $u'$s and $v'$s, we obtain
that
$$-(u^{i}-v^{i})_{t}+D_{\alpha}\left(A^{\alpha\beta}_{ij}(x,t)D_{\beta}(u^{j}-v^{j})\right)
=D_{\alpha}g^{\alpha\,i}+f^{i}+D_{\alpha}\left((B^{\alpha\beta}_{ij}-A^{\alpha\beta}_{ij})D_{\beta}v^{j}\right),\quad\mbox{in}\
\Omega_{T}^{(\sigma)}.$$ Since $u-v=0$ on
$\partial_{p}(\Omega_{T}^{(\sigma)})$, the parabolic boundary of
$\Omega_{T}^{(\sigma)}$, it follows from Lemma \ref{lem32} that
\begin{align*}
\sup_{\sigma{T}\leq\tau\leq{T}}\int_{(1-\sigma)\omega}&|(u-v)(x,\tau)|^{2}dx+\|\nabla(u-v)\|^{2}_{L^{2}(\Omega_{T}^{(\sigma)})}\\
&\leq\,C\left(\|g\|^{2}_{L^{2}(\Omega_{T}^{(\sigma)})}+\|f\|^{2}_{L^{2}(\Omega_{T}^{(\sigma)})}
+\int_{\sigma{T}}^{T}\int_{(1-\sigma)\omega}\big|(B-A)\nabla{v}\big|^{2}dxdt\right).
\end{align*}
By H\"{o}lder inequality and \eqref{AB}, we have
\begin{align*}
&\hspace{.5cm}\sup_{\frac{1}{2}{T}\leq\tau\leq{T}}\int_{\frac{1}{2}\omega}|(u-v)(x,\tau)|^{2}dx
+\|\nabla(u-v)\|^{2}_{L^{2}(\Omega_{T}^{(\frac{1}{2})})}\\
&\leq\,C\left(\|g\|^{2}_{L^{2}(\Omega_{T})}+\|f\|^{2}_{L^{2}(\Omega_{T})}
+\left(\int_{\sigma{T}}^{T}\int_{(1-\sigma)\omega}|B-A|^{\frac{2p}{p-2}}dxdt\right)^{\frac{p-2}{p}}
\left(\int_{\sigma{T}}^{T}\int_{(1-\sigma)\omega}|\nabla\,v|^{p}dxdt\right)^{\frac{2}{p}}\right)\\
&\leq\,C\left(\|g\|^{2}_{L^{2}(\Omega_{T})}+\|f\|^{2}_{L^{2}(\Omega_{T})}+\epsilon^{\frac{p-2}{p}}\|\nabla{v}\|^{2}_{L^{p}(\Omega_{T}^{(\sigma)})}\right)\\
&\leq\,C\left(\|g\|^{2}_{L^{2}(\Omega_{T})}+\|f\|^{2}_{L^{2}(\Omega_{T})}
+\epsilon^{\frac{p-2}{p}}\left(\|u\|^{2}_{L^{2}(\Omega_{T})}+\sum_{s\leq1}\|D_{t}^{s}g\|^{2}_{L^{2}(\Omega_{T})}
+\sum_{s\leq1}\|D_{t}^{s}f\|^{2}_{L^{2}(\Omega_{T})}\right)\right).
\end{align*}
Taking $\gamma=\frac{p-2}{2p}$, Lemma \ref{pertubationlemma} is
proved.
\end{proof}

\bigskip
\section{Estimates of $|u|$ and Preliminaries for Estimates of $|\nabla{u}|$}\label{gradient}
\bigskip

Before proving Theorem \ref{thm1}, we derive uniform $L^{\infty}$
and gradient estimates in this section. In order to estimate $|u|$
and $|\nabla{u}|$ at a point $(x,t)$ in some ${D}_{m}\times(0,T)$,
we need only consider the case that for some $m_{0}$,
$X^{0}=(x^{0},t^{0})$ is in ${D}_{m_{0}}\times(0,T)$ and close to
the lateral boundary $\partial{D}_{m_{0}}\times(0,T)$, and estimate
$|u|$ and $|\nabla{u}|$ at $X^{0}$. In that case we approximate the
problem by laminar systems with a finite number of strips.

We shall focus on a neighborhood of $X^{0}$ in this section. Without
loss of generality, we take $x^{0}$ as the origin in
$\mathbb{R}^{n}$ and $t^{0}=1<T$, that is, $X^{0}=(0,1)$. By
suitable rotation and scaling, we suppose there lie a finite number
of the $\partial{D}_{m}$ in the cube
$\omega=(\frac{-1}{2},\frac{1}{2})^{n}$, and these hypersurfaces
take the form
$$x_{n}=f_{m}(x'),\quad\quad\,x'\in\bigg(\frac{-1}{2},\frac{1}{2}\bigg)^{n-1},\quad m=1,\cdots,l,$$
and
$$-\frac{1}{2}=f_{0}(x')<f_{1}(x')<\cdots<f_{l}(x')<f_{l+1}=\frac{1}{2},$$
where $f_{m}\in\,C^{1,\alpha}([-\frac{1}{2},\frac{1}{2}]^{n-1})$,
thus we have $l+1$ regions
$${D}_{m}=\left\{x=(x',x_{n})\in\omega~\bigg|~f_{m}(x')<x_{n}<f_{m+1}(x'),x'\in(-\frac{1}{2},\frac{1}{2})^{n-1}\right\},\quad0\leq\,m\leq\,l.$$
We suppose that $f_{m_{0}}(0')<0<f_{m_{0}+1}(0')$, and
$(0',f_{m_{0}+1}(0'))$ is the closest point on $\partial{D}_{m_{0}}$
to the origin. So that
$$\nabla'f_{m_{0}+1}(0')=0.$$

After rotation and scaling, \eqref{IBVP} still have the same form,
and the coefficient conditions \eqref{coeff1} \eqref{coeff2} still
hold. We now consider the equation in
$\Omega_{1}=\omega\times(0,1)$. Denote the cylinder
$$Q(0,r)=r\omega\times(1-r^{2},1),$$
where
$$r\omega=\big\{x\in\mathbb{R}^{n}~\big|~|x_{i}|<\frac{r}{2},i=1,\cdots,n\big\},$$
then $Q(0,1)=\Omega_{1}=\omega\times(0,1)$. Our desired estimate for
$\nabla{u}(0,1)$ is given by the following:

\begin{prop}\label{prop61}
Suppose the coefficients
$A(x,t)\in{C}^{\mu,1}(\overline{{D}}_{m}\times(0,1))$ $(0<\mu<1)$
satisfy \eqref{coeff1} and \eqref{coeff2}, with
$\{{D}_{m}\}_{m=0}^{l}$ as above. If $u\in{V}(\Omega_{1})$ is a weak
solution of \eqref{IBVP}, then for
$0<\alpha'<\min\{\mu,\frac{\alpha}{2(1+\alpha)}\}$,
$$|u(0,1)|+|\nabla{u}(0,1)|\leq\,C\left(\|u\|_{L^{2}(\Omega_{1})}+\|f\|_{L^{\infty}(\Omega_{1})}
+\max_{1\leq\,m\leq\,l+1}\|g\|_{{C}^{\alpha',0}(\overline{{D}}_{m}\times(0,1))}\right),$$
where $C$ depends only on $n$, $N$, $l$, $\alpha$, $\mu$, $\lambda$,
$\Lambda_{0}$, $T$,
$\max\limits_{0\leq\,m\leq\,l}\|f_{m}\|_{C^{1,\alpha}([-\frac{1}{2},\frac{1}{2}]^{n-1})}$,
and
$\max\limits_{1\leq\,m\leq\,l}\|A\|_{{C}^{\alpha',1}(\overline{{D}}_{m}\times[0,1])}$.
\end{prop}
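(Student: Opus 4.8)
The plan is to prove the estimate by a Campanato/Li--Nirenberg-type iteration at the point $X^{0}=(0,1)$, using the Perturbation Lemma (Lemma~\ref{pertubationlemma}) together with the interior regularity of laminar parabolic systems (Proposition~\ref{prop41} and, especially, Corollary~\ref{cor1}), whose constants are \emph{independent of the strip widths} --- this being the whole point. I would first record the normalizations already in force --- $X^{0}=(0,1)$, interfaces $x_{n}=f_{m}(x')$ with $f_{m}\in C^{1,\alpha}$ and $\nabla'f_{m_{0}+1}(0')=0$, so that the nearest interface is tangent to a horizontal hyperplane at $0'$ --- and make two harmless reductions: (i) assume $f,g$ smooth in $t$ (a standing hypothesis removed later by approximation), so that Lemma~\ref{pertubationlemma} and Lemmas~\ref{lemin2}--\ref{lemin3} apply; and (ii) subtract $g^{\alpha}(0,t)$ from $g^{\alpha}$, which changes nothing in \eqref{IBVP} since $g^{\alpha}(0,t)$ is independent of $x$, so that $|g(x,t)|\le[g]_{C^{\alpha',0}}|x|^{\alpha'}$ near $X^{0}$ and, after a $t$--mollification at the parabolic scale $r^{2}$, $\|D_{t}g\|$ on $Q(0,r)$ is comparable to $[g]_{C^{\alpha',0}}r^{\alpha'-2}$ --- exactly what is needed so that the final constant sees $g$ only through $\|g\|_{C^{\alpha',0}}$. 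Away from $\partial D_{m_{0}}$ the claim is classical interior parabolic regularity (covered by Proposition~\ref{prop41}); the issue is a neighborhood of $X^{0}$, uniformly in $d_{0}:=\mathrm{dist}(X^{0},\partial D_{m_{0}})$, which may be arbitrarily small.

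For $0<r\le r_{0}$ and the parabolic cylinders $Q(0,r)=r\omega\times(1-r^{2},1)$, set
$$\Phi(r)=\inf_{\ell}\Big(\frac{1}{r^{n+2}}\int_{Q(0,r)}|u(x,t)-\ell(x)|^{2}\,dx\,dt\Big)^{1/2},$$
the infimum over the finite--dimensional space of profiles that are affine in $x$ on each (flattened) strip, continuous, with continuous conormal flux across the interfaces of the laminar system obtained by freezing $A$ at $X^{0}$, and with no $t$--dependence (a $t$--linear term has parabolic degree $2>1$ and is irrelevant for the gradient). The heart of the proof is a one--step improvement: there exist $\rho\in(0,1)$ and $C$, depending only on the constants in the statement, such that for $0<r\le r_{0}$
$$\Phi(\rho r)\le \rho^{1+\alpha'}\Phi(r)+C\,r^{1+\alpha'}M,\qquad M:=\|u\|_{L^{2}(\Omega_{1})}+\|f\|_{L^{\infty}(\Omega_{1})}+\max_{m}\|g\|_{C^{\alpha',0}(\overline{D}_{m}\times[0,1])}.$$
To obtain it I would pass to the cylinder rescaled to $\Omega_{1}$, subtract the current near--optimal profile (so the rescaled, centered solution has $L^{2}$--norm $\sim r^{-1}\Phi(r)$ and still solves a parabolic system, up to an extra forcing of size $\lesssim Mr^{\min\{\mu,\alpha\}}$ from the coefficient mismatch acting on the profile), freeze $A$ at $X^{0}$ and flatten the relevant interfaces by a map that on $Q(0,r)$ is $I+O(r^{\alpha})$ (here $\nabla'f_{m_{0}+1}(0')=0$ enters, and the possibly tiny gaps between nearly touching interfaces are allowed because Proposition~\ref{prop41}/Corollary~\ref{cor1} are uniform in the widths), and compare with a solution $v$ of the homogeneous frozen laminar system via Lemma~\ref{pertubationlemma} --- hypothesis \eqref{AB} holding with $\epsilon\lesssim r^{\min\{\mu,\alpha\}}$ (from the $C^{\mu}$--in--$x$ and $C^{1}$--in--$t$ oscillation of $A$ and the $O(r^{1+\alpha})$ non--flatness of the $C^{1,\alpha}$ interfaces). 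Corollary~\ref{cor1} then bounds $v$ in $C^{2,1}$ on a fixed inner cylinder with constant independent of the widths, so on $Q(0,\rho r)$ I replace $v$ by its first--order part in each strip (an admissible profile up to an $O((\rho r)^{2})$ correction); collecting the comparison error from Lemma~\ref{pertubationlemma} (where every $D_{t}g$, $D_{t}f$ appears multiplied by the small factor $\epsilon^{\gamma}$), the $O(r^{2})$ Taylor remainder, and the contributions of $f$ (via $\|f\|_{\infty}$) and $g$ (via $|g|\le[g]_{\alpha'}|x|^{\alpha'}$), and absorbing the $\rho^{2}$-- and $\epsilon^{\gamma}$--multiples of $\Phi(r)$ by choosing $\rho$ then $r_{0}$ small, yields the displayed inequality for every $\alpha'<\min\{\mu,\tfrac{\alpha}{2(1+\alpha)}\}$.

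Iterating the one--step improvement (a standard geometric summation, using $\alpha'<1$; for $r\lesssim d_{0}$ the laminar model degenerates to a constant--coefficient, interface--free comparison and the same inequality holds) gives $\Phi(r)\le Cr^{1+\alpha'}M$ for $0<r\le r_{0}$, together with the uniform bound $|a_{r}|+|b_{r}|\le CM$ on the near--optimal profiles $\ell_{r}(x)=a_{r}+b_{r}\cdot x$ on the strip through $X^{0}$ (the increments $|a_{\rho^{k+1}r_{0}}-a_{\rho^{k}r_{0}}|+|b_{\rho^{k+1}r_{0}}-b_{\rho^{k}r_{0}}|\lesssim r^{\alpha'}M$ being summable). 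Since $u$ is continuous and, in $D_{m_{0}}$, classically differentiable, the $\ell_{r}$ converge and $\Phi(r)=o(r)$ forces $u(0,1)=a_{\infty}$ and $\nabla u(0,1)=b_{\infty}$; the bound on the profiles then gives $|u(0,1)|+|\nabla u(0,1)|\le CM$, which is the assertion. (Running the same estimate at nearby base points gives $\Phi(r)\le Cr^{1+\alpha'}M$ there as well, hence the piecewise $C^{\alpha',0}$ bound for $D_{x}u$ used later for Theorem~\ref{thm1}.)

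The main obstacle is the geometric bookkeeping that produces exactly the threshold $\min\{\mu,\tfrac{\alpha}{2(1+\alpha)}\}$: one must control, uniformly in the (possibly tiny) distances between the $C^{1,\alpha}$ interfaces, the scale down to which the interfaces can be flattened by a near--identity map and compared with a flat laminar model --- the $O(r^{1+\alpha})$ non--flatness occupies a set of normalized measure $\sim r^{\alpha}$, and, combined with the relation between this scale and the gap geometry and with the parabolic time scaling $t\sim x^{2}$, this is precisely what degrades Li--Nirenberg's elliptic exponent $\tfrac{\alpha}{1+\alpha}$ to $\tfrac{\alpha}{2(1+\alpha)}$. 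The subsidiary difficulties --- arranging the freezing and flattening and the invocations of Lemma~\ref{pertubationlemma}, Proposition~\ref{prop41} and Corollary~\ref{cor1} so that no constant depends on the strip widths, and ensuring via the matched $t$--mollification that $\|D_{t}g\|,\|D_{t}f\|$ never enter the final constant --- are handled exactly as in the elliptic case, adapted to the parabolic cylinders.
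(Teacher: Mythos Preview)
Your approach is essentially the paper's: a Li--Nirenberg/Caffarelli iteration at $(0,1)$ driven by the Perturbation Lemma~\ref{pertubationlemma} and the width--independent regularity of laminar systems (Corollary~\ref{cor1}). The differences are presentational, but two are worth noting. First, the paper does \emph{not} flatten the interfaces; it defines the comparison coefficients $\overline{A}$ on the horizontal strips $\omega_{m}=\{f_{m}(0')<x_{n}<f_{m+1}(0')\}$ and invokes Lemma~\ref{lem61} to bound $\|A-\overline{A}\|_{Y^{1+\alpha',2}}$ directly, the geometric input being $r^{-n}|(\omega_{m}\cap r\omega)\setminus D_{m}|\le Cr^{\alpha/(1+\alpha)}$ from Li--Vogelius. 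This is cleaner than your flattening map and sidesteps the delicate issue of simultaneously straightening several possibly very close $C^{1,\alpha}$ graphs by a single near--identity diffeomorphism. Second, rather than an affine--profile excess $\Phi(r)$, the paper builds an explicit sequence of laminar solutions $w_{k}$ on $Q(0,3/4^{k+1})$ and controls $\|u-\sum_{j\le k}w_{j}\|_{L^{2}}$ inductively; the correctors are full solutions of the $\overline{A}$--system (not just their first--order parts), and their $L^{\infty}$ bounds on $Dw_{k}$, $D_{t}w_{k}$, $DD_{t}w_{k}$ --- needed because Lemma~\ref{pertubationlemma} demands control of $D_{t}g_{k+1}$ --- come straight from Corollary~\ref{cor1}. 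Your exponent bookkeeping is slightly loose (the normalized measure of the mismatch set is $\sim r^{\alpha/(1+\alpha)}$, not $r^{\alpha}$, and after rescaling the smallness in~\eqref{AB} is $\epsilon\lesssim r^{\alpha'}$), but this is precisely what Lemma~\ref{lem61} makes rigorous and what yields the threshold $\min\{\mu,\tfrac{\alpha}{2(1+\alpha)}\}$.
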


The $L^{\infty}$ estimate of $|u|$ in \eqref{thm1.1} and estimate
\eqref{thm1.2} in Theorem \ref{thm1} follows from Proposition
\ref{prop61}. The proof of Proposition \ref{prop61} will use the
perturbation Lemma \ref{pertubationlemma} in $Q(0,1)$. We
approximate the system ``$A$" by a laminar system with coefficients
$\overline{A}$ that are piecewise smooth functions. Precisely, we
introduce strips in $\omega$,
$$\omega_{m}=\bigg\{x\in\omega:~f_{m}(0')<x_{n}<f_{m+1}(0')\bigg\},$$
and define the coefficients $\overline{A}$ as
$$
\overline{A}(x,t)=
\begin{cases}
\mathop{\lim}\limits_{\scriptstyle \quad\,
y\in{D}_{m}\hfill\atop\scriptstyle
y\rightarrow(0',f_{m}(0'))\hfill}A^{\alpha\beta}_{ij}(y,t),&x\in\omega_{m}\times(0,1),\quad
m>m_{0},\\
\quad\quad A^{\alpha\beta}_{ij}(0,t),&x\in\omega_{m_{0}}\times(0,1),\\
\mathop{\lim}\limits_{\scriptstyle \quad\,
y\in{D}_{m}\hfill\atop\scriptstyle
y\rightarrow(0',f_{m+1}(0'))\hfill}A^{\alpha\beta}_{ij}(y,t),&x\in\omega_{m}\times(0,1),\quad
m<m_{0},
\end{cases}
$$
Using $f$ and $g$, we similarly define $\widetilde{F}$ and
$\widetilde{G}$, respectively. We measure the difference
$A-\overline{A}$ in terms of a norm $\|\cdot\|_{Y^{s,p}}$ defined
below on $Q(0,1)$.

\begin{defn}
For $s>0$, $1\leq\,p<\infty$, and any vector- or matrix-valued
function $F$, we introduce the norm
$$\|F\|_{Y^{s,p}}=\sup_{0<r<1}r^{1-s}\bigg(\fint_{Q(0,r)}|F|^{p}dxdt\bigg)^{1/p}.$$
\end{defn}

\begin{lemma}\label{lem61}
Take
$$0<\alpha'<\min\left\{\mu,\frac{\alpha}{2(\alpha+1)}\right\},$$
and $A,\overline{A}$ defined as above. Then there exists a positive
constant $E$, depending only on
$n,l,\alpha,\alpha',\lambda,\Lambda_{0}$,
$\max\limits_{0\leq\,m\leq\,l+1}\|A\|_{C^{\alpha',0}(\overline{{D}}_{m}\times(0,1))}$
and
$\max\limits_{0\leq\,m\leq\,l}\|f_{m}\|_{C^{1,\alpha}([-\frac{1}{2},\frac{1}{2}]^{n-1})}$,
such that
$$\|A-\overline{A}\|_{Y^{1+\alpha',2}}\leq\,E.$$
\end{lemma}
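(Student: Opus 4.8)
The plan is to estimate $\|A-\overline{A}\|_{Y^{1+\alpha',2}}$ by decomposing the difference $A(x,t)-\overline{A}(x,t)$ into two geometrically distinct contributions on each cylinder $Q(0,r)$: a ``tangential'' part, coming from the fact that on the strip $\omega_m\times(0,1)$ the coefficient $\overline{A}$ freezes $A$ at the single point $(0',f_m(0'))$ (or $(0',f_{m+1}(0'))$, or the origin), and a ``vertical mismatch'' part, coming from the fact that the flat interfaces $\{x_n=f_m(0')\}$ separating the $\omega_m$ do not coincide with the curved interfaces $\{x_n=f_m(x')\}$ separating the ${D}_m$. For a point $x\in\omega_m$ whose $x_n$-coordinate lies between $f_m(0')$ and $f_m(x')$, the values $A(x,t)$ and $\overline{A}(x,t)$ may be taken from different pieces ${D}_{m-1}$ and ${D}_m$, so there $|A-\overline{A}|\le 2\max_k\|A\|_{L^\infty({D}_k\times(0,1))}=:2M$, but this only happens on a thin sliver whose thickness in the $x_n$-direction is $|f_m(x')-f_m(0')|$.

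First I would bound the tangential part. For $x$ in the ``good'' region of $\omega_m$ — where $x$ and the freezing point lie in the same piece — the $C^{\alpha',0}$ regularity of $A$ in $x$ gives
\[
|A(x,t)-\overline{A}(x,t)|\le \|A\|_{C^{\alpha',0}(\overline{{D}}_m\times(0,1))}\,|x-(0',f_m(0'))|^{\alpha'}\le C\,r^{\alpha'}
\]
on $Q(0,r)$ (here I use $\nabla' f_{m_0+1}(0')=0$ only to ensure $m_0$'s strip is handled consistently; for the general $m$ the point simply is that the horizontal displacement is $O(r)$ and the vertical displacement from $x_n$ to $f_m(0')$ is also $O(r)$ because $x\in r\omega$). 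Then
\[
r^{1-(1+\alpha')}\Big(\fint_{Q(0,r)}|A-\overline{A}|^2\Big)^{1/2}\le r^{-\alpha'}\cdot C r^{\alpha'}=C,
\]
which is the desired uniform bound from this term.

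Next I would bound the vertical-mismatch part. Since $f_m\in C^{1,\alpha}$ and $\nabla' f_m(0')$ is bounded (indeed for $m=m_0+1$ it vanishes), we have $|f_m(x')-f_m(0')|\le \|f_m\|_{C^{1,\alpha}}|x'|\le C r$ for $x\in r\omega$; more precisely, writing $f_m(x')-f_m(0')=\nabla'f_m(0')\cdot x'+O(|x'|^{1+\alpha})$, the sliver where $A$ and $\overline{A}$ disagree by an $O(1)$ amount has $x_n$ ranging over an interval of length $\le C|x'|\le Cr$ (or $\le C r^{1+\alpha}$ near $m_0$), uniformly in $x'$. Hence the measure of the bad set inside $Q(0,r)$ is at most $C r^{n-1}\cdot r\cdot r^2=Cr^{n+2}$, whereas $|Q(0,r)|=r^{n+2}$, so
\[
\Big(\fint_{Q(0,r)}|A-\overline{A}|^2\mathbf{1}_{\text{bad}}\Big)^{1/2}\le \big(2M\big)\Big(\frac{Cr^{n+2}}{r^{n+2}}\Big)^{1/2}\le C.
\]
Multiplying by $r^{-\alpha'}$ loses a factor $r^{-\alpha'}$, so this crude count is not quite enough: I would instead use that the sliver has thickness $\le C|x'|$ and split $r\omega$ dyadically in $|x'|$, or simply note that on the region where the mismatch is actually nonzero one still controls $|A-\overline{A}|$ by the $C^{\alpha',0}$ modulus whenever the two relevant pieces abut along a smooth interface — the genuine ``$O(1)$ jump'' region has $x_n$-thickness $\le C|x'|^{1+\alpha}$ (thanks to $\nabla'f_{m_0+1}(0')=0$ and the $C^{1,\alpha}$ bound on the remaining $f_m$ after the analogous normalization), giving bad-set measure $\le C r^{n-1}\cdot r^{1+\alpha}\cdot r^2$ and hence a contribution $\le C r^{-\alpha'}\cdot r^{(1+\alpha)/2}$, which is bounded provided $\alpha'\le\alpha/(2(1+\alpha))$ — exactly the hypothesis. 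Combining the two parts, $\|A-\overline{A}\|_{Y^{1+\alpha',2}}\le E$ with $E$ depending only on the listed quantities.

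The main obstacle is the bookkeeping in the vertical-mismatch term: one must show that the region on which $A$ and $\overline{A}$ are taken from genuinely different (non-adjacent, or adjacent-but-far-frozen) pieces has $x_n$-thickness controlled not merely by $|x'|$ but by $|x'|^{1+\alpha}$ near the relevant interface, so that the lost power $r^{-\alpha'}$ is absorbed. This is precisely where the constraint $\alpha'<\alpha/(2(1+\alpha))$ enters, and where the normalization $\nabla'f_{m_0+1}(0')=0$ (placing the closest point of $\partial{D}_{m_0}$ at the origin) is used; all other estimates are routine consequences of the $C^{\alpha',0}$ and $C^{1,\alpha}$ hypotheses.
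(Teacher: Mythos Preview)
Your decomposition into a ``tangential'' part (where $x\in\omega_m\cap D_m$ and one uses $C^{\alpha',0}$ regularity of $A$) and a ``vertical mismatch'' part (where $x\in\omega_m\setminus D_m$) is exactly what the paper does, and your treatment of the tangential part is correct. The genuine gap is in the mismatch part. You assert that the sliver between the flat interface $\{x_n=f_m(0')\}$ and the curved one $\{x_n=f_m(x')\}$ has thickness $\le C|x'|^{1+\alpha}$, appealing to ``$\nabla' f_{m_0+1}(0')=0$ and the analogous normalization for the remaining $f_m$.'' But only \emph{one} interface has been normalized: the coordinate rotation was chosen so that $\nabla' f_{m_0+1}(0')=0$, and you cannot simultaneously make $\nabla' f_m(0')=0$ for all $m$. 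For $m\neq m_0+1$ the first-order term $\nabla' f_m(0')\cdot x'$ is generically nonzero, so the naive thickness is only $O(|x'|)$, and your computation breaks down. (Incidentally, if your thickness bound \emph{were} valid, the mismatch fraction would be $r^{\alpha}$ and the resulting constraint would be $\alpha'\le\alpha/2$, not $\alpha/(2(1+\alpha))$; your displayed exponent $r^{(1+\alpha)/2}$ and the stated threshold are both miscomputed.)

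The missing idea, which the paper imports as Lemma~5.1 of Li--Vogelius, is that the \emph{non-crossing} of the $C^{1,\alpha}$ graphs forces the gradients of nearby interfaces to be small. Concretely, set $h=f_m-f_{m_0+1}>0$; then $h\in C^{1,\alpha}$ with $\nabla' h(0')=\nabla' f_m(0')$, and positivity of $h$ on the whole base cube gives, by a one-line optimization of $h(0')-|\nabla' h(0')|\,s+Cs^{1+\alpha}>0$ over $s$, the bound
\[
|\nabla' f_m(0')|\ \le\ C\big(f_m(0')-f_{m_0+1}(0')\big)^{\alpha/(1+\alpha)}.
\]
If the sliver for $f_m$ meets $r\omega$ at all, then $|f_m(0')|\le Cr$, so $|\nabla' f_m(0')|\le Cr^{\alpha/(1+\alpha)}$, and the sliver thickness is $\le |\nabla' f_m(0')|\,|x'|+C|x'|^{1+\alpha}\le Cr^{1+\alpha/(1+\alpha)}$. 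This yields $r^{-n}\big|(\omega_m\cap r\omega)\setminus D_m\big|\le Cr^{\alpha/(1+\alpha)}$, hence a mismatch contribution $\big(\fint_{Q(0,r)}|A-\overline A|^2\big)^{1/2}\le Cr^{\alpha/(2(1+\alpha))}$, and the $Y^{1+\alpha',2}$ norm is finite precisely under the hypothesis $\alpha'<\alpha/(2(1+\alpha))$. That non-crossing step is the one ingredient your argument is missing.
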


It can be proved in the same way as Lemma 5.2 in \cite{lv}. For
reader's convenience, we present the proof here.

\begin{proof}
Due to the definition of ${D}_{m}$ and $\omega_{m}$, and Lemma 5.1
in \cite{lv}, we have
$$r^{-n}\left|(\omega_{m}\cap{r\omega})\setminus{D}_{m}\right|\leq{C}r^{\frac{\alpha}{1+\alpha}}.$$
Then by the definition of $A$, $\overline{A}$,
\begin{align}\label{lem62.1}
&\hspace{.5cm}\left(\fint_{Q(0,r)}|A-\overline{A}|^{2}dxdt\right)^{1/2}\nonumber\\
&=\left(\frac{1}{r^{n+2}}\sum_{m}\int_{1-r^{2}}^{1}\int_{\omega_{m}\cap{r\omega}\cap{D}_{m}}|A-\overline{A}|^{2}dxdt
+\frac{1}{r^{n+2}}\sum_{m}\int_{1-r^{2}}^{1}\int_{(\omega_{m}\cap{r\omega})\setminus{D}_{m}}|A-\overline{A}|^{2}dxdt\right)^{1/2}\nonumber\\
&\leq\left(\frac{1}{r^{n+2}}\sum_{m}\int_{1-r^{2}}^{1}\int_{\omega_{m}\cap{r\omega}\cap{D}_{m}}|A-\overline{A}|^{2}dxdt\right)^{1/2}
+Cr^{\frac{\alpha}{2(1+\alpha)}},
\end{align}
where $C$ depending only on $\Lambda_{0},l$ and the $C^{1,\alpha}$
norm of $f_{m}$, $1\leq{m}\leq{l}$. The first term in the right-side
of \eqref{lem62.1} requires a slightly different estimate, depending
on whether $m<m_{0}$, $m=m_{0}$ or $m>m_{0}$. For $m<m_{0}$,
\begin{align*}
&\hspace{.5cm}\left(\frac{1}{r^{n+2}}\int_{1-r^{2}}^{1}\int_{\omega_{m}\cap{r\omega}\cap{D}_{m}}|A-\overline{A}|^{2}dxdt\right)^{1/2}\\
&=\left(\frac{1}{r^{n+2}}\int_{1-r^{2}}^{1}\int_{\omega_{m}\cap{r\omega}\cap{D}_{m}}|A(x,t)-A(0',f_{m}(0'),t)|^{2}dxdt\right)^{1/2}\\
&\leq\,C\left(\frac{1}{r^{n+2}}\int_{1-r^{2}}^{1}\int_{\omega_{m}\cap{r\omega}\cap{D}_{m}}|x-(0',f_{m}(0'))|^{2\alpha'}dxdt\right)^{1/2}
\leq\,Cr^{\alpha'},
\end{align*}
where $C$ depends only on
$\|A\|_{C^{\alpha',0}(\overline{{D}}_{m}\times(0,1))}$
$(\alpha'<\mu)$. For $m=m_{0}$,
\begin{align*}
&\hspace{.5cm}\left(\frac{1}{r^{n+2}}\int_{1-r^{2}}^{1}\int_{\omega_{m_{0}}\cap{r\omega}\cap{D}_{m_{0}}}|A-\overline{A}|^{2}dxdt\right)^{1/2}\\
&=\left(\frac{1}{r^{n+2}}\int_{1-r^{2}}^{1}\int_{\omega_{m_{0}}\cap{r\omega}\cap{D}_{m_{0}}}|A(x,t)-A(0,t)|^{2}dxdt\right)^{1/2}
\leq\,Cr^{\alpha'},
\end{align*}
and for $m>m_{0}$,
\begin{align*}
&\hspace{.5cm}\left(\frac{1}{r^{n+2}}\int_{1-r^{2}}^{1}\int_{\omega_{m}\cap{r\omega}\cap{D}_{m}}|A-\overline{A}|^{2}dxdt\right)^{1/2}\\
&=\left(\frac{1}{r^{n+2}}\int_{1-r^{2}}^{1}\int_{\omega_{m}\cap{r\omega}\cap{D}_{m}}|A(x,t)-A(0',f_{m-1}(0'),t)|^{2}dxdt\right)^{1/2}\\
&\leq\,C\left(\frac{1}{r^{n+2}}\int_{1-r^{2}}^{1}\int_{\omega_{m}\cap{r\omega}\cap{D}_{m}}|x-(0',f_{m-1}(0'))|^{2\alpha'}dxdt\right)^{1/2}
\leq\,Cr^{\alpha'}.
\end{align*}
In either case we therefore conclude from \eqref{lem62.1} that
$$\left(\fint_{Q(0,r)}|A-\overline{A}|^{2}dxdt\right)^{1/2}\leq\,C\left(r^{\alpha'}+r^{\frac{\alpha}{2(1+\alpha)}}\right)\leq\,Cr^{\alpha'}.$$
We now choose $E=C$, then the lemma follows.
\end{proof}

We now prove Proposition \ref{prop61}. The method we use here is an
adaption of that of Li-Nirenberg in \cite{ln} for the elliptic case.
See also related papers of L. Caffarelli and Caffarelli-Cabr\'{e}
\cite{caf,cc}.

\begin{proof}[Proof of Proposition \ref{prop61}]

For simplicity, we treat the case $f^{i}=g^{\alpha{i}}\equiv0$. We
will show that
\begin{equation}\label{prop61:result}
|u(0,1)|+|\nabla{u}(0,1)|\leq\,C\|u\|_{L^{2}(\Omega_{1})}.
\end{equation}
By Lemma \ref{lem61},
$$\|A-\overline{A}\|_{Y^{1+\alpha',2}}\leq\,E.$$
In fact, we can further assume that
\begin{equation}\label{prop61:epsilon}
\|A-\overline{A}\|_{Y^{1+\alpha',2}}\leq\,\epsilon_{0}.
\end{equation}
for some small enough $\epsilon_{0}>0$ (depending only on
$n,N,\lambda,\Lambda,\alpha'$, and $E$). Indeed, we pick $r_{0}$
satisfying $r_{0}^{\alpha'}(1+E)=\epsilon_{0}$ and let
$$\widehat{A}(x,t)=A(r_{0}x,r_{0}^{2}(t-1)+1),\quad\overline{\widehat{A}}(x,t)=\overline{A}(r_{0}x,r_{0}^{2}(t-1)+1),$$
and$$\widehat{u}(x,t)=u(r_{0}x,r_{0}^{2}(t-1)+1).$$ A simple
calculation yields
\begin{align*}
&\hspace{.5cm}r^{-\alpha'}\left(\fint_{Q(0,r)}\big|\widehat{A}-\overline{\widehat{A}}\big|^{2}dxdt\right)^{1/2}\\
&=r^{-\alpha'}\left(\frac{1}{r^{n+2}}\int\limits_{\scriptstyle~\ \ \
\,|x|<r\hfill\atop\scriptstyle
1-r^{2}<t<1\hfill}\left|A(r_{0}x,r_{0}^{2}(t-1)+1)-\overline{A}(r_{0}x,r_{0}^{2}(t-1)+1)\right|^{2}dxdt\right)^{1/2}\\
&=r^{-\alpha'}\left(\frac{1}{(r_{0}r)^{n+2}}\int_{Q(0,r_{0}r)}\big|A-\overline{A}\big|^{2}(y,s)dyds\right)^{1/2},
\end{align*}
so
$$\|\widehat{A}-\overline{\widehat{A}}\|_{Y^{1+\alpha',2}}\leq\,r_{0}^{\alpha'}\|A-\overline{A}\|_{Y^{1+\alpha',2}}\leq\epsilon_{0},$$
and, since $f^{i}=g^{\alpha{i}}\equiv0$,
$$\left(\widehat{u}\right)_{t}-\partial\left(\widehat{A}\partial\widehat{u}\right)=0\quad\mbox{in}\ \Omega_{1}.$$

In the following we will always assume that for sufficiently small
$\epsilon_{0}$, \eqref{prop61:epsilon} holds and $u$ is normalized
to satisfy $\|u\|_{L^{2}(\Omega_{1})}\leq 1$. We will find
$w_{k}\in{V}(Q(0,\frac{3}{4^{k+1}}))$, $k\geq0$, such that for all
$k\geq0$,
\begin{align}
(w_{k})_{t}-\partial(\overline{A}\partial{w}_{k})&=0,\quad\quad\quad Q(0,\frac{3}{4^{k+1}}),\label{6induction1}\\
\|w_{k}\|_{L^{2}(Q(0,\frac{2}{4^{k+1}}))}\leq\,C'4^{-\frac{k(n+4+2\alpha')}{2}},
&\quad\|Dw_{k}\|_{L^{\infty}(Q(0,\frac{1}{4^{k+1}}))}\leq\,C'4^{-k\alpha'},\label{6induction2}\\
\|D_{t}w_{k}\|_{L^{\infty}(Q(0,\frac{1}{4^{k+1}}))}\leq\,C'4^{-k(\alpha'-1)},&\quad
\|DD_{t}w_{k}\|_{L^{\infty}(Q(0,\frac{1}{4^{k+1}}))}\leq\,C'4^{-k(\alpha'-2)},\label{6induction3}\\
\bigg\|u-\sum_{j=0}^{k}w_{j}\bigg\|_{L^{2}(Q(0,\frac{1}{4^{k+1}}))}&\leq\,4^{-\frac{k(n+4+2\alpha')}{2}}.\label{6induction4}
\end{align}

In the proof of \eqref{6induction1}-\eqref{6induction3}, $C$, $C'$
and $\epsilon_{0}$ denote various constants that depend only on
parameters specified in the proposition. In particular, they are
independent of $k$. $C$ will be chosen first and will be large, then
$C'$ (much larger that $C$), and finally $\epsilon_{0}\in(0,1)$
(much smaller than $1/CC'$).

By Lemma \ref{pertubationlemma}, we can find
$w_{0}\in{V}(Q(0,\frac{3}{4}))$, such that
$$\left(w_{0}\right)_{t}-\partial\left(\overline{A}\partial{w}_{0}\right)=0,\quad\quad(x,t)\in Q(0,\frac{3}{4}),$$
with
$$\|u-w_{0}\|_{V(Q(0,\frac{1}{2}))}\leq\,C\epsilon_{0}^{\gamma},$$
so
$$\|u-w_{0}\|_{L^{2}(Q(0,\frac{1}{2}))}\leq\frac{1}{2}C\epsilon_{0}^{\gamma},$$
and
$$\|w_{0}\|_{L^{2}(Q(0,\frac{1}{2}))}\leq\,\|u\|_{L^{2}(Q(0,\frac{1}{2}))}+\|u-w_{0}\|_{L^{2}(Q(0,\frac{1}{2}))}\leq\,C(\leq\,C').$$
Recalling the definition of $\overline{A}$, it follows from
Corollary \ref{cor1} that
$$\|Dw_{0}\|_{L^{\infty}(Q(0,\frac{1}{4}))}\leq\,C\leq\,C',\quad\|D_{t}w_{0}\|_{L^{\infty}(Q(0,\frac{1}{4}))}\leq\,C', \quad\|DD_{t}w_{0}\|_{L^{\infty}(Q(0,\frac{1}{4}))}\leq\,C'.$$ So
far, we have verified \eqref{6induction1}-\eqref{6induction4} for
$k=0$.  Suppose that \eqref{6induction1}-\eqref{6induction4} hold up
to $k$ ($k\geq\,0$), we will prove them for $k+1$. Let
\begin{align*}
&W_{k+1}(x,t)=\left(u-\sum_{j=0}^{k}w_{j}\right)\left(\frac{x}{4^{k+1}},\frac{t-1}{4^{2(k+1)}}+1\right),\\
&A_{k+1}(x,t)=A\left(\frac{x}{4^{k+1}},\frac{t-1}{4^{2(k+1)}}+1\right),
\quad\overline{A}_{k+1}(x,t)=\overline{A}\left(\frac{x}{4^{k+1}},\frac{t-1}{4^{2(k+1)}}+1\right),\\
&g_{k+1}(x,t)=-\frac{1}{4^{k+1}}\left(A_{k+1}-\overline{A}_{k+1}\right)(x,t)\sum_{j=0}^{k}\partial{w}_{j}\left(\frac{x}{4^{k+1}},\frac{t-1}{4^{2(k+1)}}+1\right).
\end{align*}
then $W_{k+1}$ satisfies
\begin{equation}\label{equwk+1}
\big(W_{k+1}\big)_{t}-\partial\big(A_{k+1}\partial{W_{k+1}}\big)=-\partial{g}_{k+1},\quad\quad
(x,t)\in{Q}(0,1).
\end{equation}
By simple calculation, using the fact that $|Q(0,1)|=1$ and
\eqref{prop61:epsilon}, we obtain
\begin{equation}\label{prop61Ak+1}
\|A_{k+1}-\overline{A}_{k+1}\|_{L^{2}(Q(0,1))}=\bigg(\fint_{Q(0,\frac{1}{4^{k+1}})}|A-\overline{A}|^{2}\bigg)^{1/2}
\leq\frac{1}{4^{(k+1)\alpha'}}\|A-\overline{A}\|_{Y^{1+\alpha',2}}\leq\frac{\epsilon_{0}}{4^{(k+1)\alpha'}}.
\end{equation}
By Lemma \ref{pertubationlemma}, there exists
$v_{k+1}\in{V}(Q(0,\frac{3}{4}))$ such that
$$-(v_{k+1})_{t}+\partial\left(\overline{A}_{k+1}\partial{v}_{k+1}\right)=0,\quad\mbox{in}\quad Q(0,\frac{3}{4}),$$
and
\begin{align}\label{prop61.W-v}
&\hspace{.5cm}\|W_{k+1}-v_{k+1}\|_{V(Q(0,\frac{1}{2}))}\nonumber\\
&\leq\,C\left(\|g_{k+1}\|_{L^{2}(Q(0,1))}
+\left(\frac{\epsilon_{0}}{4^{(k+1)\alpha'}}\right)^{\gamma}\left(\|W_{k+1}\|_{L^{2}(Q(0,1))}+\|D_{t}g_{k+1}\|_{L^{2}(Q(0,1))}\right)\right).
\end{align}
In the following, we will estimate these three terms on the right
hand side of \eqref{prop61.W-v}. Making a change of variable and
using \eqref{6induction4}, we first have
\begin{align}\label{prop61.wk+1l2}
\|W_{k+1}\|_{L^{2}(Q(0,1))}&=\left(4^{(k+1)(n+2)}\iint_{Q(0,\frac{1}{4^{k+1}})}\big|u-\sum_{j=0}^{k}w_{j}\big|^{2}(y,s)dyds\right)^{1/2}\nonumber\\
&\leq4^{\frac{(k+1)(n+2)}{2}}4^{-\frac{k(n+4+2\alpha')}{2}}\nonumber\\
&\leq\frac{C}{4^{(k+1)(1+\alpha')}}
\end{align}
Further, we need to estimate the $L^{2}$ norm of $g_{k+1}$ and
$D_{t}g_{k+1}$. In fact, by the induction hypothesis
\eqref{6induction2} and \eqref{6induction3}, we have
\begin{align}\label{prop61.gk+1}
&\hspace{.5cm}\|g_{k+1}\|_{L^{2}(Q(0,1))}\leq\frac{1}{4^{k+1}}\|A_{k+1}-\overline{A}_{k+1}\|_{L^{2}(Q(0,1))}
\sum_{j=0}^{k}C'4^{-j\alpha'},
\end{align}
and
\begin{align}\label{prop61.gtk+1}
\|D_{t}g_{k+1}\|_{L^{2}(Q(0,1))}
&\leq\frac{1}{4^{k+1}}\|D_{t}A_{k+1}-D_{t}\overline{A}_{k+1}\|_{L^{2}(Q(0,1))}
\sum_{j=0}^{k}C'4^{-j\alpha'}\nonumber\\
&+\frac{1}{4^{k+1}}\|A_{k+1}-\overline{A}_{k+1}\|_{L^{2}(Q(0,1))}
\sum_{j=0}^{k}\frac{1}{4^{2(k+1)}}\cdot{C'}4^{-j(\alpha'-2)}.
\end{align}
Recalling the definition of $A_{k+1}$ and $\overline{A}_{k+1}$, and
using the smoothness of $A$, we have
\begin{align*}
\left(\int_{Q(0,1)}|D_{t}(A_{k+1}-\overline{A}_{k+1})|^{2}dxdt\right)^{1/2}
&\leq\frac{1}{4^{2(k+1)}}\left(\fint_{Q(0,\frac{1}{4^{k+1}})}|D_{t}(A-\overline{A}|^{2}dxdt\right)^{1/2}\\
&\leq\frac{2}{4^{2(k+1)}}\|D_{t}A\|_{L^{\infty}(Q(0,\frac{1}{4^{k+1}}))}\\
&\leq\frac{C}{4^{2(k+1)}}.
\end{align*}
Therefore, combining these estimates with \eqref{prop61Ak+1},
\eqref{prop61.gk+1} and \eqref{prop61.gtk+1}, we have
\begin{align}\label{prop61.gk+10}
&\|g_{k+1}\|_{L^{2}(Q(0,1))}\leq\frac{1}{4^{k+1}}\cdot\frac{\epsilon_{0}}{4^{(k+1)\alpha'}}\cdot\frac{4^{\alpha'}}{4^{\alpha'}-1}C'
\leq\frac{CC'\epsilon_{0}}{4^{(k+1)(1+\alpha')}},
\end{align}
and
\begin{align}\label{prop61.gtk+10}
&\|D_{t}g_{k+1}\|_{L^{2}(Q(0,1))}\leq\,\frac{CC'}{4^{3(k+1)}}.
\end{align}
So substituting \eqref{prop61.wk+1l2}, \eqref{prop61.gk+10} and
\eqref{prop61.gtk+10} into \eqref{prop61.W-v}, we have
\begin{equation}\label{prop61.Wk+1-v}
\|W_{k+1}-v_{k+1}\|_{V(Q(0,\frac{1}{2}))}\leq\,CC'\max\{\epsilon_{0}^{\gamma},\epsilon_{0}\}\cdot\frac{1}{4^{(k+1)(1+\alpha')}},
\end{equation}
and
\begin{align*}
\|v_{k+1}\|_{L^{2}(Q(0,\frac{1}{2}))}&\leq\|W_{k+1}-v_{k+1}\|_{L^{2}(Q(0,\frac{1}{2}))}+\|W_{k+1}\|_{L^{2}(Q(0,\frac{1}{2}))}\\
&\leq\,\max\{\epsilon_{0}^{\gamma},\epsilon_{0}\}\cdot\frac{CC'}{4^{(k+1)(1+\alpha')}}+\frac{C}{4^{(k+1)(1+\alpha')}}.
\end{align*}

Let
$$w_{k+1}(x,t)=v_{k+1}\left(4^{k+1}x,4^{2(k+1)}(t-1)+1\right),\quad\,(x,t)\in\,Q(0,\frac{3}{4^{k+2}}).$$
A change of variables yields \eqref{6induction1}, and
\begin{align*}
&\hspace{0.5cm}\|w_{k+1}\|_{L^{2}(Q(0,\frac{1}{4^{k+2}}))}\\
&=\left(\int_{Q(0,\frac{1}{4^{k+2}})}|v_{k+1}|^{2}\left(4^{k+1}x,4^{2(k+1)}(t-1)+1\right)dxdt\right)^{1/2}\\
&=\left(4^{-(k+1)(n+2)}\int_{Q(0,\frac{1}{4})}|v_{k+1}|^{2}\left(y,s\right)dyds\right)^{1/2}\\
&=\frac{1}{4^{\frac{(k+1)(n+2)}{2}}}\|v_{k+1}\|_{L^{2}(Q(0,\frac{1}{4}))}\\
&\leq\,C'4^{-\frac{(k+1)(n+4+2\alpha')}{2}},
\end{align*}
that is, \eqref{6induction2} is obtained for $k+1$;
\begin{align*}
&\hspace{0.5cm}\bigg\|u-\sum_{j=0}^{k+1}w_{j}\bigg\|_{L^{2}(Q(0,\frac{1}{4^{k+2}}))}\\
&=\left(4^{-(k+1)(n+2)}\int_{Q(0,\frac{1}{4})}|W_{k+1}-v_{k+1}|^{2}\left(x,t\right)dxdt\right)^{1/2}\\
&=\frac{1}{4^{\frac{(k+1)(n+2)}{2}}}\|W_{k+1}-v_{k+1}\|_{L^{2}(Q(0,\frac{1}{4}))}\\
&\leq\,4^{-\frac{(k+1)(n+4+2\alpha')}{2}},
\end{align*}
that is, \eqref{6induction4} holds for $k+1$.

Combining the above and Corollary \ref{cor1}, we have
$$\|Dv_{k+1}\|_{L^{\infty}(Q(0,\frac{1}{4}))}\leq\,C\|v_{k+1}\|_{L^{2}(Q(0,\frac{1}{2}))}\leq\frac{2C}{4^{(k+1)(1+\alpha')}},$$
$$\|D_{t}v_{k+1}\|_{L^{\infty}(Q(0,\frac{1}{4}))}\leq\,C\|v_{k+1}\|_{L^{2}(Q(0,\frac{1}{2}))}\leq\frac{2C}{4^{(k+1)(1+\alpha')}},$$
and
$$\|DD_{t}v_{k+1}\|_{L^{\infty}(Q(0,\frac{1}{4}))}\leq\,C\|v_{k+1}\|_{L^{2}(Q(0,\frac{1}{2}))}\leq\frac{2C}{4^{(k+1)(1+\alpha')}}.$$
By a change of variables, we have
$$\|Dw_{k+1}\|_{L^{\infty}(Q(0,\frac{1}{4^{k+2}}))}\leq\frac{C'}{4^{(k+1)\alpha'}},\quad\|D_{t}w_{k+1}\|_{L^{\infty}(Q(0,\frac{1}{4^{k+2}}))}\leq\frac{C'}{4^{(k+1)(\alpha'-1)}},$$
and
$$\|DD_{t}w_{k+1}\|_{L^{\infty}(Q(0,\frac{1}{4^{k+2}}))}\leq\frac{C'}{4^{(k+1)(\alpha'-2)}}.$$
Estimates \eqref{6induction2} and \eqref{6induction3} for $k+1$
follow from the above estimates. Thus we have established
\eqref{6induction1}-\eqref{6induction4} for all $k$.

An easy consequence of \eqref{6induction2} and \eqref{6induction3},
we have
\begin{equation}\label{wkLinfty}
\|w_{k}(\cdot,t)\|_{L^{\infty}(Q(0,\frac{1}{4^{k+1}}))}\leq\frac{C}{4^{k(1+\alpha')}}.
\end{equation}
For $|x|<\frac{1}{4^{k+1}}$ and $1-\frac{1}{4^{2(k+1)}}<t\leq1$,
using \eqref{6induction3} and \eqref{wkLinfty}, we have
\begin{align*}
\bigg|\sum_{j=0}^{k}w_{j}(x,t)-\sum_{j=0}^{\infty}w_{j}(0,1)\bigg|
&\leq\bigg|\sum_{j=0}^{k}w_{j}(x,t)-\sum_{j=0}^{k}w_{j}(0,1)\bigg|
+\sum_{j=k+1}^{\infty}\big|w_{j}(0,1)\big|\\
&\leq\,C\left(\sum_{j=0}^{k}\frac{|x|}{4^{j\alpha'}}+\sum_{j=0}^{k}\frac{|1-t|}{4^{j(\alpha'-1)}}\right)
+C\sum_{j=k+1}^{\infty}\frac{1}{4^{j(1+\alpha')}}\\
&\leq\,C\left(\sum_{j=0}^{k}\frac{|x|}{4^{j\alpha'}}+\sum_{j=0}^{k}\frac{\sqrt{|1-t|}}{4^{j\alpha'}}\right)
+C4^{-k(1+\alpha')}\\
&\leq\,C\left(|x|+\sqrt{|1-t|}\right)+C4^{-k}.
\end{align*}
Then
$$\bigg\|\sum_{j=0}^{k}w_{j}-\sum_{j=0}^{\infty}w_{j}(0,1)\bigg\|_{L^{2}(Q(0,\frac{1}{4^{k+1}}))}\leq\frac{C}{4^{\frac{k(n+4)}{2}}}.$$
So, in view of \eqref{6induction4}, we have
\begin{align}\label{prop61.u-w}
&\bigg\|u-\sum_{j=0}^{\infty}w_{j}(0,1)\bigg\|_{L^{2}(Q(0,\frac{1}{4^{k+1}}))}\nonumber\\
&\leq\bigg\|u-\sum_{j=0}^{k}w_{j}\bigg\|_{L^{2}(Q(0,\frac{1}{4^{k+1}}))}
+\bigg\|\sum_{j=0}^{k}w_{j}-\sum_{j=0}^{\infty}w_{j}(0,1)\bigg\|_{L^{2}(Q(0,\frac{1}{4^{k+1}}))}\nonumber\\
&\leq\frac{C}{4^{\frac{k(n+4)}{2}}}.
\end{align}
Thus, after sending $k\rightarrow\infty,$
\begin{equation}\label{prop61.u}
u(0,1)=\sum_{j=0}^{\infty}w_{j}(0,1).
\end{equation}
The estimate of $|u(0,1)|$ in \eqref{prop61:result} is established.
By Taylor expansion,
$$u(x,t)-u(0,1)=\nabla_{x}u(0,1)\cdot{x}+O(|x|^{2}+|1-t|).$$
Using \eqref{prop61.u-w} and \eqref{prop61.u}, we have
\begin{align}\label{prop61.nablau1}
\|\nabla_{x}u(0,1)\cdot{x}\|_{L^{2}(Q(0,\frac{1}{4^{k+1}}))}
&\leq\|u-u(0,1)\|_{L^{2}(Q(0,\frac{1}{4^{k+1}}))}+C\|x^{2}+|1-t|~\|_{L^{2}(Q(0,\frac{1}{4^{k+1}}))}\nonumber\\
&\leq\frac{C}{4^{\frac{k(n+4)}{2}}}.
\end{align}
Let $\mathbf{e}=\frac{\nabla_{x}u(0,1)}{|\nabla_{x}u(0,1)|}$, then
\begin{align}\label{prop61.nablau2}
\left(\mathop{\iint}\limits_{\scriptstyle \quad\,
x\cdot{\mathbf{e}}>\frac{1}{2}|x|\hfill\atop\scriptstyle
(x,t)\in{Q}(0,\frac{1}{4^{k+1}})\hfill}
\big|\nabla_{x}u(0,1)\cdot{x}\big|^{2}dxdt\right)^{1/2}
&\geq\left(\mathop{\iint}\limits_{\scriptstyle \quad\,
x\cdot{\mathbf{e}}>\frac{1}{2}|x|\hfill\atop\scriptstyle
(x,t)\in{Q}(0,\frac{1}{4^{k+1}})\hfill}\left(\frac{1}{2}\big|\nabla_{x}u(0,1)\big|\cdot\big|{x}\big|\right)^{2}dxdt\right)^{1/2}\nonumber\\
&\geq\frac{1}{C}|\nabla_{x}u(0,1)|\left(\iint_{{Q}(0,\frac{1}{4^{k+1}})}|{x}|^{2}dxdt\right)^{1/2}\nonumber\\
&=\frac{|\nabla_{x}u(0,1)|}{C}\frac{1}{4^{\frac{k(n+2)}{2}}}
\end{align}
Combining \eqref{prop61.nablau1} and \eqref{prop61.nablau2}, it
implies that
\begin{equation*}
|\nabla{u}(0,1)|\leq\,C.
\end{equation*}
Estimate \eqref{prop61:result} is established. We have completed the
proof of Proposition \ref{prop61} for $f^{i}=g^{\alpha{i}}\equiv0$.
For the general case, by the method used in Proposition 5.3 in
\cite{lv}, suppose
$$\|u\|_{L^{2}(\Omega_{1})}+\|f\|_{L^{\infty}(\Omega_{1})}
+\max_{1\leq\,m\leq\,l+1}\|g\|_{{C}^{\alpha,0}(\overline{{D}}_{m}\times(0,1))}\leq
1,$$ we can obtain the same assertion. We leave the details to the
interested readers.
\end{proof}

\bigskip
\section{H\"{o}lder Estimates of the Gradient}\label{sec7}

Theorem \ref{thm1} can be deduced from the following Proposition. We
use the notation of Section \ref{gradient}.

\begin{prop}\label{prop71}
Let $A$ be as in Section \ref{gradient}, and let
$u\in{V}(\Omega_{1})$ be a solution of
\begin{equation}\label{g=0}
(u^{i})_{t}-D_{\alpha}\bigg(A_{ij}^{\alpha\beta}(x,t)D_{\beta}u^{j}\bigg)=0\quad\quad\hbox{in}\quad
\Omega_{1}.
\end{equation}
Then for all $x\in{D}_{m_{0}}\cap\frac{1}{2}\omega$,
$$|\nabla{u}(x,1)-\nabla{u}(0,1)|\leq\,C\|u\|_{L^{2}(\Omega_{1})}|x|^{\alpha'},$$
where $\alpha'<\min\{\mu,\frac{\alpha}{2(1+\alpha)}\}$, and $C$
depends only on $n,N,l,\alpha,\epsilon$, $\lambda,\Lambda_{2},\mu$,
the $C^{1,\alpha}$ norm of $\omega_{m}$ and
$\|A\|_{C^{\alpha',1}(\overline{D}_{m}\times[0,T])}$.
\end{prop}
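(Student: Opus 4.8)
The plan is to follow the Li--Nirenberg iteration and reuse the approximating sequence $\{w_j\}_{j\ge0}$ built in the proof of Proposition~\ref{prop61}: each $w_j\in V(Q(0,3\cdot4^{-(j+1)}))$ solves the laminar system $(w_j)_t-D_\alpha(\overline A^{\alpha\beta}_{ij}D_\beta w_j^j)=0$ and obeys \eqref{6induction2}--\eqref{6induction4}. In particular $\sum_{j\ge0}\|\nabla w_j\|_{L^\infty(Q(0,4^{-(j+1)}))}\le C'\sum_j4^{-j\alpha'}<\infty$, so $P:=\sum_{j\ge0}\nabla w_j(0,1)$ converges, and by the Taylor--expansion step at the end of the proof of Proposition~\ref{prop61} (applied to $\nabla u$ in place of $u$) one has $\nabla u(0,1)=P$. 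Normalising $\|u\|_{L^2(\Omega_1)}\le1$, fix $x_0\in D_{m_0}\cap\frac12\omega$; we may assume $0<|x_0|\le r_*$ for a small fixed $r_*$, since for $|x_0|\ge r_*$ the bound is immediate from Proposition~\ref{prop61} (whose constant is independent of the location of the base point relative to the interfaces, hence applies at $x_0$ and at $0$). Choose $k=k(x_0)\ge0$ with $4^{-(k+2)}\le c_n|x_0|<4^{-(k+1)}$ and write
\begin{align*}
\nabla u(x_0,1)-\nabla u(0,1)&=\Bigl(\nabla u(x_0,1)-\sum_{j=0}^{k}\nabla w_j(x_0,1)\Bigr)+\sum_{j=0}^{k}\bigl(\nabla w_j(x_0,1)-\nabla w_j(0,1)\bigr)\\
&\quad-\sum_{j=k+1}^{\infty}\nabla w_j(0,1)=:I+II-III.
\end{align*}

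For $III$, \eqref{6induction2} gives $|III|\le C'\sum_{j\ge k+1}4^{-j\alpha'}\le C4^{-k\alpha'}\le C|x_0|^{\alpha'}$. For $II$, first suppose $x_0\in\omega_{m_0}$; the slab $\omega_{m_0}$ is convex, so $[0,x_0]\subset\omega_{m_0}$, where $\overline A$ is constant in $x$, and Corollary~\ref{cor1} together with \eqref{6induction2} (after rescaling $w_j$ to unit size) gives $\|D^2w_j\|_{L^\infty((\omega_{m_0}\cap\omega_{1/2})\times(1/4,1))}\le C'4^{j(1-\alpha')}$; hence $|\nabla w_j(x_0,1)-\nabla w_j(0,1)|\le C'4^{j(1-\alpha')}|x_0|$, and summing this geometric series (ratio $4^{1-\alpha'}>1$) yields $|II|\le C|x_0|\,4^{k(1-\alpha')}\le C4^{-k\alpha'}\le C|x_0|^{\alpha'}$. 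If instead $x_0\in D_{m_0}\setminus\omega_{m_0}$, then since $\nabla'f_{m_0+1}(0')=0$ (and likewise at the lower face) one has $\mathrm{dist}(x_0,\omega_{m_0})\le C|x_0|^{1+\alpha}$; letting $\bar x_0$ be $x_0$ pushed vertically just inside $\omega_{m_0}$, the segment $[x_0,\bar x_0]$ stays in $D_{m_0}$, $|x_0-\bar x_0|\le C|x_0|^{1+\alpha}$, $|\nabla u(\bar x_0,1)-\nabla u(0,1)|\le C|x_0|^{\alpha'}$ by the slab case, and $|\nabla u(x_0,1)-\nabla u(\bar x_0,1)|\le C|x_0-\bar x_0|^{\alpha'}\le C|x_0|^{(1+\alpha)\alpha'}\le C|x_0|^{\alpha'}$ by re-running the whole argument with the construction re-centred at the point of $\partial D_{m_0}$ closest to $x_0$.

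Finally, for $I$, set $W(y,s)=\bigl(u-\sum_{j=0}^{k}w_j\bigr)(4^{-(k+1)}y,4^{-2(k+1)}(s-1)+1)$ on $Q(0,1)$, so $\nabla_yW(y_0,1)=4^{-(k+1)}\bigl(\nabla u-\sum_{j=0}^{k}\nabla w_j\bigr)(x_0,1)$ with $y_0:=4^{k+1}x_0$, $|y_0|\le\frac14$. As in \eqref{equwk+1}, $W$ solves $(W)_t-D_\alpha(A^{\alpha\beta}_{ij,k+1}D_\beta W^j)=-D_\alpha g^{\alpha i}_{k+1}$ in $Q(0,1)$ with $A_{k+1}\in C^{\alpha',1}$ (norm bounded uniformly in $k$) and, using \eqref{prop61.wk+1l2}, \eqref{prop61.gk+10} and the interior $C^{2,1}$ bounds on the $w_j$ from Corollary~\ref{cor1} summed over scales, $\|W\|_{L^2(Q(0,1))}+\|g_{k+1}\|_{C^{\alpha',0}(Q(0,1))}\le C4^{-(k+1)(1+\alpha')}$. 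Applying Proposition~\ref{prop61} (inhomogeneous case, $f\equiv0$, $g=g_{k+1}$) re-centred at $(y_0,1)$ — or the standard interior estimate when $y_0$ lies well inside a strip — gives $|\nabla_yW(y_0,1)|\le C\bigl(\|W\|_{L^2(Q(0,1))}+\|g_{k+1}\|_{C^{\alpha',0}}\bigr)\le C4^{-(k+1)(1+\alpha')}$, so $|I|=4^{k+1}|\nabla_yW(y_0,1)|\le C4^{-(k+1)\alpha'}\le C|x_0|^{\alpha'}$, and adding the three bounds proves the Proposition. The main obstacle is exactly this last step: one needs Proposition~\ref{prop61} with a constant independent of the distance from the base point to the interface (so it may be invoked at $(y_0,1)$ and, in the thin-region case, at arbitrary interface points), and one needs to control $\|g_{k+1}\|_{C^{\alpha',0}}$, which requires propagating the interior $C^{2,1}$ estimates of the $w_j$ across all dyadic scales and summing them against the decay $4^{-j\alpha'}$ — the summability being precisely what $\alpha'<\alpha/(2(1+\alpha))$ (through $1-\alpha'>0$ and $\alpha-\alpha'>0$) provides.
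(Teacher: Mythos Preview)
Your decomposition $I+II-III$ is the natural one and $III$ is fine, but there are genuine gaps in both $I$ and $II$.

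\medskip

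\textbf{The main problem is term $I$.} You want to apply Proposition~\ref{prop61} (inhomogeneous case) to $W$ at the rescaled point $(y_{0},1)$, and for this you claim $\|g_{k+1}\|_{C^{\alpha',0}}\le C\,4^{-(k+1)(1+\alpha')}$. But the only bound available from the construction is the $L^{2}$ bound \eqref{prop61.gk+10}; a $C^{\alpha',0}$ bound on $g_{k+1}=-4^{-(k+1)}(A_{k+1}-\overline{A}_{k+1})\sum_{j\le k}\partial w_{j}(\cdot)$ simply does not hold. The coefficients $A_{k+1}$ are piecewise H\"older across the rescaled curved interfaces $\partial D_{m}$, whereas $\overline{A}_{k+1}$ is piecewise constant across the rescaled flat slabs $\partial\omega_{m}$; these two partitions do \emph{not} coincide, so $A_{k+1}-\overline{A}_{k+1}$ jumps by $O(1)$ across the $\omega_{m}$-interfaces inside each $D_{m}$ and is therefore not in $C^{\alpha',0}(\overline{D}_{m})$. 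Consequently Proposition~\ref{prop61} is not applicable to $W$ with data $g_{k+1}$, and your control of $I$ collapses. The paper circumvents this by never trying to bound $g_{k+1}$ in $C^{\alpha',0}$ globally: instead it moves from $x_{0}$ to an auxiliary point $\bar x$ on the $x_{n}$-axis chosen so that a cube $\bar x+a|\bar x|\omega$ lies in $D_{m+1}\cap\omega_{m+1}$ \emph{simultaneously}. On that cube $A-\overline{A}$ is genuinely $C^{\mu}$ (both pieces come from the same $A^{(m+1)}$), so classical Schauder gives the analogue of your bound on $I$ there (this is the content of the lemma proving \eqref{lem72.1}--\eqref{lem72.2}).

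\medskip

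\textbf{Term $II$ is also incomplete.} Your case $x_{0}\in\omega_{m_{0}}$ is correct, but when $x_{0}\in D_{m_{0}}\setminus\omega_{m_{0}}$ you assert ``and likewise at the lower face'' to get $\mathrm{dist}(x_{0},\omega_{m_{0}})\le C|x_{0}|^{1+\alpha}$. The setup of Section~\ref{gradient} only guarantees $\nabla'f_{m_{0}+1}(0')=0$ (the upper face is nearest to the origin); there is no reason for $\nabla'f_{m_{0}}(0')=0$, so your distance bound and hence the whole reduction may fail on the lower side. Moreover, the step ``re-running the whole argument with the construction re-centred at the point of $\partial D_{m_{0}}$ closest to $x_{0}$'' is circular: you are invoking the very proposition you are proving. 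The paper handles the passage of $\sum_{j\le k}\nabla w_{j}$ across slab interfaces differently, via the linear maps $T^{(m)}=(N^{(m)})^{-1}N^{(m_{0})}$ built from the conormal matching conditions \eqref{lem71.5}; this yields \eqref{lem71.2}, which replaces your mean-value estimate for $II$ whenever $x_{0}$ (or the auxiliary $\bar x$) falls in a slab $\omega_{m}$ with $m\neq m_{0}$.
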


\subsection{Beginning of the proof of Proposition \ref{prop71}}

As explained in Section \ref{gradient} we may assume without loss of
generality that
$$\|u\|_{L^{2}(\Omega_{1})}\leq1\quad\mbox{and}\quad\|A-\overline{A}\|_{Y^{1+\alpha',2}}\leq\epsilon_{0},$$
where $\epsilon_{0}$ is the small constant in Section
\ref{gradient}. To prove the $C^{1,\alpha}$ estimate, we slightly
strengthen \eqref{6induction1}-\eqref{6induction4}. Namely, we show
that we can find $\{w_{k}\}_{k=0}^{\infty}$ in
$V(Q(0,\frac{3}{4^{k+1}}))$ such that for $k\geq0$, $w_{k}$ satisfy,
in addition to \eqref{6induction1}-\eqref{6induction4}, for any
$1-\frac{1}{4^{2(k+1)}}\leq\,t\leq1$,
\begin{equation}\label{remin61w-v7}
\left\|u(\cdot,t)-\sum_{j=0}^{k+1}w_{j}(\cdot,t)\right\|_{L^{2}(\frac{1}{4^{k+1}}\omega)}\leq\,4^{-\frac{(k+1)(n+2+2\alpha')}{2}}.
\end{equation}
and
\begin{equation}\label{d2wk7}
\|D^{2}w_{k}\|_{L^{\infty}((\omega_{m}\cap\frac{1}{4^{k+1}}\omega)\times(1-\frac{1}{4^{2(k+1)}},1))}\leq\,C4^{k(1-\alpha')}.
\end{equation}
These estimates will be used in the proof of Proposition
\ref{prop71}.

\begin{proof}[Proof of \eqref{remin61w-v7} and \eqref{d2wk7}] We will
prove those $\{w_{k}\}_{k=0}^{\infty}$, found in Proposition
\ref{prop61}, also satisfy \eqref{remin61w-v7} and \eqref{d2wk7}.
First, for $k=0$, by Lemma \ref{pertubationlemma}, we can find
$w_{0}\in{V}(Q(0,\frac{3}{4}))$, such that
$$\left(w_{0}\right)_{t}-\partial\left(\overline{A}\partial{w}_{0}\right)=0,\quad\quad(x,t)\in Q(0,\frac{3}{4}),$$
with
$$\|u-w_{0}\|_{V(Q(0,\frac{1}{2}))}\leq\,C\epsilon_{0}^{\gamma},$$
and by Corollary \ref{cor1}, we have
$$\|D^{2}w_{0}\|_{L^{\infty}(Q(0,\frac{1}{4}))}\leq\,C\|w_{0}\|_{L^{2}(Q(0,\frac{1}{2}))}\leq\,C\leq\,C'.$$
Suppose that \eqref{6induction1}-\eqref{6induction4},
\eqref{remin61w-v7} and \eqref{d2wk7} hold up to $k$ ($k\geq\,0$),
we will prove them for $k+1$. Let
\begin{align*}
&W_{k+1}(x,t)=\left(u-\sum_{j=0}^{k}w_{j}\right)\left(\frac{x}{4^{k+1}},\frac{t-1}{4^{2(k+1)}}+1\right),\\
&A_{k+1}(x,t)=A\left(\frac{x}{4^{k+1}},\frac{t-1}{4^{2(k+1)}}+1\right),
\quad\overline{A}_{k+1}(x,t)=\overline{A}\left(\frac{x}{4^{k+1}},\frac{t-1}{4^{2(k+1)}}+1\right),\\
&g_{k+1}(x,t)=-\frac{1}{4^{k+1}}\left(A_{k+1}-\overline{A}_{k+1}\right)(x,t)\sum_{j=0}^{k}\partial{w}_{j}\left(\frac{x}{4^{k+1}},\frac{t-1}{4^{2(k+1)}}+1\right).
\end{align*}
then $W_{k+1}$ satisfies
$$\label{7equwk+1}
\big(W_{k+1}\big)_{t}-\partial\big(A_{k+1}\partial{W_{k+1}}\big)=-\partial{g}_{k+1},\quad\quad
(x,t)\in{Q}(0,1).
$$
There exists $v_{k+1}\in{V}(Q(0,\frac{3}{4}))$ such that
$$-(v_{k+1})_{t}+\partial\left(\overline{A}_{k+1}\partial{v}_{k+1}\right)=0,\quad\mbox{in}\quad Q(0,\frac{3}{4}),$$
with
\begin{align*}\label{7prop61.W-v}
&\hspace{.5cm}\|W_{k+1}-v_{k+1}\|_{V(Q(0,\frac{1}{2}))}\nonumber\\
&\leq\,C\left(\|g_{k+1}\|_{L^{2}(Q(0,1))}
+\left(\frac{\epsilon_{0}}{4^{(k+1)\alpha'}}\right)^{\gamma}\left(\|W_{k+1}\|_{L^{2}(Q(0,1))}+\|D_{t}g_{k+1}\|_{L^{2}(Q(0,1))}\right)\right)\\
&\leq\,CC'\max\{\epsilon_{0}^{\gamma},\epsilon_{0}\}\cdot\frac{1}{4^{(k+1)(1+\alpha')}},
\end{align*}
and
\begin{align*}
\|v_{k+1}\|_{L^{2}(Q(0,\frac{1}{2}))}\leq\,\max\{\epsilon_{0}^{\gamma},\epsilon_{0}\}\cdot\frac{CC'}{4^{(k+1)(1+\alpha')}}+\frac{C}{4^{(k+1)(1+\alpha')}}.
\end{align*}

Let
$$w_{k+1}(x,t)=v_{k+1}\left(4^{k+1}x,4^{2(k+1)}(t-1)+1\right),\quad\,(x,t)\in\,Q(0,\frac{3}{4^{k+2}}).$$
Then we have, for $1-\frac{1}{4^{2(k+2)}}\leq\tau\leq1$
\begin{align*}
&\hspace{0.5cm}\left(\int_{\frac{1}{4^{k+2}}\omega}\left|u(x,\tau)-\sum_{j=0}^{k+1}w_{j}(x,\tau)\right|^{2}dx\right)^{1/2}\\
&=\left(4^{-(k+1)n}\int_{\frac{1}{4}\omega}|W_{k+1}-v_{k+1}|^{2}(x,\tau)dx\right)^{1/2}\\
&=\frac{1}{4^{\frac{(k+1)n}{2}}}\|W_{k+1}-v_{k+1}\|_{V(Q(0,\frac{1}{4}))}\\
&\leq\,4^{-\frac{(k+1)(n+2+2\alpha')}{2}}.
\end{align*}
Similar as the proof of \eqref{6induction2} and \eqref{6induction3},
in view of Corollary \ref{cor1}, we have
$$\|D^{2}v_{k+1}\|_{L^{\infty}(Q(0,\frac{1}{4}))}\leq\,C\|v_{k+1}\|_{L^{2}(Q(0,\frac{1}{2}))}\leq\frac{2C}{4^{(k+1)(1+\alpha')}}.$$
By a change of variables, we have \eqref{d2wk7}.
\end{proof}

Similarly as in \cite{ln}, associated with
$\overline{A}^{(m)}:=\overline{A}|_{\omega_{m}\times(0,T)}$, we
introduce a linear transformation
$N^{(m)}:\mathbb{R}^{nN}\rightarrow\mathbb{R}^{nN}$ as follow: For
$b=(b_{\alpha}^{i})\in\mathbb{R}^{nN}$
$(1\leq\alpha\leq{n},1\leq{i}\leq{N})$,
$$
\begin{array}{ccc}
  (N^{(m)}b)^{i}_{\alpha}=b^{i}_{\alpha}, & 1\leq\alpha\leq{n-1},1\leq{i}\leq{N} \\
  (N^{(m)}b)^{i}_{n}=\overline{A}^{(m)n\beta}_{ij}b^{i}_{\beta}, & 1\leq{i}\leq{N}.
\end{array}
$$
Since $(\overline{A}_{ij}^{(m)nn})$ is a positive definite
$N\times{N}$ matrix with eigenvalues in $[\lambda,\Lambda_{0}]$, it
is clear that $N^{(m)}$ is invertible and
\begin{equation}\label{lem72.-1}
\|N^{m}\|,\|(N^{(m)})^{-1}\|\leq\,C(n,N,\lambda,\Lambda_{0}).
\end{equation}
We also define linear transformations
$T^{(m)}:\mathbb{R}^{nN}\rightarrow\mathbb{R}^{nN}$ by setting
$$T^{(m)}=(N^{(m)})^{-1}N^{(m_{0})}.$$

\begin{lemma}\label{lem71}
\begin{equation}\label{lem71.1}
\nabla{u}(0,1)=\sum_{j=0}^{\infty}\nabla{w}_{j}(0,1),
\end{equation}
and for
$x\in(\frac{1}{4^{k+1}}\omega\cap\omega_{m})\setminus\frac{1}{4^{k+2}}\omega$,
\begin{equation}\label{lem71.2}
\left|\sum_{j=0}^{k}\nabla{w}_{j}(x,1)-\sum_{j=0}^{k}T^{(m)}\nabla{w}_{j}(0,1)\right|\leq\,C|x|^{\alpha'}.
\end{equation}
\end{lemma}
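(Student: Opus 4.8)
The plan rests on three properties of the functions $w_j$, $0\le j\le k$, constructed in Proposition~\ref{prop61}. First, each $w_j$ solves the \emph{same} laminar system $(w_j)_t-\partial_\alpha(\overline A^{\alpha\beta}\partial_\beta w_j)=0$ with the original piecewise-constant (in $x$) coefficients $\overline A$, so by Corollary~\ref{cor1} it is $C^\infty$ in each closed strip $\overline\omega_p$ up to $\{t=1\}$; moreover $\|\nabla w_j\|_{L^\infty(Q(0,\,4^{-(j+1)}))}\le C'4^{-j\alpha'}$ by \eqref{6induction2} and the piecewise bound $\|D^2w_j\|_{L^\infty((\omega_p\cap\frac1{4^{j+1}}\omega)\times(1-4^{-2(j+1)},1))}\le C4^{j(1-\alpha')}$ by \eqref{d2wk7}. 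Second, across each flat interface $\Gamma_p=\{x_n=f_p(0')\}$ between $\omega_{p-1}$ and $\omega_p$ the transmission conditions hold: $w_j$ is continuous, hence all its $x'$-derivatives match from the two sides, and, because $w_j$ is a weak solution in divergence form, the conormal flux $\overline A^{n\beta}\partial_\beta w_j$ is continuous across $\Gamma_p$ (integrate the weak identity by parts on each side of $\Gamma_p$ and cancel the interface terms). Equivalently, $N^{(p)}\nabla w_j$ --- whose first $n-1$ entries are the $x'$-derivatives and whose last entry is the conormal flux --- is continuous across $\Gamma_p$, so on $\Gamma_p$ one has $\nabla w_j|_{\omega_p}=(N^{(p)})^{-1}N^{(p-1)}\nabla w_j|_{\omega_{p-1}}$. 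Third, $\sum_{j\ge0}w_j(0,1)=u(0,1)$ by Proposition~\ref{prop61}.

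To prove \eqref{lem71.2}, fix $x\in(\tfrac1{4^{k+1}}\omega\cap\omega_m)\setminus\tfrac1{4^{k+2}}\omega$, so that $|x|$ is comparable to $4^{-k}$; assume $m>m_0$ (the case $m<m_0$ is symmetric and $m=m_0$ is treated below). For each $j\le k$, join $0\in\omega_{m_0}$ to $x\in\omega_m$ by the straight segment: since its $x_n$-coordinate runs monotonically from $0$ up to $x_n>f_m(0')>0$, the segment crosses $\Gamma_{m_0+1},\dots,\Gamma_m$ exactly once each, has length $|x|$, and lies in $\tfrac1{4^{k+1}}\omega\subset\tfrac1{4^{j+1}}\omega$. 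On the sub-segment inside a strip $\omega_p$ the increment of $\nabla w_j$ is at most $C4^{j(1-\alpha')}$ times the sub-segment length by \eqref{d2wk7}, while at each crossing I apply the transmission relation above. Chaining these along the segment and using $\|N^{(p)}\|,\|(N^{(p)})^{-1}\|\le C$ from \eqref{lem72.-1}, the transformations telescope, $(N^{(m)})^{-1}N^{(m-1)}(N^{(m-1)})^{-1}\cdots N^{(m_0)}=(N^{(m)})^{-1}N^{(m_0)}=T^{(m)}$, while the error is amplified by only a bounded factor since at most $l$ strips are crossed; thus $|\nabla w_j(x,1)-T^{(m)}\nabla w_j(0,1)|\le C4^{j(1-\alpha')}|x|$. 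Summing over $0\le j\le k$ gives a geometric series dominated by its last term, $\sum_{j=0}^k4^{j(1-\alpha')}\le C4^{k(1-\alpha')}$, so the left side of \eqref{lem71.2} is at most $C|x|\,4^{k(1-\alpha')}\le C4^{-k\alpha'}\le C|x|^{\alpha'}$.

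To prove \eqref{lem71.1}, write $P:=\sum_{j\ge0}\nabla w_j(0,1)$ (the series converges by the $C^1$ bound above) and use that $0$ is interior to the slab $\omega_{m_0}$: for all large $k$, $\tfrac1{4^{k+1}}\omega\subset\omega_{m_0}$, and then the argument of the previous paragraph with $m=m_0$ (so $T^{(m_0)}=\mathrm{Id}$ and no interface is crossed --- one simply integrates \eqref{d2wk7} along $[0,x]\subset\omega_{m_0}$, legitimate by convexity of the slab) gives $|\sum_{j=0}^k\nabla w_j(x,1)-\sum_{j=0}^k\nabla w_j(0,1)|\le C|x|^{\alpha'}$ for $x\in\tfrac1{4^{k+1}}\omega$. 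Adding the tail $\sum_{j>k}\|\nabla w_j\|_\infty\le C4^{-k\alpha'}$, integrating along $[0,x]$, and using $\sum_jw_j(0,1)=u(0,1)$ yields $\big|\sum_{j\ge0}w_j(x,1)-u(0,1)-P\cdot x\big|\le C|x|^{1+\alpha'}$ on $\tfrac1{4^{k+1}}\omega$. On the other hand, \eqref{6induction4} together with \eqref{remin61w-v7} gives $\|u(\cdot,1)-\sum_{j\ge0}w_j(\cdot,1)\|_{L^2(\frac1{4^{k+1}}\omega)}\le C4^{-(k+1)(n+2+2\alpha')/2}$. Since $u$ is $C^1$ up to $(0,1)$ in $\overline\omega_{m_0}$ by interior regularity, inserting the Taylor expansion $u(x,1)=u(0,1)+\nabla u(0,1)\cdot x+o(|x|)$, subtracting, and comparing $L^2(\tfrac1{4^{k+1}}\omega)$-norms, one gets $|\nabla u(0,1)-P|\le C4^{-k\alpha'}+o(1)\to0$ as $k\to\infty$, that is, $\nabla u(0,1)=P$.

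The delicate step is the chaining in the second paragraph: one must check that the conormal-flux continuity genuinely holds for the merely weak solutions $w_j$ --- this is exactly where the divergence form of the system is used --- so that the transformations $N^{(p)}$ telescope \emph{exactly} into $T^{(m)}$ with no leftover term, and one must control that the strip-by-strip error, magnified by a bounded factor at each of the at most $l$ interface crossings, still sums (after summation over $j$ and use of $4^{-k}\sim|x|$) to the clean bound $C|x|^{\alpha'}$. The bound \eqref{d2wk7} on $D^2w_j$ is precisely what allows $\nabla w_j$ to be carried across a strip, and the finiteness of the number of strips is what keeps all constants independent of $k$ and of the (possibly very small) slab widths.
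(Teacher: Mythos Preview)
Your argument is correct and follows essentially the same route as the paper: the transmission identity $N^{(p)}\nabla w_j$ continuous across each flat interface, the piecewise bound \eqref{d2wk7} to carry $\nabla w_j$ through a strip, telescoping of the $N^{(p)}$'s into $T^{(m)}$, and the $L^2$ approximation \eqref{remin61w-v7} to identify $\nabla u(0,1)$; the only cosmetic difference is that you chain along the straight segment $[0,x]$ whereas the paper goes down the $x_n$-axis and then over to $x$. One small slip to fix: the infinite series $\sum_{j\ge0}w_j(x,1)$ is not defined for $x\neq0$ (each $w_j$ lives only on $Q(0,\tfrac{3}{4^{j+1}})$), so your final $L^2$ comparison should be written with the partial sums $\sum_{j\le k}w_j(\cdot,1)$, exactly as the paper does in \eqref{lem72.3}--\eqref{lem72.5}.
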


\begin{proof}
We first prove \eqref{lem71.1}. For
$\frac{1}{4^{k+1}}\omega\subset\omega_{m_{0}}$, it follows from
\eqref{d2wk7} that
$$\big|w_{j}(x,1)-[w_{j}(0,1)+\nabla{w}_{j}(0,1)x]\big|\leq\,4^{j(1-\alpha')}|x|^{2},\quad j\leq k,x\in\frac{1}{4^{k+1}}\omega.$$
This, and \eqref{remin61w-v7}, yield
\begin{align}\label{lem72.3}
&\left\|u(x,1)-\left[\sum_{j=0}^{k}w_{j}(0,1)+\nabla{w}_{j}(0,1)x\right]~\right\|_{L^{2}(\frac{1}{4^{k+1}}\omega)}\nonumber\\
&\leq\left\|u(x,1)-\sum_{j=0}^{k}w_{j}(x,1)\right\|_{L^{2}(\frac{1}{4^{k+1}}\omega)}+
\left\|\sum_{j=0}^{k}\left[w_{j}(x,1)+w_{j}(0,1)+\nabla{w}_{j}(0,1)x\right]~\right\|_{L^{2}(\frac{1}{4^{k+1}}\omega)}\nonumber\\
&\leq\,4^{-\frac{k(n+2+2\alpha')}{2}}+C\sum_{j=0}^{k}4^{j(1-\alpha')}\big\|~|x|^{2}~\big\|_{L^{2}(\frac{1}{4^{k+1}}\omega)}\nonumber\\
&\leq\,C4^{-\frac{k(n+2)}{2}}.
\end{align}
From \eqref{wkLinfty} and \eqref{6induction2}, we know that
$\sum_{j=0}^{\infty}w_{j}(0,1)$ and
$\sum_{j=0}^{\infty}\nabla{w}_{j}(0,1)$ are convergent and
\begin{align}\label{lem72.4}
\left|\sum_{j=0}^{\infty}w_{j}(0,1)-\sum_{j=0}^{k}w_{j}(0,1)\right|&\leq\,C\frac{1}{4^{k(1+\alpha')}},
\end{align}
\begin{align}\label{lem72.5}
\left|\sum_{j=0}^{\infty}\nabla{w}_{j}(0,1)-\sum_{j=0}^{k}\nabla{w}_{j}(0,1)\right|&\leq\,C\frac{1}{4^{k\alpha'}}.
\end{align}
Combining \eqref{lem72.3}, \eqref{lem72.4} and \eqref{lem72.5}, we
have
$$\left\|u(x,1)-\left[\sum_{j=0}^{\infty}w_{j}(0,1)+\sum_{j=0}^{\infty}\nabla{w}_{j}(0,1)x\right]~\right\|_{L^{2}(\frac{1}{4^{k+1}}\omega)}
\leq\frac{C}{4^{k(n+2)/2}}.$$ Equation \eqref{lem71.1} follows from
the above.

Next we prove \eqref{lem71.2}. The matching condition of $w_{j}$ at
$x_{n}=c_{m-1}$ is, for all $x'\in(-\frac{1}{2},\frac{1}{2})^{n-1}$,
\begin{equation}\label{lem71.5}
N^{(m)}\nabla{w}_{j}^{(m)}(x',c_{m-1},1)=N^{(m-1)}\nabla{w}_{j}^{(m-1)}(x',c_{m-1},1),
\end{equation}
where $w_{j}^{(m)}(\cdot,1)=w_{j}(\cdot,1)|_{\omega_{m}}$.

For $m=m_{0}$, \eqref{lem71.2} follows from \eqref{d2wk7}. We will
only show \eqref{lem71.2} for $m\geq\,m_{0}+1$ since the proof is
the same for $m\leq\,m_{0}-1$. For
$x=(x',x_{n})\in\frac{1}{4^{k+1}}\omega\cap\omega_{m}\setminus\frac{1}{4^{k+2}}\omega$,
$m\geq\,m_{0}+1$, we have
\begin{align*}
&\sum_{j=0}^{k}\left|\nabla{w}_{j}^{(m)}(x,1)-T^{(m)}\nabla{w}_{j}(0,1)\right|\\
&\leq\sum_{j=0}^{k}\left|\nabla{w}_{j}^{(m)}(x,1)-\nabla{w}_{j}^{(m)}(0',c_{m-1},1)\right|
+\sum_{j=0}^{k}\left|\nabla{w}_{j}^{(m)}(0',c_{m-1},1)-T^{(m)}\nabla{w}_{j}(0,1)\right|.
\end{align*}
By \eqref{d2wk7},
$$\left|\nabla{w}_{j}^{(m)}(x,1)-\nabla{w}_{j}^{(m)}(0',c_{m-1},1)\right|\leq\,C4^{j(1-\alpha')}(|x'|+x_{n}-c_{m-1})\leq\,C4^{j(1-\alpha')}|x|.$$
By \eqref{lem72.-1}, \eqref{lem71.5}, and \eqref{d2wk7},
\begin{align*}
&\big|\nabla{w}_{j}^{(m)}(0',c_{m-1},1)-T^{(m)}\nabla{w}_{j}(0,1)\big|\\
&\leq\,C\,\big|N^{(m)}\nabla{w}_{j}^{(m)}(0',c_{m-1},1)-N^{(m_{0})}\nabla{w}^{(m_{0})}_{j}(0,1)\big|\\
&\leq\,C\sum_{i=m_{0}+2}^{m}\big|N^{(i)}\nabla{w}_{j}^{(i)}(0',c_{i-1},1)-N^{(i-1)}\nabla{w}_{j}^{(i-1)}(0',c_{i-2},1)\big|\\
&\hspace{.5cm}+C\,\big|N^{(m_{0}+1)}\nabla{w}_{j}^{(m_{0}+1)}(0',c_{m_{0}},1)-N^{(m_{0})}\nabla{w}^{(m)}_{j}(0,1)\big|\\
&\leq\,C\sum_{i=m_{0}+2}^{m}\big|N^{(i-1)}\nabla{w}_{j}^{(i-1)}(0',c_{i-1},1)-N^{(i-1)}\nabla{w}_{j}^{(i-1)}(0',c_{i-2},1)\big|\\
&\hspace{.5cm}+C\,\big|N^{(m_{0})}\nabla{w}_{j}^{(m_{0})}(0',c_{m_{0}},1)-N^{(m_{0})}\nabla{w}^{(m)}_{j}(0,1)\big|\\
&\leq\,C\left(\sum_{i=m_{0}+2}^{m}4^{j(1-\alpha')}(c_{i-1}-c_{i-2})+4^{j(1-\alpha')}(c_{m_{0}}-0)\right)\\
&=C4^{j(1-\alpha')}c_{m-1}\\
&\leq\,C4^{j(1-\alpha')}|x|.
\end{align*}
It follows that
$$\sum_{j=0}^{k}|\nabla{w}_{j}^{(m)}(x,1)-T^{(m)}\nabla{w}_{j}(0,1)|\leq\,C4^{k(1-\alpha')}|x|\leq\,C4|x|^{\alpha'}.$$
Estimate \eqref{lem71.2} is established; so is Lemma \ref{lem71}.
\end{proof}

\begin{lemma}
Let $\bar{x}$ be on the $x_{n}$-axis and
$\bar{x}+a|\bar{x}|\omega\subset(D_{m+1}\cap\omega_{m+1})$ for some
$a>0$. Then
\begin{equation}\label{lem72.1}
\left|\nabla{u}(y,1)-\sum_{j=0}^{k}\nabla{w}_{j}(y,1)\right|\leq\,C(a)|\bar{x}|^{\alpha'},\quad\,y\in\bar{x}+\frac{a}{2}|\bar{x}|\omega,
\end{equation}
where $k$ satisfies $4^{-(k+2)}\leq|\bar{x}|<4^{-(k+1)}$;
consequently,
\begin{equation}\label{lem72.2}
\left|\nabla{u}(y,1)-\nabla{u}(z,1)\right|\leq\,C(a)|\bar{x}|^{\alpha'},\quad\,y,z\in\bar{x}+\frac{a}{2}|\bar{x}|\omega,
\end{equation}
\end{lemma}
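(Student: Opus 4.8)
Throughout we are inside the proof of Proposition \ref{prop71}, so $f\equiv g\equiv0$. The plan is to reduce both inequalities to an interior $C^{1,\alpha'}$ estimate, after rescaling, for the remainder $S_k:=u-\sum_{j=0}^{k}w_j$; note that $\nabla u(\bar{x}+\,\cdot\,,1)-\sum_{j=0}^{k}\nabla w_j(\bar{x}+\,\cdot\,,1)=\nabla S_k(\bar{x}+\,\cdot\,,1)$, so \eqref{lem72.1} reads $\|\nabla S_k(\cdot,1)\|_{L^{\infty}(\bar{x}+\frac{a}{2}|\bar{x}|\omega)}\le C(a)|\bar{x}|^{\alpha'}$. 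Subtracting $u_t-D_\alpha(A^{\alpha\beta}D_\beta u)=0$ from $\sum_{j}\big((w_j)_t-D_\alpha(\overline A^{\alpha\beta}D_\beta w_j)\big)=0$ gives, on $Q(0,3\cdot4^{-(k+1)})$,
\begin{equation*}
(S_k)_t-D_\alpha\big(A^{\alpha\beta}D_\beta S_k\big)=D_\alpha H^{\alpha},\qquad H^{\alpha}:=\sum_{j=0}^{k}\big(A^{\alpha\beta}-\overline A^{\alpha\beta}\big)D_\beta w_j ,
\end{equation*}
where $A^{\alpha\beta},\overline A^{\alpha\beta}$ are the $N\times N$ coefficient matrices. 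By hypothesis $\bar{x}+a|\bar{x}|\omega\subset D_{m+1}\cap\omega_{m+1}$ contains no surface of discontinuity, so on a full parabolic neighbourhood of $(\bar{x},1)$ the coefficients $A$ are $C^{\alpha',1}$, $\overline A$ is constant in $x$ and $C^{1}$ in $t$, and the system for $S_k$ is uniformly parabolic with parabolically H\"older coefficients.

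Now rescale about $(\bar{x},1)$. Set $\rho:=|\bar{x}|$, so $4^{-(k+2)}\le\rho<4^{-(k+1)}$ and $\rho\sim4^{-k}$. I treat $a\le1$ directly; for larger $a$ one localizes, applying this case at each $y\in\bar{x}+\frac{a}{2}|\bar{x}|\omega$ (which carries a concentric sub-cube inside $D_{m+1}\cap\omega_{m+1}$ of comparable relative size) and absorbing the tail $\sum_{j>k}|\nabla w_j(\cdot,1)|\le C\rho^{\alpha'}$ coming from \eqref{6induction2}. For $a\le1$ there is $c=c(a)$ with $(\bar{x}+a\rho\,\omega)\times(1-(a\rho)^{2},1)\subset Q(0,c\rho)$, and $c\rho$ still lies inside the origin-centred cylinders on which \eqref{6induction1}--\eqref{6induction4}, \eqref{remin61w-v7} and \eqref{d2wk7} are valid (if necessary after a harmless enlargement of those cylinders by a fixed factor, permissible by interior estimates for the $w_j$). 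Put $\tilde S(x,t):=S_k(\bar{x}+\rho x,\rho^{2}(t-1)+1)$ and define $\tilde A,\tilde{\overline A},\tilde w_j$ by the same change of variables; then $(\tilde S)_t-D_\alpha(\tilde A^{\alpha\beta}D_\beta\tilde S)=D_\alpha\tilde H^{\alpha}$ on a unit-size parabolic cube $Q$, with $\tilde H^{\alpha}=\rho\sum_{j=0}^{k}(\tilde A^{\alpha\beta}-\tilde{\overline A}^{\alpha\beta})D_\beta\tilde w_j$. Estimate \eqref{6induction4} (equivalently \eqref{remin61w-v7}), together with $\rho\sim4^{-k}$, gives $\|\tilde S\|_{L^{2}(Q)}\le C(a)\rho^{1+\alpha'}$. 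For the data: since $\bar{x}+a\rho\,\omega\subset Q(0,c\rho)$ with $c\rho<1$, the hypothesis $\|A-\overline A\|_{Y^{1+\alpha',2}}\le\epsilon_0$ bounds the $L^{2}$-average of $|A-\overline A|$ on that cube by $C(a)\epsilon_0\rho^{\alpha'}$, and $A-\overline A$ has parabolic oscillation $\le C(a)\rho^{\alpha'}$ there (because $A\in C^{\alpha',1}$, $\overline A$ constant in $x$, $C^{1}$ in $t$), whence $\|\tilde A-\tilde{\overline A}\|_{C^{\alpha'}(Q)}\le C(a)\rho^{\alpha'}$; combined with \eqref{6induction2}, \eqref{6induction3}, \eqref{d2wk7}, which after rescaling give $\sum_{j=0}^{k}\|D\tilde w_j\|_{C^{\alpha'}(Q)}\le C\rho$, this yields $\|\tilde H\|_{C^{\alpha'}(Q)}\le C(a)\rho^{1+\alpha'}$.

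It remains to invoke interior regularity. A constant-coefficient parabolic system satisfying \eqref{coeff2} is (analytically) hypoelliptic, and $\tilde A$ is uniformly parabolic and parabolically $C^{\alpha'}$; hence the interior $C^{1,\alpha'}$ estimate for divergence-form parabolic systems with H\"older coefficients — the parabolic analogue of Campanato's theorem \cite{camp}, obtained in the spirit of Theorem \ref{reverseholder} and the perturbation scheme of Section \ref{sec5} exactly as in the elliptic argument of \cite{ln} — gives $\|\nabla\tilde S\|_{L^{\infty}(\frac12 Q)}\le C(a)\big(\|\tilde S\|_{L^{2}(Q)}+\|\tilde H\|_{C^{\alpha'}(Q)}\big)\le C(a)\rho^{1+\alpha'}$. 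Undoing the scaling, $\nabla\tilde S=\rho\,(\nabla S_k)(\bar{x}+\rho\,\cdot\,,\,\cdot\,)$, so $\|\nabla S_k(\cdot,1)\|_{L^{\infty}(\bar{x}+\frac{a}{2}\rho\,\omega)}\le C(a)\rho^{\alpha'}$, which is \eqref{lem72.1}. For \eqref{lem72.2} write $\nabla u(y,1)-\nabla u(z,1)=\big(\nabla S_k(y,1)-\nabla S_k(z,1)\big)+\sum_{j=0}^{k}\big(\nabla w_j(y,1)-\nabla w_j(z,1)\big)$; the first bracket is $\le2C(a)\rho^{\alpha'}$ by \eqref{lem72.1}, and by \eqref{d2wk7} the second is $\le\sum_{j=0}^{k}\|D^{2}w_j\|_{L^{\infty}}\,|y-z|\le C\,4^{k(1-\alpha')}\,a\rho\le C(a)\rho^{\alpha'}$ since $\rho\sim4^{-k}$.

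The genuinely delicate part is the bookkeeping in the middle step: reconciling the two dilations in play — the intrinsic $4^{-k}$-scalings built into the $w_j$ and the $\rho$-scaling centred at $\bar{x}$ — tracking every power of $\rho$ through \eqref{6induction2}, \eqref{6induction3} and \eqref{d2wk7} after rescaling, and choosing the radius $c(a)$ so that the cube about $\bar{x}$ sits in an origin-centred cylinder on which all of \eqref{6induction1}--\eqref{6induction4}, \eqref{remin61w-v7} and \eqref{d2wk7} hold. The interior estimate itself and the localization for large $a$ are then routine.
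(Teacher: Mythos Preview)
Your proof is correct and follows essentially the same approach as the paper: rescale the remainder $S_k=u-\sum_{j\le k}w_j$ about $(\bar x,1)$, bound the rescaled $L^2$ norm via \eqref{6induction4}/\eqref{remin61w-v7} and the rescaled H\"older norm of the right-hand side via the $C^{\alpha'}$ regularity of $A$ on $D_{m+1}$ together with \eqref{6induction2}, \eqref{6induction3}, \eqref{d2wk7}, then apply interior Schauder for parabolic systems and unscale; the paper merely rescales by $a|\bar x|$ rather than $|\bar x|$ and writes the estimate for $\widehat g$ directly. One bookkeeping slip: after your rescaling the effective source is $\sum_{j}(\tilde A-\tilde{\overline A})D_\beta\tilde w_j$ \emph{without} the extra prefactor $\rho$ you wrote in $\tilde H^\alpha$; with the correct expression your claimed bound $\|\tilde H\|_{C^{\alpha'}}\le C(a)\rho^{1+\alpha'}$ still holds (since $\|\tilde A-\tilde{\overline A}\|\lesssim\rho^{\alpha'}$ and $\sum_j\|D\tilde w_j\|\lesssim\rho$), so the argument goes through unchanged.
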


\begin{proof}
Let
$$\widehat{w}(y,t)=u\left(\bar{x}+a|\bar{x}|y,(a|\bar{x}|)^{2}(t-1)+1\right)-\sum_{j=0}^{k}w_{j}\left(\bar{x}+a|\bar{x}|y,(a|\bar{x}|)^{2}(t-1)+1\right),\quad y\in\omega.$$
Then $\widehat{w}$ satisfies the following systems
$$\widehat{w}_{t}-\partial\left(A\left(\bar{x}+a|\bar{x}|\cdot,(a|\bar{x}|)^{2}(t-1)+1\right)\partial\widehat{w}\right)=\partial\widehat{g}\quad\mbox{in}\ \omega\times(0,1),$$
where
\begin{align*}
\widehat{g}=-a|\bar{x}|\sum_{j=0}^{k}\bigg(&A^{(m+1)}\left(\bar{x}+a|\bar{x}|y,(a|\bar{x}|)^{2}(t-1)+1\right)\\
&-A^{(m+1)}\left(0',c_{m},(a|\bar{x}|)^{2}(t-1)+1\right)\bigg)
\cdot\partial{w}_{j}\left(\bar{x}+a|\bar{x}|y,(a|\bar{x}|)^{2}(t-1)+1\right),
\end{align*}
with
$A^{(m+1)}(\cdot,(a|\bar{x}|)^{2}(t-1)+1):=A(\cdot,(a|\bar{x}|)^{2}(t-1)+1)|_{D_{m+1}\times(0,1)}$.
Since $\bar{x}+a|\bar{x}|\omega\in(D_{m+1}\cap\omega_{m+1})$, the
$C^{\mu}(\omega)$-seminorm of
$A^{(m+1)}(\bar{x}+a|\bar{x}|\cdot,(a|\bar{x}|)^{2}(t-1)+1)$ is
bounded by $C(a)|\bar{x}|^{\mu}$. Thus, by \eqref{6induction3} and
\eqref{d2wk7},
$$\|\widehat{g}\|_{C^{\mu}(\omega)}\leq\,C(a)|\bar{x}|^{1+\mu}.$$
We also deduced from \eqref{remin61w-v7} that, for
$1-\frac{1}{4^{2(k+1)}}<t\leq1$,
$$\|\widehat{w}(\cdot,t)\|_{L^{2}(\omega)}\leq\,C(a)|\bar{x}|^{1+\mu}$$
By the Schauder theory,
$$\|\nabla\widehat{w}(\cdot,1)\|_{L^{\infty}(\frac{1}{2}\omega)}\leq\,C(a)|\bar{x}|^{1+\alpha'}$$
Estimate \eqref{lem72.1} follows from the above. Estimate
\eqref{lem72.2} follows from \eqref{lem72.1} and \eqref{d2wk7}.
\end{proof}

\subsection{Completion of the Proof of Proposition \ref{prop71}}

For some small $r_{1}$, depending only on the parameters specified
in Proposition \ref{prop71}, if $x$ satisfies $|x|\geq{r}_{1}$, the
desired estimate in Proposition \ref{prop71} follows from the
gradient estimates in Proposition \ref{prop61}. So we always assume
that $x\in{D}_{m_{0}}\setminus\{0\}$ and $|x|<r_{1}$. In the
following we repeatedly use the smallness of $|x|$. We select an
$\bar{x}$ as follows. If $c_{m_{0}}>80|x|$, set $\bar{x}=(0',10|x|)$
(and $m=m_{0}$), otherwise let $m\geq{m}_{0}+1$ be the smallest
index for which $c_{m+1}-c_{m}>80|x|$, and set
$\bar{x}=(0',c_{m}+10|x|)$. Clearly,
$10|x|\leq|\bar{x}|\leq\,100(l+1)|x|$ and
$\bar{x}+a|x|\omega\subset{D}_{m+1}\cap\omega_{m+1}$, with $a=8$.
With this choice of $\bar{x}$, let $k$ satisfy
$\frac{1}{4^{k+2}}\leq|\bar{x}|\leq\frac{1}{4^{k+1}}$. Then by
\eqref{lem71.1} \eqref{lem71.2} and \eqref{lem72.1}, we have
\begin{align}\label{prop7.1}
&\left|\nabla{u}(\bar{x},1)-T^{(m)}\nabla{u}(0,1)\right|\nonumber\\
&\leq\left|\nabla{u}(\bar{x},1)-\sum_{j=0}^{k}\nabla{w}_{j}(\bar{x},1)\right|
+\left|\sum_{j=0}^{k}\nabla{w}_{j}(\bar{x},1)-\sum_{j=0}^{\infty}T^{(m)}\nabla{w}_{j}(0,1)\right|\nonumber\\
&\leq\,C|\bar{x}|^{\alpha'}\leq\,C|x|^{\alpha'}.
\end{align}
Let $z$ be on either the graph of $f_{m_{0}}$ or $f_{m_{0}+1}$, so
that the distance of $x$ to $z$ is the least distance of $x$ to the
union of graphs of $\{f_{i}\}$. Let $L$ be the line passing through
$z$ that is normal to this graph. Clearly $x\in{L}$. Let $z^{(j)}$
denote the intersection of $L$ with the graph of $f_{i}$ for
$m_{0}\leq{j}\leq\,m+1$. Using the smallness of $|x|$ and the
$C^{\alpha'}$ property of $\{f_{i}\}$, it is not difficult to see
that
\begin{equation}\label{prop7.2}
|z^{(j)}-(0',f_{j}(0'))|\leq\,4|x|,\quad\,m_{0}\leq{j}\leq\,m,
\end{equation}
and
$$|z^{(m+1)}-z^{(m)}|\geq\,40|x|.$$
Here $m$ is as defined before, and we have used the fact that the
point $(0',f_{m_{0}}(0'))$ is the projection of the origin onto the
graph of the function $f_{m_{0}}$. The same argument shows that we
can find $\bar{z}$ on the segment determined by $z^{(m)}$ and
$z^{(m+1)}$ with $|\bar{z}-z^{(m)}|=10|x|$ such that
$$|\nabla{u}(\bar{z},1)-\widetilde{T}^{(m)}\nabla{u}(x,1)|\leq\,C|x|^{\alpha'}$$
where the $\{\widetilde{T}^{(m)}\}$ are defined in the natural way.
Due to \eqref{prop7.2} and the H\"{o}lder continuity of $A^{(j)}$,
we have
$$|T^{(m)}-\widetilde{T}^{(m)}|\leq\,C|x|^{\mu},$$
so
\begin{equation}\label{prop7.3}
|\nabla{u}(\bar{z},1)-T^{(m)}\nabla{u}(x,1)|\leq\,C|x|^{\alpha'}.
\end{equation}
It is easy to see, by the smallness of $r_{1}$ and H\"{o}lder
continuity of $\{\nabla{f}_{j}\}$, that
$$|\bar{x}-\bar{z}|\leq2|x|.$$
By \eqref{lem72.2},
\begin{equation}\label{prop7.4}
|\nabla{u}(\bar{x},1)-\nabla{u}(\bar{z},1)|\leq\,C|\bar{x}|^{\alpha'}\leq\,C|x|^{\alpha'}.
\end{equation}
A combination of \eqref{prop7.1}, \eqref{prop7.3}, \eqref{prop7.4}
and \eqref{lem72.-1} yields
$$|\nabla{u}(x,1)-\nabla{u}(0,1)|\leq\,C|T^{(m)}[\nabla{u}(x,1)-\nabla{u}(0,1)]|\leq\,C|x|^{\alpha'}.$$
Proposition \ref{prop71} is established.

Similarly, we can prove the following more general proposition; we
leave the details to the interested reader.

\begin{prop}\label{prop72}
Let $A$ be as in Section \ref{gradient}, and let
$u\in{V}(\Omega_{1})$ be a solution of
\begin{equation*}
(u^{i})_{t}-D_{\alpha}\bigg(A_{ij}^{\alpha\beta}(x,t)D_{\beta}u^{j}\bigg)=-D_{\alpha}g^{\alpha\,i}+f^{i}\quad\quad\hbox{in}\quad
\Omega_{1}.
\end{equation*}
Then for all $x\in{D}_{m_{0}}\cap\frac{1}{2}\omega$,
$$|\nabla{u}(x,1)-u(0,1)|\leq\,C\left(\|u\|_{L^{2}(\Omega_{T})}
+\|f\|_{L^{\infty}(\Omega_{T})}+\max_{1\leq\,m\leq\,L}\|g\|_{C^{\alpha',0}(\overline{\omega_{m}}\times[0,T])}\right)|x|^{\alpha'},$$
where $\alpha'<\min\{\mu,\frac{\alpha}{2(1+\alpha)}\}$, and $C$
depends only on $n,N,l,\alpha,\epsilon$, $\lambda,\Lambda_{0},\mu$,
$\|A\|_{C^{\alpha',1}(\overline{\omega_{m}}\times[0,T])}$ and the
$C^{1,\alpha}$ norm of $\omega_{m}$.
\end{prop}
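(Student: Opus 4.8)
The plan is to run the argument of Proposition \ref{prop71}, now carrying the inhomogeneous data $-D_\alpha g^{\alpha i}+f^i$ through the iteration. As in Section \ref{gradient} I would first reduce to a normalized situation: dividing the equation by $\|u\|_{L^2(\Omega_1)}+\|f\|_{L^\infty(\Omega_1)}+\max_m\|g\|_{C^{\alpha',0}(\overline D_m\times(0,1))}$ and then applying a parabolic rescaling $x\mapsto r_0x$, $t\mapsto r_0^2(t-1)+1$ (under which $f$ is multiplied by $r_0^2$, $g$ by $r_0$, and the $C^{\alpha'}$-in-$x$ seminorm of $g$ by $r_0^{1+\alpha'}$, and which by Lemma \ref{lem61} may be chosen so small that $\|A-\overline A\|_{Y^{1+\alpha',2}}\le\epsilon_0$), we may assume $\|u\|_{L^2(\Omega_1)}+\|f\|_{L^\infty(\Omega_1)}+\max_m\|g\|_{C^{\alpha',0}}\le1$ together with $\|A-\overline A\|_{Y^{1+\alpha',2}}\le\epsilon_0$. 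In parallel with $\overline A$ I would introduce a piecewise-constant-in-$x$ approximation $\overline g$ of $g$, built by taking the very same boundary limit values as in the definition of $\overline A$ (this is the object called $\widetilde G$ in Section \ref{gradient}); repeating verbatim the estimates of Lemma \ref{lem61} with $g-\overline g$ in place of $A-\overline A$, and using $\alpha'<\tfrac{\alpha}{2(1+\alpha)}$, gives $\|g-\overline g\|_{Y^{1+\alpha',2}}\le E'$, so after the same rescaling also $\|g-\overline g\|_{Y^{1+\alpha',2}}\le\epsilon_0$.

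Next I would construct $\{w_k\}_{k\ge0}$, $w_k\in V(Q(0,3\cdot4^{-(k+1)}))$, satisfying the inhomogeneous analogues of \eqref{6induction1}--\eqref{6induction4}, \eqref{remin61w-v7} and \eqref{d2wk7}, where now each $w_k$ solves a \emph{laminar} system whose coefficients are $\overline A$ and whose data are the piecewise-constant $\overline g$ and the rescaled $f$. At the inductive step the rescaled remainder $W_{k+1}=\bigl(u-\sum_{j\le k}w_j\bigr)\bigl(4^{-(k+1)}x,4^{-2(k+1)}(t-1)+1\bigr)$ satisfies, on $Q(0,1)$, a system with coefficients $A_{k+1}$ and right-hand side $-\partial\bigl(g_{k+1}+\widehat{(g-\overline g)}_{k+1}\bigr)+\widehat f_{k+1}$, with $g_{k+1}=-4^{-(k+1)}(A_{k+1}-\overline A_{k+1})\sum_{j\le k}\partial w_j(\cdots)$ exactly the coefficient-error term from the proof of Proposition \ref{prop61}. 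I would apply Lemma \ref{pertubationlemma} in the slightly extended form in which the comparison solution $v_{k+1}$ is allowed to solve the laminar equation with coefficients $\overline A_{k+1}$, piecewise-constant datum $\widehat{\overline g}_{k+1}$ and datum $\widehat f_{k+1}$ (instead of the homogeneous equation); this extension is proved in the same way, the reverse-H\"older inequality of Theorem \ref{reverseholder} now being applied to a solution of an $L^{\bar p}$-forced system. The resulting bound for $\|W_{k+1}-v_{k+1}\|_{V(Q(0,1/2))}$ involves only $\|g_{k+1}\|_{L^2}$, $\|\widehat{(g-\overline g)}_{k+1}\|_{L^2}$ and $\epsilon^\gamma$-times-controlled quantities; the first is $\le C\epsilon_0\,4^{-(k+1)(1+\alpha')}$ by the induction hypotheses \eqref{6induction2}--\eqref{6induction3} and the estimate \eqref{prop61Ak+1}, and the second is $\le C\epsilon_0\,4^{-(k+1)(1+\alpha')}$ because $\bigl(\fint_{Q(0,4^{-(k+1)})}|g-\overline g|^2\bigr)^{1/2}\le 4^{-(k+1)\alpha'}\|g-\overline g\|_{Y^{1+\alpha',2}}$ while the rescaling of $g$ carries the extra factor $4^{-(k+1)}$. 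Rescaling back and applying the laminar interior estimates of Corollary \ref{cor1} to $v_{k+1}$ yields \eqref{6induction2}, \eqref{6induction3} and \eqref{d2wk7} for $w_{k+1}$; here Corollary \ref{cor1} (equivalently Proposition \ref{prop41}) must be recorded in the mildly more general form permitting $f\in L^\infty$ and $g\in C^{\alpha',0}$, which is routine since inside each strip the coefficients are constant in $x$ and smooth in $t$, so one invokes classical $L^p$/Schauder theory together with the transmission conditions of \cite{ck,ln}.

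With $\{w_k\}$ in hand the conclusion is reached exactly as in Section \ref{sec7}: $\nabla u(0,1)=\sum_j\nabla w_j(0,1)$ follows from \eqref{d2wk7} and \eqref{remin61w-v7} as in Lemma \ref{lem71}; the bound is propagated across the strips by the linear matching maps $T^{(m)}$, giving \eqref{lem71.2}; the comparison near an interior point well separated from the interfaces is unchanged, giving \eqref{lem72.1}--\eqref{lem72.2}; and the final combination \eqref{prop7.1}, \eqref{prop7.3}, \eqref{prop7.4} with \eqref{lem72.-1} yields $|\nabla u(x,1)-\nabla u(0,1)|\le C|x|^{\alpha'}$. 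Throughout, the quantity $\|u\|_{L^2(\Omega_1)}$ appearing in Proposition \ref{prop71} is replaced by $\|u\|_{L^2(\Omega_1)}+\|f\|_{L^\infty(\Omega_1)}+\max_m\|g\|_{C^{\alpha',0}(\overline D_m\times(0,1))}$, since that is what is normalized to $1$; the H\"older exponent $\alpha'$ and the constant retain the dependences listed in the statement.

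The main obstacle is the treatment of $f$. Unlike $g$, which is piecewise H\"older so that its oscillation $g-\overline g$ decays like $4^{-(k+1)\alpha'}$ on $Q(0,4^{-(k+1)})$ and can safely be put into the error term of the perturbation lemma, $f$ is merely bounded and its oscillation does not decay; if one tried instead to absorb $\widehat f_{k+1}$ into the $\|u-v\|_V$-error of Lemma \ref{pertubationlemma}, the contribution $\epsilon^\gamma\|(\widehat f_{k+1})_t\|_{L^2}$ would, after the parabolic rescaling, be of size $\epsilon_0^\gamma\,4^{(k+1)[(n-6)/2-\alpha'\gamma]}\|f_t\|_{L^2}$, which \emph{grows} geometrically once $n\ge7$ and destroys the iteration. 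This is exactly why $\widehat f_{k+1}$ must be kept on both sides of the comparison at every stage, which forces the two (routine but genuinely necessary) upgrades above: the perturbation lemma with an inhomogeneous limit equation, and Corollary \ref{cor1} with $L^\infty$ forcing. A related bookkeeping point is that Lemma \ref{pertubationlemma} presupposes $f,g\in W_2^{0,1}$ whereas the hypotheses only give $f\in L^\infty$, $g\in C^{\alpha',0}$; one therefore first proves the proposition assuming $f,g$ smooth in $t$, verifies that the final constant depends only on $\|f\|_{L^\infty}$ and $\|g\|_{C^{\alpha',0}}$ (which is precisely what the ``keep $f$ on both sides'' device secures, since no $f_t$ or $g_t$ survives into the constant), and then removes this extra assumption by mollifying $f,g$ in $t$ and passing to the limit via the uniform bound and Arzel\`a--Ascoli.
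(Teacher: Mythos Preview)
Your proposal is correct and in line with what the paper indicates: the paper gives no proof of Proposition~\ref{prop72} beyond the sentence ``Similarly, we can prove the following more general proposition; we leave the details to the interested reader,'' and for the inhomogeneous case of Proposition~\ref{prop61} it simply points to ``the method used in Proposition~5.3 in \cite{lv}.'' Your write-up fills in exactly those details---normalizing, introducing the piecewise-constant datum $\overline g=\widetilde G$ already alluded to in Section~\ref{gradient}, running the iteration of Subsection~7.1 with the extra data, and concluding via Lemma~\ref{lem71} and the argument of Subsection~7.2---and your handling of the $f$-term (keeping $\widehat f_{k+1}$ on the laminar side so that no $\|f_t\|$ enters the final constant, together with the corresponding routine extensions of Lemma~\ref{pertubationlemma} and Corollary~\ref{cor1}, and the closing mollification-in-$t$ argument) is more careful than anything the paper makes explicit.
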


\subsection{Proof of Theorem \ref{thm2}}

Here, for simplicity, we still only treat the case
$f^{i}=g^{\alpha{i}}\equiv0$.

\begin{proof}[Proof of Theorem \ref{thm2}]
Since
$A_{ij}^{\alpha\beta}\in{C}^{\mu,k}(\overline{{D}}_{m}\times[0,T])$,
applying $D_{t}$ to \eqref{g=0} and denoting $v=u_{t}$, then we have
\begin{equation}\label{g=0t}
(v^{i})_{t}-D_{\alpha}\bigg(A_{ij}^{\alpha\beta}(x,t)D_{\beta}v^{j}\bigg)
=D_{\alpha}\bigg(D_{t}A_{ij}^{\alpha\beta}(x,t)D_{\beta}u^{j}\bigg)\quad\quad\hbox{in}\quad
Q_{T}.
\end{equation}
Since $A_{t}\in{C^{\mu,k-1}}(\overline{{D}}_{m}\times[0,T])$ and
$\partial{u}\in{C}^{\alpha',0}(({D}_{\epsilon}\cap\overline{{D}}_{m})\times(\epsilon{T},T))$,
it follows that
$A_{t}\partial{u}\in{C}^{\alpha',0}(({D}_{\epsilon}\cap\overline{{D}}_{m})\times(\epsilon{T},T))$.
Then for $l=1$, apply Theorem \ref{thm1} to \eqref{g=0t}, and in
view of Lemma \ref{lemin2}, we have, for any
$0<\epsilon<\frac{1}{2}$ and
$\alpha'<\min\{\mu,\frac{\alpha}{2(1+\alpha)}\}$,
\begin{align}\label{thm2.v}
&\|v\|_{L^{\infty}({D}_{\epsilon}\times(\epsilon{T},T))}+
\|D_{x}v\|_{L^{\infty}({D}_{\epsilon}\times(\epsilon{T},T))}+
\|D_{x}v\|_{C^{\alpha',0}(({D}_{\epsilon}\cap\overline{{D}}_{m})\times(\epsilon{T},T))}\nonumber\\
&\leq\,C\left(\|v\|_{L^{2}({D}_{\frac{\epsilon}{2}}\times(\frac{\epsilon}{2}{T},T))}
+\max_{1\leq\,m\leq\,L}\|A_{t}D{u}\|_{C^{\alpha',0}(({D}_{\frac{\epsilon}{2}}\cap\overline{{D}}_{m})\times[\frac{\epsilon}{2}{T},T])}\right)\nonumber\\
&\leq\,C\left(\|u_{t}\|_{L^{2}({D}_{\frac{\epsilon}{2}}\times(\frac{\epsilon}{2}{T},T))}
+\max_{1\leq\,m\leq\,L}\|D{u}\|_{C^{\alpha',0}({D}_{\frac{\epsilon}{2}}\cap\overline{{D}}_{m})\times[\frac{\epsilon}{2}{T},T])}\right)\nonumber\\
&\leq\,C\|u\|_{L^{2}(Q_{T})},
\end{align}
where $C$ depends only on
$n,N,L,\alpha,\epsilon,\lambda,\Lambda_{2},\mu,T$,$\|A\|_{C^{\alpha',2}(\overline{{D}}_{m}\times[0,T])}$
and the $C^{1,\alpha}$ norm of ${D}_{m}$. Estimate \eqref{thm2.1}
for $l=1$ is proved.   To prove \eqref{thm2.1} for $l=2$, we apply
Lemma \ref{lemin2} to \eqref{g=0t} to obtain
\begin{align*}
&\|v_{t}\|_{L^{2}({D}_{\frac{\epsilon}{2}}\times(\frac{\epsilon}{2}{T},T))}\\
&\leq\,C\left(\|v\|_{L^{2}({D}_{\frac{\epsilon}{4}}\times(\frac{\epsilon}{4}{T},T))}
+\|(D_{t}A)\nabla{u}\|_{L^{2}({D}_{\frac{\epsilon}{4}}\times(\frac{\epsilon}{4}{T},T))}
+\|D_{t}\left[(\partial_{t}A)\nabla{u}\right]\|_{L^{2}({D}_{\frac{\epsilon}{4}}\times(\frac{\epsilon}{4}{T},T))}\right),
\end{align*}
where $C$ depends only on $n,N,\epsilon,\lambda,\Lambda_{2},T$.
Since
$$D_{t}\left[\left(D_{t}A\right)\nabla{u}\right]=(D_{tt}A)\nabla{u}+(D_{t}A)D_{t}(\nabla{u})$$
it follows, from Lemma \ref{lemin1} and Lemma \ref{lemin2}, that
\begin{align*}
\|(D_{t}A)D_{t}(\nabla{u})\|_{L^{2}({D}_{\frac{\epsilon}{4}}\times(\frac{\epsilon}{4}{T},T))}
&=\|(D_{t}A)\nabla{v}\|_{L^{2}({D}_{\frac{\epsilon}{4}}\times(\frac{\epsilon}{4}{T},T))}\\
&\leq\,C\,\|\nabla{v}\|_{L^{2}({D}_{\frac{\epsilon}{4}}\times(\frac{\epsilon}{4}{T},T))}\\
&\leq\,C\left(\|v\|_{L^{2}({D}_{\frac{\epsilon}{8}}\times(\frac{\epsilon}{8}{T},T))}
+\|(D_{t}A)\nabla{u}\|_{L^{2}({D}_{\frac{\epsilon}{8}}\times(\frac{\epsilon}{8}{T},T))}\right)\\
&\leq\,C\left(\|u_{t}\|_{L^{2}({D}_{\frac{\epsilon}{8}}\times(\frac{\epsilon}{8}{T},T))}
+\|\nabla{u}\|_{L^{2}({D}_{\frac{\epsilon}{8}}\times(\frac{\epsilon}{8}{T},T))}\right)\\
&\leq\,C\,\|u\|_{L^{2}(Q_{T})},
\end{align*}
where $C$ depends only on $n,N,\epsilon,\lambda,\Lambda_{2},T$. Thus
we have shown
\begin{align}\label{thm2.vt}
\|v_{t}\|_{L^{2}({D}_{\frac{\epsilon}{2}}\times(\frac{\epsilon}{2}{T},T))}
&\leq\,C\left(\|u_{t}\|_{L^{2}({D}_{\frac{\epsilon}{4}}\times(\frac{\epsilon}{4}{T},T))}
+\|\nabla{u}\|_{L^{2}({D}_{\frac{\epsilon}{4}}\times(\frac{\epsilon}{4}{T},T))}
+\|u\|_{L^{2}(Q_{T})}\right)\nonumber\\
&\leq\,C\|u\|_{L^{2}(Q_{T})},
\end{align}
where $C$ depends only on $n,N,\epsilon,\lambda,\Lambda_{4},T$.

Apply $D_{t}$ to \eqref{g=0t} and write $w=v_{t}(=u_{tt})$, we have
\begin{equation}\label{g=0tt}
(w^{i})_{t}-D_{\alpha}\bigg(A_{ij}^{\alpha\beta}D_{\beta}w^{j}\bigg)
=D_{\alpha}\bigg(D_{tt}A_{ij}^{\alpha\beta}D_{\beta}u^{j}+2D_{t}A_{ij}^{\alpha\beta}D_{\beta}v^{j}\bigg)
\quad\hbox{in}~ Q_{T}.
\end{equation}
Apply Theorem \ref{thm1} to \eqref{g=0tt}, and in combination with
\eqref{thm2.v} and \eqref{thm2.vt}, we have, for any
$0<\epsilon<\frac{1}{2}$ and
$\alpha'<\min\{\mu,\frac{\alpha}{2(1+\alpha)}\}$,
\begin{align*}
&\|w\|_{L^{\infty}({D}_{\epsilon}\times(\epsilon{T},T))}+
\|D_{x}w\|_{L^{\infty}({D}_{\epsilon}\times(\epsilon{T},T))}+
\|D_{x}w\|_{C^{\alpha',0}(({D}_{\epsilon}\cap\overline{{D}}_{m})\times(\epsilon{T},T))}\nonumber\\
&\leq\,C\left(\|w\|_{L^{2}({D}_{\frac{\epsilon}{2}}\times(\frac{\epsilon}{2}{T},T))}
+\max_{1\leq\,m\leq\,L}\left(\|A_{tt}D{u}\|_{C^{\alpha',0}(({D}_{\frac{\epsilon}{2}}\cap\overline{{D}}_{m})\times[\frac{\epsilon}{2}{T},T])}
+\|A_{t}D{v}\|_{C^{\alpha',0}(({D}_{\frac{\epsilon}{2}}\cap\overline{{D}}_{m})\times[\frac{\epsilon}{2}{T},T])}\right)\right)\nonumber\\
&\leq\,C\left(\|v_{t}\|_{L^{2}({D}_{\frac{\epsilon}{2}}\times(\frac{\epsilon}{2}{T},T))}
+\max_{1\leq\,m\leq\,L}\left(\|D{u}\|_{C^{\alpha',0}({D}_{\frac{\epsilon}{2}}\cap\overline{{D}}_{m})\times[\frac{\epsilon}{2}{T},T])}
+\|D{v}\|_{C^{\alpha',0}(({D}_{\frac{\epsilon}{2}}\cap\overline{{D}}_{m})\times[\frac{\epsilon}{2}{T},T])}\right)\right)\nonumber\\
&\leq\,C\|u\|_{L^{2}(Q_{T})},
\end{align*}
where $C$ depends only on
$n,N,L,\alpha,\epsilon,\lambda,\Lambda_{4},\mu,T$,$\|A\|_{C^{\alpha',3}(\overline{{D}}_{m}\times[0,T])}$
and the $C^{1,\alpha}$ norm of ${D}_{m}$. Estimate \eqref{thm2.1}
for $l=2$ is proved. For general $k$ and $l>2$, repeating this
process by induction, we proved Theorem \ref{thm2}.
\end{proof}

\section{Appendix}

Let $\omega\subset\mathbb{R}^{n}$ be a bounded open set with Lipschitz
boundary $\partial\omega$. For $0<\lambda<\Lambda<\infty$, $T>0$,
$\mathscr{A}(\lambda,\Lambda)$ denotes the class of measurable
vector-valued functions $(A^{\alpha\beta}_{ij}(x,t))$ satisfying
\eqref{coeff1} and \eqref{coeff2}, with $\Lambda_{0}=\Lambda$ and $Q_{T}=\omega\times(0,T)$. Consider, for $T>0$, $g\in
L^{2}(\omega\times(0,T),\mathbb{R}^{N})$ and $f\in
L^{2}(0,T,L^{2}(\omega,\mathbb{R}^{N}))$,
\begin{equation}\label{appendix1}
\begin{cases}
u_{t}^{i}-\partial_{x_{\alpha}}(A_{ij}^{\alpha\beta}(x,t)\partial_{x_{\beta}}u^{j})=\partial_{x_{\alpha}}g_{\alpha}^{i}(z)+f^{i}(z)
&\mbox{in}~\omega\times(0,T),\quad\forall\,i,\\
u=0&\mbox{on}~(\partial\omega\times(0,T))\cup(\omega\times\{0\}).
\end{cases}
\end{equation}
In the following we use notation $2^{*}=\frac{2n}{n-2}$ if $n\geq3$, $2^{*}=\infty$ if $n=1,2$.

\begin{theorem}\label{reverseholder}
For $n\geq1$, $N\geq1$, $0<\lambda\leq\Lambda<\infty$, $T>0$, and $A\in\mathscr{A}(\lambda,\Lambda)$, let
$u\in\overset{\circ}{V}(\omega\times(0,T),\mathbb{R}^{N}))$ be a
weak solution of \eqref{appendix1}. Then there exists a $2<p_{0}<2^{*}$,
depending only on $n,N,\lambda$ and $\Lambda$, such
that if $p\in[2,p_{0})$, $g\in
L^{p}(\omega\times(0,T),\mathbb{R}^{N})$ and $f\in
L^{p}(0,T,L^{2}(\omega,\mathbb{R}^{N}))$, then $u\in
L^{p}(0,T,W_{0}^{1,p}(\omega,\mathbb{R}^{N}))$. Moreover,
$$\int_{0}^{T}\int_{\omega}\bigg(|u|^{p}+|\nabla{u}|^{p}\bigg)\,dxdt
\leq\,C\left(\int_{0}^{T}\bigg(\int_{\omega}|f|^{2}dx\bigg)^{\frac{p}{2}}dt+\int_{0}^{T}\int_{\omega}|g|^{p}dxdt\right),$$
where $C$ depends only on $\lambda,\Lambda,n,N,\omega$ and $T$.
\end{theorem}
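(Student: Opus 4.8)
The plan is to prove Theorem~\ref{reverseholder} by the classical route to Meyers--Gehring higher integrability for parabolic systems (as in the work of Campanato and of Giaquinta--Struwe), checking at each step that the weak parabolicity condition \eqref{coeff2} suffices in place of pointwise strong parabolicity. Throughout one works in the intrinsic parabolic geometry: cylinders $Q_{\rho}(z_{0})=B_{\rho}(x_{0})\times(t_{0}-\rho^{2},t_{0})$ and the quasi-distance $\delta((x,t),(y,s))=\max\{|x-y|,|t-s|^{1/2}\}$, for which Lebesgue measure is doubling ($|Q_{\rho}|\sim\rho^{n+2}$), so that $\omega\times(0,T)$ together with its parabolic boundary is a space of homogeneous type on which Gehring's lemma is available.

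\textbf{Step 1 (Caccioppoli inequality).} For an interior cylinder $Q_{2\rho}(z_{0})\subset\subset\omega\times(0,T)$, test the weak formulation of \eqref{appendix1} with $\zeta=\eta^{2}(u-a)$, where $\eta$ is a standard cutoff supported in $Q_{2\rho}$ and $a\in\mathbb{R}^{N}$ is a constant vector; for a cylinder reaching the parabolic boundary one takes $a=0$, which is legitimate since $u=0$ there, so that $\eta(u-a)$ vanishes on $\partial\omega$ and lies in $\overset{\circ}{W}\,^{1,0}_{2}$. The time term produces $\tfrac12\operatorname*{ess\,sup}_{t}\int\eta^{2}|u-a|^{2}$ plus terms involving $\eta\eta_{t}$; for the principal part one writes $\eta^{2}\partial_{\alpha}u^{i}\partial_{\beta}u^{j}=\partial_{\alpha}(\eta(u^{i}-a^{i}))\partial_{\beta}(\eta(u^{j}-a^{j}))-(\text{terms linear in }\nabla\eta)$ and applies \eqref{coeff2} to $\xi=\eta(u-a)$, which bounds $\int|\nabla(\eta(u-a))|^{2}$ from below; absorbing the cross terms by Cauchy--Schwarz yields
\[
\operatorname*{ess\,sup}_{t}\int_{B_{\rho}}|u-a|^{2}+\int_{Q_{\rho}}|\nabla u|^{2}\le C\int_{Q_{2\rho}}\Big(\Big|\tfrac{u-a}{\rho}\Big|^{2}+|g|^{2}\Big)+C\rho^{2}\int_{Q_{2\rho}}|f|^{2},
\]
and likewise, with $a=0$, at boundary cylinders.

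\textbf{Step 2 (reverse H\"older) and Step 3 (Gehring and conclusion).} Combining Step~1 with the parabolic Sobolev--Poincar\'e inequality --- which incorporates the $\operatorname*{ess\,sup}_{t}L^{2}_{x}$ bound just obtained --- gives, with $a$ the spatial mean of $u$ (or $a=0$ at the boundary) and $2_{*}:=\tfrac{2(n+2)}{n+4}<2$,
\[
\fint_{Q_{\rho}}|\nabla u|^{2}\le C\Big(\fint_{Q_{2\rho}}|\nabla u|^{2_{*}}\Big)^{2/2_{*}}+C\fint_{Q_{2\rho}}\big(|g|^{2}+\rho^{2}|f|^{2}\big),
\]
a reverse H\"older inequality with increasing support; here $\rho^{2}\fint_{Q_{2\rho}}|f|^{2}$ is dominated by the time-average of $\|f(\cdot)\|_{L^{2}(\omega)}^{2}$, which is what will yield the mixed-norm $L^{p}(0,T;L^{2}(\omega))$ dependence. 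Applying the Gehring--Giaquinta--Modica lemma on the homogeneous space from Step~1 to $|\nabla u|^{2}$, with data $|g|^{2}$ and (for $f$, in the mixed-norm form just noted) $t\mapsto\|f(t)\|_{L^{2}(\omega)}^{2}$, produces $p_{0}>2$ depending only on $n,N,\lambda,\Lambda$, with $2<p_{0}<2^{*}$ forced by the gain exponent, such that for $2\le p<p_{0}$ one has $\nabla u\in L^{p}$ with the stated estimate; the bound on $\|u\|_{L^{p}}$ then follows from the Poincar\'e inequality in $x$ since $u(\cdot,t)|_{\partial\omega}=0$.

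\textbf{Main obstacle.} The only place where strong parabolicity is classically invoked is the Caccioppoli inequality, so the crux --- and the single new point relative to Campanato's strongly parabolic case --- is Step~1: making the localized G{\aa}rding-type condition \eqref{coeff2} play the role of pointwise ellipticity after multiplication by $\eta^{2}(u-a)$. One must absorb the cross terms $\int A\,(\nabla u)\,\eta\nabla\eta\,(u-a)$ and the time-cutoff terms while keeping in mind that \eqref{coeff2} controls $\int|\nabla(\eta(u-a))|^{2}$ rather than $\int\eta^{2}|\nabla u|^{2}$ directly, and near the parabolic boundary one must verify that the chosen test functions genuinely lie in $\overset{\circ}{W}\,^{1,0}_{2}$. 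Once Step~1 is secured, Steps~2 and~3 are the standard parabolic Meyers argument and need no property of $A$ beyond \eqref{coeff1}.
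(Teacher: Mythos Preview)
Your overall scheme (Caccioppoli $\to$ reverse H\"older $\to$ Gehring) matches the paper's, and you correctly identify the one genuinely new point: the G{\aa}rding-type condition \eqref{coeff2}, applied to $\xi=\eta(u-a)\in\overset{\circ}{W}\,^{1,0}_{2}$, suffices to run the Caccioppoli argument in place of pointwise strong parabolicity. The paper does exactly this in Lemma~\ref{caccioppoli2}. There are, however, two places where your sketch diverges from the paper and where, as written, it does not close.

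\textbf{Constant $a$ versus time-dependent mean.} In Step~1 you subtract a fixed vector $a\in\mathbb{R}^{N}$, and in Step~2 you invoke a ``parabolic Sobolev--Poincar\'e inequality incorporating the $\operatorname*{ess\,sup}_{t}L^{2}_{x}$ bound'' to pass to a reverse H\"older inequality. With a constant $a$ this step does not go through directly: the right-hand side of Caccioppoli contains $\rho^{-2}\int_{Q_{2\rho}}|u-a|^{2}$, and no purely function-theoretic inequality bounds this by $(\fint|\nabla u|^{2_{*}})^{2/2_{*}}$, because the time oscillation of $t\mapsto\fint_{B_{2\rho}}u(\cdot,t)$ is not controlled by $\nabla u$ alone. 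One would need a ``gluing lemma'' that uses the equation to estimate $|\bar u(t_{1})-\bar u(t_{2})|$, which you do not mention. The paper avoids this entirely by subtracting from the outset the \emph{time-dependent} weighted spatial mean $\bar u(t)=\bar u_{x_{0},2R}(t)$; the identity $\int_{B_{2R}}(u_{h}-\bar u_{h})\chi_{2R}^{2}\,dx=0$ kills the $\partial_{t}\bar u$ contribution, and then ordinary \emph{spatial} Sobolev--Poincar\'e applies slice by slice (see \eqref{prop73.2}). The resulting reverse H\"older has lower exponent $2n/(n+2)$ and carries an extra $\tfrac12\fint_{Q_{4R}}|\nabla u|^{2}$ term, which is absorbed by the $\theta$-version of Gehring's lemma stated as Proposition~\ref{lq}.

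\textbf{Treatment of $f$.} You propose to carry $\rho^{2}\fint_{Q_{2\rho}}|f|^{2}$ through Gehring's lemma and read off the mixed norm $L^{p}(0,T;L^{2}(\omega))$. But
\[
\rho^{2}\fint_{Q_{2\rho}}|f|^{2}\;\le\;\frac{C}{|B_{2\rho}|}\int_{t_{0}-4\rho^{2}}^{t_{0}}\|f(\cdot,t)\|_{L^{2}(\omega)}^{2}\,dt\;\sim\;\rho^{2-n}\fint_{t_{0}-4\rho^{2}}^{t_{0}}\|f(\cdot,t)\|_{L^{2}(\omega)}^{2}\,dt,
\]
and for $n\ge3$ the factor $\rho^{2-n}$ blows up as $\rho\to0$, so this term cannot be written as $\fint_{Q_{2\rho}}H^{2}$ for a fixed function $H$; Gehring's lemma does not apply in the form you need. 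The paper instead first proves the case $f\equiv0$, and then (Lemma~\ref{f}) solves $-\Delta U=f$ in $\omega$ for each $t$, so that $f^{i}=\partial_{\alpha}(\partial_{\alpha}U^{i})$ and one is reduced to the $g$-only case with new datum $\nabla U$. The estimate $\|\nabla U(\cdot,t)\|_{L^{p}(\omega)}\le C\|U(\cdot,t)\|_{W^{2,2}(\omega)}\le C\|f(\cdot,t)\|_{L^{2}(\omega)}$ uses the Sobolev embedding $W^{2,2}(\omega)\hookrightarrow W^{1,p}(\omega)$, valid precisely for $p<2^{*}$; this, rather than ``the gain exponent'' from Gehring, is the source of the restriction $p_{0}<2^{*}$ in the statement.
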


\begin{remark}
The above theorem was proved by Campanato in \cite{camp} for strongly parabolic systems, i.e.,
under
\begin{equation}\label{coeff5}
\lambda|\xi|^{2}\leq\,A_{ij}^{\alpha\beta}(x,t)\xi_{\alpha}^{i}\xi_{\beta}^{j}\leq\Lambda|\xi|^{2},\quad\forall\xi\in\mathbb{R}^{nN}.
\end{equation}
The method in \cite{camp} does not
apply under the weaker hypotheses \eqref{coeff1} and \eqref{coeff2}.
\end{remark}

We introduce some standard notations:
$$B_{R}(x_{0})=\{x\in\mathbb{R}^{n}:~|x-x_{0}|<R\},$$
$$z_{0}=(x_{0},t_{0}),\quad\,Q_{R}(z_{0})=B_{R}(x_{0})\times(t_{0}-R^{2},t_{0}),$$
$$\fint_{Q_{R}(z_{0})}fdz=\frac{1}{|Q_{R}(z_{0})|}\int_{Q_{R}(z_{0})}fdz.$$
When no confusion may arise, we shall omit $x_{0}$ and $z_{0}$ in
the notations. In the following we use $Q$ to denote some parabolic cube, i.e. for some
$(\bar{x}_{0},\bar{t}_{0})\in\mathbb{R}^{n+1}$ and $\bar{R}>0$,
$$Q=\left\{(x,t)\in\mathbb{R}^{n+1}~:~\bar{t}_{0}-\bar{R}^{2}<{t}<\bar{t}_{0},~|x_{i}-\bar{x}_{0i}|<\bar{R},~i=1,2,\cdots,n \right\}.$$

\subsection{Theorem \ref{reverseholder} when $f\equiv0$}
In this subsection we  prove Theorem \ref{reverseholder} when
$f\equiv0$. The proof relies on the following result that can be
proved exactly the same way as for the analogous elliptic one in \cite{gi} [see
Proposition 1.1 in Chapter V there] by simply changing Euclidean cubes to
parabolic cubes. For $\theta>0$ small, the elliptic one was proved
in \cite{gm}.

\begin{prop}\label{lq}
Let $Q$ be a parabolic cube, $q>1$, $0<\theta<1$, and let $h,H$ be
two nonnegative functions in ${L}^{q}(Q)$. Suppose
$$\fint_{Q_{R}(z_{0})}h^{q}dz\leq{b}\left(\fint_{Q_{4R}(z_{0})}hdz\right)^{q}+\fint_{Q_{4R}(z_{0})}H^{q}dz+\theta\fint_{Q_{4R}(z_{0})}h^{q}dz,$$
for every $Q_{4R}(z_{0})\subset{Q}$. Then there
exist constants $\epsilon>0$ and $C>0$, depending only on
$b,q,\theta,n$ and $|Q|$, such that for all $p\in[q,q+\epsilon)$ and
all $Q_{4R}(z_{0})\subset{Q}$,
$$\left(\fint_{Q_{R}(z_{0})}h^{p}dz\right)^{\frac{1}{p}}\leq\,C
\left(\left(\fint_{Q_{4R}(z_{0})}h^{q}dz\right)^{\frac{1}{q}}+\left(\fint_{Q_{4R}(z_{0})}H^{p}dz\right)^{\frac{1}{p}}\right).$$
\end{prop}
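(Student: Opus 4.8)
The plan is to prove Proposition~\ref{lq} by the Calderón--Zygmund / Gehring--Giaquinta--Modica scheme, exactly as in \cite[Ch.~V, Prop.~1.1]{gi} (whose argument in turn refines \cite{gm}), with Euclidean cubes replaced by parabolic cubes. Two preliminary reductions are in order. First, since it is not known a priori that $h\in L^{p}(Q)$ for $p>q$, replace $h$ by its truncation $h_{\ell}:=\min(h,\ell)$; one checks that $(\fint h_{\ell})^{q}\le(\fint h)^{q}$ and $\fint h_{\ell}^{q}\le\fint h^{q}$, so $h_{\ell}$ still satisfies the hypothesis with the same constants $b,\theta$ and the same $H$, all the estimates below are uniform in $\ell$, and we let $\ell\to\infty$ by Fatou at the end. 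Second, it suffices to establish the reverse-Hölder estimate on $Q_{R}(z_{0})$ for an arbitrary parabolic cube $Q_{4R}(z_{0})\subset Q$; by translation and parabolic scaling we may take $z_{0}=0$, $4R=1$, so that $Q_{4R}=Q_{1}$ is a unit parabolic cube and the hypothesis holds for every parabolic sub-cube of $Q_{1}$. The dependence of $\epsilon$ and $C$ on $b,q,\theta,n$ and $|Q|$ comes out of the argument as stated.

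Next I would fix the dyadic parabolic structure: a parabolic cube of spatial side $\rho$ and temporal length $\rho^{2}$ subdivides into $2^{n}\cdot 2^{2}=2^{n+2}$ congruent dyadic children, each of measure $2^{-(n+2)}$ times that of the parent. With this convention the dyadic maximal function and the stopping-time argument go through verbatim. For every level $s$ exceeding $s_{0}:=c_{0}\fint_{Q_{1}}h_{\ell}^{q}$, run the stopping-time decomposition of $\{z\in Q_{1/4}:h_{\ell}^{q}(z)>s\}$ to produce pairwise disjoint dyadic parabolic cubes $\{Q^{j}\}$ with $s<\fint_{Q^{j}}h_{\ell}^{q}\le 2^{n+2}s$, with parent satisfying $\fint_{\widehat{Q^{j}}}h_{\ell}^{q}\le s$, and covering the level set up to a null set. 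Applying the hypothesis on each $Q^{j}$ and a controlled parabolic dilate $Q_{4R_{j}}\subset Q_{1}$ whose average of $h_{\ell}^{q}$ is comparable to $s$, splitting $h_{\ell}$ and $H$ into their parts below and above $\eta s^{1/q}$ for a small $\eta=\eta(n,q)$, and using the comparability $\fint_{Q_{4R_{j}}}(\cdot)\lesssim\fint_{\widehat{Q^{j}}}(\cdot)$ together with the bounded overlap of the dilates, summation over $j$ yields a distribution-function inequality of the form
$$\int_{\{h_{\ell}^{q}>s\}}h_{\ell}^{q}\,dz\le c_{1}(\theta+b\,\eta^{q})\int_{\{h_{\ell}^{q}>\eta^{q}s\}}h_{\ell}^{q}\,dz+c_{1}\,b\,s^{\frac{q-1}{q}}\!\!\int_{\{h_{\ell}^{q}>\eta^{q}s\}}\!\!h_{\ell}\,dz+c_{1}\!\!\int_{\{H^{q}>\eta^{q}s\}}\!\!H^{q}\,dz,$$
valid for all $s>s_{0}$, with $c_{1}$ depending only on $n,q$.

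The last step is to multiply this inequality by $s^{\delta-1}$ and integrate over $s\in(s_{0},\infty)$. By Fubini the first two terms on the right produce multiples of $\int_{Q_{1/4}}h_{\ell}^{q(1+\delta)}$ (for the second term, $q>1$ keeps $\frac{q-1}{q}$ bounded away from $0$, so its constant stays controlled as $\delta\to 0$), the third produces a multiple of $\int_{Q_{1/4}}H^{q(1+\delta)}$, and there are boundary contributions $\lesssim s_{0}^{\delta}\int_{Q_{1}}(h_{\ell}^{q}+H^{q})$; the left side produces $\tfrac1\delta\int_{Q_{1/4}}h_{\ell}^{q(1+\delta)}$ minus a boundary term, and every integral here is finite because $h_{\ell}\le\ell$. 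I expect the \emph{absorption} to be the main obstacle: one must arrange the parameters so that, after integration, the coefficient multiplying $\int h_{\ell}^{q(1+\delta)}$ that comes from the $(\theta+b\eta^{q})$-term is strictly smaller than the one on the left. For $\theta$ small this is immediate (the case of \cite{gm}); for general $0\le\theta<1$ it requires the sharper bookkeeping of \cite[Ch.~V]{gi} — in particular choosing the stopping cubes and their dilates so that the covering does not worsen the constant in front of $\theta$, and then taking $\delta$ small depending on $b,q,\theta,n$. This absorption, together with the care needed in the parabolic cube geometry of the covering, is the only non-routine part; the remainder transcribes the elliptic argument word for word. Dividing out, undoing the normalization, and sending $\ell\to\infty$ gives the asserted estimate on $Q_{R}\subset Q_{4R}\subset Q$, with $\epsilon$ and $C$ depending only on $b,q,\theta,n$ and $|Q|$.
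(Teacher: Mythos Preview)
Your proposal is correct and follows exactly the route the paper indicates: the paper does not give a proof but simply remarks that the result ``can be proved exactly the same way as for the analogous elliptic one in \cite{gi} [Proposition 1.1, Chapter V] by simply changing Euclidean cubes to parabolic cubes,'' and your outline is precisely the Gehring--Giaquinta--Modica stopping-time/absorption argument carried out with the $2^{n+2}$-fold dyadic parabolic subdivision. Your identification of the absorption for general $0<\theta<1$ as the only non-routine point matches the paper's reference to \cite{gi} (rather than \cite{gm}, which only handles small $\theta$).
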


This result was used in \cite{gs} to derive partial regularity of solutions of
some nonlinear parabolic systems satisfying strongly elliptic condition
\eqref{coeff5}. We first establish interior estimates in Theorem
\ref{appendix1} when $f\equiv0$.

\begin{prop}\label{prop73}
Under the hypotheses of Theorem \ref{appendix1} with $f\equiv0$, there exists $2<p_{0}<2^{*}$, depending only on $n,N,\lambda$ and $\Lambda$, such that for
any ${Q}_{5R}\subset\omega\times(0,T)$, and
$2\leq{p}<{p}_{0}$, we have
\begin{equation}\label{prop73.0}
\fint_{Q_{R}}|\nabla{u}|^{p}dz\leq\,C\left(\fint_{Q_{4R}}|\nabla{u}|^{2}\right)^{\frac{p}{2}}+\fint_{Q_{4R}}|g|^{p}dz,
\end{equation}
where $C$ depends only on $n,N,\lambda$, and $\Lambda$.
\end{prop}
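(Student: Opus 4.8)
The plan is to deduce \eqref{prop73.0} from a self-improving (reverse H\"older) property of $|\nabla u|$, obtained by pairing a Caccioppoli estimate on parabolic cylinders with a parabolic Sobolev--Poincar\'e inequality and then invoking Proposition \ref{lq}. Everything is interior, so fix a cylinder $Q_{5R}=B_{5R}(x_0)\times(t_0-25R^2,t_0)\subset\omega\times(0,T)$ and work in nested sub-cylinders contained in it; the initial/lateral data in \eqref{appendix1} play no role.

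\textbf{Step 1 (Caccioppoli inequality).} For any $Q_{2\rho}=B_{2\rho}\times I_{2\rho}\subset Q_{5R}$, with $\bar u(t)=\fint_{B_{2\rho}}u(x,t)\,dx$, I would prove
\[
\sup_{t\in I_\rho}\int_{B_\rho}|u-\bar u|^2\,dx+\int_{Q_\rho}|\nabla u|^2\,dz\le \frac{C}{\rho^2}\int_{Q_{2\rho}}|u-\bar u|^2\,dz+C\int_{Q_{2\rho}}|g|^2\,dz,
\]
with $C=C(n,N,\lambda,\Lambda)$. This proceeds as in Lemmas \ref{lem31}--\ref{lem32}: insert the Steklov-averaged test function $\zeta=(\overline{\eta^2(u-\bar u)})_h$ into \eqref{weaksoln1}, where $\eta(x,t)=\eta_1(x)\eta_2(t)$ is a cutoff with $\eta\equiv1$ on $Q_\rho$, $\operatorname{supp}\eta\subset Q_{2\rho}$, $|\nabla\eta_1|\le C/\rho$, $|\partial_t\eta_2|\le C/\rho^2$; let $h\to0$; then invoke the coercivity \eqref{coeff2} for a.e.\ $t$ applied to $\xi(\cdot)=\eta(\cdot,t)\big(u(\cdot,t)-\bar u(t)\big)\in\overset{\circ}{W}\,^{1,0}_{2}$ (admissible since $\eta$ has compact spatial support in $B_{2\rho}\subset D$) to bound $\int_{B_{2\rho}}A\,\eta^2D_\beta u^jD_\alpha u^i$ below by $\tfrac{\lambda}{2}\int\eta^2|\nabla u|^2$ minus terms carrying $\nabla\eta$. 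The cross terms with $\nabla\eta$, with $\partial_t\eta_2$, and with $g$ are estimated by Cauchy's inequality and absorbed; the $u_t$-term contributes $\sup_t\int_{B_\rho}|u-\bar u|^2$ along with $\int\eta\,\partial_t\eta_2\,|u-\bar u|^2$, the term where $\partial_t$ hits $\bar u$ vanishing because $\int_{B_{2\rho}}(u-\bar u(t))\,dx=0$.

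\textbf{Step 2 (parabolic Sobolev--Poincar\'e and the Gehring lemma).} The borderline spatial Sobolev embedding $W^{1,q}(B)\hookrightarrow L^2(B)$ with $q=\tfrac{2n}{n+2}<2$ (for $n\le2$ take any $q<2$), applied slicewise to $u-\bar u$, together with the $L^\infty_tL^2_x$ bound from Step 1, yields by interpolation an exponent $2\sigma<2$ depending only on $n$, and then — after a routine Young-inequality absorption against the energy term furnished by Step 1 — the estimate, for every $Q_{4\rho}(z)\subset Q_{5R}$,
\[
\fint_{Q_\rho}|\nabla u|^2\,dz\le b\Big(\fint_{Q_{4\rho}}|\nabla u|^{2\sigma}\,dz\Big)^{1/\sigma}+\theta\fint_{Q_{4\rho}}|\nabla u|^2\,dz+C\fint_{Q_{4\rho}}|g|^2\,dz,
\]
with $\theta\in(0,1)$ as small as we please and $b,C$ depending only on $n,N,\lambda,\Lambda$. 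This is the hypothesis of Proposition \ref{lq} with $h=|\nabla u|^{2\sigma}$, $H=|g|^{2\sigma}$ and $q=1/\sigma>1$, so that proposition gives $\varepsilon>0$ and $C$ with, for $q\le p<q+\varepsilon$,
\[
\Big(\fint_{Q_R}|\nabla u|^{2\sigma p}\,dz\Big)^{1/p}\le C\Big(\fint_{Q_{4R}}|\nabla u|^2\,dz\Big)^{\sigma}+C\Big(\fint_{Q_{4R}}|g|^{2\sigma p}\,dz\Big)^{1/p}.
\]
Writing $r=2\sigma p$, so $r$ ranges over $[2,2+2\sigma\varepsilon)$, shrinking $\varepsilon$ so that $p_0:=2+2\sigma\varepsilon<2^{*}$, and raising the inequality to the power $1/(2\sigma)$ and then to the power $r$, we obtain \eqref{prop73.0} for all $2\le r<p_0$.

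\textbf{Expected main obstacle.} The delicate point is Step 1 under the \emph{integrated} parabolicity \eqref{coeff2} rather than pointwise strong ellipticity: one cannot control $\int\eta^2 A\,DuDu$ at each point, so the argument must run through the globally (in $x$) admissible test function $\eta(u-\bar u)$ and use \eqref{coeff2} slicewise — which is exactly why the compact spatial support of $\eta$ is essential and why the estimate is interior rather than global. The second subtlety is that the Sobolev--Poincar\'e step must produce an exponent strictly below $2$ on its right-hand side, which is possible only because Step 1 also delivers the $L^\infty_tL^2_x$ energy bound feeding the interpolation; this is likewise the origin of the small term $\theta\fint|\nabla u|^2$ allowed in Proposition \ref{lq}. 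The Steklov-average manipulation of the $u_t$-term and the various absorptions are routine and parallel to Section \ref{sec2}.
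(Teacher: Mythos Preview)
Your approach is exactly the paper's: Caccioppoli on parabolic cylinders (the paper's Lemma~\ref{caccioppoli2}), then the slicewise Sobolev--Poincar\'e with exponent $\tfrac{2n}{n+2}$ combined with the $L^\infty_tL^2_x$ energy bound, then Proposition~\ref{lq}. The one genuine slip is in Step~1: with your \emph{unweighted} mean $\bar u(t)=\fint_{B_{2\rho}}u$, the term where $\partial_t$ hits $\bar u$ does \emph{not} vanish, because after inserting $\zeta=\eta^2(u-\bar u)$ the spatial integral that multiplies $\partial_t\bar u$ is $\int_{B_{2\rho}}\eta_1^2(u-\bar u)\,dx$, not $\int_{B_{2\rho}}(u-\bar u)\,dx$, and the former is not zero. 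Since $u$ is only in $V^{1,0}$ you have no control on $\partial_t\bar u$, so this term cannot simply be absorbed. The paper fixes this by taking the \emph{weighted} mean
\[
\bar u_{2R}(t)=\frac{\int_{B_{2R}}u(x,t)\chi_{2R}^2\,dx}{\int_{B_{2R}}\chi_{2R}^2\,dx},
\]
so that $\int_{B_{2R}}(u-\bar u_{2R})\chi_{2R}^2\,dx\equiv0$ kills the offending term identically; everything else in your outline then goes through verbatim.
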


Let $\chi(x)$ be a function in $C^{\infty}_{c}(B_{2}(x_{0}))$ such
that $0\leq\chi\leq1$, $\chi\equiv1$ in $B_{1}(x_{0})$ and
$|\nabla\chi|\leq2$. We denote
$$\chi_{2R}(x)=\chi\left(x_{0}+\frac{x-x_{0}}{R}\right),$$
and let $\tau_{2R}\in{C}^{\infty}([t_{0}-(2R)^{2},t_{0}])$,
$$0\leq\tau_{2R}\leq1,\quad~\tau_{2R}=1~\mbox{on}~[t_{0}-R^{2},t_{0}],\quad\tau_{2R}=0~\mbox{on}~[t_{0}-(2R)^{2},t_{0}-(\frac{3}{2}R)^{2}],$$
and they satisfy
$$|\nabla\chi_{2R}|\leq\frac{C(n)}{R},\quad|\nabla\tau_{2R}|\leq\frac{C(n)}{R^{2}}.$$
We note that
\begin{equation}\label{chi2R}
\int_{B_{2R}}\chi_{2R}^{4}=\frac{1}{R^{n}}\int_{B_{2}}\chi^{4},\quad\quad\quad\int_{B_{2R}}\chi_{2R}^{2}=\frac{1}{R^{n}}\int_{B_{2}}\chi^{2}.
\end{equation}

Define the weighted means of $u(x,t)$ in $B_{2R}(x_{0})$ as
$$\bar{u}(t):=\bar{u}_{x_{0},2R}(t)=\frac{\int_{B_{2R}(x_{0})}u(x,t)\chi_{2R}^{2}dx}{\int_{B_{2R}(x_{0})}\chi_{2R}^{2}(x)dx}.$$

\begin{lemma}\label{caccioppoli2}
Let $A\in\mathscr{A}(\lambda,\Lambda)$ and let
$u\in\overset{\circ}{V}(\omega\times(0,T),\mathbb{R}^{N}))$ be a
weak solution of \eqref{appendix1} with $f\equiv0$. Then for all
${Q}_{2R}\subset{Q}$, we have
\begin{equation}\label{lem75.0}
\sup_{t_{0}-R^{2}\leq\,t\leq\,t_{0}}\int_{B_{R}}|u-\bar{u}_{2R}|^{2}dx+\int_{Q_{R}}|\nabla{u}|^{2}dz\leq\frac{C}{R^{2}}\int_{Q_{2R}}|u-\bar{u}_{2R}|^{2}dz+C\int_{Q_{2R}}|g|^{2}dz,
\end{equation}
where $C$ depends only on $n,N,\lambda$, and $\Lambda$.
\end{lemma}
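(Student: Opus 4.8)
The plan is to carry out a localized energy (Caccioppoli) estimate, the only nonstandard point being that the coercivity \eqref{coeff2} is available only in integrated (G\aa rding) form, not pointwise. Fix $z_0=(x_0,t_0)$ and $Q_{2R}=Q_{2R}(z_0)$, and note that in the situations where the lemma is used $Q_{2R}\subset\omega\times(0,T)$, so that $\overline{B_{2R}(x_0)}\subset\omega$. Write $\eta(x,t)=\chi_{2R}(x)\tau_{2R}(t)$ and $v(x,t)=u(x,t)-\bar u_{2R}(t)$, and record the defining identity $\int_{B_{2R}}\chi_{2R}^2 v(\cdot,t)\,dx=0$ for a.e.\ $t$, whence also $\int_{B_{2R}}\eta^2 v(\cdot,t)\,dx=0$. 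First I would use $\zeta=\eta^2 v$ as a test function in the weak formulation of \eqref{appendix1} with $f\equiv0$; to make this rigorous for a mere weak solution one proceeds through Steklov averages exactly as in the proofs of Lemma~\ref{lem31} and Lemma~\ref{lem42}. The point of subtracting the $x$-independent $\bar u_{2R}(t)$ is that the contribution of the mean to the parabolic term cancels because $\int_{B_{2R}}\eta^2 v(\cdot,t)\,dx\equiv0$; consequently, after integrating in $t$ over $(t_0-(2R)^2,\tau)$ and using $\tau_{2R}(t_0-(2R)^2)=0$, the time term contributes exactly $\tfrac12\int_{B_{2R}}\eta^2|v|^2(\cdot,\tau)\,dx-\iint\eta\eta_t|v|^2$, with no derivative of $\bar u_{2R}$ surviving and $|\eta\eta_t|\le C/R^2$.

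For the diffusion term one has $\int A^{\alpha\beta}_{ij}\partial_\beta u^j\,\partial_\alpha(\eta^2 v^i)$ with $\partial_\alpha(\eta^2 v^i)=\eta^2\partial_\alpha u^i+2\eta\,\partial_\alpha\eta\,v^i$ (using $\partial_\alpha v=\partial_\alpha u$ in the spatial variables). Substituting $\eta\,\partial_\alpha u^i=\partial_\alpha(\eta v^i)-v^i\partial_\alpha\eta$ wherever $\eta$ and $\partial u$ occur together rewrites this term as $\int A^{\alpha\beta}_{ij}\,\partial_\beta(\eta v^j)\,\partial_\alpha(\eta v^i)$ plus a finite sum of terms in each of which at least one factor carries $\partial\eta$ (bounded by $C/R$) against $|v|$ or against $|\nabla(\eta v)|$. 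Now comes the key step: for a.e.\ fixed $t$ the field $\eta(\cdot,t)\,v(\cdot,t)$ has compact support in $B_{2R}\subset\omega$, hence, extended by zero, $\eta v\in\overset{\circ}{W}\,^{1,0}_{2}(\omega\times(0,T);\mathbb{R}^N)$, and \eqref{coeff2} gives $\int_\omega A^{\alpha\beta}_{ij}\,\partial_\beta(\eta v^j)\,\partial_\alpha(\eta v^i)\,dx\ge\lambda\int_\omega|\nabla(\eta v)|^2\,dx$ for a.e.\ $t$; integrating in $t$ produces the good term $\lambda\iint|\nabla(\eta v)|^2$. This regrouping — replacing the non-coercive pointwise form $\int A\,\eta^2\,\partial u\,\partial u$ by the coercive $\int A\,\partial(\eta v)\,\partial(\eta v)$ plus lower-order remainders — is precisely what makes the Caccioppoli estimate go through under \eqref{coeff1}--\eqref{coeff2} alone, and is the step I expect to be the main obstacle; the rest is bookkeeping.

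Finally I would collect all terms. The $g$-term $\iint g^\alpha_i\,\partial_\alpha(\eta^2 v^i)$ is split by the same substitution $\eta^2\partial_\alpha u=\eta(\partial_\alpha(\eta v)-v\partial_\alpha\eta)$ into a contribution $\lesssim\iint|g|\,|\nabla(\eta v)|$ and contributions $\lesssim R^{-1}\iint|g|\,|v|$. Using $|A^{\alpha\beta}_{ij}|\le\Lambda$ and $\eta\le1$, every occurrence of $|\nabla(\eta v)|$ is then absorbed into $\lambda\iint|\nabla(\eta v)|^2$ by Young's inequality with a small parameter, and the remaining error is bounded by $\frac{C}{R^2}\iint_{Q_{2R}}|v|^2+C\iint_{Q_{2R}}|g|^2$ with $C=C(n,N,\lambda,\Lambda)$. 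This yields, for a.e.\ $\tau\in(t_0-(2R)^2,t_0)$,
\[
\tfrac12\int_{B_{2R}}\eta^2|v|^2(\cdot,\tau)\,dx+\lambda\iint_{Q_{2R}}|\nabla(\eta v)|^2\,dz\le\frac{C}{R^2}\iint_{Q_{2R}}|v|^2\,dz+C\iint_{Q_{2R}}|g|^2\,dz .
\]
Taking the supremum over $\tau\in[t_0-R^2,t_0]$ (legitimate after invoking the $L^2$-in-$t$ continuity of $u$, cf.\ Remark~\ref{rem1}) and noting that $\eta\equiv1$ on $Q_R$, so that $\nabla(\eta v)=\nabla u$ there and $\int_{B_{2R}}\eta^2|v|^2(\cdot,\tau)\,dx\ge\int_{B_R}|v|^2(\cdot,\tau)\,dx$, gives \eqref{lem75.0}.
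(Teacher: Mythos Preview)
Your proposal is correct and follows essentially the same route as the paper: test with $\eta^2(u-\bar u_{2R})$ via Steklov averages, use the weighted-mean identity $\int_{B_{2R}}\chi_{2R}^2(u-\bar u_{2R})\,dx=0$ to kill the $\partial_t\bar u_{2R}$ contribution, rewrite the diffusion term as $\int A\,\partial(\eta v)\,\partial(\eta v)$ plus lower-order pieces so that \eqref{coeff2} applies to $\eta v\in\overset{\circ}{W}\,^{1,0}_{2}$, and then absorb. The paper carries out the Steklov limit for the time term in more detail, but the structure and the key regrouping you flag as the main obstacle are exactly what the paper does.
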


\begin{proof}
Let $z_{0}=(x_{0},t_{0})\in{Q}$, and $Q_{2R}(z_{0})\subset{Q}$. By
the definition of weak solutions, we test \eqref{appendix1} with
$$\varphi=\Big(u_{h}-\bar{u}_{h}\Big)\eta^{2},$$
where $\eta=\eta_{2R}=\chi_{2R}\tau_{2R}$,
\begin{equation}\label{steklov}
u_{h}(x,t):=\frac{1}{2h}\int_{t-h}^{t+h}\widetilde{u}(x,s)ds,\quad\mbox{for}~~0<h<R^{2},
\end{equation}
and
$$\widetilde{u}(x,t)=\begin{cases}
u(x,\tau),&t\geq\tau,\\
u(x,t),&t_{0}-(2R)^{2}< t<\tau,\\
u(x,t_{0}-(2R)^{2}),&t\leq{t}_{0}-(2R)^{2}.
\end{cases}
$$
Then we have, for $\tau\in[t_{0}-R^{2},t_{0}]$
\begin{align}\label{lem75.1}
\int_{B_{2R}}(u\varphi)(\cdot,t_{0})dx-\int_{t_{0}-(2R)^{2}}^{\tau}\int_{B_{2R}}u\varphi_{t}dxdt
&+\int_{t_{0}-(2R)^{2}}^{\tau}\int_{B_{2R}}A_{ij}^{\alpha\beta}D_{\beta}u^{j}D_{\alpha}\varphi^{i}dxdt\nonumber\\
&=\int_{t_{0}-(2R)^{2}}^{\tau}\int_{B_{2R}}g^{\alpha}D_{\alpha}\varphi\,dxdt.
\end{align}
By our choice of the test function, we know that the term
$$\int_{t_{0}-(2R)^{2}}^{\tau}\left(\int_{B_{2R}}\left(u_{h}-\bar{u}_{h}\right)\chi^{2}dx\right)\Big(\partial_{t}\bar{u}\Big)\tau_{2R}^{2}dt=0.$$
It follows that
\begin{align*}
\int_{t_{0}-(2R)^{2}}^{\tau}\int_{B_{2R}}\bar{u}\partial_{t}\left((u_{h}-\bar{u}_{h})\eta^{2}\right)dxdt
=\int_{B_{2R}}\left(\bar{u}\Big(u_{h}-\bar{u}_{h}\Big)\eta^{2}\right)(x,\tau)dx.
\end{align*}
Then we have
\begin{align}\label{lem75.2}
&\int_{t_{0}-(2R)^{2}}^{\tau}\int_{B_{2R}}u\varphi_{t}dxdt=\int_{t_{0}-(2R)^{2}}^{\tau}\int_{B_{2R}}u\partial_{t}\left(\Big(u_{h}-\bar{u}_{h}\Big)\eta^{2}\right)dxdt\nonumber\\
&=\int_{t_{0}-(2R)^{2}}^{\tau}\int_{B_{2R}}\left(u-\bar{u}\right)\partial_{t}\left(\left(u_{h}-\bar{u}_{h}\right)\eta^{2}\right)dxdt
+\int_{B_{2R}}\left(\bar{u}\Big(u_{h}-\bar{u}_{h}\Big)\eta^{2}\right)(x,\tau)dx\nonumber\\
&=\int_{t_{0}-(2R)^{2}}^{\tau}\int_{B_{2R}}\left(u-\bar{u}\right)\left(u_{h}-\bar{u}_{h}\right)_{t}\eta^{2}dxdt\nonumber\\
&\hspace{1cm}+\int_{t_{0}-(2R)^{2}}^{\tau}\int_{B_{2R}}\left(u-\bar{u}\right)\left(u_{h}-\bar{u}_{h}\right)\left(\partial_{t}\left(\eta^{2}\right)\right)dxdt
+\int_{B_{2R}}\left(\bar{u}\Big(u_{h}-\bar{u}_{h}\Big)\eta^{2}\right)(x,\tau)dx.
\end{align}
First, we will show that
\begin{align}\label{lem75.3}
&\lim_{h\rightarrow0}\int_{t_{0}-(2R)^{2}}^{\tau}\int_{B_{2R}}\left(u-\bar{u}\right)\left(u_{h}-\bar{u}_{h}\right)_{t}\eta^{2}dxdt\nonumber\\
&=\frac{1}{2}\int_{B_{2R}}\left(\left(u-\bar{u}\right)^{2}\eta^{2}\right)(x,\tau)dx
-\frac{1}{2}\int_{t_{0}-(2R)^{2}}^{\tau}\int_{B_{2R}}\left(\left(u-\bar{u}\right)^{2}\partial_{t}\left(\eta^{2}\right)\right)(x,\tau)dx.
\end{align}
By the definition of $u_{h}$, \eqref{steklov}, we have, for
$0<h<\frac{R^{2}}{2}$,
\begin{align*}
&\int_{t_{0}-(2R)^{2}}^{\tau}\int_{B_{2R}}\left(u-\bar{u}\right)\left(u_{h}-\bar{u}_{h}\right)_{t}\eta^{2}dxdt\\
&=\frac{1}{2h}\int_{t_{0}-(2R)^{2}}^{\tau}\int_{B_{2R}}\Big(u-\bar{u}\Big)(x,t)
\bigg\{\Big(\widetilde{u}-\overline{\widetilde{u}}\Big)(x,t+h)-\Big(\widetilde{u}-\overline{\widetilde{u}}\Big)(x,t-h)\bigg\}\eta^{2}(x,t)dxdt\\
&=\frac{1}{2h}\int_{t_{0}-(2R)^{2}}^{\tau}\int_{B_{2R}}\bigg\{\Big(u-\bar{u}\Big)(x,t)
\Big(\widetilde{u}-\overline{\widetilde{u}}\Big)(x,t+h)\eta^{2}(x,t+h)\\
&\hspace{3.5cm}-\Big(u-\bar{u}\Big)(x,t)\Big(\widetilde{u}-\overline{\widetilde{u}}\Big)(x,t-h)\eta^{2}(x,t)\bigg\}dxdt\\
&\hspace{.5cm}+\frac{1}{2h}\int_{t_{0}-(2R)^{2}}^{\tau}\int_{B_{2R}}\Big(u-\bar{u}\Big)(x,t)
\Big(u-\bar{u}\Big)(x,t+h)\left\{\eta^{2}(x,t)-\eta^{2}(x,t+h))\right\}dxdt\\
&=\frac{1}{2h}\int_{\tau}^{\tau+h}\int_{B_{2R}}\Big(u-\bar{u}\Big)(x,t-h)\Big(u-\bar{u}\Big)(x,\tau)\eta^{2}(x,t)dxdt\\
&\hspace{.5cm}+\frac{1}{2h}\int_{t_{0}-(2R)^{2}}^{\tau}\int_{B_{2R}}\Big(u-\bar{u}\Big)(x,t)
\Big({u}-\bar{u}\Big)(x,t+h)\left\{\eta^{2}(x,t)-\eta^{2}(x,t+h))\right\}dxdt\\
&=\mathrm{I}_{h}+\mathrm{II}_{h}.
\end{align*}
Clearly,
$$\lim_{h\rightarrow0}\mathrm{II}_{h}=-\frac{1}{2}\int_{t_{0}-(2R)^{2}}^{\tau}\int_{B_{2R}}\left(u-\bar{u}\right)^{2}\partial_{t}\left(\eta^{2}\right)dxdt,$$
and
\begin{align*}
&\left|2I_{h}-\int_{B_{2R}}\left((u-\bar{u})^{2}\eta^{2}\right)(x,\tau)dx\right|\\
&\leq\frac{1}{h}\int_{\tau}^{\tau+h}\int_{B_{2R}}\left|(u-\bar{u})(x,t-h)-(u-\bar{u})(x,\tau)\right|
\left|u-\bar{u}\right|(x,\tau)\eta^{2}dxdt\\
&\leq\frac{C(R)}{h}\int_{\tau}^{\tau+h}\|u(\cdot,t-h)-u(\cdot,\tau)\|_{L^{2}(\omega)}\cdot
\|\left(u-\bar{u}\right)(x,\tau)\eta^{2}(x,\tau)\|_{L^{2}(\omega)}dt\\
&\rightarrow0\quad\quad\mbox{as}\quad\,h\rightarrow0.
\end{align*}
For the last step above, we have used the fact that $u\in{V}^{1,0}(\omega\times(0,T))$, see Lemma \ref{lem31} (the conclusion and its proof are valid under our hypothese).
Thus we obtain \eqref{lem75.3}. Again, using $u\in{V}^{1,0}(\omega\times(0,T))$, we have
$$\lim_{h\rightarrow0}\int_{B_{2R}}|u-u_{h}|^{2}(x,\tau)=0,\quad\mbox{and}
\quad\lim\limits_{h\rightarrow0}\int_{t_{0}-(2R)^{2}}^{\tau}\int_{B_{2R}}\left|u-u_{h}\right|^{2}dxdt=0.$$
It follows that
$$\lim_{h\rightarrow0}\int_{B_{2R}}|(u-\bar{u})-(u_{h}-\bar{u}_{h})|^{2}(x,\tau)=0,$$
and
$$\lim_{h\rightarrow0}\int_{t_{0}-(2R)^{2}}^{\tau}\int_{B_{2R}}\left|\left(u-\bar{u}\right)-\left(u_{h}-\bar{u}_{h}\right)\right|^{2}dxdt=0.$$
Then we have
$$\lim_{h\rightarrow0}\int_{B_{2R}}\bar{u}\left((u-\bar{u})-(u_{h}-\bar{u}_{h})\right)\eta^{2}(x,\tau)=0,$$

$$\lim_{h\rightarrow0}\int_{t_{0}-(2R)^{2}}^{\tau}\int_{B_{2R}}\left(u-\bar{u}\right)
\Big\{\left(u-\bar{u}\right)-\left(u_{h}-\bar{u}_{h}\right)\Big\}\partial_{t}\left(\eta^{2}\right)dxdt=0.$$
That is,
\begin{align}\label{lem75.40}
\lim_{h\rightarrow0}\int_{B_{2R}}\bar{u}(u_{h}-\bar{u}_{h})\eta^{2}(x,\tau)dx=\int_{B_{2R}}\bar{u}(u-\bar{u})\eta^{2}(x,\tau)dx,
\end{align}
and
\begin{align}\label{lem75.4}
\lim_{h\rightarrow0}\int_{t_{0}-(2R)^{2}}^{\tau}\int_{B_{2R}}\left(u-\bar{u}\right)\left(u_{h}-\bar{u}_{h}\right)\partial_{t}\left(\eta^{2}\right)dxdt
=\int_{t_{0}-(2R)^{2}}^{\tau}\int_{B_{2R}}\left(u-\bar{u}\right)^{2}\partial_{t}\left(\eta^{2}\right)dxdt.
\end{align}
By \eqref{lem75.3}, \eqref{lem75.40} and \eqref{lem75.4}, we have
from \eqref{lem75.2},
\begin{align}\label{lem75.5}
\lim_{h\rightarrow0}\int_{t_{0}-(2R)^{2}}^{\tau}\int_{B_{2R}}u\varphi_{t}dxdt&=\int_{B_{2R}}\bar{u}\left(u-\bar{u}\right)(x,\tau)\eta^{2}(x,\tau)dx
+\frac{1}{2}\int_{B_{2R}}\left(u-\bar{u}\right)^{2}(x,\tau)\eta^{2}(x,\tau)dx\nonumber\\
&\hspace{0.5cm}+\frac{1}{2}\int_{t_{0}-(2R)^{2}}^{\tau}\int_{B_{2R}}\left(u-\bar{u}\right)^{2}\partial_{t}\left(\eta^{2}\right)dxdt.
\end{align}
Since $\|u_{h}-u\|_{V(Q_{2R})}\rightarrow 0$ as $h\rightarrow0$, it
follows that
\begin{align}\label{lem75.6}
 \lim_{h\rightarrow
0}\int_{t_{0}-(2R)^{2}}^{\tau}\int_{B_{2R}}A^{\alpha\beta}_{ij}D_{\beta}u^{j}D_{\alpha}\varphi^{i}dxdt
=\int_{t_{0}-(2R)^{2}}^{\tau}\int_{B_{2R}}A^{\alpha\beta}_{ij}D_{\beta}\left(u-\bar{u}\right)^{j}D_{\alpha}
\Big[\left(u-\bar{u}\right)\eta^{2}\Big]^{i}dxdt,
\end{align}
and
\begin{align}\label{lem75.7}
\lim_{h\rightarrow 0}\int_{t_{0}-(2R)^{2}}^{\tau}\int_{B_{2R}}
g^{\alpha}D_{\alpha}\varphi
=\int_{t_{0}-(2R)^{2}}^{\tau}\int_{B_{2R}}g^{\alpha}D_{\alpha}\Big[\left(u-\bar{u}\right)\eta^{2}\Big]dxdt.
\end{align}
Then sending $h\rightarrow0$ in \eqref{lem75.1}, from
\eqref{lem75.5}, \eqref{lem75.6} and \eqref{lem75.7}, we have
\begin{align*}
&\frac{1}{2}\int_{B_{2R}}\left(u-\bar{u}\right)^{2}(x,\tau)\eta^{2}(x,\tau)dx+\int_{t_{0}-(2R)^{2}}^{\tau}\int_{B_{2R}}A_{ij}^{\alpha\beta}D_{\beta}\left(u-\bar{u}\right)^{j}D_{\alpha}\Big[\left(u-\bar{u}\right)\eta^{2}\Big]^{i}dxdt\\
&=\int_{t_{0}-(2R)^{2}}^{\tau}\int_{B_{2R}}\left(g^{\alpha}D_{\alpha}\Big[\left(u-\bar{u}\right)\eta^{2}\Big]\right)dxdt
+\int_{t_{0}-(2R)^{2}}^{\tau}\int_{B_{2R}}\left(u-\bar{u}\right)^{2}\partial_{t}\left(\eta^{2}\right)dxdt.
\end{align*}
By the Cauchy inequality, for any $\epsilon>0$,
\begin{align*}
&\int_{t_{0}-(2R)^{2}}^{\tau}\int_{B_{2R}}\left(g^{\alpha}D_{\alpha}\Big[\left(u-\bar{u}\right)\eta^{2}\Big]\right)dxdt\\
&\leq\frac{\epsilon}{4}\int_{t_{0}-(2R)^{2}}^{\tau}\int_{B_{2R}}\left|\nabla\Big[\left(u-\bar{u}\right)\eta\Big]\right|^{2}dxdt
+C(\epsilon)\int_{Q_{2R}(z_{0})}|g|^{2}dz.
\end{align*}
By a simply computation, we have
\begin{align*}
\int_{t_{0}-(2R)^{2}}^{\tau}\int_{B_{2R}}\left(u-\bar{u}\right)^{2}\partial_{t}\left(\eta^{2}\right)dxdt
\leq\frac{C}{R^{2}}\int_{t_{0}-(2R)^{2}}^{\tau}\int_{B_{2R}}|u-\bar{u}|^{2}dxdt,
\end{align*}
and
\begin{align*}
&\int_{t_{0}-(2R)^{2}}^{\tau}\int_{B_{2R}}A_{ij}^{\alpha\beta}D_{\beta}\left(u-\bar{u}\right)^{j}D_{\alpha}
\Big[\left(u-\bar{u}\right)\eta^{2}\Big]^{i}dxdt\\
&=\int_{t_{0}-(2R)^{2}}^{\tau}\int_{B_{2R}}A_{ij}^{\alpha\beta}D_{\beta}\Big[\left(u-\bar{u}\right)^{j}\eta\Big]
D_{\alpha}\Big[\left(u-\bar{u}\right)^{i}\eta\Big]dxdt\\
&\hspace{1cm}-\int_{t_{0}-(2R)^{2}}^{\tau}\int_{B_{2R}}A_{ij}^{\alpha\beta}\Big[(D_{\beta}\eta)\left(u-\bar{u}\right)^{j}\Big]
D_{\alpha}\Big[\left(u-\bar{u}\right)^{i}\eta\Big]dxdt\\
&\hspace{2cm}+\int_{t_{0}-(2R)^{2}}^{\tau}\int_{B_{2R}}A_{ij}^{\alpha\beta}D_{\beta}\Big[\left(u-\bar{u}\right)^{j}\eta\Big]\Big[(D_{\alpha}\eta)
\left(u-\bar{u}\right)^{i}\Big]dxdt\\
&\hspace{3cm}-\int_{t_{0}-(2R)^{2}}^{\tau}\int_{B_{2R}}A_{ij}^{\alpha\beta}\Big[(D_{\beta}\eta)\left(u-\bar{u}\right)^{j}\Big]\Big[(D_{\alpha}\eta)
\left(u-\bar{u}\right)^{i}\Big]dxdt.
\end{align*}
Then by the Cauchy inequality again and \eqref{coeff1}, we have
\begin{align*}
&\frac{1}{2}\int_{B_{2R}}\left(u-\bar{u}\right)^{2}(x,\tau)\eta^{2}(x,\tau)dx
+\int_{t_{0}-(2R)^{2}}^{\tau}\int_{B_{2R}}A_{ij}^{\alpha\beta}D_{\beta}\Big[\left(u-\bar{u}\right)^{j}\eta\Big]
D_{\alpha}\Big[\left(u-\bar{u}\right)\eta\Big]^{i}dxdt\\
&\leq\epsilon\int_{t_{0}-(2R)^{2}}^{\tau}\int_{B_{2R}}\left|\nabla\Big[\left(u-\bar{u}\right)\eta\Big]\right|^{2}dxdt
+\frac{C(\epsilon)}{R^{2}}\int_{t_{0}-(2R)^{2}}^{\tau}\int_{B_{2R}}|u-\bar{u}|^{2}dxdt+C(\epsilon)\int_{Q_{2R}(z_{0})}|g|^{2}dz.
\end{align*}
Therefore, by the weak parabolic condition \eqref{coeff2}, taking
$\epsilon=\frac{\lambda}{2}$, we have
$$\int_{B_{R}}\left(u-\bar{u}\right)^{2}(x,\tau)dx+
\int_{Q_{R}}\left|\nabla(u-\bar{u})\right|^{2}dz
\leq\frac{C}{R^{2}}\int_{Q_{2R}(z_{0})}\left|u-\bar{u}\right|^{2}dz+C\int_{Q_{2R}(z_{0})}|g|^{2}dz,$$
where $C$ depends only on $n,N,\lambda$ and $\Lambda$. The proof of Lemma \ref{caccioppoli2} is
completed.
\end{proof}

\begin{proof}[Proof of Proposition \ref{prop73}]

Using property \eqref{chi2R}, we have
$$\int_{B_{2R}}|u-\bar{u}_{2R}|^{2}\leq\,C\int_{B_{2R}}|u-\bar{u}_{4R}|^{2},$$
where $C$ depends only on $n$. From Lemma \ref{caccioppoli2} with
$R$ replaced by $2R$, for $t_{0}-(2R)^{2}\leq{t}\leq{t}_{0}$,
\begin{align}\label{prop73.1}
\int_{B_{2R}}|u-\bar{u}_{4R}|^{2}dx\leq\frac{C}{R^{2}}\int_{Q_{4R}}|u-\bar{u}_{4R}|^{2}dz+\int_{Q_{4R}}|g|^{2}dz.
\end{align}
By H\"{o}lder inequality, Poincar\'{e} inequality and
\eqref{prop73.1}, we have
\begin{align}\label{prop73.2}
\int_{B_{2R}}|u-\bar{u}_{2R}|^{2}dx
&=\left(\int_{B_{2R}}|u-\bar{u}_{2R}|^{2}dx\right)^{\frac{n}{n+2}}\left(\int_{B_{2R}}|u-\bar{u}_{2R}|^{2}dx\right)^{\frac{2}{n+2}}\nonumber\\
&\leq\,C\left(\int_{B_{2R}}|\nabla{u}|^{\frac{2n}{n+2}}dx\right)
\left(\frac{1}{R^{2}}\int_{Q_{4R}}|u-\bar{u}_{4R}|^{2}dz+\int_{Q_{4R}}|g|^{2}dz\right)^{\frac{2}{n+2}}\nonumber\\
&\leq\,C\left(\int_{B_{2R}}|\nabla{u}|^{\frac{2n}{n+2}}dx\right)
\left(\int_{Q_{4R}}|\nabla{u}|^{2}dz+\int_{Q_{4R}}|g|^{2}dz\right)^{\frac{2}{n+2}}
\end{align}
Integrating over $t$ leads to, for every $\epsilon>0$,
\begin{align*}
\iint_{Q_{2R}}|u-\bar{u}_{2R}|^{2}dz
&\leq\,C\left(\int_{Q_{2R}}|\nabla{u}|^{\frac{2n}{n+2}}dz\right)
\left(\int_{Q_{4R}}|\nabla{u}|^{2}dz+\int_{Q_{4R}}|g|^{2}dz\right)^{\frac{2}{n+2}}\\
&\leq\epsilon\left(\int_{Q_{4R}}\left(|\nabla{u}|^{2}+|g|^{2}\right)dz\right)
+\frac{C}{\epsilon^{\frac{2}{n}}}\left(\int_{Q_{2R}}|\nabla{u}|^{\frac{2n}{n+2}}dz\right)^{\frac{n+2}{n}}.
\end{align*}
Using \eqref{lem75.0}, we have
\begin{align*}
\fint_{Q_{2R}}|\nabla{u}|^{2}dz
&\leq\frac{C}{R^{2}}\fint_{Q_{2R}}|u-\bar{u}_{2R}|^{2}dz+C\fint_{Q_{2R}}|g|^{2}dz\\
&\leq\frac{C\epsilon}{R^{2}}\left(\fint_{Q_{4R}}\left(|\nabla{u}|^{2}+|g|^{2}\right)dz\right)
+\frac{C}{\epsilon^{\frac{2}{n}}R^{n+4}}\left(\int_{Q_{4R}}|\nabla{u}|^{\frac{2n}{n+2}}dz\right)^{\frac{n+2}{n}}+C\fint_{Q_{2R}}|g|^{2}dz.
\end{align*}
Taking $\epsilon>0$ such that $\frac{C\epsilon}{R^{2}}=\frac{1}{2}$,
we have
$$\epsilon^{\frac{2}{n}}R^{n+4}=CR^{\frac{4}{n}+n+4}=CR^{\frac{4+(n+4)n}{n}}=CR^{\frac{(n+2)^{2}}{n}}.$$
So
$$\fint_{Q_{R}}|\nabla{u}|^{2}dz\leq\frac{1}{2}\fint_{Q_{4R}}|\nabla{u}|^{2}dz+C\left(\fint_{Q_{4R}}|\nabla{u}|^{\frac{2n}{n+2}}dz\right)^{\frac{n+2}{n}}
+C\fint_{Q_{4R}}|g|^{2}dz.$$ Then taking
$h=|\nabla{u}|^{\frac{2n}{n+2}}$, $q=\frac{n+2}{n}$ and
$H=|g|^{\frac{2n}{n+2}}$, we obtain, in view of Proposition \ref{lq},
\eqref{prop73.0} and have proved Proposition \ref{prop73}.
\end{proof}

Given the interior estimates Proposition \ref{prop73}, we now only need to establish boundary estimates analogous to \eqref{prop73.0}.

\noindent{\bf The completion of the proof of Theorem \ref{reverseholder} when $f\equiv0$.}
Since $\partial\omega$ is Lipschitz continuous, there exists $\bar{R}>0$ such that for all $\bar{x}\in\partial\omega$, $\partial\omega\cap{B}_{\bar{R}}(\bar{x})$ is the graph of a Lipschitz function with controlled Lipschitz constant. In view of Proposition \ref{prop73}, we only need to establish \eqref{prop73.0} for all $Q_{4R}(z_{0})$ with $t_{0}\leq{T}$, $0<R<\frac{1}{8}\bar{R}$.
Note that we allow $\Omega_{T}\setminus{Q}_{4R}\neq\emptyset$, and here $u$ and $g$ have been extended as zero outside $\Omega_{T}:=\omega\times(0,T)$.

There are three cases: Case 1, where $B_{\frac{3}{2}R}(x_{0})\cap\omega^{c}=\emptyset$, can be seen as the interior case, and has been settled; Case 2, where $B_{\frac{3}{2}R}(x_{0})\subset\omega^{c}$, is trivial; We only need to consider Case 3, where $B_{\frac{3}{2}R}(x_{0})\cap\partial\omega^{c}\neq\emptyset$.

For Case 3, in order to prove \eqref{prop73.0} in $Q_{4R}^{+}:=Q_{4R}(z_{0})\cap\Omega_{T}$, we
need only to replace $\bar{u}_{x_{0},2R}(t)$ by
$$\bar{u}^{+}_{x_{0},2R}(t)=\frac{\int_{B_{2R}(x_{0})\cap\omega}u(x,t)\chi_{2R}^{2}dx}{\int_{B_{2R}(x_{0})\cap\omega}\chi_{2R}^{2}(x)dx},$$
and let $\chi(x)$ be a function in $C^{\infty}_{c}(B_{2}(x_{0}))$ such
that $0\leq\chi\leq1$, $\chi\equiv1$ in $B_{\frac{3}{2}}(x_{0})$ and
$|\nabla\chi|\leq4$, take
$$\chi_{2R}(x)=\chi\left(x_{0}+\frac{x-x_{0}}{R}\right),$$
satisfying
$|\nabla\chi_{2R}|\leq\frac{C(n)}{R}.$
Then by the same way, we could obtain the estimate \eqref{prop73.0}. The choice of $R$ and ball $B_{2R}(x_{0})$ guarantees the validity of the Sobolev inequality used in \eqref{prop73.0}.

It follows that for some $p>2$, the $L^{p}$ norm of $|\nabla{u}|$ is
controlled by the $L^{2}$ norm of $|\nabla{u}|$ and the $L^{p}$ norm
of $g$. On the other hand, we know that the $L^{2}$ norm of
$\nabla{u}$ is controlled by the $L^{2}$ norm of $g$. Therefore we
have shown that, for some $p>2$,
\begin{equation}\label{lem81.1}
\int_{\Omega_{T}}|\nabla{u}|^{p}dz\leq\,C\int_{\Omega_{T}}|g|^{p}dz.
\end{equation}

\subsection{Completion of the Proof of Theorem \ref{reverseholder}}
In order to complete the proof of Theorem \ref{reverseholder}, we need the
following Lemma.

\begin{lemma}\label{f}
Under the condition of Theorem \ref{reverseholder} with $g\equiv0$, and let $p_{0}$ be as in Proposition \ref{prop73}. Then for all
$2\leq{p}<p_{0}$,
$f\in{L}^{p}(0,T,L^{2}(\omega,\mathbb{R}^{N}))$, we have
$u\in{L}^{p}(\Omega_{T})$ and
$$\int_{\Omega_{T}}|\nabla{u}|^{p}dxdt\leq\,C\int_{0}^{T}\left(\int_{\omega}|f|^{2}dx\right)^{p/2},$$
where $C$ depends only on $n,N,\lambda,\Lambda$ and $\omega$.
\end{lemma}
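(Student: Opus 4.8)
The plan is to reduce the present case $f\not\equiv 0$, $g\equiv 0$ to the case $g\not\equiv 0$, $f\equiv 0$, which has already been settled (inequality \eqref{lem81.1}), by rewriting the forcing term $f$ as a spatial divergence. Fix once and for all a ball $B$ with $\overline{\omega}\subset B$, and for a.e. $t\in(0,T)$ extend $f(\cdot,t)$ by zero to $B$. For a.e. $t$ let $\phi(\cdot,t)\in H^{2}(B)\cap H^{1}_{0}(B)$ be the unique solution of $-\Delta\phi(\cdot,t)=f(\cdot,t)$ in $B$ with $\phi(\cdot,t)=0$ on $\partial B$. Because $\partial B$ is smooth, classical $L^{2}$ elliptic regularity (Calder\'{o}n--Zygmund estimates) gives $\|\phi(\cdot,t)\|_{H^{2}(B)}\le C\|f(\cdot,t)\|_{L^{2}(\omega)}$ with $C=C(n,\omega)$. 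Since the solution operator $f(\cdot,t)\mapsto\phi(\cdot,t)$ is linear and bounded from $L^{2}(B)$ to $H^{2}(B)$, the map $t\mapsto\phi(\cdot,t)$ is strongly measurable, hence $\phi\in L^{p}(0,T;H^{2}(B))$ with
$$\int_{0}^{T}\|\phi(\cdot,t)\|_{H^{2}(B)}^{p}\,dt\le C\int_{0}^{T}\Big(\int_{\omega}|f|^{2}\,dx\Big)^{p/2}dt.$$

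Next set $G^{\alpha i}=-\partial_{\alpha}\phi^{i}$, so that $f^{i}=\partial_{\alpha}G^{\alpha i}$ in $B$, and in particular in $\omega$. Since $p<p_{0}<2^{*}$ and $B$ is bounded, the Sobolev embedding $H^{1}(B)\hookrightarrow L^{2^{*}}(B)\hookrightarrow L^{p}(B)$ yields $\|G(\cdot,t)\|_{L^{p}(\omega)}\le\|\nabla\phi(\cdot,t)\|_{L^{p}(B)}\le C\|\phi(\cdot,t)\|_{H^{2}(B)}\le C\|f(\cdot,t)\|_{L^{2}(\omega)}$ (for $n=1,2$ one uses instead $H^{1}(B)\hookrightarrow L^{q}(B)$ for every finite $q$, which suffices since $p<\infty$). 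Taking $p$-th powers and integrating in $t$ gives
$$\|G\|_{L^{p}(\Omega_{T})}^{p}=\int_{0}^{T}\!\int_{\omega}|G|^{p}\,dx\,dt\le C\int_{0}^{T}\Big(\int_{\omega}|f|^{2}\,dx\Big)^{p/2}dt.$$
I then claim that $u$ is also a weak solution of \eqref{appendix1} with $f$ replaced by $0$ and $g$ replaced by $G$. Indeed, for any admissible test function $\zeta$ (vanishing on the parabolic boundary of $\Omega_{T}$), using $\nabla\phi(\cdot,t)\in H^{1}(B)$ and $\zeta(\cdot,t)\in H^{1}_{0}(\omega)$, integration by parts in $x$ gives $\int_{\omega}f^{i}\zeta^{i}\,dx=\int_{\omega}(\partial_{\alpha}G^{\alpha i})\zeta^{i}\,dx=-\int_{\omega}G^{\alpha i}\partial_{\alpha}\zeta^{i}\,dx$, with no boundary contribution; inserting this into the weak formulation of \eqref{appendix1} shows that $u$ solves the transformed problem.

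Finally, applying Theorem \ref{reverseholder} in the already-proven case $f\equiv 0$ (with the same exponent $p_{0}$, so that the range $2\le p<p_{0}$ is admissible) to $u$ with forcing data $G$, we obtain $u\in L^{p}(0,T;W^{1,p}_{0}(\omega,\mathbb{R}^{N}))$ together with
$$\int_{\Omega_{T}}|\nabla u|^{p}\,dz\le C\int_{\Omega_{T}}|G|^{p}\,dz\le C\int_{0}^{T}\Big(\int_{\omega}|f|^{2}\,dx\Big)^{p/2}dt,$$
which is the asserted gradient bound. Since $u(\cdot,t)\in W^{1,p}_{0}(\omega)$ for a.e. $t$ and $\omega$ is bounded, the Poincar\'{e} inequality gives $\|u(\cdot,t)\|_{L^{p}(\omega)}\le C\|\nabla u(\cdot,t)\|_{L^{p}(\omega)}$, and integrating in $t$ yields $u\in L^{p}(\Omega_{T})$ with the same bound. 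The only points requiring attention are the smoothness of $\partial B$ (the reason one inverts the Laplacian on a ball rather than on $\omega$, which is merely Lipschitz, so that global $H^{2}$ regularity is available) and the strong measurability of $t\mapsto\phi(\cdot,t)$; both are routine, and the remainder is a direct application of the $g$-case already established in \eqref{lem81.1}.
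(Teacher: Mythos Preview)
Your proof is correct and follows essentially the same route as the paper: both write $f$ as the divergence of $-\nabla\phi$ by solving $-\Delta\phi=f$, bound $\|\nabla\phi\|_{L^{p}}$ via $H^{2}$ regularity and the Sobolev embedding $H^{1}\hookrightarrow L^{p}$ (valid since $p<p_{0}<2^{*}$), and then invoke the already-established estimate \eqref{lem81.1}. The only difference is that the paper solves the Poisson problem directly on $\omega$, whereas you solve it on a smooth ball $B\supset\overline{\omega}$; your choice is arguably cleaner, since $\omega$ is assumed merely Lipschitz in the Appendix and global $H^{2}$ regularity for $-\Delta$ on Lipschitz domains is not automatic.
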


\begin{proof}
Let $U$ be the solution of
$$U\in{H}^{1}_{0}(\omega,\mathbb{R}^{N})\cap{H}^{2}(\omega,\mathbb{R}^{N}),\quad-\Delta{U}^{i}=f^{i}.$$
It is known that
$$\|U\|_{W^{2,2}(\omega)}\leq\,C\|f\|_{L^{2}(\omega,\mathbb{R}^{N})},$$
where $C$ depends only on $n, N$ and $\omega$,
and by the imbedding theorems with respect to $x$,
\begin{equation}\label{U}
\int_{\omega}|DU|^{p}dx\leq\,C\|U\|^{p}_{W^{2,2}(\omega)}\leq\,C\|f\|^{p}_{L^{2}(\omega,\mathbb{R}^{N})}
\end{equation}
Then the weak solution of \eqref{appendix1} with $g\equiv0$, $u$, satisfies, for a.e. $\tau\in(0,T)$ and for all
$\varphi\in\overset{\circ}{W}\,^{1,1}_{2}(\Omega_{T};\mathbb{R}^{N})$,
$$\int_{\omega}(u\varphi)(\cdot,\tau)dx-\int_{\Omega_{T}}\left(A^{\alpha\beta}_{ij}\partial_{\beta}u^{j}\partial_{\alpha}\varphi^{i}-u\varphi_{t}\right)dxdt
=\int_{\Omega_{T}}\partial_{\alpha}U^{i}\partial_{\alpha}\varphi^{i}dxdt.$$
By \eqref{lem81.1} and \eqref{U}, we conclude
that
$$\int_{\Omega_{T}}|\nabla{u}|^{p}dxdt\leq\,C\int_{\Omega_{T}}|DU|^{p}dxdt\leq\,C\int_{0}^{T}
\left(\int_{\omega}|f|^{2}dx\right)^{p/2}dt,$$ and this is the
required assertion.
\end{proof}

\begin{proof}[Proof of Theorem \ref{reverseholder}]
Combining the proof of Theorem \ref{reverseholder} with $f\equiv0$ and Lemma \ref{f}, the proof is
completed.
\end{proof}

\noindent{\bf{\large Acknowledgements.}} The first author is
grateful to Professor Jiguang Bao for helpful comments and
encouragement. Part of the work was completed while the first author
was visiting Rutgers University, he also thanks the mathematics
department and the Nonlinear Analysis Center for the hospitality.
The first author was partially supported by SRFDPHE (20100003120005),
NSFC (11071020) and (11126038). The work of the second author was
partially supported by NSF grant DMS-0701545. Both authors were
partially supported by Program for Changjiang Scholars and
Innovative Research Team in University in China.


\end{document}